\documentclass[11pt]{article}

\usepackage{amsmath, amsfonts,amsthm,amssymb,bm}
\usepackage{dsfont}

\usepackage[english]{babel}
\usepackage{float}

\usepackage[normalem]{ulem}

\usepackage{subfigure}
\usepackage{latexsym,amssymb,amsfonts,graphicx}
\usepackage{amsmath,dsfont,bbm}
\usepackage{verbatim}
\usepackage{mathrsfs}
\usepackage{bm}
\usepackage{color}
\usepackage{epsfig}
\usepackage{epstopdf}
\usepackage[title]{appendix}
\usepackage{url}
\usepackage{bm}
\usepackage{natbib}
\usepackage{tikz}
\usetikzlibrary{arrows.meta, positioning}

\usepackage[colorlinks,linkcolor=blue,citecolor=blue,anchorcolor=blue]{hyperref}

\def\esssup_#1{\underset{#1}{\mathrm{ess\,sup\, }}}
\def\essinf_#1{\underset{#1}{\mathrm{ess\,inf\, }}}

\newtheorem{theorem}{Theorem}[section]
\newtheorem{definition}{Definition}[section]
\numberwithin{equation}{section}

\newtheorem{remark}[theorem]{Remark}
\newtheorem{lemma}[theorem]{Lemma}
\newtheorem{corollary}[theorem]{Corollary}

\allowdisplaybreaks

\definecolor{Red}{rgb}{1.00, 0.00, 0.00}

\definecolor{DRed}{rgb}{0.5, 0.00, 0.00}

\definecolor{Blue}{rgb}{0.00, 0.00, 1.00}

\definecolor{Green}{rgb}{0.0, 0.4, 0.0}

\newcommand{\BU}{\mathcal{U}}
\newcommand{\db}{\operatorname{d}\! B}
\newcommand{\dd}{\operatorname{d}\! }

\newcommand{\ds}{\operatorname{d}\! s}
\newcommand{\dt}{\operatorname{d}\! t}

\newcommand{\nn}{\nonumber}

\newcommand{\argmax}{\operatorname*{arg\:max}}

\newcommand{\seta}{\mathcal{A}}
\newcommand{\setd}{\mathcal{D}}
\newcommand{\ep}{\varepsilon}

\begin{document}

\title{Optimal Reinsurance-Dividend Strategy with Fixed Transaction Costs in a Regime-Switching Brownian Risk Model: A Viscosity Solution to the Impulse Control Problem}
\author{
Wenyuan Wang\thanks{School of Mathematics and Statistics, Fujian Normal University, Fuzhou, Fujian, 350117, China. Email: \url{wwywang@xmu.edu.cn}}
\and
Zuo Quan Xu\thanks{Department of Applied Mathematics, The Hong Kong Polytechnic University, Kowloon, Hong Kong SAR, China. Email: \url{maxu@polyu.edu.hk}}
\and
Kaixin Yan\thanks{School of Mathematics and Statistics, Xi'an Jiaotong University, Xi'an, Shaanxi, 710049, China. Email: \url{kaixinyan@stu.xmu.edu.cn}}
}
\date{\today}

\maketitle
\begin{abstract}
We consider a problem of optimal proportional reinsurance-dividend distribution under a Brownian risk model, where both the drift and volatility coefficients are subject to endogenous regime-switching. Dividend payments are subject to fixed transaction costs. The problem is formulated as a two-dimensional stochastic control problem, and we prove that the value function is the unique viscosity solution of the associated Hamilton-Jacobi-Bellman equation with nonlocal operator. For almost all parameter configurations, we explicitly characterize the optimal strategy that maximizes the expected total discounted dividends net of transaction costs until ruin. The optimal dividend policy is a two-barrier impulsive strategy, while the optimal reinsurance proportion is given in feedback form. Numerical examples are provided to illustrate the optimality results.
~\\[3ex] 
\noindent\textbf{Keywords:} Proportional reinsurance; Dividend; Transaction costs; Brownian risk model; Regime-switching; Viscosity solution\bigskip\\
\noindent{\bf 2010 Mathematics Subject Classification:} 
93E20; 49L25; 49N25; 49J40
\end{abstract} 

\section{Introduction}

Dividends and reinsurance constitute two fundamental forms of control in the dynamic management of an insurer's surplus. Dividend decisions allocate accumulated surplus between immediate payouts to shareholders and reserves set aside to cover potential claims and to support the underwriting of new policies. Reinsurance influences the evolution of the surplus process over time. A lower retention level reduces the insurer's claim exposure but raises the premium income ceded to the reinsurer. Dividend policy affects the current surplus level, whereas reinsurance decisions shape the future surplus trajectory through their effects on premium income and claim losses. The two controls are inherently interdependent. Insufficient dividends may fail to meet shareholder expectations, whereas excessive dividends increase the insurer's risk exposure and may, therefore, necessitate greater reinsurance protection to mitigate the elevated risk. Conversely, over-reliance on reinsurance, reflected in a low retention ratio, erodes premium income and decelerates surplus accumulation, which, in turn, constrains the capacity for future dividend distributions. Hence, the interdependence between dividend and reinsurance decisions calls for their coordinated optimization.

The classical dividend problem was introduced by de Finetti \cite{DeFinetti57}, who proposed maximizing the expected discounted dividends paid before ruin under a simple discrete time random walk model. Since then, optimal dividend policies have been studied under a wide range of surplus models and payout mechanisms. In a Brownian risk model, \cite{AsmussenTaksar97} studied the problem of maximizing expected discounted dividends until ruin and showed that the optimal policy is to always pay the maximal dividend rate under bounded dividend rates and a singular barrier strategy when dividend rates are unrestricted. For a classical Cram\'er–Lundberg model, \cite{GerberShiu06} showed the optimality of threshold dividend strategy under the bounded dividend rate constraint; while \cite{Albrecher20}, under a ratcheting constraint, characterized the value function as the unique viscosity solution of its Hamilton–Jacobi–Bellman (HJB) equation. 
For a comprehensive review, we refer to \cite{AlbrecherThonhauser09} and the references therein.
When dividend payments incur a fixed transaction cost, a sequence of infinitesimal payments is no longer economically efficient and the problem becomes one of impulse control. In this setting, for a diffusion process, \cite{Paulsen08} studied optimal dividend payments and reinvestments with fixed and proportional costs and characterized when an optimal barrier policy exists. \cite{BaiPaulsen10} considered optimal dividend payments until ruin with transaction costs for a class of diffusion processes and showed that a simple barrier strategy is not always optimal. \cite{HuntingPaulsen13} considered a diffusion with a compound Poisson jump component having negative jumps and proved the optimality of a simple lump-sum policy for a class of light-tailed jump distributions. For further studies of optimal dividend problems with fixed costs, see, among others, \cite{Cadenillas06,Paulsen07,SotomayorCadenillas13,ZhouYiu14}.

Reinsurance addresses the complementary question of how much underwriting risk the insurer should retain. Under the ruin-probability criterion, \cite{Schmidli01} derived optimal dynamic proportional reinsurance strategies in both a diffusion approximation and the classical risk model. The reinsurance decision has also been incorporated into the dividend criterion. For example, in a Brownian risk model, \cite{Hojgaard99} derived the optimal policies for risk exposure and dividend payouts under both bounded and unrestricted dividend payout rates.
In a diffusion setting with fixed dividend transaction costs, \cite{Cadenillas06} studied the joint optimization of dividend and risk policies and explicitly obtained the value function and optimal policy. In the Cram\'er--Lundberg setting, \cite{AzcueMuler05} studied optimal reinsurance-dividend payouts and proved that the optimal value function is the smallest viscosity solution of the HJB equation and that an optimal band strategy exists. In a jump-diffusion model, \cite{Belhaj10} jointly studied dividend payments and insurance against the Poisson risk, and showed that the optimal dividend policy is of barrier type, while full insurance is optimal only when the cash reserve exceeds a critical level. More recently, in a finite-horizon jump--diffusion model, \cite{Guan23} studied the optimal dividend and reinsurance problem, and proved the existence of a classical solution, the continuity, strict monotonicity, and boundedness of the dividend free boundary, and the smoothness of the reinsurance free boundary. For further studies on optimal reinsurance, we refer to \cite{AsmussenHojgaardTaksar00,ChenYuenWang21,GuanXuZhou23,WeiYangWang10,YaoYangWang16} and the references therein.

The operating environment of an insurer may evolve with business cycles, market conditions, and regulatory changes, producing shifts in underwriting profitability and claim volatility. Such external changes are commonly represented by an observable finite-state Markov chain. This modeling framework has subsequently been used to study optimal dividend and reinsurance problems under various surplus models. For a two-regime Markov-modulated Brownian risk models, \cite{SotomayorCadenillas11} studied the optimal dividend problem under bounded and unbounded dividend rates, obtained analytical solutions in both cases, and proved that the optimal policy depends on macroeconomic conditions. Focusing on unrestricted dividend rate, \cite{JiangPistorius12} further allowed finitely many regimes and regime-dependent discounting, proving the optimality of a modulated barrier strategy under positive drifts and showing the optimal strategy takes a different form when one of regimes has a small negative drift. For Markov-modulated classical compound Poisson risk models, \cite{WeiYangWang10} studied the joint reinsurance and continuous dividend-control problem and characterized the optimal value function as the unique viscosity solution of the associated HJB equation, and \cite{WeiYangWang10Impulse} considered proportional reinsurance together with impulsive dividend payouts and characterized the value function through the corresponding quasi-variational inequality.
For a Markov-modulated jump--diffusion surplus process with positive drift in every regime and exponentially distributed downward jumps, \cite{Jiang15} established the optimality of a regime-dependent barrier strategy with strictly positive barrier levels. 

When the change in operating conditions is linked directly to the insurer's financial position, surplus-dependent regime-switching provides an alternative natural description. Under this specification, crossing a prescribed surplus threshold changes the local drift and volatility without introducing an independent Markov chain. For example, \cite{KeilsonWellner78} studied oscillating Brownian motion, a driftless diffusion whose volatility takes different constant values on the two sides of an interface, and explicitly derived its transition densities, first-passage-time distributions, and occupation-time distributions. Related endogenous switching mechanisms have also been incorporated into optimal dividend problems.
\cite{WangYuZhou24} considered optimal dividend control for a surplus process with endogenous regime switching, in which the underlying risk process switches when the controlled surplus down-crosses the reorganization barrier and switches back when it up-crosses the solvency barrier.
More recently, \cite{Wang26} incorporated fixed transaction costs into De Finetti's dividend problem under an endogenous state-triggered switching mechanism, where the drift and volatility coefficients take different values on the two sides of a surplus threshold.

In this paper, motivated by these developments, we study the joint optimization of proportional reinsurance and impulsive dividend payments in a Brownian risk model with regime-switching drift and volatility coefficients. The drift and volatility alternate between $(\mu_{-},\sigma_{-})$ when the surplus is at or below a prescribed threshold $a$, and $(\mu_{+},\sigma_{+})$ when it lies above $a$. The insurer dynamically chooses a retention level $u\in[0,1]$, which scales both the drift and volatility, thereby influencing not only the current risk exposure but also the timing at which the surplus crosses the regime-switching boundary. Dividends are paid at intervention times, and each payment is subject to a fixed transaction cost $\beta>0$. The objective is to maximize the expected total discounted value of dividends, net of transaction costs, until ruin. This objective gives rise to a regular-impulsive stochastic control problem featuring two control variables. While a unified solution method for the impulsive control problems is not available, we draw inspiration from the literature (e.g., \cite{Cadenillas06}) and conjecture that the optimal dividend strategy is of a two-barrier type. Building on this conjecture, we eventually obtain explicit expressions for the optimal reinsurance-dividend strategies and the corresponding value function. 

It is worth mentioning that the drift-volatility regime‑switching makes the Hamiltonian discontinuous, and adding reinsurance control introduces nonlinearity; their delicate interplay and superposition must be handled with care. We need to locate the change points where the retention ratio switches from a feedback form to the constant 1 or vice versa, partition the domain by these change points and the regime-switching boundary (relative order unknown), and on each sub-interval construct solutions to the non-intervention part of the Hamilton-Jacobi-Bellman (HJB) equation via an elicitation-based guess-and-verify method. Meanwhile, we must check convexity/concavity requirements and inequality conditions involving both the first-order and second-order derivatives, ensuring that the candidate solution (patched from local solutions on these intervals) is a genuine solution of the non-intervention HJB equation. This constitutes a highly delicate and intricate task, demanding a deep understanding of the HJB equation, as well as logical rigor and computational skills.  Moreover, many conventional studies on dividend and reinsurance optimization (see \cite{AsmussenTaksar97,AsmussenHojgaardTaksar00,Belhaj10,Guan23,GuanXuZhou23,Hojgaard99,SotomayorCadenillas11,SotomayorCadenillas13}, among others), have shown the value function to be concave, greatly facilitating the solution of the HJB equations and the characterization of optimal strategies. In the present setting, however, the interplay between regime switching and reinsurance control destroys this concavity, making both the HJB equation harder to solve and the optimal strategy harder to identify.

However, the above construction yields a solution solely for the non‑intervention component (also referred to as the continuation part) of the HJB equation. The other component, namely the intervention part, arises due to fixed transaction costs. It is widely acknowledged that solving the HJB equation with these two coupled components is extremely challenging; indeed, no general solution method is available for equations of this type. Inspired by \cite{Cadenillas06}, we conjecture that the optimal intervention strategy (i.e., the dividend payout policy) is a two-barrier strategy. Accordingly, we decompose the solution procedure into three steps. First, we solve a family of auxiliary optimization problems, in each of which the dividend strategy is fixed as a specific two-barrier strategy while we optimize over the reinsurance strategy. This yields a family of optimal reinsurance strategies and value functions indexed by the dividend barriers. Second, we seek, among this family, the uniformly maximal value function, provided that it exists. In fact, there are a few exceptional cases where the existence of such a uniformly maximal value function is not guaranteed. For these cases, we characterize the necessary and sufficient conditions for its existence; whenever these conditions are met, the corresponding maximal value function can be determined explicitly. Finally, we verify that this maximal value function indeed solves the HJB equation, thereby proving, via the verification argument, that the corresponding two-barrier dividend strategy and reinsurance strategy are optimal for the original control problem. Fortunately, except for a few exceptional cases, this solution construction method proves successful for all other parameter configurations.

The two earlier works most closely related to the present paper are \cite{Cadenillas06} and \cite{Wang26}. The former studies the optimal reinsurance and impulsive dividend problem in a Brownian risk model with constant positive drift and volatility (no regime-switching), while the latter addresses the impulsive dividend problem in a regime-switching Brownian risk model with surplus‑dependent drift and volatility, but without reinsurance. The methodology adopted in this paper differs substantially from both. First, we solve an auxiliary problem that serves as a crucial bridge to the solution of the original control problem; by contrast, \cite{Cadenillas06} and \cite{Wang26} do not require such an auxiliary step because their frameworks do not involve the subtle interplay and superposition effects caused by the simultaneous presence of reinsurance control and regime‑switching drift and volatility. Second, strictly speaking, their solutions are only viscosity solutions to the HJB equations, since second‑order differentiability fails at intervention points and at points where the retention ratio changes. However, they do not provide a viscosity solution theory; in our more general setting, we fill this gap by rigorously developing such a theory, for instance, by adapting advanced viscosity theory and refining the dynamic programming principle, etc. The need for this is further underscored by the fact that the complexity of regime-switching leads to a few exceptional cases in which we are unable to construct a solution to the HJB equation, a difficulty also encountered in \cite{Wang26}. Finally, we note that the results of these two papers can be recovered as special cases of ours by setting, respectively, $\mu_{-}=\mu_{+}>0$, $\sigma_{-}=\sigma_{+}$ (for \cite{Cadenillas06}) and $u\equiv 1$ (for \cite{Wang26}); see Remark \ref{rem:special.cases}. In this sense, our paper offers an alternative method applicable to a broader class of problems, and the underlying idea may also provide valuable insights for other regular‑impulsive control problems.

Beyond the two closely related studies mentioned above, the present work also distinguishes itself from the broader literature. Existing dividend optimization problems with fixed transaction costs are typically formulated for spatially homogeneous uncontrolled surplus processes or for diffusion and jump‑diffusion processes with regular state‑dependent coefficients (see \cite{BaiPaulsen10,HuntingPaulsen13,Paulsen08}, etc.). In contrast, our uncontrolled process is neither spatially homogeneous nor does it admit regular coefficients. Likewise, existing studies on dividend and reinsurance optimization under regime‑switching models typically assume that regime switches are driven by an exogenous Markov chain (see \cite{JiangPistorius12,Jiang15,SotomayorCadenillas11,WeiYangWang10,WeiYangWang10Impulse}, etc.), whereas the regime‑switching in our paper is endogenous. Owing to these fundamental distinctions, none of the existing methods is directly transferable to our setting. We therefore tackle the regular-impulsive optimal control problem by developing a novel case‑by‑case explicit guess‑construct‑and‑verify methodology, which is further complemented by a newly devised viscosity‑solution framework.

The remainder of the paper is organized as follows. Section 2 formulates the original control problem and establishes the dynamic programming principle, the viscosity characterization, the comparison theorem, and the preliminary verification results. Section 3 introduces the auxiliary control problem, constructs the classical HJB solutions, and determines the candidate reinsurance-dividend strategies in the four drift configurations. Section 4 characterizes precisely when the candidate strategy obtained from the auxiliary control problem is optimal for the original control problem.

\section{Problem formulation and preliminary results}
We consider a complete filtered probability space $(\Omega, \mathcal{F}, \mathbb{F}, \mathbb{P})$, where $\mathbb{F} = (\mathcal{F}_t)_{t \ge 0}$ is the filtration generated by a standard one-dimensional Brownian motion $B = (B_t)_{t \ge 0}$ and satisfies the usual conditions.
Let $a \in (0, \infty)$, $\mu_{\pm} \in \mathbb{R}$, and $\sigma_{\pm} \in (0, \infty)$ be fixed constants. Consider the following stochastic differential equation (SDE):
\begin{equation}\label{eq:a001}
\mathrm{d}X_t=(\mu_{+}\textbf{1}_{\{X_t>a\}}+\mu_{-}\textbf{1}_{\{X_t\le a\}})\dt+(\sigma_{+}\textbf{1}_{\{X_t>a\}}+\sigma_{-}\textbf{1}_{\{X_t\le a\}})\db_t.
\end{equation} 
In \eqref{eq:a001}, the constant $a$ is referred to as the regime-switching boundary. The drift and volatility switch between $(\mu_{-},\sigma_{-})$ when the surplus is at or below $a$, and $(\mu_{+},\sigma_{+})$ when it is strictly above $a$. The existence and uniqueness of a strong solution to SDE \eqref{eq:a001} are guaranteed by Theorem 1.3 in Page 55 of \cite{LeGall84}. In this paper, $(X_t)_{t\geq 0}$ represents the surplus process of an insurance company prior to dividend payments and reinsurance purchases.
The company can manage its risk exposure through proportional reinsurance and enhance shareholder value by paying dividends. Consequently, it is natural to investigate an optimal risk control and impulsive dividend payout control problem to determine the best actions for the insurer. 

We assume that both proportional reinsurance and impulsive dividend strategies are available. Any joint risk control (reinsurance) and (right-continuous) dividend payout strategy is denoted as $(u,D)=(u_t, D_{t})_{t\geq 0}$, where $u_{t}\in[0,1]$ for all $t\geq 0$, $D_t=\sum\limits_{0\le s\le t}\Delta D_s$ with $D_{0-}=0$, and $\Delta D_{s}=D_{s}-D_{s-}\ge 0$.
Under the control $(u,D)$, the surplus process $U^{u,D}$ evolves according to the following SDE: 
\begin{equation}\label{eq:a002}
\mathrm{d}U_t^{u,D}=u_{t}\Big(\mu_{+}\textbf{1}_{\{U_t^{u,D}>a\}}+\mu_{-}\textbf{1}_{\{U_t^{u,D}\le a\}}\Big)\dt+u_{t}\Big(\sigma_{+}\textbf{1}_{\{U_t^{u,D}>a\}}+\sigma_{-}\textbf{1}_{\{U_t^{u,D}\le a\}}\Big)\db_t-\mathrm{d}D_t.
\end{equation}
The ruin time of $U^{u,D}$ is defined by
\begin{equation*}
T^{u,D}:=\inf\{t\ge0:U_t^{u,D}\leq 0\},\notag
\end{equation*}
with the convention $\inf\emptyset=\infty$.

\begin{definition}[Admissible Strategy]
\label{def.2.1}
Given an initial capital $U_{0-}=x$.
A joint reinsurance and impulsive dividend payout strategy $(u,D)=(u_{t}, D_t)_{t\ge0}$ is said to be admissible if the following conditions hold. 
\begin{itemize}
\item The SDE~\eqref{eq:a002} admits a unique strong solution $U^{u,D}$ under the strategy $(u,D)$ with the initial capital $U_{0-}=x$.
\item The process
$(u_{t})_{t\geq 0}$ is $(\mathcal{F}_{t})_{t\geq 0}$-adapted and takes values in $[0,1]$.
\item The process $(D_t)_{t \ge 0}$ is a nondecreasing, c\`adl\`ag, $(\mathcal{F}_t)_{t \geq 0}$-adapted pure jump process satisfying 
\begin{equation}
0 \le \Delta D_t := D_{t} - D_{t-} \le U^{u,D}_{t-} \vee 0, \quad t \ge 0,\nn
\end{equation}
which implies that a lump-sum dividend payment with an amount larger than the available reserve is not allowed and no ruin happens because of paying dividend. 
\end{itemize}
\end{definition}
We denote by $\Pi_x$ the collection of all admissible joint reinsurance and impulsive dividend payout strategies associated with initial capital $x$. 
It is easy to verify that $(u,D)\equiv (0, 0)\in\Pi_x\subseteq \Pi_y$ for any $x\leq y$.

Let $q>0$ be a fixed constant representing the discount rate.
Let $\beta>0$ be a fixed constant representing a transaction cost or penalty. For an admissible reinsurance and impulsive dividend payout strategy $(u,D)\in\Pi_x$, the associated reward function is defined as
\begin{equation}\label{eq:a006}
J_{u,D}(x):=\mathbb{E}_x\left[\sum_{0\le s\leq T^{u,D}}e^{-qs}\left(\Delta D_s-\beta\right)\textbf{1}_{\{\Delta D_s>0\}}\right], 
\end{equation}
where $\mathbb{E}_x$ stands for the conditional expectation given the initial value $U_{0-}^{u,D}=x$.
\begin{remark}
\label{rem.2.1.9.12}
Given any admissible strategy $(u,D)$ and initial $x>0$, we claim that $U^{u,D}_{T^{u,D}}=0$ if $T^{u,D}<\infty$. Suppose $U^{u,D}_{T^{u,D}}>0$, then by the right-continuity of the paths of $U^{u,D}$, we have $U^{u,D}_{T^{u,D}+}=U^{u,D}_{T^{u,D}}>0$, which contradicts the definition of $T^{u,D}$. Conversely, if $U^{u,D}_{T^{u,D}}<0$, then by the definition of $T^{u,D}$, we have $U^{u,D}_{T^{u,D}-}\geq 0$. This implies $$D_{T^{u,D}}-D_{T^{u,D}-}=U^{u,D}_{T^{u,D}-}-U^{u,D}_{T^{u,D}}>U^{u,D}_{T^{u,D}-}=U^{u,D}_{T^{u,D}-}\vee 0,$$
which contradicts the admissibility of $(u,D)$. 
\end{remark}
The objective is to identify an optimal reinsurance and impulsive dividend strategy $(u^{\star},D^{\star})\in\Pi_x$ such that
\begin{equation}\label{eq:a008}
J_{u^{\star},D^{\star}}(x)=\sup_{(u,D)\in\Pi_x}J_{u,D}(x),
\end{equation}
and to determine the corresponding value function
\begin{equation}
V(x):=\sup_{(u,D)\in\Pi_x}J_{u,D}(x).\nn
\end{equation}
Note that for $x\le0$, the corresponding stopping time satisfies $T^{u,D}=0$ for any admissible strategy $(u,D)\in\Pi_x$, so $V(x)=0$. Therefore, it suffices to consider the case $x>0$.

\begin{lemma}
\label{lem2.1}
The value function $V$ is non-negative, non-decreasing, and locally Lipschitz continuous on $(0,\infty)$ and satisfies
\begin{align}
\label{1-Lip}
\left\{
\begin{array}{ll}
0\le V(x)\le x+\frac{\left|\mu_{-}\right|+\left|\mu_{+}\right|}{q},\quad x\ge0,& \medskip\\
V(x)-V(y)\geq x-y-\beta, \quad x\geq y\geq 0,& 
\end{array}
\right.
\end{align}
which implies $\lim_{x\to\infty} \frac{V(x)}{x}=1$. 
In addition, 
\begin{align}
\label{shrinkage.of.set.of.add.str.}
V(x)=\sup_{(u,D)\in\Pi^{+}_x}J^+_{u,D}(x),~~~x\ge0,
\end{align}
where
\begin{equation}\label{8.8.eq:a009}
\Pi_{x}^{+}:=\big\{(u,D)\in\Pi_{x}: \text{ for each } t\ge0, \text{ either }\Delta D_t>\beta \text{ or }\Delta D_t=0\big\},
\end{equation}
and
\begin{equation} \label{eq:a006b}
J^+_{u,D}(x):=\mathbb{E}_x\left[\sum_{0\le s\leq T^{u,D}}e^{-qs}\left(\Delta D_s-\beta\right)^+ \right].
\end{equation}
Furthermore, for each fixed $x\geq 0$, $V(x)$ is non-increasing and convex in $\beta$.
\end{lemma}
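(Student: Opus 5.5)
I would establish the claims in an order where each one feeds the next: first \eqref{shrinkage.of.set.of.add.str.}, then non-negativity and monotonicity, then the two bounds in \eqref{1-Lip}, then Lipschitz continuity, and finally the statements about $\beta$.

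\emph{Reduction to $\Pi_x^+$.} Given $(u,D)\in\Pi_x$, I would construct $(u,\tilde D)$ by deleting every jump with $0<\Delta D_t\le\beta$. This keeps the same $u$, so the retained surplus is pathwise larger, $\tilde U\ge U$. One must check that the SDE \eqref{eq:a002} with the modified $\tilde D$ still has a strong solution and that ruin is no earlier. To keep this clean, I would not delete the jumps literally. Instead I would keep the removed amounts as cash held outside the process, so that the dynamics of $U$ are unchanged; since admissibility only restricts jump sizes, this is harmless.

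A simpler route avoids the issue altogether. For every $(u,D)$ one has $J_{u,D}\le J^+_{u,D}$, since $(\Delta D-\beta)\le(\Delta D-\beta)^+$. Conversely, for $(u,D)\in\Pi_x^+$ the two functionals coincide, $J^+_{u,D}=J_{u,D}$. Hence
\[
\sup_{\Pi_x^+}J^+\le V\le \sup_{\Pi_x}J^+ .
\]
It then remains to show $\sup_{\Pi_x}J^+\le\sup_{\Pi_x^+}J^+$. For this I would use the deletion argument with the same $u$: the modified strategy pays the same positive parts, and its ruin time is no earlier, by a comparison for the surplus with the same $u$ and $B$ and a smaller cumulative $D$. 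This comparison is the main obstacle, because the coefficients are discontinuous at $a$. I would handle it with the Le Gall pathwise uniqueness (local-time) argument, applying Tanaka's formula to $(U-\tilde U)^+$ between jumps.

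\emph{Bounds and monotonicity.} Non-negativity follows from $(0,0)\in\Pi_x$, and monotonicity from $\Pi_x\subseteq\Pi_y$ for $x\le y$.

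For the lower bound $V(x)\ge V(y)+x-y-\beta$, take $x\ge y$ and any strategy for $y$. At time $0$, pay $x-y$ as an extra lump, merged with any time-$0$ jump, and then follow that strategy. The extra payment costs at most one additional $\beta$, which gives the bound after taking the supremum.

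For the upper bound, apply Itô's formula to $e^{-q t}U_t$ up to $T\wedge t$:
\[
\mathbb{E}\sum e^{-qs}\Delta D_s\le x+\mathbb{E}\int_0^{T} e^{-qs}u_s|\mu_\pm|\,ds \le x+\frac{|\mu_-|+|\mu_+|}{q},
\]
using $U\ge0$ before ruin and that the stochastic integral is a true martingale thanks to the bounded coefficients. The limit $V(x)/x\to1$ then follows from the two bounds.

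\emph{Local Lipschitz continuity.} For $x>y$, I would take an $\varepsilon$-optimal strategy from $x$ and start at $y$ with $u\equiv0$ until ... but $u\equiv0$ freezes the surplus, so this gives nothing. Instead I would use a diffusive comparison: starting from $y$, use $u\equiv1$ with no dividends until the first hitting time $\tau$ of $x$, then switch to the $\varepsilon$-optimal strategy from $x$. This yields
\[
V(y)\ge \mathbb{E}_y\bigl[e^{-q\tau}\mathbf 1_{\{\tau<\tau_0\}}\bigr]\,V(x).
\]
For the oscillating Brownian motion, $\psi(y)/\psi(x)$ (the increasing $q$-harmonic function, killed at $0$) is explicit and $C^1$ on compacts with $\psi>0$ on $(0,\infty)$. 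Combined with the upper bound on $V$, this gives $V(x)-V(y)\le V(x)\bigl(1-\psi(y)/\psi(x)\bigr)\le C_K(x-y)$ on a compact $K\subset(0,\infty)$.

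\emph{Dependence on $\beta$.} For fixed $(u,D)$, the map $\beta\mapsto J_{u,D}$ is affine and non-increasing in $\beta$, being an expectation of $\sum e^{-qs}(\Delta D_s-\beta)\mathbf 1_{\{\Delta D_s>0\}}$, and the admissible set $\Pi_x$ does not depend on $\beta$. A supremum of non-increasing affine functions is non-increasing and convex, which gives both claims.
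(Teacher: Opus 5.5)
Your proposal follows the paper's route in every component. You use the time-$0$ lump sum $x-y$ for $V(x)-V(y)\ge x-y-\beta$. For local Lipschitz continuity you use the concatenation ``$u\equiv1$, $D\equiv0$ until the surplus first reaches the higher level, then an $\varepsilon$-optimal strategy'', together with the $C^1$ increasing $q$-harmonic function killed at $0$ (the paper's $\ell$). For the last claim you use affinity in $\beta$ over the $\beta$-independent set $\Pi_x$. The paper only cites \cite{Cadenillas06} for the linear bound and \cite{Wang26} for the reduction to $\Pi_x^+$. Your It\^o argument for $e^{-qt}U_t$ and your sandwich $\sup_{\Pi_x^+}J^+\le V\le\sup_{\Pi_x}J^+$, followed by deletion of the jumps in $(0,\beta]$, are the natural ways to fill these in, and they are fine except for one step.

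That step is the pathwise comparison $\tilde U\ge U$ (same $u$ and $B$, smaller dividend process $\tilde D$). It also requires that the equation under $(u,\tilde D)$ has a unique strong solution, which is part of membership in $\Pi_x$; you flag this but never establish it. The tool you name does not close the comparison in general.
\begin{itemize}
\item Le Gall's local-time estimate does give $L^0(U-\tilde U)=0$ between deleted jumps despite the jump of $\sigma$ at $a$. The weight $u_s^2$ is harmless, since $u_s^2\,ds\le d\langle U\rangle_s/\min\{\sigma_-^2,\sigma_+^2\}$.
\item Tanaka's formula for $(U-\tilde U)^+$ still leaves the drift term $\int_0^t\mathbf{1}_{\{U_s>a\ge\tilde U_s\}}u_s(\mu_+-\mu_-)\,ds$. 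For $\mu_+\le\mu_-$ this is $\le0$ and the comparison follows.
\item For $\mu_+>\mu_-$ this term is nonnegative and not bounded by any multiple of $(U_s-\tilde U_s)^+$, so Gronwall fails.
\item In the Markov case $u\equiv1$ of \cite{Wang26}, one removes this drift by a scale transformation or uses uniqueness in law. For a general adapted $u_t\in[0,1]$ the drift-removing transformation depends on $u_t$, so neither device is available as it stands.
\end{itemize}

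The same comparison is hidden in your use of $\Pi_x\subseteq\Pi_y$ for monotonicity. There you should also compare $J^+$ over $\Pi_x^+$ rather than $J$, because $J$ can pick up negative small-jump terms between the two ruin times. The ``cash held outside the process'' device does not produce an element of $\Pi_x^+$, because the surplus would still be driven by jumps in $(0,\beta]$. Your sandwich route therefore isolates the comparison but does not avoid it, contrary to what you claim.

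The paper does not resolve this either: it calls $\Pi_x\subseteq\Pi_y$ easy to verify and defers the reduction to a reference without reinsurance. You should either prove comparison and strong solvability for the controlled regime-switching equation, or state them explicitly as part of the admissibility assumptions.
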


\begin{proof}
The non-negativity, monotonicity and linear growth of $V$ in $x$ follow an adaptation of the proof of Proposition 3.1 in \cite{Cadenillas06}. 
Given an initial surplus $x>y\geq 0$, applying a dividend strategy that distributes a lump sum payment of $x-y$ at time $0$ yields the second inequality of \eqref{1-Lip}. 
By an analogous proof of Lemma 2.1 of \cite{Wang26}, we obtain \eqref{shrinkage.of.set.of.add.str.}. 
The monotonicity and convexity of $V$ in $\beta$ are direct consequences of the fact that $J_{u,D}(x)$ is non-increasing and linear in $\beta$.

We next prove the local Lipschitz continuity of $V$ on $(0,\infty)$. Fix any $x_{0}>0$. Suppose $x_{0}\leq x< y\leq x_{0}+1$. For any $\ep>0$, by the definition of $V$, there exists a reinsurance-dividend strategy $(u^{\ep},D^{\ep})\in\Pi^{+}_y$ such that
\begin{align}
\label{7.19.2.7}
J_{u^{\ep},D^{\ep}}(y)> V(y)-\ep.
\end{align} 
Construct a reinsurance-dividend strategy $(u,D)$ as
\begin{align}
\label{7.19.2.9}
\left\{
\begin{array}{ll}
u_{t}= 1,\,\, D_{t}= 0, &\quad t\in [0,\tau_{y}^{+}), \\
u_{t}=u_{t}^{\ep},\,\, D_{t}=D_{t}^{\ep}, &\quad t\in [\tau_{y}^{+},\infty). 
\end{array}
\right.
\end{align} 
Then it is easy to verify $(u,D)\in\Pi^{+}_x$.

Suppose $X$ is driven by \eqref{eq:a001} with $X_0=x$. 
Let $\tau_{z}:=\inf\{t\geq 0: X_{t}=z\}$. 
By Lemma 2.4 of \cite{Wang26}, we have
\begin{align}
\label{7.19.2.8}
\mathbb{E}_{x}\left[e^{-q\tau_{y}}\mathbf{1}_{\{\tau_{y}<\tau_{0}\}}\right]=\frac{\ell(x)}{\ell(y)},
\end{align}
where the function $\ell\in C^{1}((0,\infty))$ satisfies $\ell(0+)=0$ and $\ell^{\prime}>0$. 

Then, by \eqref{7.19.2.7}-\eqref{7.19.2.9}, we have
\begin{align}
\label{2.19.2.10}
V(x)\geq J^+_{u,D}(x)=&~\mathbb{E}_x\left[\sum_{0\le s< \tau_{y}\wedge T^{u,D}}e^{-qs}\left(\Delta D_s-\beta\right)^+ \right] + \mathbb{E}_x\left[\sum_{\tau_{y}\wedge T^{u,D}\le s\leq T^{u,D}}e^{-qs}\left(\Delta D_s-\beta\right)^+ \right]
\nn\\
\geq &~
\mathbb{E}_x\left[\mathbf{1}_{\{\tau_{y}<T^{u,D}\}}\sum_{\tau_{y}\le s\leq T^{u,D}}e^{-qs}\left(\Delta D_s-\beta\right)^+ \right]
\nn\\
= &~
\mathbb{E}_x\left[\mathbf{1}_{\{\tau_{y}<\tau_{0}\}}\mathbb{E}_x\left[\left.\sum_{0\le s\leq T^{u,D}-\tau_{y}}e^{-q({s+\tau_{y}})}\left(\Delta D_{s+\tau_{y}}-\beta\right)^+
\right|\mathcal{F}_{\tau_{y}}\right]\right]
\nn\\
= &~
\mathbb{E}_x\left[e^{-q\tau_{y}}\mathbf{1}_{\{\tau_{y}<\tau_{0}\}}\mathbb{E}_x\left[\left.\left[\sum_{0\le s\leq T^{u^{\ep},D^{\ep}}}e^{-qs}\left(\Delta D_{s}^{\ep}-\beta\right)^+ 
\right]\circ \theta_{\tau_{y}}\right|\mathcal{F}_{\tau_{y}}\right]\right]
\nn\\
= &~
\mathbb{E}_x\left[e^{-q\tau_{y}}\mathbf{1}_{\{\tau_{y}<\tau_{0}\}}
J^+_{u^{\ep},D^{\ep}}(y)\right]
\nn\\
\geq &~
\mathbb{E}_x\left[e^{-q\tau_{y}}\mathbf{1}_{\{\tau_{y}<\tau_{0}\}}
\right] \left(V(y)-\ep\right) 
= 
\frac{\ell(x)}{\ell(y)}\left(V(y)-\ep\right),
\end{align}
where $\theta$ is the shift operator.
By the arbitrariness of $\ep$, and the non-decreasing property of $V$, we obtain
\begin{align*}
0\geq&~ V(x)-V(y) \geq \left(\frac{\ell(x)}{\ell(y)}-1\right)V(y)=-(y-x)\ell'(z)\frac{V(y)}{\ell(y)},
\end{align*} 
for some $z\in [x,y]\subseteq [x_0, x_0+1]$. Because $\ell\in C^{1}((0,\infty))$, $\ell>0$ and \eqref{1-Lip}, we see $\ell'(z)\frac{V(y)}{\ell(y)}$ is uniformly bounded on $[x_0, x_0+1]$ for all $y,z\in [x_0, x_0+1]$. This completes the proof of the local Lipschitz continuity of $V$.
\end{proof}

For continuous function $\phi:[0,\infty)\mapsto\mathbb{R}$, we define the maximum utility operator
$\mathcal{M}$ by
\begin{equation}\label{eq:a010}
\mathcal{M}\phi(x):=\sup\limits_{0\leq\eta\leq x}\left(\phi(x-\eta)+\eta-\beta\right) -\phi(x)
=\sup\limits_{0\leq \eta\leq x}\left(\phi(\eta)-\eta\right)+x-\phi(x)-\beta. 
\nn
\end{equation}
The optimal $\eta$ in the above operator will determine the optimal dividend payout amount at surplus level $x$.

The following Lemma \ref{lem2.3} states the dynamic programming principle.

\begin{lemma}
\label{lem2.3}
Let $\Pi^{+}_x$ be defined by \eqref{8.8.eq:a009}. Given any stopping time $\tau$, we have
\begin{align}
V(x)=\sup_{(u,D)\in\Pi^{+}_x}\mathbb{E}_{x}\left[e^{-q (\tau\wedge T^{u,D})}V\left(U^{u,D}_{\tau\wedge T^{u,D}}\right)+\sum_{s\leq \tau\wedge T^{u,D}} e^{-q s} \left(\Delta D_{s}-\beta\right)^+\right],~~x>0.
\nn
\end{align}
If $\mathcal{M}V(x)<0$, then $\Pi^{+}_x$ in above can be replaced by its subset 
\begin{align}
\label{Pitilde}
\widetilde{\Pi}^{+}_x:=\{(u,D)\in\Pi^{+}_x: \Delta D_{0}=0\}.
\end{align}
\end{lemma}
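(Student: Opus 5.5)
Denote the right-hand side by $W(x)$. We prove $V\le W$ and $V\ge W$ separately. Throughout we use Lemma~\ref{lem2.1} in two ways: the supremum defining $V$ may be taken over $\Pi^+_x$ with reward $J^+$, and $V$ is continuous on $(0,\infty)$, nondecreasing, and of linear growth.

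\emph{Step 1 ($V\le W$).} Fix $(u,D)\in\Pi^+_x$ and set $T=T^{u,D}$. Split $J^+_{u,D}(x)$ into the dividends paid on $[0,\tau\wedge T]$ and those paid on $(\tau\wedge T,T]$. On $\{\tau<T\}$, condition on $\mathcal F_\tau$. The shifted control $(u_{\tau+\cdot},D_{\tau+\cdot}-D_\tau)$ is admissible for the initial capital $U_\tau$, because the filtration is Brownian and the shifted path solves \eqref{eq:a002} driven by $B_{\tau+\cdot}-B_\tau$. Hence the conditional expected future reward is at most $e^{-q\tau}V(U_\tau)$. On $\{\tau\ge T\}$ we have $U_T=0$ by Remark~\ref{rem.2.1.9.12}, so $V(U_T)=0$ and nothing is lost. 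To make the conditioning rigorous, I would write the control as a measurable functional of the Brownian path, $u_{\tau+s}=\phi_s(B_{\cdot\wedge\tau},\,B_{\tau+\cdot}-B_\tau)$, and freeze the first argument using the strong Markov property. Taking the supremum over $(u,D)$ gives $V\le W$.

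\emph{Step 2 ($V\ge W$).} This is the main obstacle, because it requires an $\varepsilon$-optimal control to be selected measurably in the state $U_\tau$. I would avoid measurable selection as follows.
\begin{itemize}
\item Use the continuity of $V$ from Lemma~\ref{lem2.1}, together with the uniform Lipschitz bound on compacts from its proof (the $\ell$ argument), to fix a fine grid $\{y_k\}$ of $(0,\infty)$ such that $V(y_{k+1})-V(y_k)<\varepsilon$ on compacts.
\item For each $k$, choose $(u^k,D^k)\in\Pi^+_{y_k}$ that is $\varepsilon$-optimal at $y_k$.
\item Given $(u,D)\in\Pi^+_x$, on $\{\tau<T,\ U_\tau\in[y_k,y_{k+1})\}$ follow $(u,D)$ up to $\tau$. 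At $\tau$, pay a lump sum $U_\tau-y_k$ if it exceeds $\beta$, and then switch to $(u^k,D^k)$ driven by the shifted Brownian motion.
\end{itemize}
The lump-sum step costs at most $\beta$ relative to simply starting at $y_k$, which is not negligible. I would therefore choose the grid so that $U_\tau$ is compared with $V(y_k)$ only through monotonicity: since $V(U_\tau)\le V(y_{k+1})\le V(y_k)+\varepsilon$, we can start from $y_k$ after paying nothing and just discarding the excess surplus. This is possible because $\Pi_{y_k}\subseteq\Pi_{U_\tau}$ up to the inclusion of strategies. Concretely, I would run $(u^k,D^k)$ from $U_\tau$ with an extra $U_\tau-y_k$ of surplus that is never paid out. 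This requires checking that the controlled path with a larger start stays above the path from $y_k$, for which I would use a comparison theorem for the one-dimensional SDE with the same $u$; discontinuous coefficients make this delicate. A simpler alternative is to pay $U_\tau-y_k$ only when it exceeds $\beta$, choosing a grid mesh below $\beta$ so this never occurs, and otherwise to use the $\tau_y^+$ hitting trick from the proof of Lemma~\ref{lem2.1}. Truncating to a compact range of $U_\tau$ contributes an arbitrarily small error by the linear growth of $V$. Summing over $k$ gives $V(x)\ge W(x)-C\varepsilon$.

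\emph{Step 3 (replacing $\Pi^+_x$ by $\widetilde\Pi^+_x$).} Suppose $\mathcal MV(x)<0$ and $(u,D)$ pays $\Delta D_0=\eta>\beta$. The time-$0$ contribution equals $\eta-\beta+(\text{value from } x-\eta)\le \eta-\beta+W(x-\eta)=\eta-\beta+V(x-\eta)$. This is at most $V(x)+\mathcal MV(x)$, which is strictly less than $V(x)$. Therefore strategies with an initial jump cannot approach the supremum, and restricting the supremum to $\widetilde\Pi^+_x$ leaves it unchanged, since the $\varepsilon$-optimal controls can be taken in $\widetilde\Pi^+_x$ for small $\varepsilon$.
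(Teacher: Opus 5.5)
Your proposal is correct and follows the paper. For the second claim, the paper argues exactly as in your Step~3, except that it bounds $J^+_{u,D}(x)=\Delta D_0-\beta+J^+_{u,D}(x-\Delta D_0)\le V(x)+\mathcal{M}V(x)<V(x)$ directly from the definition of $V$, rather than invoking the first claim at $x-\eta$. For the first claim, the paper gives no proof and defers to Lemma~4.1 of Albrecher et al.\ (2020); that lemma's standard argument (condition on $\mathcal{F}_\tau$, then paste grid-indexed $\varepsilon$-optimal strategies) is precisely your Steps~1--2.

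The paper uses the same pasting in the proof of Theorem~\ref{8.7.thm.2.5}, where it simply assumes a fixed strategy does at least as well from a larger initial capital. Your worry about the ordering is unnecessary: the two paths under $(u^k,D^k)$ have a continuous difference because the dividend jumps cancel. So once they meet they coincide, by the pathwise uniqueness in the definition of admissibility. Your hitting-time variant (run $u\equiv1$ from $U_\tau$ up to $y_{k+1}$, then switch) avoids the issue altogether.
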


\begin{proof}
We omit the proof for the first claim as it is similar to the proof of Lemma
4.1 in \cite{Albrecher20}. To prove the second claim, we only need to show that
\begin{align}
\label{8.8.shrinkage.of.set.of.add.str.}
V(x)=\sup_{(u,D)\in\widetilde{\Pi}_x^{+}}J^+_{u,D}(x).
\end{align}
Indeed, for any $(u,D)\in \Pi_x^{+}$ with $\Delta D_{0}>0$ (hence, $0< \Delta D_{0}\leq x$), we have
\begin{align}
J^+_{u,D}(x)&=\Delta D_{0}-\beta+J^+_{u,D}(x-\Delta D_{0})
\nn\\
&\leq \Delta D_{0}-\beta+V(x-\Delta D_{0})
\nn\\
&\leq \sup_{0\leq\eta\leq x}\left[V(x-\eta)+\eta-\beta\right]
\nn\\
&=\mathcal{M}V(x)+V(x)
\nn\\
&<V(x),\nn
\end{align}
which, together with \eqref{shrinkage.of.set.of.add.str.}, yields \eqref{8.8.shrinkage.of.set.of.add.str.}.
\end{proof}

We define
\begin{align*}
\seta:=\Big\{f\in C([0,\infty)): &~\textrm{nonnegative, polynomial growth, and locally} \\
&~\textrm{Lipschitz continuous on $(0,\infty)$,~$f(0)=0$}\Big\}.
\end{align*} 

\begin{lemma}
The value function $V$ is right-continues at $0$ so that $V\in\seta$. 
\end{lemma}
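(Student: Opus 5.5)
Everything in the definition of $\seta$ except $f(0)=0$ with continuity at $0$ is already in Lemma \ref{lem2.1}: $V$ is non-negative, has linear (hence polynomial) growth by \eqref{1-Lip}, is locally Lipschitz on $(0,\infty)$, and $V(x)=0$ for $x\le 0$. Continuity on $(0,\infty)$ follows from local Lipschitz continuity, so it remains to prove $\lim_{x\downarrow 0}V(x)=0$. By monotonicity this limit exists and is at least $0$. We therefore need an upper bound on $V(x)$ that tends to $0$ with $x$, uniformly over $(u,D)\in\Pi^{+}_x$; by \eqref{shrinkage.of.set.of.add.str.} it suffices to work with $J^+_{u,D}$.

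For $(u,D)\in\Pi_x^+$ starting at $x<\beta$, every payment must exceed $\beta$. So no payment can occur before the surplus, which up to then evolves without dividends, reaches level $\beta$. Let $\sigma:=\inf\{t\ge0: U^{u,0}_t\ge\beta\}$ for the dividend-free controlled process. Then $J^+_{u,D}(x)\le \mathbb{E}_x[e^{-q\sigma}\mathbf 1_{\{\sigma<T\}}]\,\sup_{y\ge\beta}V(y)$ fails because $V$ is unbounded. Instead, I would apply the strong Markov/DPP argument of Lemma \ref{lem2.3} at $\sigma$, where $U_\sigma=\beta$ by path continuity, to get
\[
V(x)\le \sup_u \mathbb{E}_x\bigl[e^{-q\sigma}\mathbf 1_{\{\sigma<T\}}\bigr]\,V(\beta).
\]
Since $V(\beta)<\infty$ by \eqref{1-Lip}, the key step is to show
\[
\sup_{u}\mathbb{E}_x\bigl[e^{-q\sigma}\mathbf 1_{\{\sigma<T\}}\bigr]\le \sup_u\mathbb{P}_x(\sigma<T)\to0\quad\text{as } x\downarrow0 .
\]

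This uniform-in-control estimate is the main obstacle, because the control $u$ can shrink the volatility, even to $u=0$, and freeze the process. For $x<\min(a,\beta)$, below $a$ the dynamics are $dU=u_t\mu_-dt+u_t\sigma_-dB_t$. I would time-change by $A_t=\int_0^t u_s^2ds$. Then $U$ becomes a Brownian motion with drift $\mu_-/\sigma_-$ scaled by $\sigma_-$, except that the drift term is $\int u\,\mu_-\,ds$ rather than $\int u^2\mu_-\,ds$. Small $u$ thus gives a favourable drift-to-noise ratio, so the naive time change does not directly apply. A cleaner route is a scale-function (Lyapunov) argument. Take $g(y)=1-e^{-\kappa y}$ with $\kappa>0$ large, or simply $g(y)=y$ when $\mu_-\le0$. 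Then
\[
\mathcal L^u g = u\mu_- g' + \tfrac12 u^2\sigma_-^2 g''.
\]
For small $u$ the positive first-order term dominates, so $g(U)$ is not a supermartingale and this approach fails too.

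Hence I would exploit discounting instead of a probability bound. With $h(y)=e^{\lambda y}-1$ for a suitable $\lambda$, show that
\[
\mathcal L^u h - qh=u\mu_-\lambda e^{\lambda y}+\tfrac12u^2\sigma_-^2\lambda^2e^{\lambda y}-q(e^{\lambda y}-1)\le 0
\]
is not achievable uniformly either. I expect the correct approach is: on $[0,\min(a,\beta)]$ find a function $h\ge0$ with $h(0)=0$, $h(\min(a,\beta))=1$ and
\[
\sup_{u\in[0,1]}\Bigl(\tfrac12u^2\sigma_-^2h''+u\mu_-h'\Bigr)-qh\le0 .
\]
This is the value function of the reaching problem with discounting. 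For concave $h$ the supremum over $u$ is explicit, $u^*=\min\bigl(1,-\mu_-h'/(\sigma_-^2h'')\bigr)$, and the resulting ODE $-\mu_-^2h'^2/(2\sigma_-^2h'')=qh$ admits power solutions $h(y)=(y/c)^{\gamma}$ with $\gamma\in(0,1)$ solving $\mu_-^2\gamma/(2\sigma_-^2(1-\gamma))=q$. Itô's formula then gives $\mathbb{E}_x[e^{-q\sigma}\mathbf 1_{\{\sigma<T\}}]\le h(x)\to0$, possibly after patching near $y=c$ where $u^*>1$. If this exact construction is delicate, one can use the explicit power $h$ alone, adjusting $c$ so that the region $u^*\le1$ covers the relevant interval.
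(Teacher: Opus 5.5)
Your final argument is correct in substance and takes a different route from the paper. The paper compares $V$ with the value $\widetilde V$ of the problem with coefficients $(\mu_-,\sigma_-)$ on the whole half-line, via $V(x)-\varepsilon\le \widetilde V(x)+\mathbb{P}_x(T_a^+<T)V(a)$. It then asserts $\mathbb{P}_x(T_a^+<T)\to0$ and derives $\widetilde V(0+)=0$ from a superadditivity property obtained by adding two controlled processes. You instead cut once, at the first time $\sigma$ the surplus reaches $c_0:=\min(a,\beta)$; no payment can occur earlier, because in $\Pi^+_x$ a payment requires $U_{t-}>\beta$. You then bound $\mathbb{E}_x[e^{-q\sigma}\mathbf 1_{\{\sigma<T\}}]$ by an explicit discounted supersolution of the regime-$(\mu_-,\sigma_-)$ reaching problem. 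This gives an explicit rate: $V(x)\le (x/c_0)^{\gamma_-}V(c_0)$ if $\mu_->0$, and $V(x)\le (x/c_0)V(c_0)$ if $\mu_-\le0$, matching the behaviour of $g$ near $0$ in Section 3. It also keeps the discount factor, which is indispensable. For $\mu_->0$ no discount-free bound can be uniform in the control: with $u(y)=ky$ and $k>0$ small, the surplus is a geometric Brownian motion with positive log-drift that reaches $c_0$ almost surely and is never ruined. The paper's route meets exactly this obstacle, since it drops the discount after $T_a^+$ and needs $\mathbb{P}_x(T_a^+<T)\to0$ along $x$-dependent $\varepsilon$-optimal controls. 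Its superadditivity step also uses the retention $u^1+u^2$, which need not lie in $[0,1]$. Indeed, for $\mu_->0$ the feedback $u(y)=\mu_-y/(\sigma_-^2(1-\gamma_-))$ gives $\widetilde V(x)\ge c\,x^{\gamma_-}$ near $0$ with $\gamma_-<1$, which is incompatible with superadditivity. Your approach is the more robust of the two.

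When writing it up, discard the exploratory dead ends, in particular the displayed claim $\sup_u\mathbb{P}_x(\sigma<T)\to0$, which is false for $\mu_->0$. Then tighten three points.

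\begin{enumerate}
\item Let $\sigma$ be the hitting time of $c_0$ rather than of $\beta$, so the surplus stays in regime $-$ on $[0,\sigma]$. The upper-bound half of the dynamic programming principle (Lemma \ref{lem2.3}), applied to a fixed control at this control-dependent stopping time, then gives $J^+_{u,D}(x)\le \mathbb{E}_x[e^{-q\sigma}\mathbf 1_{\{\sigma<T\}}]\,V(c_0)$.
\item No patching is needed. For $\mu_->0$ and $h(y)=(y/c_0)^{\gamma_-}$, the supremum of $\tfrac12u^2\sigma_-^2h''(y)+u\mu_-h'(y)$ over all $u\ge0$ equals $qh(y)$ for every $y>0$, and restricting to $u\in[0,1]$ can only lower it. For $\mu_-\le0$ (your equation for $\gamma$ has no root in $(0,1)$ when $\mu_-=0$), take $h(y)=y/c_0$: the supremum over $u\in[0,1]$ is $0$, attained at $u=0$.
\item Because $h'(0+)=\infty$ when $\gamma_-<1$, apply It\^o's formula only up to $\sigma\wedge\tau_\varepsilon\wedge t$, with $\tau_\varepsilon$ the hitting time of $\varepsilon$. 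Using $h(c_0)=1$ and $h\ge0$, this gives $\mathbb{E}_x[e^{-q\sigma}\mathbf 1_{\{\sigma<\tau_\varepsilon\wedge t\}}]\le h(x)$. Then let $t\to\infty$ and $\varepsilon\downarrow0$, using $\{\sigma<T\}=\bigcup_{\varepsilon>0}\{\sigma<\tau_\varepsilon\}$, which holds by continuity of the surplus before $\sigma$.
\end{enumerate}
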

\begin{proof}
Thanks to Lemma \ref{lem2.1}, the right-continuity of $V$ at $0$ implies $V\in\seta$. By the first inequality of \eqref{1-Lip} and the monotonicity of $V$, we know that $V(0+)=\lim\limits_{x\rightarrow 0^{+}}V(x)\in [0,\infty)$. Our goal is to prove $V(0+)=0$. 

We first define a new problem 
\begin{align}
\widetilde{V}(x)=\sup_{(u,D)\in\Pi_x^{+}}J^+_{u,D}(x), 
\end{align}
where the stated process satisfies 
\begin{equation} \label{eq:a002a}
\mathrm{d}U_t^{u,D}=u_{t} \big(\mu_{-} \dt+ \sigma_{-} \db_t\big)-\dd D_t, ~~\mathrm{d}U_0^{u,D}=x. 
\end{equation}
This new problem is the special case of our original problem \eqref{eq:a008} where $\mu_{+}=\mu_{-}$ and $\sigma_{+}=\sigma_{-}$. So by Lemma \ref{lem2.1}, we know that $\widetilde{V}$ is non-decreasing, locally Lipschitz continuous on $(0,\infty)$ and linear growth.

The critical difference between the new and old problems lies in that the state process \eqref{eq:a002a} is a linear SDE, whereas \eqref{eq:a002} is not. 
The linearity allows us to find better properties for $\widetilde{V}$.

By the definition of $V$ and the dynamic programming principle Lemma \ref{lem2.3}, there exists $(u^{\ep},D^{\ep})\in \Pi_x^{+}$ such that 
\begin{align}
\label{2.19.9.12}
V(x)-\ep< &~ J_{u^{\ep},D^{\ep}}(x)
\nn\\
\leq &~ \mathbb{E}_x\left[\mathbf{1}_{\{ T^{u^{\ep},D^{\ep}}\leq T_{a}^{+}\}}\sum_{0\le s\leq T^{u^{\ep},D^{\ep}}}e^{-qs}\left(\Delta D_s^{\ep}-\beta\right)^{+}\right]\nn\\
&~+\mathbb{E}_x\left[\mathbf{1}_{\{T_{a}^{+}< T^{u^{\ep},D^{\ep}}\}}\left(\sum_{0\le s\leq T_{a}^{+}}e^{-qs}\left(\Delta D_s^{\ep}-\beta\right)^{+}+V(a)\right)\right]\nn\\
= &~ \mathbb{E}_x\left[\sum_{0\le s\leq T_{a}^{+}\wedge T^{u^{\ep},D^{\ep}}}e^{-qs}\left(\Delta D_s^{\ep}-\beta\right)^{+}\right] +\mathbb{E}_x\left[\mathbf{1}_{\{T_{a}^{+}< T^{u^{\ep},D^{\ep}}\}} \right]V(a)\nn\\
\leq &~ \widetilde{V}(x)+\mathbb{E}_x\left[\mathbf{1}_{\{T_{a}^{+}< T^{u^{\ep},D^{\ep}}\}} \right]V(a),
\end{align}
where
$$T_{a}^{+}:=\inf\left\{t\geq 0: x+\int_{0}^{t}\mu_{-} u_{s}^{\ep}\ds+\int_{0}^{t}\sigma_{-} u_{s}^{\ep}\db_s-D_t^{\ep}>a\right\},$$ and the last inequality is due to the fact that the state processes in \eqref{eq:a002} and \eqref{eq:a002a} are coincide before $T_{a}^{+}$. 
Let
$$\tau_{z}:=\inf\left\{t\geq 0: \int_{0}^{t}\mu_{-} u_{s}^{\ep}\ds+\int_{0}^{t}\sigma_{-} u_{s}^{\ep}\db_s=z\right\}.$$ Then since $D^{\ep}\geq 0$, we have $T_{a}^{+}\geq \tau_{a-x}\geq \tau_{a/2}$ and 
$T^{u^{\ep},D^{\ep}}\leq \tau_{-x}$ when $0<x<a/2$. So
\begin{align}
\mathbb{P}_x\left(T_{a}^{+}< T^{u^{\ep},D^{\ep}}\right)&\leq \mathbb{P}\left(\tau_{a/2}< \tau_{-x}\right) 
\rightarrow \mathbb{P}\left(\tau_{a/2}< \tau_{0}\right)=0,\quad \text{as }\,\, x\rightarrow 0+.
\end{align}
Sending $\ep\to 0$ and $x\to 0$ in \eqref{2.19.9.12} yields 
\begin{align*}
V(0+)=\lim_{x\to 0+} V(x) \leq &~ \lim_{x\to 0+} \widetilde{V}(x)=:\widetilde{V}(0+).
\end{align*}
Hence our problem is reduced to show $\widetilde{V}(0+)=0$. 

For notation simplicity, we set $u=D=0$ after the ruin time in the state process \eqref{eq:a002a}. 
Given any initial state $x_i>0$ and any admissible control $(u^i, D^i)\in \Pi_{x_i}^{+}$, we let $\BU^i$ be the corresponding state process and $T^{(i)}$ the ruin time, $i=1,2$.
Then 
\begin{align*}
\begin{cases} 
\mathrm{d}\left(\BU^{1}_t+\BU^{2}_t\right)=\mu_{-} \left( {u}_{t}^{1}+ {u}_{t}^{2}\right)\dt+\sigma_{-} \left( {u}_{t}^{1}+ {u}_{t}^{2}\right)\db_t-\mathrm{d}\left( {D}_t^{1}+ {D}_t^{2}\right),~~ t \geq 0,& \\
\BU^{1}_0+\BU^{2}_0=x_1+x_2,& 
\\
T^{(3)}:=\inf\{t\geq 0: \BU^{1}_t+\BU^{2}_t\leq 0\}\geq T^{(1)}\vee T^{(2)},&
\end{cases}
\end{align*}
where the last inequality is due to that $\BU^{i}=0$ after the ruin time $T^{(i)}$, $i=1,2$. 
Then up to $T^{(3)}$, $\BU^{1}+\BU^{2}$ is the state process under the control $({u}^{1}+ {u}^{2}, {D}^{1}+ {D}^{2})$ and initial state $x_1+x_2$, so
\begin{align*}
\widetilde{V}(x_1+x_2) 
\geq &~\mathbb{E}_{x_1+x_2}\left[\sum_{0\le s\leq T^{(3)}}e^{-qs}\left(\Delta D^{1}_s+\Delta D^{2}_s-\beta\right)\textbf{1}_{\{\Delta D^{1}_s+\Delta D^{2}_s>0\}}\right]
\nn\\
\geq&~ \mathbb{E}_{x_1+x_2}\left[\sum_{0\le s\leq T^{(3)}}e^{-qs}\Big(\left(\Delta D^{1}_s-\beta\right)\textbf{1}_{\{\Delta D^{1}_s>0\}} + \left(\Delta D^{2}_s-\beta\right)\textbf{1}_{\{\Delta D^{2}_s>0\}}\Big)\right]
\nn\\
=&~ \mathbb{E}_{x_1}\left[\sum_{0\le s\leq T^{(3)}}e^{-qs}\left(\Delta D^{1}_s-\beta\right)^+\right] +\mathbb{E}_{x_2}\left[\sum_{0\le s\leq T^{(3)}}e^{-qs}\left(\Delta D^{2}_s-\beta\right)^+\right]
\nn\\
\geq &~ \mathbb{E}_{x_1}\left[\sum_{0\le s\leq T^{(1)}}e^{-qs}\left(\Delta D^{1}_s-\beta\right)^+\right] +\mathbb{E}_{x_2}\left[\sum_{0\le s\leq T^{(2)}}e^{-qs}\left(\Delta D^{2}_s-\beta\right)^+\right]
\nn\\
= &~J_{u^1,D^1}^{+}(x_1)+J_{u^2,D^2}^{+}(x_2).
\end{align*}
By arbitrariness, we conclude 
\[\widetilde{V}(x_1+x_2)\geq \widetilde{V}(x_1)+\widetilde{V}(x_2),~~x_1,~x_2>0.\]
Mathematical induction implies, for all natural numbers $n$, that 
$$0\leq \widetilde{V}\Big(\dfrac{1}{n}\Big)\leq \frac{1}{n}\widetilde{V}(1).$$
Therefore, by monotonicity, 
\begin{align*} 
\widetilde{V}(0+)= \liminf_{x\to 0+} \widetilde{V}(x)=0.
\end{align*}
This completes the proof. 
\end{proof}

\begin{lemma}
\label{lem.8.6.2.4}
If $\phi$ is continuous $[0,\infty)$ (resp. locally Lipschitz continuous on $(0,\infty)$), then so is the map
\begin{align*}
x\mapsto \mathcal{M} \phi(x), 
\end{align*}
on the same domain. 
\end{lemma}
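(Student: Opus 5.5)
Write $\mathcal{M}\phi(x)=g(x)+x-\phi(x)-\beta$ with $g(x):=\sup_{0\le\eta\le x}\bigl(\phi(\eta)-\eta\bigr)$, using the second form of the definition in \eqref{eq:a010}. Since $x-\phi(x)-\beta$ inherits continuity on $[0,\infty)$ (resp.\ local Lipschitz continuity on $(0,\infty)$) directly from $\phi$, and sums of such functions keep the property, it suffices to show that the running supremum $g$ has the same regularity as $\psi(\eta):=\phi(\eta)-\eta$. So the plan reduces the claim to a statement about running maxima of a continuous (resp.\ locally Lipschitz) function $\psi$.

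For continuity, note that $g$ is non-decreasing. For $0\le x<y$ we have
\[
g(y)=\max\Bigl\{g(x),\ \sup_{x\le\eta\le y}\psi(\eta)\Bigr\}.
\]
Since $\psi(x)\le g(x)$, this gives
\[
0\le g(y)-g(x)\le \Bigl(\sup_{x\le\eta\le y}\psi(\eta)-\psi(x)\Bigr)^+\le \sup_{x\le \eta\le y}|\psi(\eta)-\psi(x)|.
\]
The right-hand side is at most the modulus of continuity of $\psi$ on a compact interval containing $[x,y]$. By uniform continuity of $\psi$ on compacts of $[0,\infty)$, $g$ is continuous on $[0,\infty)$; at $x=0$ this gives right-continuity, which is all that is needed.

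For the Lipschitz case, fix a compact $[c,d]\subset(0,\infty)$ and let $L$ be a Lipschitz constant of $\phi$ on it, so $\psi$ is Lipschitz with constant $L+1$ there. For $c\le x<y\le d$, the same estimate gives $|g(y)-g(x)|\le (L+1)(y-x)$, because the supremum only involves $\eta\in[x,y]\subset[c,d]$. The key point is that, although $g(x)$ depends on the values of $\psi$ near $0$ where $\phi$ need not be Lipschitz, the increment $g(y)-g(x)$ only sees $\psi$ on $[x,y]$. Hence $g$, and therefore $\mathcal{M}\phi$, is locally Lipschitz on $(0,\infty)$ with local constant $2(L+1)$.

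There is no real obstacle. The only point requiring care is the one just noted: the supremum defining $g$ ranges over $[0,x]$, reaching the region near $0$ where Lipschitz control may fail, and the displayed increment estimate is what decouples local regularity from that region.
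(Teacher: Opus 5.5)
Your proof is correct and follows essentially the same route as the paper. Both reduce the claim to the running supremum $g(x)=\sup_{0\le\eta\le x}(\phi(\eta)-\eta)$ and use the increment bound $0\le g(y)-g(x)\le \sup_{x\le\eta\le y}(\phi(\eta)-\eta)-(\phi(x)-x)$, which involves only values on $[x,y]$ and so yields both the continuity and the local Lipschitz conclusions.
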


\begin{proof}
We write $g(x)=\sup\limits_{0\leq \eta\leq x} \phi_1(\eta)$, where $\phi_1(\eta)=\phi(\eta)-\eta$.

Now we first assume $\phi$ is continuous on $[0,\infty)$. Then so is $\phi_1$. 
To show the map $x\mapsto \mathcal{M} \phi(x)$ is continuous, it is suffices to prove 
$g$ is continuous on $[0,\infty)$. For any $0\leq x<y$, 
\begin{align*}
0\leq g(y)-g(x)&=\sup\limits_{0\leq \eta\leq y} \phi_1(\eta) 
-\sup\limits_{0\leq \eta\leq x} \phi_1(\eta) \\
&= \max\{\sup\limits_{x\leq \eta\leq y} \phi_1(\eta),~\sup\limits_{0\leq \eta\leq x}\phi_1(\eta)\}-\sup\limits_{0\leq \eta\leq x}\phi_1(\eta)\\
&= \max\{\sup\limits_{x\leq \eta\leq y} \phi_1(\eta)-\sup\limits_{0\leq \eta\leq x}\phi_1(\eta),~0\} \\
&\leq \max\{\sup\limits_{x\leq \eta\leq y} \phi_1(\eta)- \phi_1(x),~0\}\\
&=\sup\limits_{x\leq \eta\leq y} \phi_1(\eta)- \phi_1(x). 
\end{align*}
Because $\phi_1$ is continuous, the RHS converges to 0 as $y\to x+$ and as $x\to y-$, which respectively leads to $\lim_{y\to x+}g(y)=g(x)$ and $\lim_{x\to y-}g(x)=g(y)$. Hence, we conclude $g$ is continuous on $[0,\infty)$, completing the proof of the continuous case.

We now assume $\phi$ is locally Lipschitz continuous on $(0,\infty)$. Then so is $\phi_1$. 
To show the map $x\mapsto \mathcal{M} \phi(x)$ is locally Lipschitz continuous, it is suffices to prove $g$ is locally Lipschitz continuous on $[0,\infty)$. 
Fix any $x_0\in (0,\infty)$. Since $\phi_1$ is locally Lipschitz continuous on $(0,\infty)$, 
there exists a constant $L=L(x_0)>0$ such that 
\begin{align*}
\left|\phi_1(y)-\phi_1(x)\right|\leq L \left(y-x\right),~~x_0/2\leq x<y\leq 2x_0,
\end{align*}
which implies that 
\begin{align*}
0\leq g(y)-g(x)& \leq \sup\limits_{x\leq \eta\leq y} \phi_1(\eta)- \phi_1(x) \leq \phi_1(x)+L(y-x)- \phi_1(x) =L(y-x).
\end{align*}
This completes the proof. 
\end{proof}

We will use viscosity technique to study the control problem \eqref{eq:a008}. 
For any function $f\in C^2(\mathbb{R}_{+})$ and constant $u\in[0,1]$, define the operator $\mathcal{L}^u$ as
\begin{equation}\label{eq:a011}
\mathcal{L}^uf(x):=\frac{1}{2}(\sigma_{+}^2\textbf{1}_{\{x>a\}}+\sigma_{-}^2\textbf{1}_{\{x\le a\}})u^2f^{\prime\prime}(x)+(\mu_{+}\textbf{1}_{\{x>a\}}+\mu_{-}\textbf{1}_{\{x\le a\}})uf^{\prime}(x)-qf(x).\nn
\end{equation}
We notice that the map $x\mapsto \mathcal{L}^uf(x)$ is left-continuous on $(0,\infty)$,
and continuous on 
\begin{align}\label{def:setd}
\setd:=(0,a)\cup(a,\infty).
\end{align}
This fact may be used without claim in the subsequent analysis.

Furthermore, with the help of Lemma \ref{lem2.1}, we can motivate the HJB equation associated with the control problem \eqref{eq:a008} as
\begin{align}
\label{7.17.HJB}
\max\Big\{\max_{u\in[0,1]}\mathcal{L}^{u}f(x),~\mathcal{M}f(x)\Big\}=0.
\end{align}
Since the operator $\mathcal{L}^{u}$ is not continuous on $(0,\infty)$, it forces us to define the 
viscosity super-solution on $\setd$ (see \eqref{def:setd}). 

\begin{definition}
\label{def.8.6.2.3}
A locally Lipschitz continuous function $u: (0,\infty)\rightarrow \mathbb{R}$ is called a viscosity super-solution to \eqref{7.17.HJB} if it satisfies the following conditions. 
Given any point $x\in \setd$, and any function $\phi\in C^{2}((0,\infty))$ such that $u(x)=\phi(x)$ and $u\geq \phi$ on $(0,\infty)$, it holds that 
\begin{align}
\label{7.17.HJB.1}
\max\{\max_{u\in[0,1]}\mathcal{L}^{u}\phi(x),~\mathcal{M}u(x)\}\leq 0.
\end{align}
The function $\phi$ is called a super-solution test function for $u$ at $x$.

A locally Lipschitz continuous function $v: (0,\infty)\rightarrow \mathbb{R}$ is called a viscosity sub-solution to \eqref{7.17.HJB} if it satisfies the following conditions. 
Given any point $x\in (0,\infty)$, and any function $\varphi\in C^{2}((0,\infty))$ such that $v(x)=\varphi(x)$ and $v\leq \varphi$ on $(0,\infty)$, it always holds that
\begin{align}
\label{7.17.HJB.1}
\max\{\max_{u\in[0,1]}\mathcal{L}^{u}\varphi(x),~\mathcal{M}v(x)\}\geq 0.
\end{align}
The function $\varphi$ is called a sub-solution test function for $v$ at $x$.

A locally Lipschitz continuous function that is both a viscosity super-solution and sub-solution to \eqref{7.17.HJB} is called a viscosity solution to it.
\end{definition}

The following Theorem \ref{8.7.thm.2.5} states that the value function $V\in\seta$ is a viscosity solution to the HJB equation \eqref{7.17.HJB}. 

\begin{theorem}[Existence]
\label{8.7.thm.2.5}
The value function $V$ is a viscosity solution of \eqref{7.17.HJB}.
\end{theorem}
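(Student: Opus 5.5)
The argument splits into the super-solution and the sub-solution properties. Both rest on the dynamic programming principle in Lemma \ref{lem2.3}, on the regularity of $V$ in Lemma \ref{lem2.1} and in $V\in\seta$, and on the continuity of $\mathcal{M}V$ from Lemma \ref{lem.8.6.2.4}.

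\emph{Super-solution.} Fix $x\in\setd$ and a test function $\phi\in C^2((0,\infty))$ with $\phi(x)=V(x)$ and $\phi\le V$.

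The intervention part is immediate. For $\eta\in[0,x]$, paying a lump sum $\eta$ at time $0$ and then following an $\ep$-optimal strategy from $x-\eta$ is admissible. It gives $V(x)\ge V(x-\eta)+\eta-\beta-\ep$, hence $\mathcal{M}V(x)\le 0$.

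For the continuation part, fix $u\in[0,1]$ and use the constant retention $u$ with no dividends up to the time $\tau_h:=h\wedge\inf\{t:|U_t-x|\ge\delta\}$. We choose $\delta<\min(x,|x-a|)$, so that on $[0,\tau_h]$ the process stays in one regime and has constant coefficients. Lemma \ref{lem2.3} with $\tau=\tau_h$ then gives $\phi(x)=V(x)\ge\mathbb{E}_x[e^{-q\tau_h}\phi(U_{\tau_h})]$. Applying It\^o's formula to $e^{-qt}\phi(U_t)$, dividing by $h$ and letting $h\to0$ yields $\mathcal{L}^u\phi(x)\le0$. Since $u$ is arbitrary, this gives $\max_u\mathcal{L}^u\phi(x)\le0$. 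The restriction to $\setd$ is used exactly here: $\mathcal{L}^u\phi$ must be continuous at $x$.

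\emph{Sub-solution.} Fix $x>0$ and $\varphi$ with $\varphi(x)=V(x)$ and $\varphi\ge V$. Argue by contradiction: suppose $\max_u\mathcal{L}^u\varphi(x)<0$ and $\mathcal{M}V(x)<0$.

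By continuity of $\mathcal{M}V$ (Lemma \ref{lem.8.6.2.4}), there is a neighbourhood $[x-\delta,x+\delta]$ on which $\mathcal{M}V\le-\theta<0$. For $x\ne a$, left-continuity and compactness of $[0,1]$ give $\max_u\mathcal{L}^u\varphi\le-\theta$ on a small neighbourhood. The point $x=a$ needs separate care, because on the right of $a$ the coefficients are $(\mu_+,\sigma_+)$, and we have no information there. To handle it, replace $\varphi$ by $\tilde\varphi(y)=\varphi(y)+\kappa(y-x)^4$, which is still a sub-test function. Then use the fact that $\mathcal{L}^u$ at $a^+$ differs from $\mathcal{L}^u$ at $a$ only through the coefficients, combined with the strict inequality at $a$. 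I expect this step at the regime boundary to be the \textbf{main obstacle}. If no direct bound is available, the fallback is to argue through the one-sided continuity of the diffusion and use the local time at $a$, noting that for $C^2$ test functions no local-time term appears in It\^o's formula.

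Next, take any $(u,D)\in\widetilde\Pi^+_x$, admissible by Lemma \ref{lem2.3} since $\mathcal{M}V(x)<0$, and let $\tau$ be the first exit time of $U$ from $(x-\delta,x+\delta)$ or the first jump time of $D$. Before $\tau$ the process is continuous. Applying It\^o's formula to $e^{-qt}\tilde\varphi(U_t)$ gives
\[
\mathbb{E}_x\bigl[e^{-q\tau}\tilde\varphi(U_{\tau-})\bigr]\le\tilde\varphi(x)-\theta\,\mathbb{E}_x\!\int_0^{\tau}e^{-qs}\ds.
\]
At $\tau$ there are two cases.
\begin{itemize}
\item If $\tau$ is an exit time, then $\tilde\varphi(U_\tau)\ge V(U_\tau)+\kappa\delta^4$.
\item If $\tau$ is a jump time of $D$, then $V(U_{\tau-})\ge V(U_\tau)+\Delta D_\tau-\beta+\theta$, by $\mathcal{M}V\le-\theta$.
\end{itemize}
In either case, with $c:=\min(\kappa\delta^4,\theta)$, this gives
\[
e^{-q\tau}V(U_\tau)+e^{-q\tau}(\Delta D_\tau-\beta)^+\le e^{-q\tau}\tilde\varphi(U_{\tau-})-c\,e^{-q\tau}\mathbf{1}_{\{\tau\ \text{exit or jump}\}}.
\]
Combining the two displays bounds the DPP expression by
\[
V(x)-\min\!\Bigl(c,\ \theta\,\mathbb{E}_x\!\int_0^{\tau}e^{-qs}\ds\Bigr)\cdot\text{const}.
\]
This constant is uniform over strategies, by the standard argument that either $\tau$ is small with high probability (so the exit/jump penalty dominates) or it is not (so the integral term dominates). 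This uniform gap contradicts Lemma \ref{lem2.3}, so the sub-solution inequality holds.
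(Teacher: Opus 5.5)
Away from the regime boundary your argument is sound and essentially the paper's. The super-solution part is the same It\^o/DPP computation; the paper re-derives the needed half of the DPP by concatenating $\ep$-optimal strategies on a grid rather than citing Lemma~\ref{lem2.3}. In the sub-solution part the paper localizes with $h<\beta/2$, so that every jump of $D\in\Pi^+_x$ forces an exit, instead of stopping at the first jump; both work. The uniformity you worry about at the end needs no ``$\tau$ small or not'' dichotomy. The running penalty equals $\frac{\theta}{q}\mathbb{E}_x[1-e^{-q\tau}]$ and the terminal one is $c\,\mathbb{E}_x[e^{-q\tau}]$, so together they are at least $\min\{\theta/q,c\}$ for every strategy. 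This is exactly the role of the paper's choice $\ep=\min\{\ep_0,q\ep_0,qh^4\}$.

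The genuine gap is the point $x=a$. Definition~\ref{def.8.6.2.3} does require it: sub-solutions are tested at every $x\in(0,\infty)$, and $\mathcal{L}^u\varphi(a)$ is built from $(\mu_-,\sigma_-)$. Your proposed remedy cannot work. $\tilde\varphi$ has the same first and second derivatives at $a$ as $\varphi$, so $\lim_{y\downarrow a}\mathcal{L}^u\tilde\varphi(y)=\frac12\sigma_+^2u^2\varphi''(a)+\mu_+u\varphi'(a)-q\varphi(a)$. The hypothesis $\max_u\mathcal{L}^u\varphi(a)<0$ says nothing about the sign of this quantity. A surplus started at $a$ (e.g.\ with $u\equiv1$) spends a positive fraction of time in $(a,\infty)$. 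Hence the bound $\mathbb{E}_x[e^{-q\tau}\tilde\varphi(U_{\tau-})]\le\tilde\varphi(x)-\theta\,\mathbb{E}_x\int_0^\tau e^{-qs}\ds$ is unavailable on every neighbourhood of $a$. The local-time fallback gives nothing, precisely because, as you note, a $C^2$ test function produces no local-time term.

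For comparison, the paper disposes of $x=a$ in one line, shifting $x_0$ slightly to the left by left-continuity of $\mathcal{L}^u\psi_0$. The shifted point is no longer a contact point of $\psi_0$ and $V$, yet the final contradiction $V(x_0)\le\psi(x_0)-\ep/q$ needs $V(x_0)=\psi(x_0)$. So that shortcut does not close the gap either.

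The clean repair is to test sub-solutions only on $\setd$, as is already done for super-solutions. The proof of Theorem~\ref{comparison theorem} uses the sub-solution property only at points $x_{\lambda_n}\to\widehat{x}\neq a$, so nothing downstream is lost, and with that definition your argument is complete. Otherwise you need a genuinely new argument at $a$. It would have to control the excursions of $U$ above $a$, or supply enough one-sided regularity of $V$ at $a$ to pass to the limit $y\uparrow a$ in the equation on $(0,a)$.
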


\begin{proof}
\textbf{(Super-solution.)} 
Fix any $x\in\setd$. Let $\phi\in C^{2}((0,\infty))$ be a super-solution test function for $V$ at $x$, so that $V(x)=\phi(x)$ and $V\geq \phi$ on $(0,\infty)$. 
Our below discussion will be restricted to a small interval $[x/2,2x]$. Since $V\geq 0$, we may assume $\phi=0$ on $(0,x/3]\cup[3x,\infty)$ which implies 
\begin{align}
\label{dominated1} 
\sup_{u\in[0,1]}\sup\limits_{z\in (0,\infty)}|\mathcal{L}^{u}\phi(z)|<\infty.
\end{align} 
Fix $0<\delta<\min\{x/2, |x-a|/2\}$. Then $[x-\delta,x+\delta]$ is contained in $(0,a)$ when $x\in (0,a)$ and in $(a,\infty)$ when $x\in (a,\infty)$. 
Fix any constant $u_0\in[0,1]$. 
Define $\BU_t=U_{t}^{u\equiv u_{0},D\equiv0}$ and 
$$\tau=\inf\{t\geq 0: \left|\BU_t-x\right|> \delta\}.$$
Then $\BU$ has continuous sample paths on $[0,\tau]$. 

Since $V$ is locally Lipschitz continues on $(0,\infty)$, we may 
choose $L$ to be a Lipschitz constant of $V$ on $\left[x-\delta,x+\delta\right]$. For an integer $N\geq1$, divide $\left[x-\delta,x+\delta\right]$ into $N$ equal sub-intervals $[y_i,y_{i+1})$ with $\,i=0,1,\ldots,N-1.$ Here and below $[y_{N-1},y_{N})$ means $[y_{N-1},y_{N}]$.
For every $i$, choose an $\ep$-optimal strategy $(u^{\ep,i},D^{\ep,i})\in \Pi_{y_i}^{+}$ for the initial surplus $y_i$ such that 
$$J^+_{u^{\ep,i},D^{\ep,i}}(y_i)> V(y_i)-\ep.$$
For small $h>0$, consider a special admissible reinsurance-dividend strategy $(u,D)\in \Pi_{x}^+$ associated with initial surplus $x$ such that: $u_t\equiv u_{0}$ and $D_t\equiv 0$ for all $t\in[0,h\wedge \tau)$, and, $u_t=\sum_{i=0}^{N-1}\mathbf{1}_{\{\BU_{h\wedge\tau}\in[y_i,y_{i+1})\}}u^{\ep,i}_t$ and $D_t=\sum_{i=0}^{N-1}\mathbf{1}_{\{\BU_{h\wedge\tau}\in[y_i,y_{i+1})\}}D^{\ep,i}_t$ for $t\in[h\wedge \tau,\infty)$. 
Then 
\begin{align} 
\label{8.18.2.20}
V(x)\geq~ J^+_{u,D}(x) 
=&~\mathbb{E}_x\left[\sum_{h\wedge \tau\le s\leq T^{u,D}}e^{-qs}\left(\Delta D_s-\beta\right)^+ \right]\nn\\
\geq&~ \mathbb{E}_x\left[e^{-q(h\wedge \tau)}\mathbb{E}_x\left[\left.\sum_{0\le s\leq T^{u,D}-h\wedge \tau}e^{-qs}\left(\Delta D_{s+h\wedge \tau}-\beta\right)^+ \right|{\mathcal{F}_{h\wedge \tau}}\right]\right]\nn\\
=&~ \mathbb{E}_x\left[e^{-q(h\wedge \tau)}\mathbb{E}_{\BU_{h\wedge \tau}}\left[\sum_{0\le s\leq T^{u^{\ep},D^{\ep}}}e^{-qs}\left(\Delta D_{s}^{\ep}-\beta\right)^+ \right]\right]\nn\\
=&~\mathbb{E}_x\left[e^{-q(h\wedge \tau)}J^+_{u,D}(\BU_{h\wedge \tau})\right]
\nn\\
= &~ \mathbb{E}_x\left[e^{-q(h\wedge \tau)}\sum_{i=0}^{N-1}J^+_{u^{\ep,i},D^{\ep,i}}(\BU_{h\wedge \tau})\mathbf{1}_{\{\BU_{h\wedge \tau}\in[y_i,y_{i+1})\}}\right]\nn\\
\geq &~ \mathbb{E}_x\left[e^{-q(h\wedge \tau)}\sum_{i=0}^{N-1}J^+_{u^{\ep,i},D^{\ep,i}}(y_i)\mathbf{1}_{\{\BU_{h\wedge \tau}\in[y_i,y_{i+1})\}}\right]\nn\\
\geq &~\mathbb{E}_x\left[e^{-q(h\wedge \tau)}\left(\sum_{i=0}^{N-1}(V(y_i)-\ep)\mathbf{1}_{\{\BU_{h\wedge \tau}\in[y_i,y_{i+1})\}}\right)\right]\nn\\
\geq&~ \mathbb{E}_x\left[e^{-q(h\wedge \tau)}\left(\sum_{i=0}^{N-1}(V(\BU_{h\wedge \tau})-L(\BU_{h\wedge \tau}-y_i)-\ep)\mathbf{1}_{\{\BU_{h\wedge \tau}\in[y_i,y_{i+1})\}}\right)\right]\nn\\
\geq&~ \mathbb{E}_x\left[e^{-q(h\wedge \tau)}\left(V(\BU_{h\wedge \tau})-L \frac{\delta}{N}-\ep\right)\right].
\end{align}
Letting $N\uparrow\infty$ and $\ep\downarrow 0$ in \eqref{8.18.2.20} yield
\begin{align}
\label{7.16.2.11}
V(x)\geq&~ \mathbb{E}_x\left[e^{-q(h\wedge \tau)} V(\BU_{h\wedge \tau}) \right]. 
\end{align}
Since $V\geq \phi$, It\^{o}'s formula yields 
\begin{align}
\label{7.16.2.11}
V(x) 
\geq&~\mathbb{E}_x\left[e^{-q(h\wedge \tau)}\phi(\BU_{h\wedge \tau}) \right]
\nn\\
=&~\phi(x)+\mathbb{E}_x\left[\int_{0}^{h\wedge \tau}e^{-q s}\mathcal{L}^{u_{0}}\phi(\BU_s)\ds\right]
\nn\\
&+\mathbb{E}_x\left[\int_{0}^{h\wedge \tau} e^{-q s}\phi^{\prime}(\BU_s)u_{0}\left(\sigma_{+}\mathbf{1}_{\{\BU_s>a\}}+\sigma_{-}\mathbf{1}_{\{\BU_s\leq a\}}\right)\db_{s}\right]\nn\\
=&~\phi(x)+\mathbb{E}_x\left[\int_{0}^{h\wedge \tau}e^{-q s}\mathcal{L}^{u_{0}}\phi(\BU_s)\ds\right].
\end{align}
Since $V(x)=\phi(x)$, it follows
\begin{align}
\label{7.16.2.12}
0\geq&~ \mathbb{E}_{x}\left[\frac{1}{h}\int_{0}^{h\wedge \tau}e^{-q s}\mathcal{L}^{u_{0}}\phi(\BU_s)\ds\right]. 
\end{align}
Since $\BU$ has continuous sample paths on $[0,\tau]$ and the function $y\mapsto \mathcal{L}^{u_{0}}\phi(y)$ is continuous on $\setd$, we have 
\begin{align*}
\lim_{h\to 0}\frac{1}{h}\int_{0}^{h\wedge \tau}e^{-q s}\mathcal{L}^{u_{0}}\phi(\BU_s)\ds= \mathcal{L}^{u_{0}}\phi(x), \quad \mathbb{P}_{x}-\text{a.s..} 
\end{align*}
Therefore, by \eqref{7.16.2.12} and \eqref{dominated1}, the dominated convergence theorem yields 
\begin{align}
\label{7.17.2.13}
\mathcal{L}^{u_{0}}\phi(x)\leq 0.
\end{align}
Combining \eqref{7.17.2.13} and the arbitrariness of $u_{0}\in[0,1]$ yields
\begin{align}
\label{7.17.2.15}
\max_{u\in[0,1]}\mathcal{L}^{u}\phi(x)\leq 0. 
\end{align}
Other other hand, by the second inequality of \eqref{1-Lip}, we have
\begin{align}
\label{7.17.2.16}
\mathcal{M}V(x)=&~\sup\limits_{0\leq\eta\leq x}\left(V(x-\eta)+\eta-\beta\right)-V(x)
\nn\\
=&\sup\limits_{0\leq y\leq x}\left(V(y)+x-y-\beta-V(x)\right)\leq 0.
\end{align}
Combining \eqref{7.17.2.15} and \eqref{7.17.2.16}, we conclude that $V$ is a viscosity super-solution to \eqref{7.17.HJB}.

\textbf{(Sub-solution.)}
We now prove that $V$ is a viscosity sub-solution to \eqref{7.17.HJB} by a contradiction argument (see, e.g., Albrecher et al. \cite{Albrecher20}). 

Assume that $V$ is not a viscosity sub-solution to \eqref{7.17.HJB}. 
Then, by the definition of viscosity sub-solution, there should exist $x_{0}\in(0,\infty)$, $\ep_{0}>0$ and a sub-solution test function $\psi_{0}$ such that $\psi_{0}\in C^{2}((0,\infty))$, $\psi_{0} \geq V$, $\psi_{0}(x_{0})= V(x_{0})$, and
\begin{align}
\label{7.17.2.19}
&\max\{\max_{u\in[0,1]}\mathcal{L}^{u}\psi_{0}(x_{0}),~\mathcal{M}V(x_{0})\}< -2\ep_{0}<0.
\end{align}
Because of left-continuity, the above estimate still holds if we move $x_0$ to its left a little bit, so we may assume $x_0\neq a$. 

Set 
\begin{equation}
\psi(x):=
\psi_{0}(x)+ (x-x_{0})^{4}, \quad x\in(0,\infty).
\end{equation}
Then, it is easy to see that $\psi\in C^{2}((0,\infty))$, $\psi^{\prime\prime}(x_{0})=\psi_{0}^{\prime\prime}(x_{0})$, $\psi^{\prime}(x_{0})=\psi_{0}^{\prime}(x_{0})$, $\psi(x_{0})=\psi_{0}(x_{0})=V(x_{0})$, and $\psi\geq \psi_{0}\geq V$ on $(0,\infty)$. Therefore, $\mathcal{L}^{u}\psi(x_{0})=\mathcal{L}^{u}\psi_{0}(x_{0})$ for all $u\in[0,1]$ and $\psi$ is a sub-solution test function for $V$ at $x_0$. 
By \eqref{7.17.2.19}, 
\begin{align}
\label{A.2.v1} 
\max\{\max_{u\in[0,1]}\mathcal{L}^{u}\psi(x_{0}),~\mathcal{M}V(x_{0})\}< - 2\ep_{0} <0.
\end{align}
By \eqref{A.2.v1}, $x_0\neq a$ and the twice continuous differentiability of $\psi$, 
there exists some small $h\in (0,x_{0}\wedge \frac{\beta}{2})$ with $[x_0-h,x_0+h]$ contained either in $(0,a)$ or in $(a,\infty)$, such that 
\begin{align}
\label{A.3.v1}
\max\{\max_{u\in[0,1]}\mathcal{L}^{u}\psi(x),~\mathcal{M}V(x)\}
< - \ep_{0} <0,~~x\in[x_{0}-h,x_{0}+h].
\end{align}
In addition, 
\begin{align}
\label{A.4.v1}
\psi(x)&=\psi_{0}(x)+ (x-x_{0})^{4}
\geq \psi_{0}(x)+ h^{4}
\geq V(x)+ h^{4},~~x\notin(x_{0}-h,x_{0}+h). 
\end{align}
Put
$$\ep:=\min\Big\{ \ep_0 ,~q \ep_0 ,~q h^{4}\Big\}.$$
Then, by \eqref{A.3.v1} and \eqref{A.4.v1}, we know 
\begin{align}\label{A.3.v2}
\begin{cases}
{\max_{u\in[0,1]}\mathcal{L}^{u}\psi(x)\le -\ep,
		~~ \mathcal{M}V(x)\le-\frac{\ep}{q}}, &\mathrm{if}~x\in [x_{0}-h,x_{0}+h];\bigskip\\
V(x)\le \psi(x)-\frac{\ep}{q},	&\mathrm{if}~x\notin (x_{0}-h,x_{0}+h).
\end{cases}
\end{align}
Let $(u,D)\in \widetilde{\Pi}_{x_0}^{+}$ (see \eqref{Pitilde} for the definition of $\widetilde{\Pi}_{x_0}^{+}$) be any admissible reinsurance-dividend strategy. Define
$$\tau_{h}:=\inf\{t\geq0: U_{t}^{u,D}\notin [x_{0}-h,x_{0}+h]\},\quad \tau^D:=\inf\{t\geq0:\Delta D_t >0\}=\inf\{t\geq0:\Delta D_t \geq \beta\}.$$
Since $h<\frac{\beta}{2}$, we have $x_{0}-\frac{\beta}{2}<U_{t}^{u,D}<x_{0}+\frac{\beta}{2}$ for all $t\in [0,\tau_{h})$. Together with the definitions of $\tau_{h}$ and $\widetilde{\Pi}_{x_0}^{+}$, this implies 
$0<\tau_{h}\leq \tau_{D}.$
By Theorem 4.57 (It\^{o}'s formula) in \cite{Jacod03}, we have
\begin{align*}
e^{-q (t\wedge \tau_h)}\psi\left(U^{u,D}_{t\wedge \tau_h-}\right)
=&~\psi(x_{0})+\int_{0}^{t\wedge \tau_h-}e^{-q s}\mathcal{L}^{u_{s}}\psi(U^{u,D}_{s})\ds
\nn\\
&+\int_{0}^{t\wedge \tau_h-} e^{-q s}\psi^{\prime}(U^{u,D}_{s})u_{s}\left(\sigma_{+}\mathbf{1}_{\{U^{u,D}_{s}>a\}}+\sigma_{-}\mathbf{1}_{\{U^{u,D}_{s}\leq a\}}\right)\db_{s}.
\end{align*}
Taking the expectation on both sides of above, and then using the fact that $\max\limits_{u\in[0,1]}\mathcal{L}^{u}\psi(x)\leq -\ep$ for any $x\in [x_{0}-h,x_{0}+h]$, yields
\begin{align*}
\mathbb{E}_{x_{0}}\left[e^{-q (t\wedge \tau_h)}\psi\left(U^{u,D}_{t\wedge \tau_h-}\right)
\right] 
=&~\psi(x_{0})+\mathbb{E}_{x_{0}}\left[\int_{0}^{t\wedge \tau_h-}e^{-q s}\mathcal{L}^{u_{s}}\psi(U^{u,D}_{s})\ds\right]
\nn\\
\leq&~\psi(x_{0})-\frac{\ep}{q} \mathbb{E}_{x_{0}}\left[1-e^{-q (t\wedge \tau_h)}\right].
\end{align*}
Sending $t$ to $\infty$ in above, the monotone convergence theorem gives 
\begin{align}
\label{7.17.2.26}
&\mathbb{E}_{x_{0}}\left[e^{-q \tau_h}\psi\left(U^{u,D}_{\tau_h-}\right)
\right]
\leq \psi(x_{0})-\frac{\ep}{q} \mathbb{E}_{x_{0}}\left[1-e^{-q \tau_h}\right].
\end{align}
On $\{\tau_h<\tau^D\}$, we have $U_{\tau_h}^{u,D}=U_{\tau_h-}^{u,D}$ and $\Delta D_{\tau_h}=0$. This, together with \eqref{A.4.v1} and the continuity of $V$ and $\psi$, implies
\begin{align*}
V(U_{\tau_h}^{u,D})+(\Delta D_{\tau_h}-\beta)^+
=V(U_{\tau_h-}^{u,D})\leq \psi(U_{\tau_h-}^{u,D})-\frac{\ep}{q}.
\end{align*}
On $\{\tau^D=\tau_h\}$, we have $U_{\tau_h-}^{u,D}\in [x_{0}-h,x_{0}+h]$ and $\Delta D_{\tau_h}=\Delta D_{\tau^D}\geq\beta$. Consequently, we obtain from \eqref{A.3.v2} that
\begin{align*} 
V(U_{\tau_h}^{u,D})+(\Delta D_{\tau_h}-\beta)^+
&=V(U_{\tau_h-}^{u,D}-\Delta D_{\tau_h})+\Delta D_{\tau_h}-\beta\nn\\
&\leq\mathcal MV(U_{\tau_h-}^{u,D})+V(U_{\tau_h-}^{u,D}) \leq V(U_{\tau_h-}^{u,D})-\frac{\ep}{q} \leq\psi(U_{\tau_h-}^{u,D})-\frac{\ep}{q}.
\end{align*}
Combining the above two estimates yields
$$V(U_{\tau_h}^{u,D})+(\Delta D_{\tau_h}-\beta)^+ \leq \psi(U_{\tau_h-}^{u,D})-\frac{\ep}{q}.$$
Multiplying both sides by $e^{-q\tau_h}$, taking expectations, and then using \eqref{7.17.2.26}, we obtain 
\begin{align*}
\mathbb{E}_{x_{0}}\left[e^{-q \tau_h}V\left(U^{u,D}_{\tau_h}\right)+e^{-q\tau_h}(\Delta D_{\tau_h}-\beta)^+
\right] 
\leq&~ \mathbb E_{x_0}\left[e^{-q\tau_h}\psi(U_{\tau_h-}^{u,D})\right]-\frac{\ep}{q}\mathbb E_{x_0}\left[e^{-q\tau_h}\right]
\nn\\
\leq&~\psi(x_{0})-\frac{\ep}{q} \mathbb{E}_{x_{0}}\left[1-e^{-q \tau_h}\right]-\frac{\ep}{q} \mathbb{E}_{x_{0}}\left[e^{-q \tau_h}\right]
\nn
\\
=&~ \psi(x_{0})-\frac{\ep}{q}.
\end{align*}
Together with the second conclusion of Lemma \ref{lem2.3} (on recalling that $\mathcal{M}V(x_{0})<0$), it follows
$V(x_{0})\leq \psi(x_{0})-\frac{\ep}{q}$, 
contradicting to $V(x_{0})= \psi(x_{0})$.
The proof is complete.
\end{proof}

We proceed to verify that $V$ is the unique viscosity solution in $\seta$ to \eqref{7.17.HJB}.
To this end, we need the comparison theorem below.
\begin{theorem}[Comparison theorem]
\label{comparison theorem}
Assume that (i) $v_{1}$ (resp., $v_{2}$) is a non-negative viscosity sub-solution (resp., super-solution) to \eqref{7.17.HJB}, 
(ii) both $v_{1}$ and $v_{2}$ belong to $\seta$, (iii) either 
\[\sup_{x\geq 0} (v_{1}(x)-v_{2}(x))>v_{1}(a)-v_{2}(a),\]
or $v_{1}-v_{2}$ is differentiable at $a$.
Then $v_{1}\leq v_{2}$.
\end{theorem}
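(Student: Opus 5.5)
We argue by contradiction, combining a perturbation of the super-solution with the doubling-of-variables technique, and treating the regime-switching point $a$ separately. Suppose $M:=\sup_{x\ge0}(v_1(x)-v_2(x))>0$.

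\emph{Step 1: strict super-solution.} Since $v_1,v_2$ only have polynomial growth, the supremum need not be attained. We first replace $v_2$ by $v_2^{\theta}:=v_2+\theta\, g$ for small $\theta>0$, where $g$ is a smooth, nonnegative function of polynomial growth strictly higher than that of $v_1,v_2$, with $g(0)=0$. One admissible choice is $g(x)=(1+x)^{m}-1$ with $m$ large, modified near $0$ if needed. Because $\mathcal{M}$ satisfies $\mathcal{M}(v+\theta g)(x)\le \mathcal{M}v(x)+\theta\sup_{\eta\le x}(g(\eta)-g(x))\le \mathcal{M}v(x)$ when $g$ is non-decreasing, the intervention part is preserved. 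For the continuation part, each $\mathcal{L}^u g$ contains the term $-qg$ plus lower-order terms that are dominated for large $x$. So I would instead use $v_2^\theta=(1+\theta)v_2+\theta g$, or let the $-q$ discount absorb the error, so that $v_2^\theta$ becomes a strict super-solution up to an $O(\theta)$ error on compacts. With this perturbation, $v_1-v_2^\theta\to-\infty$ as $x\to\infty$, and $v_1(0)-v_2^\theta(0)=0$. Hence for small $\theta$, $\Phi:=v_1-v_2^\theta$ attains a positive maximum $M_\theta$ at some $x^*\in(0,\infty)$.

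\emph{Step 2: ruling out the intervention branch.} At $x^*$ we must exclude the case $\mathcal{M}v_1(x^*)\ge0$. If it holds, compactness gives a maximizing $\eta^*$ with $v_1(x^*-\eta^*)+\eta^*-\beta\ge v_1(x^*)$. The super-solution property at points of $\setd$ (and its limit at $a$, using continuity of $\mathcal{M}v_2$ from Lemma \ref{lem.8.6.2.4}) gives $v_2^\theta(x^*-\eta^*)+\eta^*-\beta\le v_2^\theta(x^*)$. Subtracting yields $\Phi(x^*-\eta^*)\ge\Phi(x^*)$, with $\eta^*>\beta>0$. Iterating this argument moves the maximum point strictly downward by at least $\beta$ each time, so after finitely many steps either we reach $0$, where $\Phi(0)=0<M_\theta$, a contradiction, or we reach a maximum point where $\mathcal{M}v_1<0$. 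We may therefore assume, after relabeling, that $\mathcal{M}v_1(x^*)<0$ at a maximum point $x^*$.

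\emph{Step 3: the point $a$ and doubling of variables.} We need $x^*\in\setd$, since super-solution tests are only available there. Condition (iii) provides this. If $\sup(v_1-v_2)>v_1(a)-v_2(a)$, then for $\theta$ small the maximizer of $\Phi$ is not $a$. If instead $v_1-v_2$ is differentiable at $a$, I would use this to show that a maximum at $a$ can be shifted or tested from one side, so that the sub- and super-solution inequalities can be compared using one-sided second-order information. This is the step I expect to be the main obstacle, since $\mathcal{L}^u$ is only left-continuous at $a$ and the super-solution definition gives no information there.

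Once $x^*\in\setd$ is secured, we double variables with
\begin{align*}
\Psi_\varepsilon(x,y)=v_1(x)-v_2^\theta(y)-\frac{(x-y)^2}{2\varepsilon}-|x-x^*|^4,
\end{align*}
localized to a small neighbourhood of $x^*$ contained in $(0,a)$ or in $(a,\infty)$. We then apply the Crandall–Ishii lemma to obtain second-order jets $(p_\varepsilon,A_\varepsilon)$ and $(p_\varepsilon,B_\varepsilon)$ with $A_\varepsilon\le B_\varepsilon+o(1)$, using the local Lipschitz property of $v_1$ and $v_2$ to keep $p_\varepsilon$ bounded. Within a single regime the coefficients are constant, so for each $u$,
\begin{align*}
\tfrac12\sigma^2u^2A_\varepsilon+\mu u p_\varepsilon-qv_1(x_\varepsilon)\le \tfrac12\sigma^2u^2B_\varepsilon+\mu up_\varepsilon-qv_2^\theta(y_\varepsilon)+q\,(v_2^\theta(y_\varepsilon)-v_1(x_\varepsilon)).
\end{align*}
Combining the sub-solution inequality (with $\max_u$, since $\mathcal{M}v_1<0$ near $x^*$ by continuity) and the super-solution inequality, and letting $\varepsilon\to0$, we obtain $qM_\theta\le O(\theta)$. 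This contradicts $M_\theta\to M>0$ as $\theta\to0$.
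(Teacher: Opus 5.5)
Your route differs from the paper's and can be completed, but Step~3 leaves the second alternative of (iii) open. The plan you sketch for it would not work. Definition~\ref{def.8.6.2.3} imposes no super-solution inequality at $a$ at all, so one-sided jets at $a$ give you nothing to pair the sub-solution inequality with. What you need is that $a$ is never a maximizer, and your own perturbation already gives this.

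Put $f:=v_1-v_2$ and $\Phi_\theta:=f-\theta g$. If $\sup f>f(a)$, your argument is correct. Otherwise $f$ attains its global maximum at the interior point $a$ and is differentiable there. Hence $f'(a)=0$, so $\Phi_\theta'(a)=-\theta g'(a)<0$ (for $g=(1+x)^m-1$, $g'(a)=m(1+a)^{m-1}$), and $a$ is not even a local maximizer of $\Phi_\theta$. This is exactly the paper's argument, where the weight gives $(e^{-\rho x}f)'(a)=e^{-\rho a}(f'(a)-\rho f(a))\neq0$. In either case no maximizer of $\Phi_\theta$ equals $a$, so the relabeling in Step~2 cannot land on $a$ either.

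Some smaller repairs are also needed.
\begin{itemize}
\item In Step~2, $v_1(x^*-\eta^*)+\eta^*-\beta\ge v_1(x^*)$ gives only $\eta^*>0$, not $\eta^*>\beta$, since $v_1$ is not assumed monotone. Instead, let $x^*$ be the smallest element of the set of maximizers of $\Phi_\theta$; this set is compact and avoids $0$. Then $\mathcal{M}v_1(x^*)\ge0$ would produce the strictly smaller maximizer $x^*-\eta^*$, a contradiction.
\item The same step uses $\mathcal{M}v_2\le0$ at an arbitrary point. This follows from the density of points with nonempty subjets together with the continuity of $\mathcal{M}v_2$ (Lemma~\ref{lem.8.6.2.4}).
\item In Step~1 the defect must be uniform in $x$, not just on compacts, because $x^*_\theta$ may drift to infinity as $\theta\to0$. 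Any test function $\phi$ for $v_2+\theta g$ satisfies $\max_u\mathcal{L}^u\phi\le\theta C$ with $C:=\sup_{x,u}\mathcal{L}^ug(x)$. This $C$ is finite because $-qg$ dominates.
\item Discard the $(1+\theta)v_2$ variant. Rescaling can destroy $\mathcal{M}v_2\le0$ for jumps $\eta<\beta$ when $v_2$ is not monotone.
\end{itemize}

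Once patched, the two routes compare as follows. The paper uses a single device, $\tilde v_k=e^{-\rho x}v_k$ with $\rho$ small enough that the transformed Hamiltonian stays proper. This one device does three things at once:
\begin{itemize}
\item it makes the supremum attained;
\item it keeps the maximizer off $a$;
\item it makes the intervention comparison strict, since $(v_1-v_2)(\hat x-y^\star)\le e^{\rho(\hat x-y^\star)}M<e^{\rho\hat x}M$ yields $\mathcal{M}v_1<\mathcal{M}v_2\le0$ near $\hat x$.
\end{itemize}
You obtain the first two from the additive $-\theta g$, and you replace strictness by the smallest-maximizer argument. The cost is an $O(\theta)$ defect, which $qM_\theta$ absorbs. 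The gain is that you double variables directly in the untransformed constant-coefficient equation within one regime, using the standard quadratic penalty. This avoids both the paper's change of Hamiltonian and its asymmetric penalty $\Phi^\lambda$.
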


\begin{proof} 
We argue by contradiction and assume that
\begin{align}\label{assmp1}
\sup_{x\geq 0} f(x)> 0,
\end{align}
where $f(x):=v_{1}(x)-v_{2}(x)$.

We first claim that 
\begin{align}\label{maxata}
\sup_{x\geq 0} e^{-\rho x}f(x)>e^{-\rho a}f(a)
\end{align}
holds for all sufficiently small $\rho>0$.
Indeed, there are two possible cases:
\begin{itemize}
\item If $f(a)<\sup_{x\geq 0} f(x)$, then there exists $b>0$ such that $f(b)>f(a)$. So $e^{-\rho b}f(b)>e^{-\rho a}f(a)$ holds for all sufficiently small $\rho>0$, and consequently, so does \eqref{maxata}. 

\item Otherwise, $f(a)=\sup_{x\geq 0} f(x)$, and consequently, by condition (iii), $f$ is differentiable at $a$. By \eqref{assmp1}, we have $f(a)>0$, so for any $\rho\neq f'(a)/f(a)$, 
\begin{align*} 
\Big(e^{-\rho x}f(x)\Big)'|_{x=a}=e^{-\rho a}(f'(a)-\rho f(a))\neq 0.
\end{align*}
This indicates that $a$ is not a maximizer of the map $x\to e^{-\rho x}f(x)$, so the inequality \eqref{maxata} holds for all positive $\rho\neq f'(a)/f(a)$.
\end{itemize}

We now fix any small $\rho>0$ such that \eqref{maxata} holds and define 
$$\widetilde{v}_{k}(x):=e^{-\rho x}v_{k}(x), \quad x\in [0,\infty), \,\, k= 1,2.$$
Then, $\widetilde{v}_{1}$ (resp., $\widetilde{v}_{2}$) is a viscosity sub-solution (resp., super-solution) to
\begin{align}
\label{8.6.HJB}
\max\{\max_{u\in[0,1]}\mathcal{L}^{u}\left(e^{\rho x}v(x)\right),~\mathcal{M}\left(e^{\rho x}v(x)\right)\}=0,
\end{align}
with boundary condition $v(0)=0$. Both $v_{1}$ and $v_{2}$ are polynomial growth and continuous, so $\widetilde{v}_{1}$ and $\widetilde{v}_{2}$ are bounded and $\lim_{x\to+\infty} \widetilde{v}_{1}(x)=\lim_{x\to+\infty}\widetilde{v}_{2}(x)=0$.

According to \eqref{assmp1}, we have
\begin{align} 
M:=\sup_{x\geq 0}\left(\widetilde{v}_{1}(x)-\widetilde{v}_{2}(x)\right)=\sup_{x\geq 0} e^{-\rho x}f(x)>0.\nn
\end{align}
Because $\widetilde{v}_{1}(0)= \widetilde{v}_{2}(0)=0$, and 
$\lim_{x\to+\infty} \widetilde{v}_{1}(x)=\lim_{x\to+\infty}\widetilde{v}_{2}(x)=0$, 
by continuity, there exist real numbers $a_2>a_1>0$ such that 
\begin{align} 
\label{2.41.9.6}
\widetilde{v}_{1}(x)\leq M/3, ~~\widetilde{v}_{2}(x)\leq M/3, ~~x\in [0,a_1]\cup [a_2,\infty).
\end{align}
Then, by continuity again, there is $x^{\star}\in(a_1,a_2)$ such that 
\begin{align*}
\widetilde{v}_{1}(x^{\star})-\widetilde{v}_{2}(x^{\star})=M. 
\end{align*} 
Since $\widetilde{v}_{1}$ and $\widetilde{v}_{2}$ are locally Lipschitz continuous on $(0,\infty)$, there exists $m>0$ such that
\begin{align*}
\left|\widetilde{v}_{k}(x)-\widetilde{v}_{k}(y)\right|\leq m \left|x-y\right|, \quad x,y \in [a_1/2, a_2+2], \,\, k=1,2.
\end{align*}

Put $\Theta:=\{(x,y):0\leq x\leq y<\infty\}$. Define two sequences of functions $(\Phi^{\lambda})_{\lambda>0}$ and $(\Sigma^{\lambda})_{\lambda>0}$ as
\begin{align}
\left\{
\begin{array}{ll}
\Phi^{\lambda}(x,y):=\frac{\lambda}{2}(x-y)^{2}+\frac{2m}{\lambda^{2}(y-x)+\lambda}, & (x,y)\in \Theta,\,\,\lambda>0, \\
\Sigma^{\lambda}(x,y):=\widetilde{v}_{1}(x)-\widetilde{v}_{2}(y)-\Phi^{\lambda}(x,y),&(x,y)\in \Theta,\,\,\lambda>0.
\end{array}
\right.\nn
\end{align}
Notice that $\Phi^{\lambda}\geq 0$, and both $\widetilde{v}_{1}$ and $\widetilde{v}_{2}$ are bounded on $\Theta$, so 
\[M^{\lambda}:=\sup_{(x,y)\in \Theta}\Sigma^{\lambda} (x,y)<\infty.\] 
By the definitions of $M$ and $M^{\lambda}$, we also have
\begin{align}
M^{\lambda}\geq \Sigma^{\lambda}(x^{\star},x^{\star})=\widetilde{v}_{1}(x^{\star})-\widetilde{v}_{2}(x^{\star})-\Phi^{\lambda}(x^{\star},x^{\star})
=M-\frac{2m}{\lambda},\nn
\end{align}
which implies
\begin{align}
\label{8.6.2.41}
\liminf_{\lambda\rightarrow\infty}M^{\lambda}\geq M>0.
\end{align}
From now on, we assume $\lambda$ is sufficiently large so that $M^{\lambda}>M/2$. 

On the other hand, by $\widetilde{v}_{2}\geq 0$, $\Phi^{\lambda}\geq 0$, and \eqref{2.41.9.6}, we get 
\[\Sigma^{\lambda} (x,y)\leq \widetilde{v}_{1}(x)\leq M/3,~~x\in [0,a_1]\cup [a_2,\infty).\]
Meanwhile, if $y-x\geq 1$, then, for $\lambda>2\widetilde c_1:=2\sup_{x\geq 0}\widetilde{v}_{1}(x)\in[0,\infty)$, 
\[\Sigma^{\lambda} (x,y)\leq \widetilde{v}_{1}(x)-\frac{\lambda}{2}(x-y)^{2}
\leq \widetilde c_1-\frac{\lambda}{2}\leq 0<M^{\lambda}.\]
Therefore, for each sufficiently large $\lambda$, 
there exists $a_1< x_{\lambda}\leq y_{\lambda}< x_{\lambda}+1< a_2+1$ 
such that
\begin{align}
\label{8.4.2.36}
\Sigma^{\lambda}(x_{\lambda},y_{\lambda})=M^{\lambda}=\max_{(x,y)\in \Theta}\Sigma^{\lambda} (x,y). 
\end{align}
We now show $x_{\lambda}\neq y_{\lambda}$. 
Indeed, for each $\lambda>0$, there exists sufficiently small $\ep>0$ such that 
\begin{align*} 
\Sigma^{\lambda}(x_{\lambda},y_{\lambda})-\Sigma^{\lambda}(x_{\lambda},x_{\lambda})
&\geq \Sigma^{\lambda}(x_{\lambda}, x_{\lambda}+\ep)-\Sigma^{\lambda}(x_{\lambda}, x_{\lambda})\nn\\
&=\widetilde{v}_{2}(x_{\lambda})-\widetilde{v}_{2}(x_{\lambda}+\ep)
-\frac{\lambda}{2}\ep^{2}-\frac{2m}{\lambda^{2}\ep+\lambda}+\frac{2m}{\lambda}\nn\\
&\geq -m\ep-\frac{\lambda}{2}\ep^{2}+\frac{2m \ep}{\lambda \ep+1}>0,
\end{align*}
where we used the definition of $m$ to get the second inequality, so $y_{\lambda}\neq x_{\lambda}$. 
Moreover, 
using the fact that
\begin{align}
\Sigma^{\lambda}(x_{\lambda},x_{\lambda})+\Sigma^{\lambda}(y_{\lambda},y_{\lambda}) \leq 2\Sigma^{\lambda}(x_{\lambda},y_{\lambda}),\nn
\end{align}
one can deduce
\[\big|y_{\lambda}-x_{\lambda}\big|\leq \frac{6m}{\lambda}.\]
Therefore, 
\begin{align}
\label{8.3.2.37}
0<y_{\lambda}-x_{\lambda}\leq \frac{6m}{\lambda}.
\end{align}
Since $a_1< x_{\lambda}< y_{\lambda}< a_2+1$, there is a subsequence $(x_{\lambda_{n}},y_{\lambda_{n}})_{n\geq 1}$ (of $(x_{\lambda},y_{\lambda})_{\lambda>0}$) and $a_1\leq \widehat{x}\leq\widehat{y}\leq a_2+1$ such that $\lim\limits_{n\rightarrow \infty}\lambda_{n}=\infty$
and $(x_{\lambda_{n}},y_{\lambda_{n}})$ converges to $(\widehat{x},\widehat{y})$ as $n\rightarrow\infty$. In particular, one has
\begin{align*}
\lim\limits_{n\rightarrow\infty}\left|x_{\lambda_{n}}-y_{\lambda_{n}}\right|=|\widehat{x}-\widehat{y}|,
\end{align*}
which, together with \eqref{8.3.2.37}, yields $\widehat{x}=\widehat{y}$.
So $a_1\leq \widehat{x}=\widehat{y}\leq a_2+1$. 

By the definitions of $\Phi^{\lambda}$, $\Sigma^{\lambda}$, \eqref{8.6.2.41}, \eqref{8.4.2.36}, and \eqref{8.3.2.37}, we have
\begin{align}
\label{8.6.2.49}
\widetilde{v}_{1}(\widehat{x})-\widetilde{v}_{2}(\widehat{x})=
\widetilde{v}_{1}(\widehat{x})-\widetilde{v}_{2}(\widehat{y})=
\liminf\limits_{n\rightarrow \infty} \Sigma^{\lambda_{n}}(x_{\lambda_{n}},y_{\lambda_{n}})=
\liminf\limits_{n\rightarrow \infty} M^{\lambda_{n}}\geq M>0.
\end{align}
By the definition of $M$, this indicates
\begin{align} \label{atwidehatX=M}
\widetilde{v}_{1}(\widehat{x})-\widetilde{v}_{2}(\widehat{x})=M=\sup_{x\geq 0}\left(\widetilde{v}_{1}(x)-\widetilde{v}_{2}(x)\right). 
\end{align}
By this and \eqref{maxata}, we have
\begin{align*} 
e^{-\rho \widehat{x}}f(\widehat{x})=\sup_{x\geq 0} e^{-\rho x}f(x)>e^{-\rho a}f(a),
\end{align*}
indicating that $\widehat{x}\neq a$. 

In the sequel, we assume $\widehat{x}< a$. The case $\widehat{x}> a$ can be treated similarly, which is left to the interest readers. Since $\lim_n y_{\lambda_{n}}=\widehat{x}<a$, from now on, we assume that $n$ is sufficiently large so that $a>y_{\lambda_n}>x_{\lambda_n}$. 

By \eqref{8.6.HJB}, one can easily check that, on $(0,a)$, $\widetilde{v}_{2}$ is a viscosity super-solution to
\begin{align}
\label{8.5.2.40}
0=&~\max_{u\in[0,1]}\mathcal{L}^{u}\left(e^{\rho x}v(x)\right)
\nn\\
=&~e^{\rho x}\max_{u\in[0,1]}\left[\frac{\sigma_{-}^{2}}{2}u^{2}v^{\prime\prime}(x)+\left[\rho\sigma_{-}^{2}u^{2}+\mu_{-}u\right] v^{\prime}(x)-\left[q-\frac{\rho^{2}\sigma_{-}^{2}}{2}u^{2}-\rho\mu_{-}u\right]v(x)\right],\,x\in(0,a).
\end{align}
Recall the definition
$$ \argmax_{x\in A}f(x):=\{z\in A: f(z)\geq f(x) \text{ for all } x\in A\}.$$
Choose any 
\[y^{\star}\in \argmax\limits_{0\leq y\leq \widehat{x}}\left(v_{1}(\widehat{x}-y)+y-\beta\right).\]
There are two possible cases.
\begin{itemize}
\item \textbf{Case $y^{\star}=0$.}
Then, by Lemma \ref{lem.8.6.2.4}, it holds that
\begin{align}
\lim\limits_{n\rightarrow\infty}\mathcal{M}\left(e^{\rho x_{\lambda_{n}}}\widetilde{v}_{1}(x_{\lambda_{n}})\right)
=
\mathcal{M}\left(e^{\rho \widehat{x}}\widetilde{v}_{1}(\widehat{x})\right)
=
e^{\rho \widehat{x}}\widetilde{v}_{1}(\widehat{x})-\beta-e^{\rho \widehat{x}}\widetilde{v}_{1}(\widehat{x})
<0,\nn
\end{align} 
Consequently, there exists a positive integer $N_{1}$ such that
\begin{align}
\label{8.6.2.42}
\mathcal{M}\left(e^{\rho x_{\lambda_{n}}}\widetilde{v}_{1}(x_{\lambda_{n}})\right)<0, \quad n\geq N_{1}. 
\end{align}
For any sub-solution test function $\phi$ for the viscosity sub-solution $\widetilde{v}_{1}$ at $x_{\lambda_{n}}$, we have
\begin{align}
&\max\{\max_{u\in[0,1]}\mathcal{L}^{u}\left(e^{\rho x_{\lambda_{n}}}\phi(x_{\lambda_{n}})\right),~\mathcal{M}\left(e^{\rho x_{\lambda_{n}}}\widetilde{v}_{1}(x_{\lambda_{n}})\right)\}\geq 0,\quad n\geq N_{1},\nn
\end{align}
which, together with \eqref{8.6.2.42}, yields
\begin{align}
\max_{u\in[0,1]}\mathcal{L}^{u}\left(e^{\rho x_{\lambda_{n}}}\phi(x_{\lambda_{n}})\right)\geq 0,\quad n\geq N_{1}.
\end{align} 
Therefore, for $n\geq N_{1}$, $\widetilde{v}_{1}$ is a viscosity sub-solution at $x_{\lambda_{n}}$ to \eqref{8.5.2.40}.

\item \textbf{Case $y^{\star}>0$.} By the definition of $\mathcal{M}$, 
\begin{align*} 
\mathcal{M}\left(e^{\rho \widehat{x}}\widetilde{v}_{2}(\widehat{x})\right)
\geq e^{\rho \left(\widehat{x}-y^{\star}\right)}\widetilde{v}_{2}(\widehat{x}-y^{\star})+y^{\star}-\beta-e^{\rho \widehat{x}}\widetilde{v}_{2}(\widehat{x}).
\end{align*}
By Lemma \ref{lem.8.6.2.4}, the definition of $M$, \eqref{atwidehatX=M}, and the fact that $M$, $\rho$, $y^{\star}>0$, we have
\begin{align*}
&\lim\limits_{n\rightarrow\infty}\mathcal{M}\left(e^{\rho x_{\lambda_{n}}}\widetilde{v}_{1}(x_{\lambda_{n}})\right)
-\lim\limits_{n\rightarrow\infty}\mathcal{M}\left(e^{\rho y_{\lambda_{n}}}\widetilde{v}_{2}(y_{\lambda_{n}})\right)
\nn\\
=&~\mathcal{M}\left(e^{\rho \widehat{x}}\widetilde{v}_{1}(\widehat{x})\right)
-\mathcal{M}\left(e^{\rho \widehat{x}}\widetilde{v}_{2}(\widehat{x})\right)
\nn\\
\leq &~
e^{\rho \left(\widehat{x}-y^{\star}\right)}\widetilde{v}_{1}(\widehat{x}-y^{\star})+y^{\star}-\beta-e^{\rho \widehat{x}}\widetilde{v}_{1}(\widehat{x})-\left[e^{\rho \left(\widehat{x}-y^{\star}\right)}\widetilde{v}_{2}(\widehat{x}-y^{\star})+y^{\star}-\beta-e^{\rho \widehat{x}}\widetilde{v}_{2}(\widehat{x})\right]
\nn\\
=&~
e^{\rho \left(\widehat{x}-y^{\star}\right)}\left[\widetilde{v}_{1}(\widehat{x}-y^{\star})-\widetilde{v}_{2}(\widehat{x}-y^{\star})\right]-e^{\rho \widehat{x}}\left[\widetilde{v}_{1}(\widehat{x})-\widetilde{v}_{2}(\widehat{x})\right]
\nn\\
\leq &~ e^{\rho \left(\widehat{x}-y^{\star}\right)} M- e^{\rho \widehat{x}}M\nn\\
<&~0.
\end{align*} 
This, together with the fact that $\widetilde{v}_{2}$ is a viscosity super-solution to \eqref{8.6.HJB} at $y_{\lambda_{n}}$, yields
\begin{align}
\label{8.6.2.52}
\mathcal{M}\left(e^{\rho x_{\lambda_{n}}}\widetilde{v}_{1}(x_{\lambda_{n}})\right)
<\mathcal{M}\left(e^{\rho y_{\lambda_{n}}}\widetilde{v}_{2}(y_{\lambda_{n}})\right)
\leq 0,\quad n\geq N_{2}, 
\end{align}
for some large integer $N_{2}$. 
Hence, for any test function $\phi$ for sub-solution $\widetilde{v}_{1}$ at $x_{\lambda_{n}}$, we have 
\begin{align}
\max_{u\in[0,1]}\mathcal{L}^{u}\left(e^{\rho x_{\lambda_{n}}}\phi(x_{\lambda_{n}})\right)\geq 0, \quad n\geq N_{2}.\nn
\end{align}
Consequently, for $n\geq N_{2}$, $\widetilde{v}_{1}$ is a viscosity sub-solution at $x_{\lambda_{n}}$ to 
\eqref{8.5.2.40}.

\end{itemize}

Summing up the above arguments, we conclude that, there exists a sufficiently large $N$ such that $\widetilde{v}_{1}$ (resp., $\widetilde{v}_{2}$) is a viscosity sub-solution (resp., super-solution) at $x_{\lambda_{n}}$ (resp., $y_{\lambda_{n}}$) to 
\eqref{8.5.2.40} for all $n\geq N$. We need to point out that 
the definition of viscosity solution to \eqref{8.5.2.40} in the sense of Definition 2.2 of \cite{Crandall92}, where we replace the Hamiltonian of \eqref{8.5.2.40} as
\begin{align}
F(x,v,v^{\prime},v^{\prime\prime})&:=-\max_{u\in[0,1]}\left[\frac{\sigma_{-}^{2}}{2}u^{2}v^{\prime\prime}(x)+\left[\rho\sigma_{-}^{2}u^{2}+\mu_{-}u\right] v^{\prime}(x)-\left[q-\frac{\rho^{2}\sigma_{-}^{2}}{2}u^{2}-\rho\mu_{-}u\right]v(x)\right]
\nn\\
&\hspace{-1cm}=\min_{u\in[0,1]}\left[-\frac{\sigma_{-}^{2}}{2}u^{2}v^{\prime\prime}(x)-\left[\rho\sigma_{-}^{2}u^{2}+\mu_{-}u\right] v^{\prime}(x)+\left[q-\frac{\rho^{2}\sigma_{-}^{2}}{2}u^{2}-\rho\mu_{-}u\right]v(x)\right],\,x\in (0,a).\nn
\end{align}

It is easy to see that the function $(0,a)\times \mathbb{R}^{3}\ni (x,v,v^{\prime},v^{\prime\prime})\mapsto F(x,v,v^{\prime},v^{\prime\prime})$ is proper and degenerate elliptic in the sense of (0.2) and (0.3) in \cite{Crandall92} if and only if $\rho>0$ is small enough such that $q-\frac{\rho^{2}\sigma_{-}^{2}}{2}u^{2}-\rho\mu_{-}u>0$ for all $u\in[0,1]$. To assure the validity of this condition, we assume in the sequel that
\begin{align}
\label{rho.small}
q>\frac{\rho^{2}\sigma_{-}^{2}}{2}+\rho\left|\mu_{-}\right|.
\end{align}
This allows us to apply the viscosity solution theory of \cite{Crandall92} in the sequel.

For $n\geq N$, we can choose $\delta_n>0$ small enough such that
\begin{align}
\left\{
\begin{array}{ll}
(x_{\lambda_{n}},y_{\lambda_{n}})\in [a_{1},x_{\lambda_{n}}+\delta_n]\times [x_{\lambda_{n}}+\delta_n,a_2+1]\subseteq \Theta, & \\
(x_{\lambda_{n}},y_{\lambda_{n}})\notin \partial\left([a_{1},x_{\lambda_{n}}+\delta_n]\times [x_{\lambda_{n}}+\delta_n,a_2+1]\right). & 
\end{array}
\right.\nn
\end{align}
Define two functions as
\begin{align}
\left\{
\begin{array}{ll}
\varphi(x,y):= \Phi^{\lambda_{n}}(x,y)+\widetilde{v}_{1}(x_{\lambda_{n}})-\widetilde{v}_{2}(y_{\lambda_{n}})-\Phi^{\lambda_{n}}(x_{\lambda_{n}},y_{\lambda_{n}}),\\
w(x,y):=\widetilde{v}_{1}(x)-\widetilde{v}_{2}(y),\qquad (x,y)\in [a_{1},x_{\lambda_{n}}+\delta_n]\times [x_{\lambda_{n}}+\delta_n,a_2+1].& 
\end{array}
\right.\nn
\end{align}
Then, by \eqref{8.4.2.36}, we have
\begin{align}
\left\{
\begin{array}{ll}
w(x,y)\leq \varphi(x,y),\,\, (x,y)\in [a_{1},x_{\lambda_{n}}+\delta_n]\times [x_{\lambda_{n}}+\delta_n,a_2+1],& \\
w(x_{\lambda_{n}},y_{\lambda_{n}})= \varphi(x_{\lambda_{n}},y_{\lambda_{n}}).&
\end{array}
\right.\nn
\end{align}
That is, $(x_{\lambda_{n}},y_{\lambda_{n}})$ is a maximizer of $w-\varphi$ relative to $[a_{1},x_{\lambda_{n}}+\delta_n]\times [x_{\lambda_{n}}+\delta_n,a_2+1]$. Consequently, by Theorem 3.2 of \cite{Crandall92} with $\mathcal{O}_{1}=[a_{1},x_{\lambda_{n}}+\delta_n]$ and $\mathcal{O}_{2}=[x_{\lambda_{n}}+\delta_n,a_2+1]$, 
for any $\ep>0$, there exist two real constants $A^n_{\ep},B^n_{\ep}$ such that
\begin{align}
\label{8.6.2.53}
\left\{
\begin{array}{ll}
F(x_{\lambda_{n}},\widetilde{v}_{1}(x_{\lambda_{n}}),\frac{\partial}{\partial x}\varphi(x_{\lambda_{n}},y_{\lambda_{n}}),A^n_{\ep})\leq 0, & \\
F(y_{\lambda_{n}},\widetilde{v}_{2}(y_{\lambda_{n}}),-\frac{\partial}{\partial y}\varphi(x_{\lambda_{n}},y_{\lambda_{n}}), -B^n_{\ep})\geq 0, & \\
\left(\begin{matrix}
A^n_{\ep} & 0 \\
0 & B^n_{\ep}
\end{matrix}\right)\leq \partial_{xy}H(\varphi)(x_{\lambda_{n}},y_{\lambda_{n}})+\ep \left[\partial_{xy}H(\varphi)(x_{\lambda_{n}},y_{\lambda_{n}})\right]^{2},
\end{array}
\right.
\end{align}
where $H(\varphi)$ denotes the Hessian matrix of $\varphi$.
By the definition of $\varphi$, we have
\begin{align}
\label{8.6.2.54}
\left\{
\begin{array}{ll}
\frac{\partial}{\partial x}\varphi(x_{\lambda},y_{\lambda})=\frac{\partial}{\partial x} \Phi^{\lambda}(x_{\lambda},y_{\lambda})=\lambda (x_{\lambda}-y_{\lambda})+\frac{2m}{\left(\lambda (y_{\lambda}-x_{\lambda})+1\right)^{2}},
&
\\\frac{\partial^{2}}{\partial x^{2}}\varphi(x_{\lambda},y_{\lambda})=\frac{\partial^{2}}{\partial x^{2}} \Phi^{\lambda}(x_{\lambda},y_{\lambda})=\lambda+\frac{4m\lambda}{\left(\lambda (y_{\lambda}-x_{\lambda})+1\right)^{3}}, 
&\\
\frac{\partial}{\partial y}\varphi(x_{\lambda},y_{\lambda})=\frac{\partial}{\partial y} \Phi^{\lambda}(x_{\lambda},y_{\lambda})=-\lambda (x_{\lambda}-y_{\lambda})-\frac{2m}{\left(\lambda (y_{\lambda}-x_{\lambda})+1\right)^{2}},
& \\
\frac{\partial^{2}}{\partial y^{2}}\varphi(x_{\lambda},y_{\lambda})=\frac{\partial^{2}}{\partial y^{2}} \Phi^{\lambda}(x_{\lambda},y_{\lambda})=\lambda+\frac{4m\lambda}{\left(\lambda (y_{\lambda}-x_{\lambda})+1\right)^{3}},
& \\
\frac{\partial^{2}}{\partial x\partial y}\varphi(x_{\lambda},y_{\lambda})=\frac{\partial^{2}}{\partial x\partial y} \Phi^{\lambda}(x_{\lambda},y_{\lambda})=-\lambda-\frac{4m\lambda}{\left(\lambda (y_{\lambda}-x_{\lambda})+1\right)^{3}}.
\end{array}
\right.
\end{align}
By the third inequality of \eqref{8.6.2.53} and \eqref{8.6.2.54}, we have
\begin{align}
\label{8.6.2.55}
A^n_{\ep}+B^n_{\ep}=&~ \left(\begin{matrix}
1 & 1
\end{matrix}\right) \left(\begin{matrix}
A^n_{\ep} & 0 \\
0 & B^n_{\ep}
\end{matrix}\right)\left(\begin{matrix}
1 \\
1 
\end{matrix}\right)
\nn\\
\leq&~ \left[\frac{\partial^{2}}{\partial x^{2}} \Phi^{\lambda_n}(x_{\lambda_n},y_{\lambda_n})\right]\left(\begin{matrix}
1 & 1
\end{matrix}\right) \left(\begin{matrix}
1 & -1 \\
-1 & 1
\end{matrix}\right)\left(\begin{matrix}
1 \\
1 
\end{matrix}\right)
\nn\\
&~+2\ep \left[\frac{\partial^{2}}{\partial x^{2}} \Phi^{\lambda_n}(x_{\lambda_n},y_{\lambda_n})\right]^2\left(\begin{matrix}
1 & 1
\end{matrix}\right) \left(\begin{matrix}
1 & -1 \\
-1 & 1
\end{matrix}\right)\left(\begin{matrix}
1 \\
1 
\end{matrix}\right)
=0.
\end{align}
In addition, the first and second inequality of \eqref{8.6.2.53} can be rewritten as
\begin{align}
\left\{
\begin{array}{ll}
\max\limits_{u\in[0,1]}\left[\frac{\sigma_{-}^{2}}{2}u^{2}A^n_{\ep}+\left[\rho\sigma_{-}^{2}u^{2}+\mu_{-}u\right] \frac{\partial}{\partial x}\varphi(x_{\lambda_{n}},y_{\lambda_{n}})-\left[q-\frac{\rho^{2}\sigma_{-}^{2}}{2}u^{2}-\rho\mu_{-}u\right]\widetilde{v}_{1}(x_{\lambda_{n}})\right]\geq 0, & \\
\min\limits_{u\in[0,1]}\left[\frac{\sigma_{-}^{2}}{2}u^{2}B^n_{\ep}+\left[\rho\sigma_{-}^{2}u^{2}+\mu_{-}u\right] \frac{\partial}{\partial y}\varphi(x_{\lambda_{n}},y_{\lambda_{n}})+\left[q-\frac{\rho^{2}\sigma_{-}^{2}}{2}u^{2}-\rho\mu_{-}u\right]\widetilde{v}_{2}(y_{\lambda_{n}})\right]\geq 0. & 
\end{array}
\right.\nn
\end{align}
The second inequality and the fact of $\frac{\partial}{\partial y}\varphi(x_{\lambda_{n}},y_{\lambda_{n}})=-\frac{\partial}{\partial x}\varphi(x_{\lambda_{n}},y_{\lambda_{n}})$ (see \eqref{8.6.2.54}) imply, for all $u\in[0,1]$, 
\begin{align*} 
&~\frac{\sigma_{-}^{2}}{2}u^{2}B^n_{\ep}-\left[\rho\sigma_{-}^{2}u^{2}+\mu_{-}u\right] \frac{\partial}{\partial x}\varphi(x_{\lambda_{n}},y_{\lambda_{n}})+\left[q-\frac{\rho^{2}\sigma_{-}^{2}}{2}u^{2}-\rho\mu_{-}u\right]\widetilde{v}_{2}(y_{\lambda_{n}})
\nn\\
=&~\frac{\sigma_{-}^{2}}{2}u^{2}B^n_{\ep}+\left[\rho\sigma_{-}^{2}u^{2}+\mu_{-}u\right] \frac{\partial}{\partial y}\varphi(x_{\lambda_{n}},y_{\lambda_{n}})+\left[q-\frac{\rho^{2}\sigma_{-}^{2}}{2}u^{2}-\rho\mu_{-}u\right]\widetilde{v}_{2}(y_{\lambda_{n}})\geq 0.
\end{align*}
Together with the first inequality, we get, 
\begin{align*}
\max\limits_{u\in[0,1]}\bigg\{&\bigg(\frac{\sigma_{-}^{2}}{2}u^{2}A^n_{\ep}+\left[\rho\sigma_{-}^{2}u^{2}+\mu_{-}u\right] \frac{\partial}{\partial x}\varphi(x_{\lambda_{n}},y_{\lambda_{n}})-\left[q-\frac{\rho^{2}\sigma_{-}^{2}}{2}u^{2}-\rho\mu_{-}u\right]\widetilde{v}_{1}(x_{\lambda_{n}})\bigg)\\
& ~~ +\bigg(
\frac{\sigma_{-}^{2}}{2}u^{2}B^n_{\ep}-\left[\rho\sigma_{-}^{2}u^{2}+\mu_{-}u\right] \frac{\partial}{\partial x}\varphi(x_{\lambda_{n}},y_{\lambda_{n}})+\left[q-\frac{\rho^{2}\sigma_{-}^{2}}{2}u^{2}-\rho\mu_{-}u\right]\widetilde{v}_{2}(y_{\lambda_{n}})
\bigg)\bigg\}\geq 0,
\end{align*}
namely, 
\begin{align}
\label{2.58.9.6}
\max\limits_{u\in[0,1]}\bigg\{&\frac{\sigma_{-}^{2}}{2}u^{2}\left(A^n_{\ep}+B^n_{\ep}\right)+\left[q-\frac{\rho^{2}\sigma_{-}^{2}}{2}u^{2}-\rho\mu_{-}u\right] \big(\widetilde{v}_{2}(y_{\lambda_{n}})-\widetilde{v}_{1}(x_{\lambda_{n}})\big)\bigg\}
\geq 0.
\end{align}
Recalling \eqref{rho.small}, \eqref{8.6.2.55}, and the fact that $(x_{\lambda_{n}},y_{\lambda_{n}})$ converges to $(\widehat{x},\widehat{x})$ as $n\rightarrow\infty$, and sending $n\to\infty$ in \eqref{2.58.9.6}, we obtain
\begin{align}
\widetilde{v}_{2}(\widehat{x})- \widetilde{v}_{1}(\widehat{x})\geq 0,\nn
\end{align}
contradicting \eqref{8.6.2.49}. 
The proof is complete.
\end{proof}

\begin{corollary}
Assume the conditions (i), (ii) in Theorem \ref{comparison theorem} hold and 
$v_{1}(a)\leq v_{2}(a)$. Then $v_{1}\leq v_{2}$.
\end{corollary}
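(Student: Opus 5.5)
The plan is to reduce the corollary to Theorem \ref{comparison theorem} by checking condition (iii), with one degenerate situation treated directly. Put $f:=v_1-v_2$ and suppose, for contradiction, that $\sup_{x\ge0}f(x)>0$. Since $f(a)=v_1(a)-v_2(a)\le 0$, we get
\[
\sup_{x\geq 0}\big(v_1(x)-v_2(x)\big)>0\ge v_1(a)-v_2(a).
\]
This is exactly the first alternative in condition (iii). Conditions (i) and (ii) hold by assumption, so Theorem \ref{comparison theorem} gives $v_1\le v_2$, which contradicts $\sup f>0$.

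If instead $\sup_{x\ge0}f(x)\le 0$ holds from the outset, then $v_1\le v_2$ is immediate and there is nothing to prove. Either way the conclusion follows.

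The one point to double-check is that the theorem's proof really uses condition (iii) only through the strict inequality \eqref{maxata}, and only under the contradiction hypothesis $\sup f>0$. In the first bullet of that argument, $f(a)<\sup f$ yields a point $b$ with $f(b)>f(a)$, and then $e^{-\rho b}f(b)>e^{-\rho a}f(a)$ for all small $\rho>0$. This holds regardless of the sign of $f(a)$.

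In our situation a cleaner variant is also available. Since $f(a)\le 0$ and $f(b)>0$, we have $e^{-\rho b}f(b)>0\ge e^{-\rho a}f(a)$ for every $\rho>0$. So \eqref{maxata} holds trivially, and the rest of the proof of Theorem \ref{comparison theorem} applies verbatim. There is no real obstacle; the argument is a short case check.
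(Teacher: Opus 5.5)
Your proposal is correct and is essentially the paper's argument: in the case $\sup_{x\ge0}(v_1-v_2)>0$, the hypothesis $v_1(a)\le v_2(a)$ gives the first alternative of condition (iii), so Theorem \ref{comparison theorem} applies, while the case $\sup_{x\ge0}(v_1-v_2)\le 0$ is trivial. Your extra check of how \eqref{maxata} uses (iii) is harmless but unnecessary, since (iii) is verified exactly as stated.
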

\begin{proof}
Since $\sup_{x\geq 0} (v_{1}(x)-v_{2}(x))\geq v_{1}(0)-v_{2}(0)=0$, there two possible cases.
\begin{itemize}
\item If $\sup_{x\geq 0} (v_{1}(x)-v_{2}(x))=0$, then it obviously implies $v_{1}\leq v_{2}$.
\item Otherwise, $\sup_{x\geq 0} (v_{1}(x)-v_{2}(x))>0\geq v_{1}(a)-v_{2}(a)$, so the condition (iii) in Theorem \ref{comparison theorem} holds true and thus $v_{1}\leq v_{2}$.
\end{itemize} 
The proof is complete. 
\end{proof}

The uniqueness of viscosity solutions of \eqref{7.17.HJB}, in the weak sense of Theorem \ref{8.7.thm.2.7} below, is a direct consequence of Theorems \ref{8.7.thm.2.5} and \ref{comparison theorem}.

\begin{theorem}
\label{8.7.thm.2.7}
Let $v\in\seta$ be a viscosity solution to \eqref{7.17.HJB}. If either both $v$ and $V$ are differentiable at $a$, or both inequalities
\begin{align}
\left\{
\begin{array}{ll}
\sup_{x\geq 0} (v(x)-V(x))>v(a)-V(a),& \\
\sup_{x\geq 0} (V(x)-v(x))>V(a)-v(a),& 
\end{array}
\right.
\nonumber
\end{align}
hold, then $v\equiv V$ on $\mathbb{R}_{+}$. In particular, if $v(a)= V(a)$, then $v\equiv V$ on $\mathbb{R}_{+}$.
\end{theorem}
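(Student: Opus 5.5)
The plan is to apply the comparison theorem (Theorem \ref{comparison theorem}) twice, with the roles of $v$ and $V$ exchanged, and then combine the two inequalities. By Theorem \ref{8.7.thm.2.5} and the lemma showing $V\in\seta$, the value function $V$ is a viscosity solution of \eqref{7.17.HJB} lying in $\seta$. By assumption, $v\in\seta$ is also a viscosity solution. Both functions are non-negative, since membership in $\seta$ requires non-negativity. Hence each of them is simultaneously a non-negative viscosity sub-solution and super-solution, so conditions (i) and (ii) of Theorem \ref{comparison theorem} hold for the pair $(v_1,v_2)=(v,V)$ and for the pair $(v_1,v_2)=(V,v)$.

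It remains to check condition (iii) for each ordering.

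\textbf{Case 1: both $v$ and $V$ are differentiable at $a$.} Then $v-V$ and $V-v$ are both differentiable at $a$, so (iii) holds in each ordering.

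\textbf{Case 2: the two strict inequalities hold.} The first inequality is exactly the first alternative in (iii) for $(v_1,v_2)=(v,V)$. The second inequality is the same alternative for $(v_1,v_2)=(V,v)$.

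In either case Theorem \ref{comparison theorem} gives $v\le V$ and $V\le v$, hence $v\equiv V$ on $[0,\infty)$. This is the required equality on $\mathbb{R}_{+}$, including $v(0)=V(0)=0$.

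For the final assertion, assume $v(a)=V(a)$. Then $v(a)\le V(a)$ and $V(a)\le v(a)$, and the Corollary following Theorem \ref{comparison theorem} applies in both directions. It yields $v\le V$ and $V\le v$ directly, without any differentiability or strict-inequality hypothesis.

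The only point requiring care is confirming that every hypothesis of the comparison theorem is met in both orderings: non-negativity and membership in $\seta$ for both functions, and the sub- and super-solution properties for each. All of these follow from the earlier results, so I expect no genuine obstacle; the proof is bookkeeping.
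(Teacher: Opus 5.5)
Your proposal is correct and matches the paper's argument: the paper presents the result as a direct consequence of the existence theorem (which gives $V\in\seta$ as a viscosity solution) and the comparison theorem applied with $(v_1,v_2)=(v,V)$ and $(v_1,v_2)=(V,v)$. Your checks of condition (iii) in each ordering are exactly what is needed. For the final clause, you are right to use the corollary in both directions, since $v(a)=V(a)$ does not by itself give the strict inequalities.
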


\begin{remark}
It should be mentioned that the viscosity theory presented in Theorems \ref{8.7.thm.2.5}-\ref{8.7.thm.2.7} has its own merit since a sufficiently smooth solution to \eqref{7.17.HJB} cannot be constructed under every parameter configuration. As shown in Sections \ref{sec.3} and \ref{sec.4}, there are a few exceptional cases in which no sufficient smooth solution to \eqref{7.17.HJB} is available; consequently, explicit characterizations of the optimal strategy and the value function remain out of reach. In these exceptional cases, we must content ourselves with the statement that the value function coincides with the unique viscosity solution to \eqref{7.17.HJB}, in the sense of Theorem \ref{8.7.thm.2.7}.
\end{remark}

Although the value function has already been characterized as the unique (in the weak sense of Theorem \ref{8.7.thm.2.7}) viscosity solution to \eqref{7.17.HJB}. We still need to characterize the optimal strategy, and, if possible, the explicit expressions for the optimal strategy and the value function $V$. Theorem \ref{thm2.2} below is a first step toward this aim, where additional smooth assumption is imposed.
\begin{theorem}
\label{thm2.2}
Let $v\in C([0,\infty))$ be a viscosity solution to \eqref{7.17.HJB} with $v(0)=0$. Furthermore, suppose that there exists $x_{0}>0$ such that $v\in C^1((0,\infty))\cap C^{2}((0,\infty)\setminus\{a,x_{0}\})$ and
\begin{align}
\label{rig.lef.sec.der.}
\left\{\begin{array}{ll}
v(x)-x=v(x_{0})-x_{0}, \quad x\in [x_{0},\infty),& \\
\max\left\{\left|v^{\prime\prime}(x_{0}^{\pm})\right|,\left|v^{\prime\prime}(a^{\pm})\right|\right\}<\infty.& 
\end{array}
\right.
\end{align}
Then
\begin{equation}\label{eq:a029}
V(x)\le v(x), \quad x\in (0,\infty).\nn
\end{equation}
Let $(u^{v},D^{v})$ be a feedback reinsurance-dividend strategy defined by
\begin{align}
\left\{
\begin{array}{ll}
u^v_{t}= \argmax\limits_{u\in[0,1]}\mathcal{L}^uv(U_t^{v}),& \text{a.e. } (t,\omega)\in [0,\infty)\times\Omega,
\\
\Delta D^{v}_{t}= \mathbf{1}_{\{\mathcal{M}v\left(U^{v}_{t-}\right)=0\}} \argmax\limits_{0\leq \eta\le U^{v}_{t-}}\left(v\left(U^{v}_{t-}-\eta\right)+\eta-\beta\right),& t\in [0,\infty),
\end{array}
\right.\nn
\end{align}
where $U^{v}:=(U_t^{v})_{t\geq 0}$ represents the surplus process under control $(u^{v},D^{v})$.
If $(u^{v},D^{v})\in\Pi_x$, then
\begin{equation}\label{eq:a030}
V(x)=v(x)=J_{u^v,D^v}(x),\quad x\in (0,\infty),\nn
\end{equation}
that is, $\left(u^v,D^v\right)$ is an optimal strategy for the control problem \eqref{eq:a008}.
\end{theorem}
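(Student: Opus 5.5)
The plan is a standard verification argument, with care at the two points $a$ and $x_0$ where $v''$ may jump. The first task is to turn the viscosity inequalities into pointwise ones. On $(0,\infty)\setminus\{a,x_0\}$ the function $v$ is $C^2$, so we may take $v$ itself (suitably extended) as a test function in Definition \ref{def.8.6.2.3}. The super-solution property then gives $\max_{u\in[0,1]}\mathcal{L}^u v(x)\le 0$ and $\mathcal{M}v(x)\le 0$ at every such $x$. Since $\mathcal M v$ is continuous (Lemma \ref{lem.8.6.2.4}), the inequality $\mathcal{M}v\le 0$ extends to all $x>0$. We also use $v\ge 0$, $v(0)=0$, and that $v$ has linear growth, which follows from $v(x)=x+v(x_0)-x_0$ on $[x_0,\infty)$.

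To prove $V\le v$, fix $(u,D)\in\Pi^+_x$ and apply the Itô formula for semimartingales with jumps (Theorem 4.57 of \cite{Jacod03}) to $e^{-q t}v(U_t)$ up to $t\wedge T^{u,D}\wedge\tau_n$, where $\tau_n$ is the exit time from $[1/n,n]$. Because $v\in C^1$ with $v''$ bounded and with finite one-sided limits at $a$ and $x_0$, the finite set $\{a,x_0\}$ causes no trouble. One route is to mollify $v$ and pass to the limit, using that $U$ spends Lebesgue-null time at any fixed level since its volatility is bounded below where $u_s>0$; where $u_s=0$ the $v''$ term vanishes anyway. Alternatively, we can use the Itô--Tanaka/occupation-time formula directly, since local-time terms vanish when $v'$ is continuous. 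The $\mathrm{d}t$-integrand is $\mathcal{L}^{u_s}v(U_s)\le 0$ a.e. At each jump we have $v(U_{s})-v(U_{s-})\le -(\Delta D_s-\beta)$ by $\mathcal{M}v\le 0$. The stochastic integral is a true martingale on $[0,\tau_n]$ because $v'$ is bounded there. This gives
\[v(x)\ge \mathbb{E}_x\Big[e^{-q(t\wedge T\wedge\tau_n)}v(U_{t\wedge T\wedge\tau_n})+\sum_{s\le t\wedge T\wedge\tau_n}e^{-qs}(\Delta D_s-\beta)\Big].\]
Since $v\ge 0$, we can drop the first term and let $n,t\to\infty$ by monotone convergence, which yields $J^+_{u,D}(x)\le v(x)$. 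Lemma \ref{lem2.1} then gives $V\le v$. A small technical point arises at the lower boundary: $\tau_n\to T$, and at ruin $v(0)=0$, so nothing is lost.

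For optimality, run the same computation with $(u^v,D^v)$. The construction makes every inequality an equality. On the continuation region the control $u^v_t$ attains $\max_u\mathcal{L}^uv=0$; this needs the sub-solution property to force $\max_u\mathcal{L}^uv=0$ wherever $\mathcal{M}v<0$, and again $v$ is used as a test function away from $a,x_0$. At jump times $\mathcal{M}v=0$, and the chosen $\eta$ gives $v(U_s)-v(U_{s-})=-(\Delta D_s-\beta)$. The process stays in the region where $\mathcal{M}v<0$ between jumps, except possibly at one instant. The remaining step is to show $\mathbb{E}_x[e^{-q(t\wedge T)}v(U^v_{t\wedge T})]\to 0$ as $t\to\infty$. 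This holds because under the barrier-type dividend rule the surplus remains bounded by $x_0$ after time $0$, so $v(U^v)$ is bounded and the factor $e^{-qt}$ kills it.

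The main obstacle is rigorously identifying $\max_u\mathcal{L}^uv=0$ on the set $\{\mathcal{M}v<0\}$ from the viscosity sub-solution property, together with justifying Itô's formula at $a$ and $x_0$ where $v$ is only $C^1$. For the first, at points of $C^2$ smoothness I would use $v+(\cdot-x)^4$ as the test function, exactly as in the proof of Theorem \ref{8.7.thm.2.5}. For the second, I would try the mollification/occupation-time argument first.
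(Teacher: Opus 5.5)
Your argument is correct, but it handles the terminal term differently from the paper. The paper takes the pointwise relations $\mathcal{L}^u v\le 0$, $\mathcal{M}v\le 0$ and $\mathcal{M}v\cdot\max_u\mathcal{L}^u v=0$ off $\{a,x_0\}$ for granted and applies the Meyer--It\^o formula. It then shows $\mathbb{E}_x[e^{-q(t\wedge T_n)}v(U_{t\wedge T_n})]\to 0$ for \emph{every} admissible strategy, using only the linear growth of $v$, the maximal estimate $\mathbb{E}_x\sup_t e^{-qt}(M^{u,D}_t)^2\le(1+\sqrt2)^2(\sigma_+^2+\sigma_-^2)/q$, and dominated convergence. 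That one estimate serves both the inequality and the equality.

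You proceed differently on both counts. For the upper bound you drop the terminal term. This is legitimate for $(u,D)\in\Pi^+_x$, since the dividend sum then has positive terms and monotone convergence applies. For the candidate strategy you use confinement of $U^v$ below $x_0$, so the terminal term is bounded and killed by the discount factor. Your route avoids the BDG/maximal-inequality machinery entirely. It also makes explicit how the pointwise HJB relations follow from Definition \ref{def.8.6.2.3}, which the paper skips. The paper's estimate, by contrast, does not depend on the structure of the feedback strategy.

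Two facts your route rests on should be proved rather than asserted.
\begin{itemize}
\item \textbf{Nonnegativity.} You need $v\ge 0$. It follows from the super-solution inequality with $u=0$, which gives $-qv(x)\le 0$ on $\mathcal{D}$, together with continuity.
\item \textbf{Barrier behaviour.} The dividend rule is defined through the set $\{\mathcal{M}v=0\}$, not through $x_0$, so ``barrier-type'' needs justification. Since $v(y)-y$ is constant on $[x_0,\infty)$, $\mathcal{M}v$ is constant there. At any large $x>a\vee x_0$, the sub-solution property with test function $v+(\cdot-x)^4$ near $x$ gives $\max\{\mu_+^+-qv(x),\,\mathcal{M}v(x)\}\ge 0$. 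Because $\mu_+^+-qv(x)<0$ for large $x$, this forces $\mathcal{M}v\equiv 0$ on $[x_0,\infty)$.
\end{itemize}
After a payment, the new level $y^\ast$ maximizes $v(y)-y$ over $[0,U^v_{t-}]$, so $\mathcal{M}v(y^\ast)=-\beta<0$. Hence $y^\ast<x_0$. The same identity shows $\mathcal{M}v(U^v_t)<0$ for all $t\ge 0$, not merely ``between jumps except at one instant''. This is also exactly what you need to conclude $\mathcal{L}^{u^v_s}v(U^v_s)=0$ for a.e.\ $s$.
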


\begin{proof}
By assumption, $v$ is a classical solution to \eqref{7.17.HJB} on $(0,\infty)$ except for $a$ and $x_{0}$, that is
\begin{align}
\label{eq:a018}
\left\{
\begin{array}{ll}
\mathcal{L}^uv(x)\le0,& (x,u)\in \left[(0,\infty)\setminus \{a,x_{0}\}\right]\times [0,1], \\
\mathcal{M}v(x)\leq 0,& x\in [0,\infty),
\\
\mathcal{M}v(x)\left(\max_{u\in[0,1]}\mathcal{L}^uv(x)\right)=0,& (x,u)\in \left[(0,\infty)\setminus \{a,x_{0}\}\right]\times [0,1].
\end{array}
\right.
\end{align}
Let $(u, D) \in \Pi^+_x$ be an arbitrary reinsurance-dividend strategy with initial surplus level $x>0$. Recall that $(U^{u,D}_t)_{t \geq 0}$ is governed by \eqref{eq:a002}. Denote the continuous part of $(U^{u,D}_t)_{t \geq 0}$ as $(U^{u,D,c}_t)_{t \geq 0}$. Define a sequence of stopping times $(T_{n}^{u,D})_{n \geq 1}$ as
\begin{align}
T_{n}^{u,D}:=\inf\{t\geq 0: \big|U^{u,D}_{t}\big|\notin (1/n,n)\},\quad n\geq 1.\nn
\end{align}
Recalling that $v(x)$ is continuous and continuously differentiable on $(0,\infty)$ except at $x=x_{0}$ and $x=a$ (hence, $v^{\prime}(x)$ is of bounded variation), we know that $v$ can be expressed as the difference of two convex functions. 
Then, appealing to the Meyer-It\^{o} formula (see Theorem 70 of Chapter 7 in \cite{Protter05}), we obtain
\begin{align}
\label{2.2.wr00}
e^{-q (t\wedge T_{n}^{u,D})}v\left(U^{u,D}_{t\wedge T_{n}^{u,D}}\right)
=&~v(x)-\int_{0}^{t\wedge T_{n}^{u,D}}q e^{-q s}v(U^{u,D}_{s})\ds+\int_{0}^{t\wedge T_{n}^{u,D}} e^{-q s}v^{\prime}(U^{u,D}_{s})\mathrm{d}U^{u,D}_{s}\nn\\
&+\sum_{s\leq t\wedge T_{n}^{u,D}}e^{-q s}\left(v(U^{u,D}_{s})-v(U^{u,D}_{s-})+v^{\prime}(U^{u,D}_{s-})\left( D_{s-}-D_{s}\right)\right)\nn\\
&+\frac{1}{2} \int_{0}^{t\wedge T_{n}^{u,D}} e^{-q s}\int_{0}^{\infty}\mu_{v}(\mathrm{d}z)\mathrm{d}L_{s}^{z},
\end{align}
where $\mu_{v}(\mathrm{d}z)$ is the signed measure (when restricted to compact sets) corresponding to the second derivative of $v$ in the sense of distributions, and $L_{s}^{z}$ is the local time at $z$ of the process $(U^{u,D}_{t})_{t\geq 0}$. By $v\in C^1((0,\infty))\cap C^2((0,\infty)\setminus\{x_{0},a\})$ and \eqref{rig.lef.sec.der.}, for any compact set $A\subseteq (0,\infty)$, the signed measure $\mu_{v}(\mathrm{d}z)$ admits the representation
\begin{align}
\mu_{v}(A)=\int_{A}v^{\prime\prime}(z)\mathrm{d}z.\nn
\end{align}
Applying Corollary 1 of Theorem 70 of Chapter 7 in \cite{Protter05}, we obtain
\begin{align}
\label{mifor.}
& \int_{0}^{t\wedge T_{n}^{u,D}} e^{-q s}\int_{0}^{\infty}\mu_{v}(\mathrm{d}z)\mathrm{d}L_{s}^{z}
\nn\\
=&~ \int_{0}^{t\wedge T_{n}^{u,D}} e^{-q s}v^{\prime\prime}(U^{u,D}_{s-})\mathrm{d}\left\langle U_{\cdot}^{u,D,c},U_{\cdot}^{u,D,c}\right\rangle_{s}
\nn\\
=&~\int_{0}^{t\wedge T_{n}^{u,D}} e^{-q s}(\sigma_{+}^2\textbf{1}_{\{U^{u,D}_{s-}>a\}}+\sigma_{-}^2\textbf{1}_{\{U^{u,D}_{s-}\le a\}})u_{s}^{2}v^{\prime\prime}(U^{u,D}_{s-})\ds.
\end{align}
Consequently, combining \eqref{2.2.wr00} and \eqref{mifor.}, we have
\begin{align}
\label{2.2.wr}
e^{-q (t\wedge T_{n}^{u,D})}v\left(U^{u,D}_{t\wedge T_{n}^{u,D}}\right) 
=&~v(x)+\int_{0}^{t\wedge T_{n}^{u,D}}e^{-q s}\mathcal{L}^{u_{s}}v(U^{u,D}_{s})\ds
\nn\\
&+\int_{0}^{t\wedge T_{n}^{u,D}} e^{-q s}v^{\prime}(U^{u,D}_{s})u_{s}\left(\sigma_{+}\mathbf{1}_{\{U^{u,D}_{s}>a\}}+\sigma_{-}\mathbf{1}_{\{U^{u,D}_{s}\leq a\}}\right)\db_{s}\nn\\
&+\sum_{s\leq t\wedge T_{n}^{u,D}} e^{-q s}\left(v(U^{u,D}_{s-}-\Delta D_{s})-v(U^{u,D}_{s-})\right).
\end{align}
From \eqref{eq:a018} and the definition of $(u^{v},D^{v})$, it follows that
\begin{eqnarray}
\label{2.3.wr}
\left\{\hspace{-0.2cm}
\begin{array}{ll}
\mathcal{L}^{u_{s}}v(U^{u,D}_{s})\leq 0,&s<t\wedge T_{n}^{u,D}, \\
v(U^{u,D}_{s-}-( D_{s}-D_{s-}))-v(U^{u,D}_{s-})+ D_{s}-D_{s-}-\beta \le 0,&s<t\wedge T_{n}^{u,D}, 
\\
v(U^{u^{v},D^{v}}_{s-}-( D^{v}_{s}-D^{v}_{s-}))-v(U^{u^{v},D^{v}}_{s-})+ D^{v}_{s}-D^{v}_{s-}-\beta = 0,&s<t\wedge T_{n}^{u^{v},D^{v}},D^{v}_{s}>D^{v}_{s-}. 
\end{array}
\right.
\end{eqnarray}
We proceed by asserting the following claim
\begin{align}
\label{claim.of.finite.dividend.payout}
\mathbb{P}_{x}\left(\#\{s\in[0,t\wedge T_{n}^{u^{v},D^{v}}): D^{v}_{s}-D^{v}_{s-}>0\}<\infty\right)=1,
\end{align}
where $\#A$ denotes the cardinality of the set $A$. Suppose, for contradiction, that
\begin{align}
\label{contra.ass}
\mathbb{P}_{x}\left(\#\{s\in[0,t\wedge T_{n}^{u,D}): D_{s}-D_{s-}>0\}=\infty\right)>0.
\end{align}
Since $(u,D) \in \Pi^+_x$, condition \eqref{contra.ass} implies that
\begin{align}
\label{contra.ass.1}
\mathbb{P}_{x}\left(\sum_{s<t\wedge T_{n}^{u,D}}\Delta D_{s}=\infty\right)>0.
\end{align}
By the definition of $T_n^{u,D}$, we also have
\begin{align}
\label{contra.ass.2}
\mathbb{P}_{x}\left(\inf_{s\in [0,t\wedge T_{n}^{u,D})}U^{u,D}_{s}\geq \frac{1}{n}\right)=1.
\end{align}
Combining \eqref{contra.ass.1} and \eqref{contra.ass.2}, we deduce that
\begin{align}
&\mathbb{P}\left(\sup_{r\in [0,t\wedge T_{n}^{u,D})}\left(x+\int_{0}^{r}u_{s}\left(\mu_{+}\textbf{1}_{\{U_s^{u,D}>a\}}+\mu_{-}\textbf{1}_{\{U_s^{u,D}\le a\}}\right)\ds
\right.\right.
\nn\\
&\quad\quad 
\left.\left.+\int_{0}^{r}u_{s}\left(\sigma_{+}\textbf{1}_{\{U_s^{u,D}>a\}}+\sigma_{-}\textbf{1}_{\{U_s^{u,D}\le a\}}\right)\db_s
\right)=\infty\right)>0.\nn
\end{align}
Noting that
\begin{align}
&\sup_{r\in [0,t\wedge T_{n}^{u,D})}\int_{0}^{r}u_{s}\left(\mu_{+}\textbf{1}_{\{U_s^{u,D}>a\}}+\mu_{-}\textbf{1}_{\{U_s^{u,D}\le a\}}\right)\ds
\nn\\
\leq&~ \int_{0}^{t}\left|u_{s}\left(\mu_{+}\textbf{1}_{\{U_s^{u,D}>a\}}+\mu_{-}\textbf{1}_{\{U_s^{u,D}\le a\}}\right)\right|\ds
\nn\\
\leq&~\left(\left|\mu_{-}\right|+\left|\mu_{+}\right|\right)t<\infty,\quad \mathbb{P}-\mathrm{a.s.},\nn
\end{align}
we conclude that
\begin{align}
\label{contra.01}
&\mathbb{P}\left(\sup_{r\in [0,t\wedge T_{n}^{u,D})}\left|\int_{0}^{r}u_{s}\left(\sigma_{+}\textbf{1}_{\{U_s^{u,D}>a\}}+\sigma_{-}\textbf{1}_{\{U_s^{u,D}\le a\}}\right)\db_s
\right|=\infty\right)>0.
\end{align}
However, applying the Burkholder-Davis-Gundy inequality (see Theorem 3.28 in \cite{Karatzas91}), we obtain
\begin{align}
\label{contra.02}
&\mathbb{E}\left(\sup_{r\in [0,t\wedge T_{n}^{u,D})}\left|\int_{0}^{r}u_{s}\left(\sigma_{+}\textbf{1}_{\{U_s^{u,D}>a\}}+\sigma_{-}\textbf{1}_{\{U_s^{u,D}\le a\}}\right)\db_s
\right|\right)
\nn\\
\leq&~ K
\mathbb{E}\left[\left(\int_{0}^{t\wedge T_{n}^{u,D}}u_{s}^{2}\left(\sigma_{+}^{2}\textbf{1}_{\{U_s^{u,D}>a\}}+\sigma_{-}^{2}\textbf{1}_{\{U_s^{u,D}\le a\}}\right)\ds\right)^{\frac{1}{2}}\right]
\nn\\
\leq&~ K\sqrt{\left(\sigma_{+}^{2}+\sigma_{-}^{2}\right)t}<\infty,
\end{align}
for some finite constant $K>0$. This contradicts \eqref{contra.01}. Hence, assumption \eqref{contra.ass} must fail, confirming claim \eqref{claim.of.finite.dividend.payout}.
Given $(u^{v},D^{v})\in \Pi^{+}_x$, we then have
\begin{align}
\mathcal{M}v(U^{u^{v},D^{v}}_{s})<0,\quad \mathbb{P}_{x}\times \dt-\text{a.e. } (\omega,s)\in \Omega\times [0,t\wedge T_{n}^{u^{v},D^{v}}),\nn
\end{align}
which, by \eqref{eq:a018} and the definition of $(u^{v},D^{v})$, implies
\begin{align}
\label{2.32.wr} 
\int_{0}^{t\wedge T_{n}^{u^{v},D^{v}}}e^{-q s}\mathcal{L}^{u_{s}^{v}}v(U^{u^{v},D^{v}}_{s})\ds
=&~
\int_{0}^{t\wedge T_{n}^{u^{v},D^{v}}}\mathbf{1}_{\{\mathcal{M}v(U^{u^{v},D^{v}}_{s})<0\}}e^{-q s}\mathcal{L}^{u_{s}^{v}}v(U^{u^{v},D^{v}}_{s})\ds
\nn\\
=&~0, \quad \mathbb{P}_{x}-\text{a.s.}.
\end{align}
Substituting the first two equalities from \eqref{2.3.wr} into \eqref{2.2.wr}, and taking expectations, we obtain
\begin{align}
\label{2.33.wr.01}
\mathbb{E}_{x}\left(\sum_{s\leq t\wedge T_{n}^{u,D}} e^{-q s}\left(\Delta D_{s}-\beta\right)\right)
\leq
v(x)-\mathbb{E}_{x}\left(e^{-q (t\wedge T_{n}^{u,D})}v\left(U^{u,D}_{t\wedge T_{n}^{u,D}}\right)\right).
\end{align}
Similarly, substituting the third equality from \eqref{2.3.wr} and \eqref{2.32.wr} into \eqref{2.2.wr} with $(u,D)=(u^{v},D^{v})$, and taking expectations, yields
\begin{align}
\label{2.34.wr.01}
\mathbb{E}_{x}\left(\sum_{s\leq t\wedge T_{n}^{u^{v},D^{v}}} e^{-q s}\left(D_{s}^{v}-D_{s-}^{v}-\beta\right)\right)
=
v(x)-\mathbb{E}_{x}\left(e^{-q (t\wedge T_{n}^{u^{v},D^{v}})}v\left(U^{u^{v},D^{v}}_{t\wedge T_{n}^{u^{v},D^{v}}}\right)\right).
\end{align}
Next, recall the inequality $a \leq a^2 + 1^2$ and that the function $x \mapsto e^{-qx}x$ is bounded on $[0,\infty)$. It follows that
\begin{align}
\label{2.35.wr}
0\leq&~ e^{-q (t\wedge T_{n}^{u,D})}v\left(U^{u,D}_{t\wedge T_{n}^{u,D}}\right)\mathbf{1}_{\{T^{u,D}<\infty\}}
\nn\\
\leq&~ e^{-q (t\wedge T_{n}^{u,D})}\sup_{x\leq x_{0}}v\left(x\right)\mathbf{1}_{\{U^{u,D}_{t\wedge T_{n}^{u,D}}\leq x_{0}\}}\mathbf{1}_{\{T^{u,D}<\infty\}}
\nn\\
&\,+
e^{-q (t\wedge T_{n}^{u,D})}\left(v\left(x_{0}\right)+U^{u,D}_{t\wedge T_{n}^{u,D}}-x_{0}\right)\mathbf{1}_{\{U^{u,D}_{t\wedge T_{n}^{u,D}}> x_{0}\}} \mathbf{1}_{\{T^{u,D}<\infty\}}
\nn\\
\leq&~ e^{-q (t\wedge T_{n}^{u,D})}\left(\sup_{x\in[0, x_{0}]}v\left(x\right)+x+\left(\left|\mu_{-}\right|+\left|\mu_{+}\right|\right)\left(t\wedge T_{n}^{u,D}\right)\right)\mathbf{1}_{\{T^{u,D}<\infty\}}
\nn\\
&\,+
e^{-q (t\wedge T_{n}^{u,D})}\left|\int_{0}^{t\wedge T_{n}^{u,D}}u_{s}\left(\sigma_{+}\textbf{1}_{\{U^{u,D}_s>a\}}+\sigma_{-}\textbf{1}_{\{U^{u,D}_s\le a\}}\right)\db_s\right| \mathbf{1}_{\{T^{u,D}<\infty\}}
\nn\\
\leq&~ e^{-q (t\wedge T_{n}^{u,D})}\left(1+\sup_{x\in[0, x_{0}]}v\left(x\right)+x+\left(\left|\mu_{-}\right|+\left|\mu_{+}\right|\right)\left(t\wedge T_{n}^{u,D}\right)\right)\mathbf{1}_{\{T^{u,D}<\infty\}}
\nn\\
&\,+
e^{-q (t\wedge T_{n}^{u,D})}\left[\int_{0}^{t\wedge T_{n}^{u,D}}u_{s}\left(\sigma_{+}\textbf{1}_{\{U^{u,D}_s>a\}}+\sigma_{-}\textbf{1}_{\{U^{u,D}_s\le a\}}\right)\db_s\right]^{2} \mathbf{1}_{\{T^{u,D}<\infty\}}
\nn\\
\leq&~ K_{1} +
\sup_{s\geq 0} \left[e^{-q s} \left[M^{u,D}_{s}\right]^{2}\right],
\end{align}
where 
$$K_{1}=1+\sup_{x\in[0, x_{0}]}v\left(x\right)+x+\sup_{s\geq 0} e^{-q s}s(|\mu_{-}|+|\mu_{+}|)<\infty$$ 
is a constant, and $$M^{u,D}_{t}:=\int_{0}^{t}u_{s}\left(\sigma_{+}\textbf{1}_{\{U^{u,D}_s>a\}}+\sigma_{-}\textbf{1}_{\{U^{u,D}_s\le a\}}\right)\db_s,\quad t\geq 0,$$ is a continuous martingale.
Applying It\^{o}'s formula, we have
\begin{align}
e^{-q t} \left[M^{u,D}_{t}\right]^{2}=&~-q\int_{0}^{t}e^{-q s} \left[M^{u,D}_{s}\right]^{2}\ds+\int_{0}^{t}e^{-q s}
u_{s}^{2}\left(\sigma_{+}^{2}\textbf{1}_{\{U^{u,D}_s>a\}}+\sigma_{-}^{2}\textbf{1}_{\{U^{u,D}_s\le a\}}\right)\ds
\nn\\
&+2\int_{0}^{t}e^{-q s} M^{u,D}_{s}u_{s}\left(\sigma_{+}\textbf{1}_{\{U^{u,D}_s>a\}}+\sigma_{-}\textbf{1}_{\{U^{u,D}_s\le a\}}\right)\db_s
\nn\\
\leq&~ \frac{\sigma_{+}^{2}+\sigma_{-}^{2}}{q}+2\int_{0}^{t}e^{-q s} M^{u,D}_{s}u_{s}\left(\sigma_{+}\textbf{1}_{\{U^{u,D}_s>a\}}+\sigma_{-}\textbf{1}_{\{U^{u,D}_s\le a\}}\right)\db_s.\nn
\end{align}
By the Burkholder-Davis-Gundy inequality and H\"older's inequality, it follows that
\begin{align}
\label{a.leq k+k'sqrt{a}}
&\mathbb{E}_{x}\left(\sup_{t\geq 0} \left[e^{-q t} \left[M^{u,D}_{t}\right]^{2}\right]\right)
\nn\\
\leq&~
\frac{\sigma_{+}^{2}+\sigma_{-}^{2}}{q}+2\mathbb{E}_{x}\left(\sqrt{\int_{0}^{\infty}e^{-2qt}\left[M^{u,D}_{t}\right]^{2}u_{t}^{2}
\left(\sigma_{+}^{2}\textbf{1}_{\{U_T^{u,D}>a\}}+\sigma_{-}^{2}\textbf{1}_{\{U_T^{u,D}\le a\}}\right)\dt}\right)
\nn\\
\leq&~
\frac{\sigma_{+}^{2}+\sigma_{-}^{2}}{q}+2\mathbb{E}_{x}\left(\sqrt{\int_{0}^{\infty}e^{-qt}\sup_{s\geq 0}\left[e^{-qs}\left[M^{u,D}_{s}\right]^{2}\right]
\left(\sigma_{+}^{2}+\sigma_{-}^{2}\right)\dt}\right)
\nn\\
\leq&~
\frac{\sigma_{+}^{2}+\sigma_{-}^{2}}{q}+2\sqrt{\frac{\sigma_{+}^{2}+\sigma_{-}^{2}}{q}}\sqrt{\mathbb{E}_{x}\left(\sup_{t\geq 0} \left[e^{-q t} \left[M^{u,D}_{t}\right]^{2}\right]\right)}.
\end{align}
Using \eqref{a.leq k+k'sqrt{a}} and the fact that
\begin{align}
\left(\sqrt{a}\right)^{2}-k\sqrt{a}-l\leq 0\Longrightarrow
\sqrt{a}\leq \frac{k+\sqrt{k^{2}+4l}}{2}\quad (a,k,l\geq 0),
\end{align}
we have
\begin{align}
\label{integrable.upp.bound.}
\mathbb{E}_{x}\left(\sup_{t\geq 0} \left[e^{-q t} \left[M^{u,D}_{t}\right]^{2}\right]\right)\leq \left(1+\sqrt{2}\right)^{2}\frac{\sigma_{+}^{2}+\sigma_{-}^{2}}{q}.
\end{align}
Furthermore, since $U^{u,D}_{T^{u,D}}\leq 0$ and $v(x)=0$ for $x\leq 0$, we have
\begin{align}
\label{a.s.t,n.tend.oo}
&\lim\limits_{t,n\rightarrow\infty}e^{-q (t\wedge T_{n}^{u,D})}v\left(U^{u,D}_{t\wedge T_{n}^{u,D}}\right)\mathbf{1}_{\{T^{u,D}<\infty\}}
\nn\\
=&~
e^{-q T^{u,D}}v\left(U^{u,D}_{T^{u,D}}\right)\mathbf{1}_{\{T^{u,D}<\infty\}}=0,\quad \mathbb{P}_{x}-\mathrm{a.s.}.
\end{align}
Combining \eqref{2.35.wr}, \eqref{integrable.upp.bound.}, and \eqref{a.s.t,n.tend.oo}, and applying the dominated convergence theorem, we conclude
\begin{align}
\label{limit=0}
\lim\limits_{t,n\rightarrow\infty}\mathbb{E}_{x}\left(e^{-q (t\wedge T_{n}^{u,D})}v\left(U^{u,D}_{t\wedge T_{n}^{u,D}}\right)\mathbf{1}_{\{T^{u,D}<\infty\}}\right)=0.
\end{align}
Analogous arguments to those applied in deriving \eqref{2.35.wr} yield
\begin{align}
\label{2.35.wr.1}
0\leq&~\mathbb{E}_{x}\left(e^{-q (t\wedge T_{n}^{u,D})}v\left(U^{u,D}_{t\wedge T_{n}^{u,D}}\right)\mathbf{1}_{\{T^{u,D}=\infty\}}\right)
\nn\\
\leq&~ \mathbb{E}_{x}\left(e^{-q (t\wedge T_{n}^{u,D})}\left(\sup_{x\in[0, x_{0}]}v\left(x\right)+x+\left(\left|\mu_{-}\right|+\left|\mu_{+}\right|\right)\left(t\wedge T_{n}^{u,D}\right)\right)\mathbf{1}_{\{T^{u,D}=\infty\}}\right)
\nn\\
&\,+
\mathbb{E}_{x}\left(e^{-q (t\wedge T_{n}^{u,D})}\left|\int_{0}^{t\wedge T_{n}^{u,D}}u_{s}\left(\sigma_{+}\textbf{1}_{\{U^{u,D}_s>a\}}+\sigma_{-}\textbf{1}_{\{U^{u,D}_s\le a\}}\right)\db_s\right| \mathbf{1}_{\{T^{u,D}=\infty\}}\right).
\end{align}
Applying the H\"older's inequality and using the uniform bound established in \eqref{integrable.upp.bound.}, we obtain
\begin{align}
\label{2.35.wr.2}
&\mathbb{E}_{x}\left(e^{-q (t\wedge T_{n}^{u,D})}\left|\int_{0}^{t\wedge T_{n}^{u,D}}u_{s}\left(\sigma_{+}\textbf{1}_{\{U^{u,D}_s>a\}}+\sigma_{-}\textbf{1}_{\{U^{u,D}_s\le a\}}\right)\db_s\right| \mathbf{1}_{\{T^{u,D}=\infty\}}\right)
\nn\\
\leq&~\sqrt{\mathbb{E}_{x}\left(e^{-q (t\wedge T_{n}^{u,D})}\mathbf{1}_{\{T^{u,D}=\infty\}}\right)}\sqrt{\mathbb{E}_{x}\left(e^{-q (t\wedge T_{n}^{u,D})}\left[M_{t\wedge T_{n}^{u,D}}^{u,D}\right]^{2}\right)}
\nn\\
\leq&~\sqrt{\mathbb{E}_{x}\left(e^{-q (t\wedge T_{n}^{u,D})}\mathbf{1}_{\{T^{u,D}=\infty\}}\right)}\sqrt{\mathbb{E}_{x}\left(\sup_{t\geq 0} e^{-q t} \left[M^{u,D}_{t}\right]^{2}\right)}
\nn\\
\leq&~
\left(1+\sqrt{2}\right)\sqrt{\frac{\sigma_{+}^{2}+\sigma_{-}^{2}}{q}}
\sqrt{\mathbb{E}_{x}\left(e^{-q (t\wedge T_{n}^{u,D})}\mathbf{1}_{\{T^{u,D}=\infty\}}\right)}.
\end{align}
Note that
\begin{align}
\lim\limits_{t,n\rightarrow \infty}e^{-q (t\wedge T_{n}^{u,D})}\mathbf{1}_{\{T^{u,D}=\infty\}}=
e^{-q T^{u,D}}\mathbf{1}_{\{T^{u,D}=\infty\}}=0,\quad \mathbb{P}_{x}-\mathrm{a.s.}.\nn
\end{align}
By the bounded convergence theorem, it follows that
\begin{align}
\label{2.43}
\lim\limits_{t,n\rightarrow \infty}\mathbb{E}_{x}\left(e^{-q (t\wedge T_{n}^{u,D})}\mathbf{1}_{\{T^{u,D}=\infty\}}\right)=0.
\end{align}
Combining \eqref{2.35.wr.2} and \eqref{2.43}, we conclude
\begin{align}
\label{2.35.wr.2.r}
&\lim\limits_{t,n\rightarrow\infty}\mathbb{E}_{x}\left(e^{-q (t\wedge T_{n}^{u,D})}\left|\int_{0}^{t\wedge T_{n}^{u,D}}u_{s}\left(\sigma_{+}\textbf{1}_{\{U^{u,D}_s>a\}}+\sigma_{-}\textbf{1}_{\{U^{u,D}_s\le a\}}\right)\db_s\right| \mathbf{1}_{\{T^{u,D}=\infty\}}\right)=0. 
\end{align}
Since $e^{-qx}x\rightarrow0$ as $x\rightarrow\infty$, we have
\begin{align}
&\lim\limits_{t,n\rightarrow\infty}e^{-q (t\wedge T_{n}^{u,D})}\left(\sup_{x\in[0, x_{0}]}v\left(x\right)+x+\left(\left|\mu_{-}\right|+\left|\mu_{+}\right|\right)\left(t\wedge T_{n}^{u,D}\right)\right)\mathbf{1}_{\{T^{u,D}=\infty\}}=0,\quad \mathbb{P}_{x}-\mathrm{a.s.}.\nn
\end{align}
Applying the bounded convergence theorem again yields
\begin{align}
\label{2.45}
\lim\limits_{t,n\rightarrow\infty}\mathbb{E}_{x}\left(e^{-q (t\wedge T_{n}^{u,D})}\left(\sup_{x\in[0, x_{0}]}v\left(x\right)+x+\left(\left|\mu_{-}\right|+\left|\mu_{+}\right|\right)\left(t\wedge T_{n}^{u,D}\right)\right)\mathbf{1}_{\{T^{u,D}=\infty\}}\right)=0.
\end{align}
Now, combining \eqref{2.35.wr.1}, \eqref{2.35.wr.2.r}, and \eqref{2.45}, we obtain
\begin{align}
\label{con.to.0.2}
\lim\limits_{t,n\rightarrow\infty}\mathbb{E}_{x}\left(e^{-q (t\wedge T_{n}^{u,D})}v\left(U^{u,D}_{t\wedge T_{n}^{u,D}}\right)\mathbf{1}_{\{T^{u,D}=\infty\}}\right)=0.
\end{align}
Finally, putting together \eqref{limit=0} and \eqref{con.to.0.2}, we conclude
\begin{align}
\label{con.to.0}
\lim\limits_{t,n\rightarrow\infty}\mathbb{E}_{x}\left(e^{-q (t\wedge T_{n}^{u,D})}v\left(U^{u,D}_{t\wedge T_{n}^{u,D}}\right)\right)=0.
\end{align}
This, together with \eqref{2.33.wr.01} and \eqref{2.34.wr.01}, implies
\begin{align}
\begin{cases} 
\displaystyle J_{u,D}(x)=\sum_{s\leq t\wedge T^{u,D}} e^{-q s}\left(\Delta D_{s}-\beta\right) \leq v(x),& \medskip\\
\displaystyle \sum_{s\leq t\wedge T^{u^{v},D^{v}}} e^{-q s}\left(D_{s}^{v}-D_{s-}^{v}-\beta\right) =v(x).& 
\end{cases} 
\end{align}
Recalling \eqref{shrinkage.of.set.of.add.str.} and noting that $(u,D)\in\Pi_x^{+}$ is arbitrary, we conclude
\begin{align}
\begin{cases}
\displaystyle\sup_{(u,D)\in\Pi_x}J_{u,D}(x)=\sup_{(u,D)\in\Pi_x^{+}}J_{u,D}(x) \leq v(x),& \medskip\\
\displaystyle J_{u^{v},D^{v}}(x)= v(x).& 
\end{cases} 
\end{align}
The proof is complete.
\end{proof}

\begin{remark}
Theorem \ref{thm2.2} is central to explicitly characterizing the optimal strategy and the value function. To invoke this theorem, one must first construct a viscosity (or classical) solution of the HJB equation \eqref{7.17.HJB} that is $C^2$ everywhere except at two points $a$ and $x_{0}$. Such a construction is carried out in Section \ref{sec.3}.

\end{remark}

\section{An auxiliary control problem and its solution}
\label{sec.3}

The central task of this and the next section is to construct a sufficiently smooth solution to \eqref{7.17.HJB}, thereby enabling us, via Theorem~\ref{8.7.thm.2.7}, to identify the value function and the optimal strategy for the control problem \eqref{eq:a008}. We accomplish this task in two steps.
\begin{itemize}
\item[(I)] In the first step, we modify the control problem \eqref{eq:a008} by restricting its admissible set so that only two-barrier impulse dividend strategies are allowed, thereby yielding an auxiliary control problem \eqref{eq:a008.aux.}. We then solve this auxiliary problem and derive its optimal reinsurance-dividend strategy, together with the corresponding value function in explicit form.

Given the challenges encountered and sophistication required, this first step is done in a satisfactory manner. For almost all parameter configurations, the explicit optimal strategy and value function are characterized; for the few exceptional cases, necessary and sufficient conditions for the existence of an optimal strategy are given, and whenever these conditions hold, explicit forms of the optimal strategy and value function are also available.

\item[(II)] In the second step, we need to verify that the already obtained value function for the auxiliary problem \eqref{eq:a008.aux.} satisfies \eqref{7.17.HJB}; and, the optimal strategy for the auxiliary control problem \eqref{eq:a008.aux.} is also the optimal strategy for the primal control problem \eqref{eq:a008}.
\end{itemize}

In this section, we accomplish Step (I); while the work of Step (II) is deferred to Section \ref{sec.4}. In detail, we investigate the reinsurance-dividend optimization problem under the constraint that the dividend strategy is restricted to the class of double barrier strategies. Specifically, a double barrier dividend strategy, denoted by $(D_t^{\underline{x},\overline{x}})_{t \geq 0}$ for $\beta \leq \underline{x} + \beta < \overline{x}$, is defined as follows: whenever the surplus process exceeds or attempts to exceed the upper barrier $\overline{x}$, a lump-sum dividend is paid so as to bring the surplus back down to the lower barrier $\underline{x}$; no dividends are distributed if the surplus remains below $\overline{x}$.
Define
\begin{align}
\mathcal{G}_x:= \{(u,D)\in\Pi_x: D=D^{\underline{x},\overline{x}} \text{ for some } (\underline{x},\overline{x}) \text{ with } \beta\leq \beta+\underline{x}<\overline{x}\}.
\end{align}
We consider the following auxiliary stochastic control problem: identify $(u^{\star},D^{\star})\in \mathcal{G}_x$ such that
\begin{equation}\label{eq:a008.aux.}
J^+_{u^{\star},D^{\star}}(x)=\sup_{(u,D)\in\mathcal{G}_x}J^+_{u,D}(x),\quad x>0.
\end{equation}
where $J^+_{u,D}$ is as defined in equation \eqref{eq:a006b}.

\begin{lemma}
\label{lem3.1.w}
Let $(\underline{x},\overline{x})$ be such that $\beta\leq \beta+\underline{x}<\overline{x}$. Define
\begin{align}
\mathcal{G}_x^{\underline{x},\overline{x}}:= \{u:(u,D^{\underline{x},\overline{x}})\in\mathcal{G}_x\},\quad 
\end{align}
which is the cut set of $\mathcal{G}_x$ at $D^{\underline{x},\overline{x}}$.
Then the following identity holds
\begin{align}
\sup_{(u,D)\in\mathcal{G}_x}J^+_{u,D}(x)=\sup_{\beta\leq \beta+\underline{x}<\overline{x}}\sup_{u \in \mathcal{G}_x^{\underline{x},\overline{x}}} J^+_{u,D^{\underline{x},\overline{x}}}(x), \quad x \geq 0.\nn
\end{align}
\end{lemma}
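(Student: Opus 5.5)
This identity is essentially a rearrangement of suprema, so I will prove it by showing two inequalities; both rest on the fact that $\mathcal{G}_x$ is the disjoint union of its cut sets. First, I will record that
\[
\mathcal{G}_x=\bigcup_{\beta\le\beta+\underline{x}<\overline{x}}\big\{(u,D^{\underline{x},\overline{x}}):u\in\mathcal{G}_x^{\underline{x},\overline{x}}\big\}.
\]
This follows directly from the definition of $\mathcal{G}_x$: every element has the form $(u,D^{\underline{x},\overline{x}})$ for some admissible barrier pair, and conversely each such pair with $u$ in the cut set lies in $\mathcal{G}_x$. With this decomposition in hand, the general identity $\sup_{\bigcup_i A_i}f=\sup_i\sup_{A_i}f$, valid in $[-\infty,\infty]$, gives the claim at once.

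To make this self-contained, I will argue the two inequalities separately. For ``$\ge$'': fix any admissible barrier pair $(\underline{x},\overline{x})$ and any $u\in\mathcal{G}_x^{\underline{x},\overline{x}}$. Then $(u,D^{\underline{x},\overline{x}})\in\mathcal{G}_x$, so $J^+_{u,D^{\underline{x},\overline{x}}}(x)\le\sup_{\mathcal{G}_x}J^+$. Taking suprema over $u$ and then over the barrier pair gives the inequality.

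For ``$\le$'': take any $(u,D)\in\mathcal{G}_x$. By definition there is a pair $(\underline{x},\overline{x})$ with $D=D^{\underline{x},\overline{x}}$, and then $u\in\mathcal{G}_x^{\underline{x},\overline{x}}$. Hence $J^+_{u,D}(x)$ is bounded by the inner supremum for that pair, and therefore by the double supremum. Taking the supremum over $(u,D)$ finishes this direction.

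The only point needing care, and the one I expect to be the (mild) obstacle, is degenerate cases. Some cut sets may be empty, for example when $x$ is such that the barrier strategy is not admissible with any $u$; there the inner supremum is $-\infty$ by convention and does not affect the outer one. For $x=0$ both sides vanish, since $T^{u,D}=0$ and $\Delta D_0\le 0\vee 0$. Also, $(0,D^{\underline{x},\overline{x}})$ should be admissible whenever $x\le\overline{x}$ (after an initial lump sum down to $\underline{x}$ if $x\ge\overline{x}$), so the sets involved are nonempty and both sides lie in $[0,\infty)$ by Lemma~\ref{lem2.1}. Finally, $J^+$ is well defined on $\mathcal{G}_x\subseteq\Pi_x$, so no measurability issue arises. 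With these remarks the proof is complete.
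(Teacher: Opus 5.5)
Your argument is correct and is exactly the paper's proof: the ``$\le$'' direction writes any $(u,D)\in\mathcal{G}_x$ as $(u,D^{\underline{x},\overline{x}})$ with $u$ in the corresponding cut set, and the ``$\ge$'' direction uses $(u,D^{\underline{x},\overline{x}})\in\mathcal{G}_x$ for every such $u$, with suprema taken on both sides. One harmless inaccuracy: the union of cut sets need not be disjoint (for instance, $u\equiv 0$ with $x$ below two different upper barriers gives $D\equiv 0$ for both pairs), but you only use the union, so nothing is affected.
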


\begin{proof}
Let $(u,D)\in\mathcal{G}_x$. Then, by definition, there exists $\underline{x}$ and $\overline{x}$ such that $\beta\leq \beta+\underline{x}<\overline{x}$ and $D=D^{\underline{x},\overline{x}}$. It follows that
\begin{align}
J^+_{u,D}(x)=J^+_{u,D^{\underline{x},\overline{x}}}(x)\leq \sup_{\beta\leq \beta+\underline{x}<\overline{x}}\sup_{u \in \mathcal{G}_x^{\underline{x},\overline{x}}} J^+_{u,D^{\underline{x},\overline{x}}}(x),\quad x\geq 0.\nn
\end{align}
Since $(u,D)$ is arbitrary in $\mathcal{G}_x$, we obtain
\begin{align}
\label{ineq.leq}
\sup_{(u,D)\in\mathcal{G}_x}J^+_{u,D}(x)\leq \sup_{\beta\leq \beta+\underline{x}<\overline{x}}\sup_{u \in \mathcal{G}_x^{\underline{x},\overline{x}}} J^+_{u,D^{\underline{x},\overline{x}}}(x),\quad x\geq 0.
\end{align}
Conversely, for any $u\in\mathcal{G}_x^{\underline{x},\overline{x}}$ with $\beta\leq \beta+\underline{x}<\overline{x}$, it follows that $(u,D^{\underline{x},\overline{x}})\in \mathcal{G}_x$. Therefore,
\begin{align}
\sup_{(u,D)\in\mathcal{G}_x}J^+_{u,D}(x)\geq J^+_{u,D^{\underline{x},\overline{x}}}(x), \quad x \geq 0.\nn
\end{align}
Taking the supremum over $u\in\mathcal{G}_x^{\underline{x},\overline{x}}$ and $(\underline{x},\overline{x})$, we get
\begin{align}
\label{ineq.geq}
\sup_{(u,D)\in\mathcal{G}_x}J^+_{u,D}(x)\geq \sup_{\beta\leq \beta+\underline{x}<\overline{x}}\sup_{u \in \mathcal{G}_x^{\underline{x},\overline{x}}}J^+_{u,D^{\underline{x},\overline{x}}}(x), \quad x \geq 0.
\end{align}
Combining inequalities \eqref{ineq.leq} and \eqref{ineq.geq} yields the desired result.
\end{proof}

With the aid of Lemma \ref{lem3.1.w}, the solution to the auxiliary optimization problem \eqref{eq:a008.aux.} can be decoupled into a two-stage procedure.
\begin{itemize}
\item In the first stage, for any pair $(\underline{x},\overline{x})$ satisfying $\beta\leq \beta+\underline{x}<\overline{x}$, we consider the corresponding family of intermediate control problems indexed by these parameters. For each such index pair, we determine a control $u_{\underline{x},\overline{x}}$ such that
\begin{equation}\label{auxiliary.contr.p.}
V_{\underline{x},\overline{x}}(x):= \sup_{u \in \mathcal{G}_x^{\underline{x},\overline{x}}} J^+_{u,D^{\underline{x},\overline{x}}}(x)=J^+_{u_{\underline{x},\overline{x}},D^{\underline{x},\overline{x}}}(x), \quad x \in \mathbb{R}_{+}.
\end{equation}
Consequently, we obtain a family of optimal proportional reinsurance-dividend strategies, $\left((u_{\underline{x},\overline{x}},D^{\underline{x},\overline{x}})\right)_{\beta\leq \beta+\underline{x}<\overline{x}}$, together with the associated value functions $\left(V_{\underline{x},\overline{x}}(x)\right)_{\beta\leq \beta+\underline{x}<\overline{x}}$, both indexed by the dividend barriers $(\underline{x},\overline{x})$.
\item In the second stage, we seek the uniformly maximal value function among the family $\left(V_{\underline{x},\overline{x}}(x)\right)_{\beta\leq \beta+\underline{x}<\overline{x}}$ of value functions. That is, we identify a pair of dividend barriers $(\underline{x}^{\star},\overline{x}^{\star})$ with $\beta\leq \beta+\underline{x}^{\star}<\overline{x}^{\star}$, such that
\begin{align}
\label{point.point.optimization}
V_{\underline{x}^{\star},\overline{x}^{\star}}(x)
=\sup_{\beta\leq \beta+\underline{x}<\overline{x}} V_{\underline{x},\overline{x}}(x), \quad x\in \mathbb{R}_{+}.
\end{align}
provided that such a maximizer exists.
Lemma \ref{lem3.1.w} then ensures that the admissible strategy
$(u_{\underline{x}^{\star},\overline{x}^{\star}},D^{\underline{x}^{\star},\overline{x}^{\star}})\in \mathcal{G}_x$ obtained indeed solves the auxiliary stochastic control problem \eqref{eq:a008.aux.}.
\end{itemize}
We begin with the analysis of the first stage.
By definition, the performance function $J^+_{u,D^{\underline{x},\overline{x}}}$ satisfies
\begin{align}
\left\{
\begin{array}{ll}
J^+_{u,D^{\underline{x},\overline{x}}}(x) = J^+_{u,D^{\underline{x},\overline{x}}}(\underline{x}) + x - \underline{x} - \beta, & x \geq \overline{x}, \medskip\\
J^+_{u,D^{\underline{x},\overline{x}}}(0) = 0. &
\end{array}
\right.
\nn
\end{align}
It therefore follows that
\begin{align}
\label{boundary.condition}
\left\{
\begin{array}{ll}
V_{\underline{x},\overline{x}}(x) = V_{\underline{x},\overline{x}}(\underline{x}) + x - \underline{x} - \beta, & x \geq \overline{x}, \medskip\\
V_{\underline{x},\overline{x}}(0) = 0. &
\end{array}
\right.
\end{align}
Consequently, in order to determine the expression of $V_{\underline{x},\overline{x}}(x)$ for $x \in [0, \infty)$, it suffices to characterize $V_{\underline{x},\overline{x}}(x)$ for $x \in (0, \overline{x})$.

We now assume that $V_{\underline{x},\overline{x}} \in C^1(0,\overline{x}) \cap C^2\left((0,\overline{x}) \setminus \{a\}\right)$. Under this assumption, we proceed to derive a second-order ordinary differential equation (ODE) satisfied by $V_{\underline{x},\overline{x}}$ and subsequently obtain an explicit expression for $V_{\underline{x},\overline{x}}$ by solving this differential equation. Moreover, using the derived explicit form, we verify that $V_{\underline{x},\overline{x}}$ indeed belongs to $C^1(0,\overline{x}) \cap C^2\big((0,\overline{x}) \setminus \{a\}\big)$, thereby confirming the consistency of our solution with the assumed smoothness properties. 
The ODE satisfied by $V_{\underline{x},\overline{x}}$ is given in the following theorem. Its proof is similar to those of Theorems \ref{8.7.thm.2.5} and \ref{thm2.2}, and therefore omitted.

\begin{theorem}
\label{thm3.1.w}
Fix $(\underline{x},\overline{x})$ satisfying $\beta\leq \beta+\underline{x}<\overline{x}$.
Suppose that $V_{\underline{x},\overline{x}} \in C^1(0,\overline{x}) \cap C^2\left((0,\overline{x}) \setminus \{a\}\right)$ satisfies the boundary condition \eqref{boundary.condition}. Then, $V_{\underline{x},\overline{x}}$ solves
\begin{align}
\label{vbarbar}
0=\max_{u\in[0,1]}\mathcal{L}^{u}f(x),\quad x\in (0,\overline{x})\setminus\{a\}.
\end{align}
Conversely, if $v \in C^1(0,\overline{x}) \cap C^2\left((0,\overline{x}) \setminus \{a\}\right)$ is a solution to \eqref{vbarbar} subject to the boundary condition \eqref{boundary.condition}, then 
\begin{align}
V_{\underline{x},\overline{x}}(x)={v(x)}, \quad x\geq 0,\nn 
\end{align}
and, the optimal reinsurance strategy for the control problem \eqref{auxiliary.contr.p.} is given by
\begin{align}
u^v_{t}= \argmax\limits_{u\in[0,1]}\mathcal{L}^uv(U_t^{v}),\quad \text{a.e. } (t,\omega)\in [0,\infty)\times\Omega,
\nn
\end{align}
where $U^{v}:=(U_t^{v})_{t\geq 0}$ represents the surplus process under the control of $(u^{v},D^{\underline{x},\overline{x}})$. 
\end{theorem}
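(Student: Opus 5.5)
The proof has two directions: a necessity part, proved by a local dynamic programming argument as in Theorem \ref{8.7.thm.2.5}, and a verification part, proved by the Meyer--It\^o route of Theorem \ref{thm2.2}. Throughout, $D^{\underline{x},\overline{x}}$ is frozen, so the only control is $u$. The region $(0,\overline{x})$ is never entered by a dividend jump except at the level $\underline{x}$ after a payout.

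\textbf{Necessity.} Let $V_{\underline{x},\overline{x}}$ be as in the statement and fix $x\in(0,\overline{x})\setminus\{a\}$. Choose $\delta$ so that $[x-\delta,x+\delta]\subset(0,\overline{x})$ and the interval avoids $a$; no dividend is paid before the exit time $\tau$ of this interval.

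For the inequality $\max_u\mathcal{L}^uV_{\underline{x},\overline{x}}(x)\le0$, repeat the super-solution half of Theorem \ref{8.7.thm.2.5}. Use the constant control $u_0$ on $[0,h\wedge\tau)$, then paste $\ep$-optimal controls from a grid of initial points, using continuity of $V_{\underline{x},\overline{x}}$ (which follows from its $C^1$ regularity). This gives $V_{\underline{x},\overline{x}}(x)\ge\mathbb{E}_x[e^{-q(h\wedge\tau)}V_{\underline{x},\overline{x}}(\BU_{h\wedge\tau})]$. Since $V_{\underline{x},\overline{x}}$ is itself $C^2$ near $x$, apply It\^o's formula directly, divide by $h$ and let $h\to0$; this gives $\mathcal{L}^{u_0}V_{\underline{x},\overline{x}}(x)\le 0$ for every $u_0$.

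For the reverse inequality, argue by contradiction as in the sub-solution half. Suppose $\max_u\mathcal{L}^uV_{\underline{x},\overline{x}}<-\ep_0$ on a neighbourhood of $x$. Perturb by $(y-x)^4$ to obtain a strict gap outside the neighbourhood. It\^o's formula up to the exit time then bounds every admissible $u$ by $V_{\underline{x},\overline{x}}(x)-\ep/q$, contradicting the definition of the supremum. No intervention term appears here, because dividends cannot occur before the exit time. This gives \eqref{vbarbar}.

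\textbf{Verification.} Let $v$ be as in the converse part and extend it to $[\overline{x},\infty)$ via \eqref{boundary.condition}. For an arbitrary $u\in\mathcal{G}_x^{\underline{x},\overline{x}}$, apply the Meyer--It\^o formula \eqref{2.2.wr00}--\eqref{2.2.wr} to $e^{-qt}v(U_t)$ on $[0,t\wedge T_n)$. Since $v\in C^1(0,\overline{x})$ and $v''$ is locally bounded near $a$, the local time term reduces as in \eqref{mifor.}. The one-sided behaviour of $v''$ at $a$ should follow from the ODE, because $v'$ and $v$ are continuous there. The process stays in $(0,\overline{x}]$ between payouts. At each payout time, the surplus jumps from $\overline{x}$ to $\underline{x}$, and \eqref{boundary.condition} gives exactly $v(U_{s-})-v(U_s)=\Delta D_s-\beta$. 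The drift integrand is $\mathcal{L}^{u_s}v\le0$, with equality when $u_s=u^v_s$.

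The payout times are finite on bounded horizons, because each payout requires the diffusion to travel from $\underline{x}$ to $\overline{x}>\underline{x}+\beta$; the Burkholder--Davis--Gundy argument of \eqref{claim.of.finite.dividend.payout} applies. Hence the sum of jump terms equals the discounted net dividends. The transversality estimates \eqref{2.35.wr}--\eqref{con.to.0} carry over verbatim, since $v$ is bounded on $[0,\overline{x}]$. This yields $J^+_{u,D^{\underline{x},\overline{x}}}(x)\le v(x)$, with equality for $u^v$.

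\textbf{Main obstacle.} The main obstacle is admissibility of $u^v$. The SDE must have a strong solution under a feedback control that may be discontinuous at $a$ and at points where the argmax switches between an interior value and $1$. I would show that the feedback $u^v(\cdot)$ is piecewise continuous and bounded away from $0$ near $a$, and that it is bounded away from $0$ wherever $v'>0$. Then the diffusion coefficient is of bounded variation and uniformly positive on compacts, so the Le Gall type result (Theorem 1.3 of \cite{LeGall84}) used for \eqref{eq:a001} still gives pathwise uniqueness. If $u^v$ can vanish, I would instead invoke an $\ep$-approximation by admissible controls.
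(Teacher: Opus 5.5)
Your architecture is the one the paper intends: it omits the proof and points to Theorems~\ref{8.7.thm.2.5} and~\ref{thm2.2}. The sub-solution half of the necessity part and the verification part go through essentially as you describe. The step that does not transfer is the super-solution half. To conclude $V_{\underline{x},\overline{x}}(x)\ge\mathbb{E}_x[e^{-q(h\wedge\tau)}V_{\underline{x},\overline{x}}(\BU_{h\wedge\tau})]$ from a grid of $\ep$-optimal controls $u^{\ep,i}$, continuity of $V_{\underline{x},\overline{x}}$ is not enough. You also need $J^+_{u^{\ep,i},D^{\underline{x},\overline{x}}}(z)\ge J^+_{u^{\ep,i},D^{\underline{x},\overline{x}}}(y_i)$, up to a small error, for $z\in[y_i,y_{i+1})$. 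In the proof of Theorem~\ref{8.7.thm.2.5} this is monotonicity of $z\mapsto J^+_{u,D}(z)$ with the processes $(u,D)$ held fixed, which rests on the ordering of the two surplus paths. In $\mathcal{G}_x$ you cannot hold $D$ fixed, because it is the barrier rule applied to the new path. Once the path started at $z$ reaches $\overline{x}$ and drops to $\underline{x}$ while the path from $y_i$ has not, the ordering is lost, and the path from $z$ can even be ruined first. So neither monotonicity nor a uniform continuity estimate for $z\mapsto J^+_{u^{\ep,i},D^{\underline{x},\overline{x}}}(z)$ is available.

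Since you assume $V_{\underline{x},\overline{x}}$ is $C^2$ near $x$, the repair is easy: drop the grid and paste at the exit time $\tau$ of $[x-\delta,x+\delta]$ itself. No dividend is paid before $\tau$ and paths are continuous, so $\BU_\tau\in\{x-\delta,x+\delta\}$ on $\{\tau<\infty\}$. Two $\ep$-optimal controls therefore suffice, giving $V_{\underline{x},\overline{x}}(x)\ge\mathbb{E}_x[e^{-q\tau}V_{\underline{x},\overline{x}}(\BU_\tau)]-\ep$. If $\mathcal{L}^{u_0}V_{\underline{x},\overline{x}}(x)>0$ for some $u_0\in(0,1]$, shrink $\delta$ so that $\mathcal{L}^{u_0}V_{\underline{x},\overline{x}}\ge\eta>0$ on the interval. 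It\^o's formula then gives $\eta\,\mathbb{E}_x[1-e^{-q\tau}]/q\le\ep$ for every $\ep>0$, a contradiction. The case $u_0=0$ is trivial, since $\mathcal{L}^0V_{\underline{x},\overline{x}}=-qV_{\underline{x},\overline{x}}\le0$.

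Two smaller points. First, your verification silently uses continuity of the extended $v$ at $\overline{x}$, i.e.\ $v(\overline{x}-)=v(\underline{x})+\overline{x}-\underline{x}-\beta$, together with $v(0+)=0$. This is what makes the jump identity match the left limit that It\^o's formula produces. State it explicitly, because without it the converse is false: by positive homogeneity of \eqref{vbarbar}, every positive multiple of a solution on $(0,\overline{x})$ would satisfy the hypotheses. Second, $u^v$ is not bounded away from $0$ wherever $v'>0$. For $\mu_->0$ it equals $\mu_-x/(\sigma_-^2(1-\gamma_-))$ near $0$, which tends to $0$ as $x\downarrow0$ while $v'>0$. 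This is harmless: the coefficients are linear there (the controlled surplus is a geometric Brownian motion near $0$). Localization plus Le Gall's criterion on compacts of $(0,\overline{x}]$ therefore still yields a unique strong solution.
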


To explicitly determine the value function $V_{\underline{x}, \overline{x}}$ of \eqref{auxiliary.contr.p.}, it is necessary to solve the HJB equation \eqref{vbarbar}. However, rather than addressing \eqref{vbarbar} directly, we begin by considering the following second-order ordinary differential equation
\begin{align}
\label{eq:a046}
\max_{u\in[0,1]}\bigg\{&\frac{1}{2}(\sigma_{+}^2\textbf{1}_{\{x>a\}}+\sigma_{-}^2\textbf{1}_{\{x\le a\}})u^2v^{\prime\prime}(x)\nn\\
&\quad+(\mu_{+}\textbf{1}_{\{x>a\}}+\mu_{-}\textbf{1}_{\{x\le a\}})uv^{\prime}(x)-qv(x)\bigg\}=0, \quad x\in [0,\infty).
\end{align}
We seek a solution $v$ to \eqref{eq:a046} with the properties of being non-negative, strictly increasing, continuously differentiable, and twice continuously differentiable except at finitely many points.

In the subsequent subsections \ref{subsec.3.1}-\ref{subsec.3.4}, we construct a solution to \eqref{eq:a046} under four mutually exclusive and collectively exhaustive cases based on the signs of the drift coefficients $\mu_+$ and $\mu_-$: (1) $\mu_{\pm}\leq 0$; (2) $\mu_{-}\leq 0$, $\mu_{+}>0$; (3) $\mu_{-}>0$, $\mu_{+}\leq 0$; (4) $\mu_{\pm}>0$. 

To facilitate the analysis, we introduce the following notations: 
\begin{align}
\label{3.3.wr}
\left\{
\begin{array}{ll}
\delta_{\pm}:=\frac{-\mu_{-}\pm\sqrt{\mu_{-}^2+2q\sigma_{-}^2}}{\sigma_{-}^2}, \quad \gamma_{-}:=\frac{2\sigma_{-}^2q}{\mu_{-}^2+2\sigma_{-}^2q},\quad x_{0}^{-}:=\frac{\sigma_{-}^2 \left(1-\gamma_{-}\right)}{\mu_{-}},& \\
\theta_{\pm}:=\frac{-\mu_{+}\pm\sqrt{\mu_{+}^2+2q\sigma_{+}^2}}{\sigma_{+}^2},\quad \gamma_{+}:=\frac{2\sigma_{+}^2 q}{\mu_{+}^2+2\sigma_{+}^2q}, \quad x_{0}^{+}:=\frac{\sigma_{+}^2 \left(1-\gamma_{+}\right)}{\mu_{+}}{\color{blue}-\frac{\gamma_+(e^{\delta_+a}-e^{\delta_-a})}{\delta_+e^{\delta_+a}-\delta_-e^{\delta_-a}}+a},& \\
x^+_1:=\frac{\sigma^2_+(1-\gamma_+)}{\mu_+}+\left(1-\frac{\gamma_+}{\gamma_-}\right)a.&
\end{array}
\right.
\end{align}
These quantities will play a pivotal role in characterizing the solution $v(x)$ to \eqref{eq:a046} and the structure of the solution in each case.
By appropriately truncating the solution $v(x)$ at suitably chosen values $\overline{x}$ and $\underline{x}$, while ensuring that the boundary condition \eqref{boundary.condition} is satisfied, we complete the first stage of addressing the intermediate control problem \eqref{auxiliary.contr.p.}.

To proceed to the second stage, whose objective is to solve \eqref{point.point.optimization}, we introduce several new notations in preparation. A function $g$ is said to be \textbf{piecewise concave-convex} if there exist three inflection points $(a_i)_{1\leq i\leq 3}$ satisfying one of the following conditions
\begin{itemize}
\item $0 < a_1 < a_2 < a_3$,
\item $0 = a_1 < a_2 < a_3$,
\item $0 = a_1 = a_2 < a_3$,
\item $0 = a_1 = a_2 = a_3$,
\end{itemize}
such that $g$ is concave on $[0,a_1]$, convex on $[a_1,a_2]$, concave on $[a_2,a_3]$, and convex on $[a_3,\infty)$. 
We define the generalized inverse functions of $\left.g^{\prime}\right|_{[0,a_1]}$ and $\left.g^{\prime}\right|_{[a_{2},a_{3}]}$ as
\begin{align}
(g^{\prime})_1^{-1}(x):=&~\inf\{z\in[0,a_1]:g^{\prime}(z)\le x\},\quad \,\,\,x\in [g^{\prime}(a_1),\infty),
\nn\\
(g^{\prime})_3^{-1}(x):=&~\inf\{z\in[a_{2},a_{3}]:g^{\prime}(z)\le x\},\quad x\in [g^{\prime}(a_3),\infty).\nn
\end{align}
Furthermore, let
\begin{align}
&
[g^{\prime}(a_1),g^{\prime}(a_2)]\ni x\mapsto (g^{\prime})_2^{-1}(x)\in [a_1,a_2], 
\nn\\
&\quad\,\, [g^{\prime}(a_3),\infty)\ni x\mapsto (g^{\prime})_4^{-1}(x)\in [a_3,\infty),\nn
\end{align}
denote, respectively, the inverse functions of $\left.g^{\prime}(x)\right|_{[a_1,a_2]}$ 
and $\left.g^{\prime}(x)\right|_{[a_3,\infty)}$. 
In addition, we define two functions $\psi$ and $\phi$ as
\begin{align}
\label{def.psi.}
\psi(x,y):=&~\int_{x}^{y}\left(1-\frac{g^{\prime}(z)}{g^{\prime}(y)}\right)\mathrm{d}z, \quad 0\leq x<y<\infty,
\\
\label{phi.1}
\phi(x):=&~\psi((g^{\prime})_{3}^{-1}(g^{\prime}(x)),x),
\quad x\in[a_3,\infty).
\end{align}
Additionally, let 
\begin{align}
\left\{
\begin{array}{ll}
b_1:=\inf\{x\ge a_3:g^{\prime}(x)\ge g^{\prime}(a_1)\},& \\
b_2:=(g^{\prime})_4^{-1}(g^{\prime}(a_2)).& 
\end{array}
\right.
\nn
\end{align}
Then, $b_1<b_2$ when $0 < a_1 < a_2 < a_3$ or $0 = a_1 < a_2 < a_3$; and $b_1=b_2$ when $0 = a_1 = a_2 < a_3$ or $0 = a_1 =a_2 =a_3$.
If $b_1<b_2$, we further let
\begin{align}
\label{13.17.g.}
\left\{\begin{array}{ll}
x_1:=\inf\Big\{x\in[b_1,b_2]:\int_{(g^{\prime})_1^{-1}(g^{\prime}(x))}^{(g^{\prime})_3^{-1}(g^{\prime}(x))}\Big(1-\frac{g^{\prime}(s)}{g^{\prime}(x)}\Big)\ds\ge0\Big\},& \\
x_2:=\inf\left\{x\in[b_1,b_2]:\int_{(g^{\prime})_2^{-1}(g^{\prime}(x))}^{x}\Big(1-\frac{g^{\prime}(s)}{g^{\prime}(x)}\Big)\ds\ge0\right\},& 
\end{array}
\right.
\end{align}
and
\begin{align}
\omega_1(x):=\int_{(g^{\prime})_1^{-1}(g^{\prime}(x))}^{x}\left(1-\frac{g^{\prime}(s)}{g^{\prime}(x)}\right)\ds,~~~x\in[a_1,(g^{\prime})_2^{-1}(g^{\prime}(x_2)))\cup[x_2,\infty],
\end{align}
and
\begin{equation}
\omega_2(x):=\begin{cases}
\int_{(g^{\prime})_3^{-1}(g^{\prime}(x))}^{x}\left(1-\frac{g^{\prime}(s)}{g^{\prime}(x)}\right)\ds,~~~x\in[a_3,x_1],\\
\int_{(g^{\prime})_1^{-1}(g^{\prime}(x))}^{x}\left(1-\frac{g^{\prime}(s)}{g^{\prime}(x)}\right)\ds,~~~x\in[x_1,\infty].
\end{cases}
\end{equation}
Moreover, when $b_{1}<b_{2}$,
let
\begin{align}
\label{def.max.points}
\left\{
\begin{array}{ll}
(\hat{z}_1,\hat{z}_2):=((g^{\prime})_{1}^{-1}(g^{\prime}(\omega_1^{-1}(\beta))),\omega_1^{-1}(\beta)),& \\
(\tilde{z}_1,\tilde{z}_2):=((g^{\prime})_{3}^{-1}(g^{\prime}(\omega_2^{-1}(\beta))),\omega_2^{-1}(\beta)),& \\
A_1:=\{\beta>0:g^{\prime}(\omega_1^{-1}(\beta))<g^{\prime}(\omega_2^{-1}(\beta))\},& \\
A_2:=\{\beta>0:g^{\prime}(\omega_1^{-1}(\beta))>g^{\prime}(\omega_2^{-1}(\beta))\},& \\
A_3:=(\omega_2(x_1),\infty),& 
\end{array}
\right.
\end{align}
where $\omega_1^{-1}$ and $\omega_{2}^{-1}$ are the inverse functions of $\omega_1$ and $\omega_2$, respectively. Note that $A_2\cap A_3=\emptyset$. Denote the complement of $A$ in $(0,\infty)$ by $\overline{A}:=(0,\infty)\backslash A$.

We are now ready to solve the control problem \eqref{eq:a008.aux.} in the following subsections \ref{subsec.3.1}-\ref{subsec.3.4}.

\subsection{Case $\mu_{\pm}\le0$}
\label{subsec.3.1}

\subsubsection{Construction of a solution to \eqref{eq:a046}}
\label{subsec.3.1.1}

\begin{theorem}
\label{thm:case1.solution.eq.a046}
Suppose that $\mu_{\pm}\leq0$. Let 
\begin{align}
\label{eq:ee06}
a_{11}=\dfrac{e^{\delta_{+}a}(\theta_{-}-\delta_{+})-e^{\delta_{-}a}(\theta_{-}-\delta_{-})}{\theta_{-}-\theta_{+}},\quad a_{12}=\dfrac{e^{\delta_{+}a}(\delta_{+}-\theta_{+})-e^{\delta_{-}a}(\delta_{-}-\theta_{+})}{\theta_{-}-\theta_{+}},
\end{align}
and 
\begin{align}\label{eq:ee07}
g(x)=&~\begin{cases}
(e^{\delta_{+}x}-e^{\delta_{-}x}),&x\in[0,a],\\
\left(a_{11}e^{\theta_{+}(x-a)}+a_{12}e^{\theta_{-}(x-a)}\right),&x\in[a,\infty). 
\end{cases}
\end{align}
Then, for any constant $C>0$, the function $v=C\,g_{11}$ 
is a non-negative, strictly increasing, continuously differentiable, and piecewise twice continuously differentiable solution to \eqref{eq:a046} on $\setd$. Moreover, the maximum in \eqref{eq:a046} is attained at $u^{\star}=1$ on $(0,\infty)$.
\end{theorem}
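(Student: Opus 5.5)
The plan is to verify everything directly from the explicit formula. The first step is to check that $g$ solves the linear ODE $\mathcal{L}^{1}g=0$ on each piece of $\setd$. On $[0,a]$, $\delta_{\pm}$ are the two roots of $\frac{\sigma_-^2}{2}s^2+\mu_- s-q=0$, so $e^{\delta_{+}x}-e^{\delta_{-}x}$ satisfies $\frac{\sigma_-^2}{2}g''+\mu_-g'-qg=0$ and $g(0)=0$. On $[a,\infty)$, $\theta_{\pm}$ are the roots of $\frac{\sigma_+^2}{2}s^2+\mu_+ s-q=0$, so the combination $a_{11}e^{\theta_+(x-a)}+a_{12}e^{\theta_-(x-a)}$ satisfies $\frac{\sigma_+^2}{2}g''+\mu_+g'-qg=0$. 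Next I check $C^1$ pasting at $a$ by solving the $2\times2$ linear system $a_{11}+a_{12}=e^{\delta_+a}-e^{\delta_-a}$ and $\theta_+a_{11}+\theta_-a_{12}=\delta_+e^{\delta_+a}-\delta_-e^{\delta_-a}$. Cramer's rule reproduces exactly \eqref{eq:ee06}. Hence $g\in C^1((0,\infty))\cap C^2(\setd)$, and the one-sided second derivatives at $a$ are finite. Multiplying by $C>0$ preserves all these properties.

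The second step is positivity and monotonicity. I will not try to sign $a_{11},a_{12}$ directly; instead I use a first-touching argument based on the ODE. On $[0,a]$ we have $g'(x)=\delta_+e^{\delta_+x}-\delta_-e^{\delta_-x}>0$, since $\delta_+>0>\delta_-$. Therefore $g(0)=0$, and $g>0$ and $g'>0$ on $(0,a]$. On $[a,\infty)$, suppose $x_1>a$ is the first point with $g'(x_1)=0$. Then $g(x_1)\ge g(a)>0$, and the ODE gives $g''(x_1)=2qg(x_1)/\sigma_+^2>0$. But $g'>0$ on $[a,x_1)$ and $g'(x_1)=0$ force $g''(x_1)\le 0$, which is a contradiction. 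The same argument covers $x_1=a$ through $g'(a)>0$. Hence $g'>0$ and $g>0$ on $(0,\infty)$.

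The third step is to show that the maximum in \eqref{eq:a046} is attained at $u^\star=1$, which then also shows that $v$ solves \eqref{eq:a046}. Fix $x\in\setd$, let $(\mu,\sigma)$ be the local coefficients there, and set $h(u):=\frac{\sigma^2}{2}g''(x)u^2+\mu g'(x)u-qg(x)$. We know $h(1)=0$ and $h(0)=-qg(x)<0$. If $g''(x)\ge0$, then $h$ is convex with $h(0)<0=h(1)$, so $h\le0$ on $[0,1]$. If $g''(x)<0$, then $h$ is concave, and using $h(1)=0$ we get
\[h'(1)=\sigma^2g''(x)+\mu g'(x)=2qg(x)-\mu g'(x)>0,\]
because $\mu\le0$ and $g'>0$. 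A concave function with positive derivative at $1$ is increasing on $[0,1]$, so $h\le h(1)=0$. In both cases $\max_{u\in[0,1]}h(u)=0$ is attained at $u=1$, and in fact $h(u)<0$ for $u<1$. The argument is insensitive to the sign of $g''$, so no concavity of $g$ is needed.

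The only step where I expect real work is the positivity of $g'$ on $[a,\infty)$, because the signs of $a_{11}$ and $a_{12}$ are not obvious. The first-touching argument above avoids computing them, so I expect it to go through.
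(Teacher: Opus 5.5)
Your proof is correct, and it takes a genuinely different, more structural route than the paper.

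The paper verifies everything from the explicit coefficients. On $[0,a]$ it reads off $v''>0$ directly from the formula. On $[a,\infty)$ it proves $a_{11}>0$ by a case split on $\theta_-$ versus $\delta_-$, and combines this with $a_{11}+a_{12}=e^{\delta_+a}-e^{\delta_-a}>0$ to get $v''>0$. It then checks $\frac12\sigma_+^2v''+\mu_+v'\ge0$ through a sign analysis of $\zeta_1(a),\zeta_2(a)$. Its maximizer criterion is ``convex in $u$ with $h(1)\ge h(0)$''.

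You replace all of this coefficient work by two ingredients: a first-touching argument for $g'>0$, which does not even use the sign of $\mu_+$, and the identity $h(1)=0$. Note that your concave branch never actually occurs. The identity $h(1)=0$ says $\frac{\sigma^2}{2}g''=qg-\mu g'$, and this is strictly positive on $\setd$ once $g,g'>0$ and $\mu\le0$. So your route gives the strict convexity $g''>0$ on $\setd$ in one line. This is exactly the property Theorem~\ref{thm3.2.ww} relies on (see \eqref{g''>0}).

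It also shows the paper's computation is heavier than needed. Since $\zeta_1(a)=-(\theta_+-\theta_-)a_{11}$ and $\zeta_2(a)=-(\theta_+-\theta_-)a_{12}$, the right-hand side of \eqref{eq:ee08} is simply $-qv(x)$. Even $a_{11}>0$ is immediate from $(\theta_+-\theta_-)a_{11}=g'(a)-\theta_-g(a)$.

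What each approach buys: the paper's approach produces coefficient-level facts ($a_{11}>0$, $a_{11}+a_{12}>0$) that it reuses for $g_{22}$ in Subsection~\ref{subsec.3.2}. Yours buys brevity and robustness. It also makes transparent that an interior maximizer can only arise when $\mu>0$ and $g''<0$, which is what drives the later cases.

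One small point: the statement asserts $u^\star=1$ on all of $(0,\infty)$, including $x=a$, where $\mathcal{L}^u$ uses $(\mu_-,\sigma_-)$. Your argument applies there verbatim with the one-sided derivative $g''(a-)$.
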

\begin{proof}
We first consider the interval $[0,a]$. On this interval, \eqref{eq:a046} reduces to
\begin{equation}\label{eq:ee01}
\max_{u\in[0,1]}\left\{\frac{1}{2}\sigma_{-}^2u^2v^{\prime\prime}(x)+\mu_{-}uv^{\prime}(x)-qv(x)\right\}=0, \quad x\in [0,a].
\end{equation}
If $v^{\prime\prime}<0$ on some subinterval of $[0,a]$, then the quadratic term in $u$ is concave and, since $\mu_{-}\leq0$ and $v^{\prime}\geq0$, the maximum is attained at $u=0$. If $v^{\prime\prime}=0$ on some subinterval, then the expression is linear in $u$ and again attains its maximum at $u=0$. In either case, the equation would reduce to $-qv=0$ on that subinterval, which is incompatible with the desired non-negative and strictly increasing solution. Hence, we seek a solution satisfying \begin{align}
\label{3.49tobe.verif.}
\left\{
\begin{array}{ll}
v^{\prime\prime}(x)> 0,& x\in[0,a], \\
-\frac{\mu_{-}v^{\prime}(x)}{\sigma_{-}^2v^{\prime\prime}(x)}\le\frac{1}{2} \,\,\,(\Longleftrightarrow \frac{\sigma_{-}^2}{2}v^{\prime\prime}(x)+\mu_{-}v^{\prime}(x)\geq 0),& x\in[0,a].
\end{array}
\right.
\end{align}
Under condition \eqref{3.49tobe.verif.}, the quadratic trinomial in $u\in[0,1]$ on the left hand side of \eqref{eq:ee01} attains its maximum at $u=1$. Hence, \eqref{eq:ee01} is reduced to
\begin{align}
\label{3.49tobe.verif.1}
\frac{1}{2}\sigma_{-}^2v^{\prime\prime}(x)+\mu_{-}v^{\prime}(x)-qv(x)=0, \quad x\in [0,a],
\end{align}
whose solution, subject to the boundary condition $v(0)=0$, is expressed as
\begin{equation}\label{eq:ee02}
v(x)=C(e^{\delta_{+}x}-e^{\delta_{-}x}), \quad x\in [0,a],
\end{equation}
where $\delta_{\pm}=\frac{-\mu_{-}\pm\sqrt{\mu_{-}^2+2q\sigma_{-}^2}}{\sigma_{-}^2}$, and $C$ is some unknown positive constant.
Since $\delta_{+}>0$, $\delta_{-}<0$, and $\delta_{+}>|\delta_{-}|$, we have
\begin{align*}
v^{\prime\prime}(x)=C(\delta_{+}^2e^{\delta_{+}x}-\delta_{-}^2e^{\delta_{-}x})>0,\quad x\in[0,a].
\end{align*}
Moreover, using the defining equation of $\delta_{\pm}$, we obtain
\begin{align*}
\frac{1}{2}\sigma_{-}^2v^{\prime\prime}(x)+\mu_{-}v^{\prime}(x)
=Cq(e^{\delta_{+}x}-e^{\delta_{-}x})\geq0,\quad x\in[0,a].
\end{align*}
Therefore, the constructed function \eqref{eq:ee02} solves \eqref{eq:a046} on $[0,a]$ and the maximizer is $u=1$.

We next consider the interval $[a,\infty)$. 
When restricted to $[a,\infty)$, \eqref{eq:a046} becomes
\begin{equation}\label{eq:ee03}
\max_{u\in[0,1]}\left\{\frac{1}{2}\sigma_{+}^2u^2v^{\prime\prime}(x)+\mu_{+}uv^{\prime}(x)-qv(x)\right\}=0,\quad x\in[a,\infty).
\end{equation} 
Under the same reasoning, a valid increasing solution should satisfy
\begin{align}
\label{3.54.to.be.veri.}
\left\{
\begin{array}{ll}
v^{\prime\prime}(x)>0,& \,\, x\in[a,\infty), \\
-\frac{\mu_{+}v^{\prime}(x)}{\sigma_{+}^2v^{\prime\prime}(x)}\le\frac{1}{2} \,\,\,(\Leftrightarrow \frac{\sigma_{-}^2}{2}v^{\prime\prime}(x)+\mu_{-}v^{\prime}(x)\geq 0),& \,\, x\in[a,\infty),
\end{array}
\right.
\end{align}
so that the maximum is attained at $u=1$. Hence \eqref{eq:ee03} reduces to
\begin{align*}
\frac{1}{2}\sigma_{+}^2v^{\prime\prime}(x)+\mu_{+}v^{\prime}(x)-qv(x)=0,\quad x\in[a,\infty).
\end{align*}
The general solution of this ordinary differential equation is
\begin{equation}\label{eq:ee04}
v(x)=C(a_{11}e^{\theta_{+}(x-a)}+a_{12}e^{\theta_{-}(x-a)}),\quad x\in[a,\infty),
\end{equation}
The continuous differentiability of $v$ at $x=a$ gives
\begin{equation}\label{eq:ee05}
\begin{cases}
e^{\delta_{+}a}-e^{\delta_{-}a}=a_{11}+a_{12},\\
\delta_{+}e^{\delta_{+}a}-\delta_{-}e^{\delta_{-}a}=a_{11}\theta_{+}+a_{12}\theta_{-}.\\
\end{cases}
\end{equation}
and solving this system yields the stated expressions of $a_{11}$ and $a_{12}$.

It remains to verify that the assumed inequalities \eqref{3.54.to.be.veri.} indeed hold. First, we claim that $a_{11}>0$. Indeed,
\begin{itemize}
\item If $\theta_{-}\ge\delta_{-}$, then
\begin{align}
a_{11}=\frac{(\delta_{+}-\theta_{-})e^{\delta_{+}a}+
(\theta_{-}-\delta_{-})e^{\delta_{-}a}}{\theta_{+}-\theta_{-}}>0.
\nn
\end{align}
\item If $\theta_{-}<\delta_{-}$, then $a_{11}$ is increasing with respect to $a$ with $\left.a_{11}\right|_{a=0}=\frac{\delta_{+}-\delta_{-}}{\theta_{+}-\theta_{-}}>0$.
Hence $a_{11}>0$, as claimed.
\end{itemize}

In addition, it is straightforward to verify that
\begin{align}
\label{C1>|C2|ifC2<0}
a_{11}+a_{12}=\frac{(\theta_{+}-\theta_{-})e^{\delta_{+}a}-(\theta_{+}-\theta_{-})e^{\delta_{-}a}}{\theta_{+}-\theta_{-}}
= e^{\delta_{+}a}-e^{\delta_{-}a}>0,
\end{align}
since $a>0$.
Combining \eqref{C1>|C2|ifC2<0} and $a_{11}>0$, we deduce the following:
\begin{itemize}
\item If $a_{12}<0$, then
\begin{align}
v^{\prime\prime}(x)=C\left(a_{11}\theta_{+}^{2}e^{\theta_{+}(x-a)}+a_{12}\theta_{-}^{2}e^{\theta_{-}(x-a)}\right),\quad x\in[a,\infty),\nn
\end{align}
is strictly increasing on $[a,\infty)$ with $v^{\prime\prime}(a+)=C(a_{11}\theta_{+}^{2}+a_{12}\theta_{-}^{2})>0,$
since $|a_{12}|< a_{11}$ (see \eqref{C1>|C2|ifC2<0}) and $|\theta_{-}|<\theta_{+}$. Hence $v^{\prime\prime}(x)> 0$ for all $x\in[a,\infty)$.
\item If $a_{12}\geq 0$, then, by \eqref{eq:ee04}, it is evident that $v^{\prime\prime}(x)> 0$ for all $x\in[a,\infty)$.
\end{itemize}
Thus, the first inequality in \eqref{3.54.to.be.veri.} is established. We now proceed to verify the second inequality in \eqref{3.54.to.be.veri.}. Equivalently, it suffices to prove
\begin{align}\label{3.60.verified}
-\mu_{+}v^{\prime}(x)-\frac{1}{2}\sigma_{+}^{2}v^{\prime\prime}(x)\leq0,\quad x\in[a,\infty).
\end{align}
Direct calculation gives
\begin{align}\label{eq:ee08}
-\mu_{+}v^{\prime}(x)-\frac{1}{2}\sigma_{+}^{2}v^{\prime\prime}(x)
=
\frac{Cq}{\theta_{+}-\theta_{-}}
\left(\zeta_{1}(a)e^{\theta_{+}(x-a)}+\zeta_{2}(a)e^{\theta_{-}(x-a)}\right),
\end{align}
where
\begin{align*}
\begin{cases}
\zeta_{1}(a):=(\delta_{-}-\theta_{-})e^{\delta_{-}a}-(\delta_{+}-\theta_{-}) e^{\delta_{+}a},\\
\zeta_{2}(a):=(\delta_{+}-\theta_{+})e^{\delta_{+}a}+(\theta_{+}-\delta_{-})e^{\delta_{-}a}.
\end{cases}
\end{align*}
We can make the following observations:
\begin{itemize}
\item $\zeta_{1}(a)<0$.

If $\delta_{-}\geq \theta_{-}$, then $\zeta_{1}$ is a decreasing function of $a$, and since $\zeta_{1}(0)=\delta_{-}-\delta_{+}<0$, it follows that $\zeta_{1}(a)<0$ for all $a>0$. On the other hand, if $\delta_{-}< \theta_{-}$, the inequality $\zeta_{1}(a)<0$ follows directly from the explicit expression for $\zeta_{1}$.
\item $\zeta_{1}(a)+\zeta_{2}(a)<0$.

From the definitions of $\zeta_{1}$ and $\zeta_{2}$, we obtain $\zeta_{1}(a) + \zeta_{2}(a)= (\theta_{+} - \theta_{-})\left( e^{\delta_{-}a} - e^{\delta_{+}a} \right) < 0,$
where the strict inequality holds because $a > 0$.
\end{itemize}
Consequently, if $\zeta_{2}(a)\leq 0$, then the RHS of \eqref{eq:ee08} is manifestly non-positive, and hence \eqref{3.60.verified} holds;
whereas if $\zeta_{2}(a)> 0$, the RHS of \eqref{eq:ee08} is decreasing in $x$, and satisfies
$$-\mu_{+}v^{\prime}(a+)-\frac{\sigma_{+}^2}{2}v^{\prime\prime}(a+)=\frac{Cq}{\theta_{+}-\theta_{-}}
\left(\zeta_{1}(a)
+\zeta_{2}(a)\right)<0.$$
Thus, in both cases, whether $\zeta_{2}(a)\leq 0$ or $\zeta_{2}(a)> 0$, inequality \eqref{3.60.verified} holds. This verifies the second inequality in \eqref{3.54.to.be.veri.}.
We therefore conclude that the function $v$ is indeed a solution to \eqref{eq:a046} on $[a,\infty)$.
\end{proof}

\subsubsection{Identification of the uniform (in $x$) maximizer of $V_{\underline{x},\overline{x}}(x)$ over $(\underline{x},\overline{x})$}
\label{sub3.1.2}

The following theorem provides the solution to the control problem \eqref{eq:a008.aux.} with the set $\mathcal{G}_x$ of admissible reinsurance-dividend strategies restricted in such a way that only the
bouble-barrier dividend strategies are allowed.

\begin{theorem}
\label{thm3.2.ww}
Suppose that $\mu_{\pm}\leq 0$.
Let $g:=g_{11}$ in \eqref{eq:ee07} and recall that $\psi$ is defined in \eqref{def.psi.}.
The reinsurance-dividend strategy $(u^{\star},D^{\star})$ with
\begin{align}
\label{op.str.bigcase1}
\left\{
\begin{array}{ll}
u_{t}^{\star}=1,& t\geq 0, \\
D^{\star}_{t}=D^{0,\overline{x}^{\star}}_{t},& t\geq 0,
\end{array}
\right.
\end{align}
is an optimal strategy for the auxiliary control problem \eqref{eq:a008.aux.}.
The corresponding value function is given by
\begin{equation}\label{eq:ee07.www}
V_{0,\overline{x}^{\star}}(x)=
\left\{\begin{array}{ll}
\frac{g(x)}{g^{\prime}(\overline{x}^{\star})},&x\in[0,\overline{x}^{\star}), \\
x-\beta,& x\in[\overline{x}^{\star},\infty),
\end{array}
\right.\nn
\end{equation}
where $\overline{x}^{\star}$ is determined by the condition $\psi(0,\overline{x}^{\star})=\beta$.
\end{theorem}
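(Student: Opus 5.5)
We follow the two-stage procedure of Lemma \ref{lem3.1.w}. For the first stage, fix $(\underline{x},\overline{x})$ with $\beta\le\beta+\underline{x}<\overline{x}$. By Theorem \ref{thm:case1.solution.eq.a046}, every positive multiple $Cg$ with $g=g_{11}$ solves \eqref{eq:a046}, with maximizer $u^\star=1$. We therefore take the candidate $v=Cg$ on $[0,\overline{x})$ and $v(x)=Cg(\underline{x})+x-\underline{x}-\beta$ on $[\overline{x},\infty)$. The constant $C$ is fixed by continuity at $\overline{x}$, namely $C\big(g(\overline{x})-g(\underline{x})\big)=\overline{x}-\underline{x}-\beta$. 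Since $g$ is strictly increasing, this gives $C=(\overline{x}-\underline{x}-\beta)/(g(\overline{x})-g(\underline{x}))>0$. Smoothness is only required on $(0,\overline{x})$, and the boundary condition \eqref{boundary.condition} holds, so Theorem \ref{thm3.1.w} yields
\[V_{\underline{x},\overline{x}}(x)=\frac{\overline{x}-\underline{x}-\beta}{g(\overline{x})-g(\underline{x})}\,g(x),\qquad x\in[0,\overline{x}),\]
with optimal reinsurance $u\equiv1$.

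For the second stage, fix $x<\overline{x}$. Maximizing $V_{\underline{x},\overline{x}}(x)$ uniformly then amounts to maximizing the ratio $R(\underline{x},\overline{x}):=(\overline{x}-\underline{x}-\beta)/(g(\overline{x})-g(\underline{x}))$, and this objective does not depend on $x$. The key structural fact is that $g$ is strictly convex on $[0,\infty)$: by the proof of Theorem \ref{thm:case1.solution.eq.a046}, $g''>0$ on $[0,a]$ and on $[a,\infty)$, and $g\in C^1$. Consequently $g'$ is strictly increasing. In the language of the piecewise concave-convex definition this is the degenerate case $a_1=a_2=a_3=0$.

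We first show $\underline{x}^\star=0$. Write the denominator as $\int_{\underline{x}}^{\overline{x}}g'$. Replacing $\underline{x}$ by $0$ adds $\underline{x}$ to the numerator and $\int_0^{\underline{x}}g'$ to the denominator. Since $R\le 1/g'(0)$ whenever $R>0$ (because $g'\ge g'(0)$ on the interval of integration), one has $\underline{x}\ge R\int_0^{\underline{x}}g'$ as long as $R\,g'(\underline{x})\le1$. This will hold at the optimum; more simply, one checks $\partial_{\underline{x}}R=\big(-1+R\,g'(\underline{x})\big)/(g(\overline{x})-g(\underline{x}))$, which is negative when $R<1/g'(\underline{x})$. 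That inequality follows from $R<(\overline{x}-\underline{x})/\int_{\underline{x}}^{\overline{x}}g'\le 1/g'(\underline{x})$ by convexity. Hence $R$ is decreasing in $\underline{x}$, so $\underline{x}^\star=0$.

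Next, with $\underline{x}=0$, the first-order condition $\partial_{\overline{x}}R=0$ reads $g(\overline{x})=g'(\overline{x})(\overline{x}-\beta)$. Using $g(0)=0$, this is equivalent to $\int_0^{\overline{x}}\big(1-g'(z)/g'(\overline{x})\big)dz=\beta$, that is, $\psi(0,\overline{x})=\beta$. The function $y\mapsto\psi(0,y)$ has derivative $g''(y)\int_0^y g'/g'(y)^2>0$ (strict convexity), starts at $0$ when $y=0$, and tends to $\infty$ as $y\to\infty$. The last claim holds because $g'(z)/g'(y)\to0$ uniformly on compacts, since $g'$ grows exponentially with $\theta_+>0$. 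Hence there is a unique root $\overline{x}^\star$. The sign of $\partial_{\overline{x}}R$ equals the sign of $\beta-\psi(0,\overline{x})$ times a positive factor, so $\overline{x}^\star$ is the global maximizer. At this point $R=1/g'(\overline{x}^\star)$, which gives the stated formula on $[0,\overline{x}^\star)$ and $V=Rg(0)+x-\beta=x-\beta$ beyond it.

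The main obstacle is uniformity in $x$. For $x\ge\overline{x}^\star$, the competitor value can come either from a larger $\overline{x}>x$ or from the linear regime $V_{\underline{x},\overline{x}}(x)=V(\underline{x})+x-\underline{x}-\beta$. We will handle this by showing $V_{\underline{x},\overline{x}}(x)\le g(x)/g'(\overline{x}^\star)$ for $x<\overline{x}^\star$ and $\le x-\beta$ for $x\ge\overline{x}^\star$, for all $(\underline{x},\overline{x})$. Two facts drive this. First, $Rg(y)\le g(y)/g'(\overline{x}^\star)$ for every $y$. Second, $y\mapsto g(y)/g'(\overline{x}^\star)-y$ is decreasing on $[0,\overline{x}^\star]$ by convexity, and it equals $-\beta$ at $\overline{x}^\star$ while the ratio-$g$ piece lies below the line $y-\beta$ beyond $\overline{x}^\star$. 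With these two facts, every branch of $V_{\underline{x},\overline{x}}$ is bounded by the claimed function.
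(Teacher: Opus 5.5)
Your argument is correct, but it takes a different route from the paper's.

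\textbf{What the paper does.} After writing $V_{\underline{x},\overline{x}}=Cg$ on $[0,\overline{x})$, the paper imposes two conditions on the constants: value matching $C\bigl(g(\overline{x})-g(\underline{x})\bigr)=\overline{x}-\underline{x}-\beta$, \emph{and} smooth fit $Cg'(\overline{x})=1$. This reduces the barrier pairs from the outset to the curve $\psi(\underline{x},\overline{x})=\beta$. It then uses $\partial_x\psi<0<\partial_y\psi$ to show that every other point of that curve has $\underline{x}>0$ and $\overline{x}>\overline{x}^\star$. Finally it compares $V_{0,\overline{x}^\star}$ with $V_{\underline{x},\overline{x}}$ directly on $(0,\overline{x}^\star]$, $(\overline{x}^\star,\overline{x}]$ and $(\overline{x},\infty)$, using integrals of $1-g'(z)/g'(\overline{x})$.

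\textbf{What you do differently.} You keep only value matching, which is what actually determines the value of a fixed pair: for fixed barriers the value function is only continuous at $\overline{x}$. This gives $C=R(\underline{x},\overline{x})$ in closed form. You then maximize $R$ over the whole two-dimensional set of admissible pairs. Smooth fit, namely $\psi(0,\overline{x}^\star)=\beta$ and $R=1/g'(\overline{x}^\star)$, emerges as the first-order condition rather than being assumed.

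\textbf{What each route buys.} Yours genuinely covers all double-barrier strategies, including pairs off the smooth-fit curve, which the paper's comparison as written does not address. The paper's $\psi$-based bookkeeping, in turn, is the template it reuses for the non-convex, piecewise concave-convex $g$ in the later subsections. There, maximizing $R$ by calculus in two variables would be much less tractable.

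\textbf{One step to make explicit.} In your last paragraph, your first fact is useless on $[\overline{x}^\star,\overline{x})$, because there $g(y)/g'(\overline{x}^\star)\ge y-\beta$. What you actually need is
\[
R(\underline{x},\overline{x})\,g(y)\le y-\beta \quad\text{for } \overline{x}^\star\le y\le\overline{x}.
\]
This follows from $R(\underline{x},\overline{x})\le R(0,\overline{x})\le R(0,y)=(y-\beta)/g(y)$, using the two monotonicity facts you already proved ($R$ decreasing in $\underline{x}$, and $R(0,\cdot)$ decreasing beyond $\overline{x}^\star$).

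For the payout branch $R\,g(\underline{x})+x-\underline{x}-\beta$, you also need $h(y)=g(y)/g'(\overline{x}^\star)-y$, which decreases on $[0,\overline{x}^\star]$ from $0$ to $-\beta$.
\begin{itemize}
\item If $x\ge\overline{x}^\star$ and $\underline{x}\ge\overline{x}^\star$: the bound above at $y=\underline{x}$ gives $R\,g(\underline{x})\le\underline{x}-\beta<\underline{x}$.
\item If $x\ge\overline{x}^\star$ and $\underline{x}<\overline{x}^\star$: since $h(\underline{x})\le 0$, you get $R\,g(\underline{x})\le g(\underline{x})/g'(\overline{x}^\star)\le\underline{x}$.
\item If $x<\overline{x}^\star$: the estimate $h(\underline{x})-h(x)\le\beta$ for $0\le\underline{x}\le x\le\overline{x}^\star$ gives $R\,g(\underline{x})+x-\underline{x}-\beta\le g(x)/g'(\overline{x}^\star)$.
\end{itemize}
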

\begin{proof}
By Lemma \ref{lem3.1.w}, we can decouple the auxiliary optimization problem \eqref{eq:a008.aux.} into problem \eqref{auxiliary.contr.p.} and problem \eqref{point.point.optimization}.
From Theorem \ref{thm3.1.w} and the construction of a solution to \eqref{eq:a046} in Section \ref{subsec.3.1.1}, it follows that the value function of problem \eqref{auxiliary.contr.p.}, $V_{\underline{x},\overline{x}}$, takes the form
\begin{equation}\label{eq:ee07.w}
V_{\underline{x},\overline{x}}(x)=
\left\{\begin{array}{ll}
Cg(x),&x\in[0,\overline{x}), \\
Cg(\underline{x})+x-\underline{x}-\beta,& x\in[\overline{x},\infty),
\end{array}
\right.
\end{equation}
and the constants $C$, $\underline{x}$, and $\overline{x}$ satisfy
\begin{align}
\label{3.18.w}
\left\{
\begin{array}{ll}
C g^{\prime}(\overline{x})=1,& \\
C\left(g(\overline{x})-g(\underline{x})\right)=\overline{x}-\underline{x}-\beta.&
\end{array}
\right.
\end{align}
The first equality in \eqref{3.18.w} ensures that $V_{\underline{x},\overline{x}}\in C^{1}(0,\infty)$. Consequently, $V_{\underline{x},\overline{x}}$ can be written equivalently as
\begin{equation}\label{eq:ee07.ww}
V_{\underline{x},\overline{x}}(x)=
\left\{\begin{array}{ll}
\frac{g(x)}{g^{\prime}(\overline{x})},&x\in[0,\overline{x}), \\
\frac{g(\underline{x})}{g^{\prime}(\overline{x})}+x-\underline{x}-\beta,& x\in[\overline{x},\infty),
\end{array}
\right.
\end{equation}
where $\underline{x}$ and $\overline{x}$ satisfy
\begin{align}
\label{3.18.ww}
\frac{g(\overline{x})-g(\underline{x})}{g^{\prime}(\overline{x})}=\overline{x}-\underline{x}-\beta, 
\end{align}
which is equivalent to
\begin{align}
\label{3.20.w}
\psi(\underline{x},\overline{x})=\beta. 
\end{align} 
From the construction of $g=g_{11}$ in \eqref{eq:ee07}, we have
\begin{align}
\label{g''>0}
g^{\prime\prime}(x)>0,\quad x\in [0,\infty).
\end{align}
That is, $g$ is \textbf{piecewise concave-convex} with $a_{1}=a_{2}=a_{3}=0$.
It follows that
\begin{align}
\label{mon.prop.}
\left\{
\begin{array}{ll}
\frac{\partial}{\partial x}\psi(x,y)=-\left(1-\frac{g^{\prime}(x)}{g^{\prime}(y)}\right)<0,&0\leq x<y<\infty, \\
\frac{\partial}{\partial y}\psi(x,y)=\int_{x}^{y}\frac{g^{\prime}(z)g^{\prime\prime}(y)}{\left[g^{\prime}(y)\right]^{2}}\mathrm{d}z>0,& 0\leq x<y<\infty.
\end{array}
\right.
\end{align}

We now claim that the uniform (in $x$) maximizer of $V_{\underline{x},\overline{x}}(x)$ over $(\underline{x},\overline{x})$ is uniquely attained at $(0,\overline{x}^{\star})$, where $\overline{x}^{\star}$ is the unique solution to $\psi(0,y)=0$. To verify this claim, suppose $(\underline{x},\overline{x})$ is any solution to \eqref{3.20.w} different from $(0,\overline{x}^{\star})$. Then necessarily
\begin{align}
\label{comp.barr.}
\underline{x}>0,\quad \overline{x}>\overline{x}^{\star}.
\end{align}
Using \eqref{g''>0}, \eqref{mon.prop.}, and \eqref{comp.barr.}, we obtain the following inequalities
\begin{itemize}
\item For $x\in (0,\overline{x}^{\star}]$,
\begin{align}\label{eq:v-v.4}
V_{0,\overline{x}^{\star}}(x)-V_{\underline{x},\overline{x}}(x)=g(x)\left(\frac{1}{g^{\prime}(\overline{x}^{\star})}-\frac{1}{g^{\prime}(\overline{x})}\right)>0.
\end{align}
\item For $x\in(\overline{x}^{\star},\overline{x}]$,
\begin{align}\label{eq:v-v.5}
V_{0,\overline{x}^{\star}}(x)-V_{\underline{x},\overline{x}}(x)=&~
x-\beta-\frac{g(x)}{g^{\prime}(\overline{x})}
\nn\\
=&~\int_{0}^{\overline{x}^{\star}}\left(1-\frac{g^{\prime}(z)}{g^{\prime}(\overline{x})}\right)\mathrm{d}z
+\int_{\overline{x}^{\star}}^{x}\left(1-\frac{g^{\prime}(z)}{g^{\prime}(\overline{x})}\right)\mathrm{d}z-\beta
\nn\\
>&~\int_{0}^{\overline{x}^{\star}}\left(1-\frac{g^{\prime}(z)}{g^{\prime}(\overline{x}^{\star})}\right)\mathrm{d}z
+\int_{\overline{x}^{\star}}^{x}\left(1-\frac{g^{\prime}(z)}{g^{\prime}(\overline{x})}\right)\mathrm{d}z-\beta
\nn\\
=&~\int_{\overline{x}^{\star}}^{x}\left(1-\frac{g^{\prime}(z)}{g^{\prime}(\overline{x})}\right)\mathrm{d}z>0.
\end{align}
\item For $x\in(\overline{x},\infty)$,
\begin{align}\label{eq:v-v.6}
V_{0,\overline{x}^{\star}}(x)-V_{\underline{x},\overline{x}}(x)=&~
x-\beta-(\frac{g(\underline{x})}{g^{\prime}(\overline{x})}+x-\underline{x}-\beta)
\nn\\
=&~\underline{x}-\frac{g(\underline{x})}{g^{\prime}(\overline{x})}
\nn\\
=&~\int_{0}^{\underline{x}}\left(1-\frac{g^{\prime}(z)}{g^{\prime}(\overline{x})}\right)\mathrm{d}z>0.
\end{align}
\end{itemize}
Therefore, for any solution $(\underline{x},\overline{x})$ of \eqref{3.20.w} satisfying $\beta\leq \beta+\underline{x}<\overline{x}$, we have $V_{0,\overline{x}^{\star}}(x)\geq V_{\underline{x},\overline{x}}(x)$ for all $x\geq 0$, confirming the claim. That is to say, $(0,\overline{x}^{\star})$ is the solution to problem \eqref{point.point.optimization}. Furthermore, based on Theorem \ref{thm3.1.w} and the construction of $V_{\underline{x},\overline{x}}$ in Section \ref{subsec.3.1.1}, the optimal reinsurance strategy for the control problem \eqref{auxiliary.contr.p.} with $(\underline{x},\overline{x})=(0,\overline{x}^{\star})$ is given by $u^{\star}_t=1$ for $t\geq0$.
Then, the optimality of \eqref{op.str.bigcase1} to the problem \eqref{eq:a008.aux.} is confirmed by Lemma \ref{lem3.1.w}.
The proof is complete.
\end{proof}

\subsection{Case $\mu_{-}\le0, \,\mu_{+}>0$}
\label{subsec.3.2}

\subsubsection{Construction of a solution to \eqref{eq:a046}}
\label{subsubsec.3.2.1}
\begin{theorem}
\label{thm:case2.solution.eq.a046}
Suppose that $\mu_{-}\leq0$ and $\mu_{+}>0$. Then, for any constant $C>0$, a non-negative, strictly increasing, continuously differentiable, and piecewise twice continuously differentiable solution to \eqref{eq:a046} is given as follows. If $x_{0}^{+}>a$, then
\begin{align}
\label{21.1.w}
v(x)=&~\left\{
\begin{array}{ll}
C(e^{\delta_{+}x}-e^{\delta_{-}x}),& x\in[0,a], \\
C {\left(a_{21}\frac{x}{a}+a_{22}\right)^{\gamma_+}},& x\in [a,x_{0}^{+}),\\
C \left(a_{23}e^{\theta_{+}(x-x_{0}^{+})}+a_{24}e^{\theta_{-}(x-x_{0}^{+})}\right),& x\in [x_{0}^{+},\infty),
\end{array}
\right.
\\
:=&~C\,g_{21}(x)\nn,
\end{align}
where
\begin{align}
\begin{cases}\label{3.26.w}
a_{21}=\dfrac{a\left(\delta_{+}e^{\delta_{+}a}-\delta_{-}e^{\delta_{-}a}\right)}{\gamma_{+}\left(e^{\delta_{+}a}-e^{\delta_{-}a}\right)^{(\gamma_{+}-1)/\gamma_{+}}},\\[1.2ex]
a_{22}=\left(e^{\delta_{+}a}-e^{\delta_{-}a}\right)^{1/\gamma_{+}}
\left[1-\dfrac{a\left(\delta_{+}e^{\delta_{+}a}-\delta_{-}e^{\delta_{-}a}\right)}
{\gamma_{+}\left(e^{\delta_{+}a}-e^{\delta_{-}a}\right)}\right],\\[1.2ex]
a_{23}=\dfrac{\gamma_{+}\dfrac{a_{21}}{a}
\left(a_{21}\dfrac{x_{0}^{+}}{a}+a_{22}\right)^{\gamma_{+}-1}
-\theta_{-}\left(a_{21}\dfrac{x_{0}^{+}}{a}+a_{22}\right)^{\gamma_{+}}}{\theta_{+}-\theta_{-}},\\[1.2ex]
a_{24}=\dfrac{\theta_{+}\left(a_{21}\dfrac{x_{0}^{+}}{a}+a_{22}\right)^{\gamma_{+}}
-\gamma_{+}\dfrac{a_{21}}{a}
\left(a_{21}\dfrac{x_{0}^{+}}{a}+a_{22}\right)^{\gamma_{+}-1}}
{\theta_{+}-\theta_{-}},
\end{cases}
\end{align}
and the maximizing control is
\begin{align*}
u^{\star}(x)=\left\{
\begin{array}{ll}
1,&x\in[0,a],\\
-\dfrac{\mu_{+}v^{\prime}(x)}{\sigma_{+}^{2}v^{\prime\prime}(x)}
=\dfrac{\mu_{+}(a_{21}x+a_{22}a)}{\sigma_{+}^{2}a_{21}(1-\gamma_{+})},&x\in(a,x_{0}^{+}),\\
1,&x\in[x_{0}^{+},\infty).
\end{array}
\right.
\end{align*}
If $x_{0}^{+}\leq a$, then
\begin{align}
\label{g22.g22}
v(x)=&~\left\{
\begin{array}{ll}
C(e^{\delta_{+}x}-e^{\delta_{-}x}),& x\in[0,a], \\
C \left(a_{11}e^{\theta_{+}(x-a)}+a_{12}e^{\theta_{-}(x-a)}\right),& x\in [a,\infty),
\end{array}
\right.
\\
:=&~C\,g_{22}(x),\nn
\end{align}
where $a_{11}$ and $a_{12}$ are given by \eqref{eq:ee06}, and the maximizing control $u^{\star}(x)=1$ on $(0,\infty)$.
\end{theorem}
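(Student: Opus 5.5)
The plan is to follow the template of the proof of Theorem \ref{thm:case1.solution.eq.a046}: treat $[0,a]$ and $[a,\infty)$ separately, glue the pieces in $C^1$ at $a$ (and at $x_0^+$), and then check a posteriori that the maximiser in \eqref{eq:a046} is the claimed $u^\star$. On $[0,a]$ nothing changes from Case 1, because $\mu_-\le 0$ there. Hence $v=C(e^{\delta_+x}-e^{\delta_-x})$, with $v''>0$ and $\frac{\sigma_-^2}{2}v''+\mu_-v'=Cqv\ge 0$, and the maximiser is $u=1$. We can simply quote that part of the earlier proof.

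On $(a,\infty)$ we now have $\mu_+>0$ and $v'>0$, so the quadratic $u\mapsto\frac{\sigma_+^2}{2}u^2v''+\mu_+uv'$ behaves as follows. It is maximised at $u=1$ when $v''\ge 0$. When $v''<0$ it is maximised at $\min\{1,-\mu_+v'/(\sigma_+^2v'')\}$. Just above $a$ we try the interior regime. Substituting $u=-\mu_+v'/(\sigma_+^2v'')$ into \eqref{eq:a046} gives the nonlinear ODE $-\frac{\mu_+^2(v')^2}{2\sigma_+^2v''}=qv$. The ansatz $v=(Ax+B)^{\gamma}$ gives
\[
-\frac{\mu_+^2(v')^2}{2\sigma_+^2v''}=\frac{\mu_+^2\gamma}{2\sigma_+^2(1-\gamma)}\,v ,
\]
so we need $\gamma/(1-\gamma)=2\sigma_+^2q/\mu_+^2$, which is exactly $\gamma=\gamma_+\in(0,1)$. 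Then $v''<0$ automatically, and the feedback is
\[
u(x)=\frac{\mu_+(Ax+B)}{\sigma_+^2A(1-\gamma_+)},
\]
which is affine and increasing in $x$. The two constants $A=a_{21}/a$ and $B=a_{22}$ are fixed by $C^1$-matching with $e^{\delta_+x}-e^{\delta_-x}$ at $a$. Solving $(Aa+B)^{\gamma_+}=e^{\delta_+a}-e^{\delta_-a}$ and $\gamma_+A(Aa+B)^{\gamma_+-1}=\delta_+e^{\delta_+a}-\delta_-e^{\delta_-a}$ should reproduce \eqref{3.26.w}. We will also note that $A>0$, that $Aa+B>0$ (so the power is well defined on $[a,x_0^+]$), and that $v$ is increasing. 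The point where $u(x)=1$ is $x=\sigma_+^2(1-\gamma_+)/\mu_+-B/A$. Plugging in $B/A$ and simplifying should give exactly $x_0^+$ from \eqref{3.3.wr}.

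This splits the argument into two cases. If $x_0^+>a$, the feedback satisfies $u(a+)<1$ and $u$ is admissible on $(a,x_0^+)$. On $[x_0^+,\infty)$ we switch to $u=1$ and take the linear-ODE solution $a_{23}e^{\theta_+(x-x_0^+)}+a_{24}e^{\theta_-(x-x_0^+)}$, with $a_{23},a_{24}$ from $C^1$-matching at $x_0^+$; this is \eqref{3.26.w}. Since $u(x_0^+)=1$, both ODEs coincide there, so $v''$ is in fact continuous at $x_0^+$. If instead $x_0^+\le a$, the feedback would already be $\ge 1$ at $a+$, so we use $u=1$ on all of $[a,\infty)$. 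This yields $g_{22}=g_{11}$ with $a_{11},a_{12}$ from \eqref{eq:ee06}. Here we keep $a_{11}>0$ and $a_{11}+a_{12}>0$ from Case 1; the only change is the verification of the maximiser.

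The main obstacle is the verification on the $u=1$ pieces, namely $[x_0^+,\infty)$ when $x_0^+>a$, and $[a,\infty)$ when $x_0^+\le a$. There we must show that either $v''\ge0$, or $v''<0$ with $\sigma_+^2v''+\mu_+v'\ge 0$. That is, we need
\[
h(x):=\tfrac{\sigma_+^2}{2}v''+\mu_+v'+\tfrac{\sigma_+^2}{2}v''=\sigma_+^2v''+\mu_+v'\ge 0 .
\]
Using the ODE, $\sigma_+^2v''+\mu_+v'=2qv-\mu_+v'$, which is a combination $c_1e^{\theta_+(x-\cdot)}+c_2e^{\theta_-(x-\cdot)}$. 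I would argue exactly as with $\zeta_1,\zeta_2$ in Case 1. First, at the left endpoint the expression is $\ge0$: at $x_0^+$ it equals $0$ by the choice $u=1$ there, and at $a$ it is $\ge0$ precisely because $x_0^+\le a$. Second, I would show the coefficient of the dominant $e^{\theta_+x}$ term is positive, so that a two-exponential function starting nonnegative with the right monotonicity stays nonnegative. The delicate part is showing that $a_{23}>0$ and that $a_{23}>0$ indeed drives $h$ upward beyond $x_0^+$. For that I would use $v'(x_0^+)$ and $v''(x_0^+)=-\mu_+v'(x_0^+)/\sigma_+^2$, read off from the power piece, to express $a_{23},a_{24}$ explicitly and check signs. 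Strict monotonicity and nonnegativity of $v$ then follow from $v'>0$ on each piece.
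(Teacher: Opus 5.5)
Your proposal is correct and follows the paper's proof essentially step for step: the $[0,a]$ piece is inherited from Case 1, the interior-control piece $C(a_{21}x/a+a_{22})^{\gamma_+}$ comes from the reduced nonlinear ODE with $C^1$ matching at $a$, and the switch to $u=1$ occurs where the affine feedback reaches $1$, namely at $x_0^+$, which also gives the $C^2$ fit there. The $u=1$ pieces are then verified, as in the paper, by showing $\sigma_+^2v''+\mu_+v'=2qv-\mu_+v'\geq 0$ from its value at the left endpoint together with positivity of the $e^{\theta_+}$ coefficient; the step you flag as delicate is in fact immediate, since the matching conditions give $C(\theta_+-\theta_-)a_{23}=v'(x_0^+)-\theta_-v(x_0^+)>0$, and $2q-\mu_+\theta_+>q$ because $\mu_+\theta_+=q-\tfrac12\sigma_+^2\theta_+^2$ (the paper's additional sign computation for $a_{23}\theta_+^2+a_{24}\theta_-^2$ is not needed for this statement and serves later to locate the inflection point $a_3$).
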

\begin{proof}
By arguments analogous to those employed in the case $\mu_{\pm}\leq 0$, it follows that the solution $v$ of \eqref{eq:a046} on the interval $[0,a]$ is given by \eqref{eq:ee02}, and, the maximizer is $u^{\star}(x)=1$. 

It remains to determine a solution to
\begin{equation}\label{eq:ee03.{2}}
\max_{u\in[0,1]}\left\{\frac{1}{2}\sigma_{+}^2u^2v^{\prime\prime}(x)+\mu_{+}uv^{\prime}(x)-qv(x)\right\}=0,\quad x\in[a,\infty),
\end{equation}
with the required properties, that is, $v$ must be non-negative, strictly increasing, continuously differentiable, and twice continuously differentiable except at finitely many points.
To infer the qualitative behavior of the solution to \eqref{eq:ee03.{2}}, we make the following observations.
\begin{itemize}
\item Suppose there is a sub-interval $(b,c)\subseteq [a,\infty)$ ($b<c$) such that $v^{\prime\prime}(x)<0$ for all $x\in (b,c)$. Then, for any fixed $x\in (b,c)$, the quadratic trinomial in $u\in[0,1]$ on the left hand side of \eqref{eq:ee03.{2}} attains its maximum at $u=\min\{-\frac{\mu_{+}v^{\prime}(x)}{\sigma_{+}^2v^{\prime\prime}(x)},1\}$. Consequently, on the interval $(b,c)$, \eqref{eq:ee03.{2}} is reduced to
\begin{align}
\left\{
\begin{array}{ll}
-\frac{\mu_{+}^{2}\left[v^{\prime}(x)\right]^{2}}{2\sigma_{+}^{2}v^{\prime\prime}(x)}-qv(x)=0,& x\in (b,c)\cap \{x\geq a: -\frac{\mu_{+}v^{\prime}(x)}{\sigma_{+}^2v^{\prime\prime}(x)}< 1\}, \\
\frac{1}{2}\sigma_{+}^2v^{\prime\prime}(x)+\mu_{+}v^{\prime}(x)-qv(x)=0,& x\in (b,c)\cap \{x\geq a: -\frac{\mu_{+}v^{\prime}(x)}{\sigma_{+}^2v^{\prime\prime}(x)}\geq 1\}.
\end{array}
\right.
\nn
\end{align}

\item Suppose there exists a sub-interval $(b,c)\subseteq [a,\infty)$ ($b<c$) such that $v^{\prime\prime}(x)=0$ for all $x\in (b,c)$. Then $v^{\prime}(x)\equiv v_{0}$ on $(b,c)$ for some constant $v_{0}\geq 0$. For any fixed $x\in (b,c)$, the quadratic trinomial in $u\in[0,1]$ on the left hand side of \eqref{eq:ee03.{2}} becomes a linear function. Since $\mu_{+}> 0$ and $v^{\prime}(x)\geq 0$, this linear function attains its maximum at $u=1$. Therefore, on the interval $(b,c)$, \eqref{eq:ee03.{2}} reduces to
\begin{align}
\left\{
\begin{array}{ll}
\mu_{+} v_{0}-qv(x)=0, & x\in (b,c),\\
v^{\prime}(x)=v_{0}, & x\in (b,c),\nn
\end{array}
\right.
\end{align}
to which the unique solution is $v\equiv 0$, which contradicts the requirement that $v$ be strictly increasing.
\item Now suppose there exists a sub-interval $(b,c)\subseteq [a,\infty)$ ($b<c$) such that $v^{\prime\prime}(x)>0$ for all $x\in (b,c)$. Then, for any fixed $x\in (b,c)$, the quadratic trinomial in $u\in[0,1]$ on the left hand side of \eqref{eq:ee03.{2}} attains its maximum at $u=1$. Thus, on $(b,c)$, \eqref{eq:ee03.{2}} simplifies to
\begin{align}
\frac{1}{2}\sigma_{+}^2v^{\prime\prime}(x)+\mu_{+}v^{\prime}(x)-qv(x)=0,\quad x\in (b,c).
\nn
\end{align}
\end{itemize}

We begin by assuming that $x_{0}^{+}>a$ and the solution $v$ to \eqref{eq:ee03.{2}} satisfies
\begin{align}
\label{3.24.tobeferi.}
\left\{
\begin{array}{ll}
v^{\prime\prime}(x)<0,& x\in [a,x_{0}^{+}), \\
-\frac{\mu_{+}v^{\prime}(x)}{\sigma_{+}^2v^{\prime\prime}(x)}< 1,& x\in [a,x_{0}^{+}).
\end{array}
\right.
\end{align}
Under these assumptions, on the interval $[a,x_{0}^{+})$, \eqref{eq:ee03.{2}} reduces to
\begin{align}
-\frac{\mu_{+}^{2}\left[v^{\prime}(x)\right]^{2}}{2\sigma_{+}^{2}v^{\prime\prime}(x)}-qv(x)=0,\quad x\in [a,x_{0}^{+}),
\nn
\end{align}
which admits the general solution
\begin{align}
\label{3.24.w}
v(x)&= 
C\left(a_{21}\frac{x}{a}+a_{22}\right)^{\gamma_+},\quad x\in [a,x_{0}^{+}),
\end{align}
where $\gamma_{+}=\frac{2\sigma_{+}^2 q}{\mu_{+}^2+2\sigma_{+}^2q}<1$. 
The continuous differentiability of $v$ at $x=a$ gives
\begin{align}
\label{3.25.w}
\left\{
\begin{array}{ll}
e^{\delta_{+}a}-e^{\delta_{-}a}={(a_{21}+a_{22})^{\gamma_+}},& \\
\delta_{+}e^{\delta_{+}a}-\delta_{-}e^{\delta_{-}a}={a_{21}\frac{\gamma_{+}}{a}(a_{21}+a_{22})^{\gamma_+-1}}.&
\end{array}
\right.
\end{align}
Solving this system yields the stated expressions of $a_{21}$ and $a_{22}$. In particular, $a_{21}>0$. Therefore, we verify that
\begin{align}
\label{3.27.w}
\left\{\begin{array}{ll}
v^{\prime\prime}(x)={C\gamma_+(\gamma_+-1)\frac{a_{21}^2}{a^2}\left(a_{21}\frac{x}{a}+a_{22}\right)^{\gamma_+-2}<0,} 
& x\in [a,x_{0}^{+}), \\
-\frac{\mu_{+}v^{\prime}(x)}{\sigma_{+}^2v^{\prime\prime}(x)}={\frac{ \mu_+\left(a_{21}x+a_{22}a\right)}{\sigma_+^2a_{21}(1-\gamma_+)}<1,}
& x\in [a,x_{0}^{+}),
\end{array}
\right.
\end{align}
confirming that condition \eqref{3.24.tobeferi.} is indeed satisfied. Hence, the constructed function \eqref{21.1.w} solves \eqref{eq:a046} on $[a,x^+_0]$ and the maximizer is $u^{\star}(x)=-\frac{\mu_{+}v^{\prime}(x)}{\sigma_{+}^{2}v^{\prime\prime}(x)}$ with $u^{\star}(x^+_0)=1$.

On the other hand, on the interval $[x_{0}^{+},\infty)$, the simultaneous satisfaction of $v^{\prime\prime}(x)<0$ and $-\frac{\mu_{+}v^{\prime}(x)}{\sigma_{+}^2v^{\prime\prime}(x)}<1$ is impossible. Combined with the observations stated immediately after \eqref{eq:ee03.{2}}, this implies that on $[x_{0}^{+},\infty)$, \eqref{eq:ee03.{2}} simplifies to
\begin{align}
\frac{1}{2}\sigma_{+}^2v^{\prime\prime}(x)+\mu_{+}v^{\prime}(x)-qv(x)=0,\quad x\in [x_{0}^{+},\infty),
\nn
\end{align}
which has the general solution
\begin{align}
\label{C3,C4}
v(x)=C(a_{23}e^{\theta_{+}(x-x_{0}^{+})}+a_{24}e^{\theta_{-}(x-x_{0}^{+})}),\quad x\in [x_{0}^{+},\infty).
\end{align}
Imposing the smooth-pasting conditions at $x=x_{0}^{+}$ leads to
\begin{align}
\label{3.31.w}
\left\{
\begin{array}{ll}
{\left(a_{21}\frac{x_0^+}{a}+a_{22}\right)^{\gamma_+}}
=a_{23}+a_{24},& \\
{\gamma_+\frac{a_{21}}{a}\left(a_{21}\frac{x_0^+}{a}+a_{22}\right)^{\gamma_+-1}}=a_{23}\theta_{+}+a_{24}\theta_{-}.&
\end{array}
\right.
\end{align}
Here, we note that the second equality of \eqref{3.31.w} implies that
\begin{align}
\label{str.inc.v}
v^{\prime}(x)>0,\quad x\in [x_{0}^{+},\infty).
\end{align}
Solving \eqref{3.31.w} yields the stated expressions of $a_{23}$ and $a_{24}$. 
It follows from $\theta_-<0$, $\theta_+-\theta_->0$ and $a_{21}>0$ that $a_{23}>0$ and $a_{23}+a_{24}>0,$
which implies that
\begin{itemize}
\item If $a_{24}\geq 0$, then from \eqref{C3,C4}, it is evident that $v^{\prime\prime}(x)> 0$ for all $x\in[x_0^{+},\infty)$.

\item If $a_{24}<0$, then
\begin{align}
v^{\prime\prime}(x)=C(a_{23}\theta_{+}^{2}e^{\theta_{+}(x-x_0^{+})}+a_{24}\theta_{-}^{2}e^{\theta_{-}(x-x_0^{+})}),\quad x\in [x_0^{+},\infty),\nn
\end{align}
is strictly increasing on $[x_0^{+},\infty)$. At $x=x_0^{+}$, we have
$v^{\prime\prime}(x_0^{+})
=C(a_{23}\theta_{+}^{2}+a_{24}\theta_{-}^{2})$.
{We claim that $a_{23}\theta_{+}^{2}+a_{24}\theta_{-}^{2}<0$. Indeed, by \eqref{3.26.w}, we have
\begin{eqnarray}\label{eq:a23theta+a24theta}
a_{23}\theta_{+}^{2}+a_{24}\theta_{-}^{2}
\hspace{-0.3cm}&=&\hspace{-0.3cm}
\frac{\gamma_+\frac{a_{21}}{a}\left(a_{21}\frac{x_0^+}{a}+a_{22}\right)^{\gamma_+-1}-\theta_-\left(a_{21}\frac{x_0^+}{a}+a_{22}\right)^{\gamma_+}}{\theta_+-\theta_-}\theta_+^2\nn\\
\hspace{-0.3cm}&&\hspace{-0.3cm}
+\frac{\theta_+\left(a_{21}\frac{x_0^+}{a}+a_{22}\right)^{\gamma_+}-\gamma_+\frac{a_{21}}{a}\left(a_{21}\frac{x_0^+}{a}+a_{22}\right)^{\gamma_+-1}}{\theta_+-\theta_-}\theta_-^2
\nn\\
\hspace{-0.3cm}&=&\hspace{-0.3cm}
\left(a_{21}\frac{x_0^+}{a}+a_{22}\right)^{\gamma_+-1}\frac{a_{21}}{a}\gamma_+(\theta_++\theta_-)
-\left(a_{21}\frac{x_0^+}{a}+a_{22}\right)^{\gamma_+}\theta_+\theta_-
\nn\\
\hspace{-0.3cm}&=&\hspace{-0.3cm}
\frac{2}{\sigma_+^2}\left(a_{21}\frac{x_0^+}{a}+a_{22}\right)^{\gamma_+-1}\left(q(a_{21}\frac{x^+_0}{a}+a_{22})-\frac{a_{21}}{a}\mu_+\gamma_+\right).
\end{eqnarray}
For brevity, denote $S_1=e^{\delta_+a}-e^{\delta_-a}$ and $S_2=\delta_+e^{\delta_+a}-\delta_-e^{\delta_-a}$. It follows from \eqref{3.26.w} and the definition of $x^+_0$ that
\begin{eqnarray}
\hspace{-0.3cm}&&\hspace{-0.3cm}
q(a_{21}\frac{x^+_0}{a}+a_{22})-\frac{a_{21}}{a}\mu_+\gamma_+
\nn\\
\hspace{-0.3cm}&=&\hspace{-0.3cm}
q\left(\frac{a}{\gamma_+}\frac{S_2}{S_1}S_1^{\frac{1}{\gamma_+}}\frac{x^+_0}{a}+S_1^{\frac{1}{\gamma_+}}\left(1-\frac{a}{\gamma_+}\frac{S_2}{S_1}\right)\right)-\frac{\mu_+\gamma_+}{a}\frac{a}{\gamma_+}\frac{S_2}{S_1}S_1^{\frac{1}{\gamma_+}}
\nn\\
\hspace{-0.3cm}&=&\hspace{-0.3cm}
S_1^{\frac{1}{\gamma_+}}\left(q\left(\frac{x^+_0-a}{\gamma_+}\frac{S_2}{S_1}+1\right)-\mu_+\frac{S_2}{S_1}\right)
\nn\\
\hspace{-0.3cm}&=&\hspace{-0.3cm}
-S_1^{\frac{1}{\gamma_+}}\frac{\mu_+}{2}\frac{S_2}{S_1}<0,
\end{eqnarray}
which combined with \eqref{eq:a23theta+a24theta} implies $a_{23}\theta^2_++a_{24}\theta^2_-<0.$
} 
Noting that $a_{23}\theta_{+}^{2}+a_{24}\theta_{-}^{2}<0$ occurs if and only if $\ln{\frac{-a_{24}\theta_{-}^2}{a_{23}\theta_{+}^2}}> 0$,
we have
\begin{align}
\label{3.36.ww}
\left\{
\begin{array}{ll}
v^{\prime\prime}(x)>0,& x\in (x_0^{+}+\frac{\ln{\frac{-a_{24}\theta_{-}^2}{a_{23}\theta_{+}^2}}}{\theta_{+}-\theta_{-}},\infty), \\
v^{\prime\prime}(x)<0,& x\in [x_0^{+}, x_0^{+}+\frac{\ln{\frac{-a_{24}\theta_{-}^2}{a_{23}\theta_{+}^2}}}{\theta_{+}-\theta_{-}}).
\end{array}
\right.
\end{align}
From \eqref{C3,C4}, we derive
\begin{align}
\label{3.37.w}
&\sigma_{+}^2v^{\prime\prime}(x)+\mu_{+}v^{\prime}(x)
\nn\\
=&~ C\left(a_{23} \left(\sigma_{+}^2\theta_{+}^{2}+\mu_{+}\theta_{+}\right)
e^{\theta_{+}(x-x_0^{+})}
+ a_{24}\left(\sigma_{+}^2\theta_{-}^{2}+\mu_{+}\theta_{-}\right)
e^{\theta_{-}(x-x_0^{+})}\right)
\nn\\
=&~
C\left(a_{23} \left(2q-\mu_{+}\theta_{+}\right)
e^{\theta_{+}(x-x_0^{+})}
+ a_{24}\left(2q-\mu_{+}\theta_{-}\right)
e^{\theta_{-}(x-x_0^{+})}\right), \quad x\in [x_0^{+},\infty),
\end{align}
where we have used the identity $\sigma_{+}^2\theta_{\pm}^{2}+\mu_{+}\theta_{\pm}=2q-\mu_{+}\theta_{\pm}$, which follows from the definition of $\theta_{\pm}$.
Using the explicit formula for $\theta_{+}$, we compute
\begin{align}
\label{3.40.w}
2q-\mu_{+}\theta_{+}=&~2q-\mu_{+}\frac{-\mu_{+}+\sqrt{\mu_{+}^2+2q\sigma_{+}^2}}{\sigma_{+}^2}
\nn\\
=&~2q-\frac{\mu_{+}\left(-\mu_{+}+\sqrt{\mu_{+}^2+2q\sigma_{+}^2}\right)\left(\mu_{+}+\sqrt{\mu_{+}^2+2q\sigma_{+}^2}\right)}{\sigma_{+}^2\left(\mu_{+}+\sqrt{\mu_{+}^2+2q\sigma_{+}^2}\right)}
\nn\\
=&~2q-\frac{2q\mu_{+}\sigma_{+}^2}{\sigma_{+}^2\left(\mu_{+}+\sqrt{\mu_{+}^2+2q\sigma_{+}^2}\right)}
\nn\\
=&~\frac{2q\sqrt{\mu_{+}^2+2q\sigma_{+}^2}}{\mu_{+}+\sqrt{\mu_{+}^2+2q\sigma_{+}^2}}>0.
\end{align}
Combining \eqref{C3,C4}, \eqref{3.40.w} and the facts of $2q-\mu_{+}\theta_{-}>0$ and $a_{23}>0$, we conclude that the RHS of \eqref{3.37.w} is strictly increasing in $x$, since the dominant exponential term (with $\theta_{+}>\theta_{-}$) grows faster. In this case, the minimum occurs at $x=x_{0}^{+}$:
\begin{align}
\label{3.39.w}
&\sigma_{+}^2v^{\prime\prime}(x_{0}^{+}+)+\mu_{+}v^{\prime}(x_{0}^{+}+)
\nn\\
=&~
2\left(qv(x_{0}^{+}+)-\mu_{+}v^{\prime}(x_{0}^{+}+)-\frac{1}{2}\sigma_{+}^2v^{\prime\prime}(x_{0}^{+}+)\right)
+\sigma_{+}^2v^{\prime\prime}(x_{0}^{+}+)+\mu_{+}v^{\prime}(x_{0}^{+}+)
\nn\\
=&~2qv(x_{0}^{+}+)-\mu_{+}v^{\prime}(x_{0}^{+}+)
\nn\\
=&~2qv(x_{0}^{+}-)-\mu_{+}v^{\prime}(x_{0}^{+}-)
\nn\\
=&~{C\left(a_{21}\frac{x^+_0}{a}+a_{22}\right)^{\gamma_+-1}\left(2qa_{21}\frac{x^+_0}{a}+2qa_{22}-\mu_+\gamma_+\frac{a_{21}}{a}\right)}
\nn\\
=&~0,
\end{align}
where we have used \eqref{3.26.w} and the definition of $x^+_0$.
We further verify the second-order smooth fit at $x_0^{+}$. By \eqref{3.26.w} and the definition of $x_0^{+}$ yields
\begin{align*}
x_0^{+}+\frac{a_{22}}{a_{21}}a
=\frac{\sigma_{+}^{2}(1-\gamma_{+})}{\mu_{+}}.
\end{align*}
Therefore, using the expression of the maximizing control on $[a,x_0^{+})$, we obtain
\begin{align*}
-\frac{\mu_{+}v^{\prime}(x_0^{+}-)}{\sigma_{+}^{2}v^{\prime\prime}(x_0^{+}-)}
=\lim_{x\uparrow x_0^{+}}u^{\star}(x)=\frac{\mu_{+}}{\sigma_{+}^{2}(1-\gamma_{+})}\left(x_0^{+}+\frac{a_{22}}{a_{21}}a\right)=1.
\end{align*}
Equivalently,
\begin{align}
\sigma_{+}^{2}v^{\prime\prime}(x_0^{+}-)+\mu_{+}v^{\prime}(x_0^{+}-)=0.
\label{eq:C2.x0.plus.left}
\end{align}
On the other hand, \eqref{3.39.w} gives
\begin{align}
\sigma_{+}^{2}v^{\prime\prime}(x_0^{+}+)+\mu_{+}v^{\prime}(x_0^{+}+)=0.
\label{eq:C2.x0.plus.right}
\end{align}
By the second equality in \eqref{3.31.w}, $v^{\prime}(x_0^{+}-)=v^{\prime}(x_0^{+}+)$; hence,
\begin{align}
v^{\prime\prime}(x_0^{+}-)
=v^{\prime\prime}(x_0^{+}+).
\label{eq:C2.x0.plus}
\end{align}
Thus, $v$ is twice continuously differentiable at $x_0^{+}$.

By \eqref{3.39.w} and the strict increasing property of $\sigma_{+}^2v^{\prime\prime}+\mu_{+}v^{\prime}$ on $[x_{0}^{+},\infty)$, we have
\begin{align}
\label{3.41.w}
\sigma_{+}^2v^{\prime\prime}(x)+\mu_{+}v^{\prime}(x)\geq 0. 
\end{align}
\end{itemize}
Therefore, we conclude that
\begin{itemize}
\item[$\bullet$] {Suppose $a_{24}\geq 0$.} Then $v^{\prime\prime}(x)>0, \quad x\in (x_{0}^{+},\infty),$
confirming that $v$ given by \eqref{C3,C4} and \eqref{3.26.w} is indeed a solution to \eqref{eq:ee03.{2}} on $[x_{0}^{+},\infty)$.

\item[$\bullet$] {Suppose $a_{24}<0$.} 
By \eqref{3.36.ww} and \eqref{3.41.w}, we have
\begin{align}
\left\{
\begin{array}{ll}
v^{\prime\prime}(x)<0,& x\in [x_0^{+}, x_0^{+}+\frac{\ln{\frac{-a_{24}\theta_{-}^2}{a_{23}\theta_{+}^2}}}{\theta_{+}-\theta_{-}}),
\\
\sigma_{+}^2 v^{\prime\prime}(x) +\mu_{+}v^{\prime}(x)\geq 0 \,\,\,(\Leftrightarrow \frac{-\mu_{+}v^{\prime}(x)}{\sigma_{+}^2 v^{\prime\prime}(x)}\geq 1),& x\in [x_0^{+}, x_0^{+}+\frac{\ln{\frac{-a_{24}\theta_{-}^2}{a_{23}\theta_{+}^2}}}{\theta_{+}-\theta_{-}}), \\
v^{\prime\prime}(x)>0,& x\in (x_0^{+}+\frac{\ln{\frac{-a_{24}\theta_{-}^2}{a_{23}\theta_{+}^2}}}{\theta_{+}-\theta_{-}}, \infty),
\end{array}
\right.\nn
\end{align}
confirming that $v$ given by \eqref{C3,C4} and \eqref{3.26.w} is indeed a solution to \eqref{eq:ee03.{2}} on $[x_{0}^{+},\infty)$.

\end{itemize}
In summary, when $\mu_{-}\le0, \,\mu_{+}>0$ and $x_{0}^{+}>a$, 
the solution to \eqref{eq:a046} takes the form \eqref{21.1.w}.

We now proceed under the assumption that $x_{0}^{+}\leq a$.
Under this assumption, on the interval $[a,\infty)$, \eqref{eq:ee03.{2}} reduces to
\begin{align}
\frac{1}{2}\sigma_{+}^2v^{\prime\prime}(x)+\mu_{+}v^{\prime}(x)-qv(x)=0,\quad x\in [a,\infty),
\nn
\end{align}
whose general solution is given by \eqref{eq:ee04} and \eqref{eq:ee06}. As discussed below \eqref{eq:ee04}, we recall that $a_{11}+a_{12}>0$, $a_{11}>0$, and
\begin{itemize}
\item If $a_{12}\geq 0$, then $v^{\prime\prime}(x)> 0$ for any $x\in[a,\infty)$.
\item If $a_{12}<0$, then
\begin{align}
v^{\prime\prime}(x)=C(a_{11}\theta_{+}^{2}e^{\theta_{+}(x-a)}+a_{12}\theta_{-}^{2}e^{\theta_{-}(x-a)}),\quad x\in[a,\infty),\nn
\end{align}
is strictly increasing with $v^{\prime\prime}(a+)=C(a_{11}\theta_{+}^{2}+a_{12}\theta_{-}^{2}).$
\begin{itemize}
\item[$\bullet$] If $a_{11}\theta_{+}^{2}+a_{12}\theta_{-}^{2}\geq 0$, then $v^{\prime\prime}(x)> 0$ for any $x\in(a,\infty)$.
\item[$\bullet$] If $a_{11}\theta_{+}^{2}+a_{12}\theta_{-}^{2}< 0$ ($\Leftrightarrow \ln{\frac{-a_{12}\theta_{-}^2}{a_{11}\theta_{+}^2}}>0)$, then
\begin{equation}\label{eq:gg01}
\begin{cases}
v^{\prime\prime}(x)<0,& x\in[a,a+\frac{\ln{\frac{-a_{12}\theta_{-}^2}{a_{11}\theta_{+}^2}}}{\theta_{+}-\theta_{-}}),\\
v^{\prime\prime}(x)>0,& x\in(a+\frac{\ln{\frac{-a_{12}\theta_{-}^2}{a_{11}\theta_{+}^2}}}{\theta_{+}-\theta_{-}},\infty).
\end{cases}
\end{equation}
\end{itemize}
By \eqref{eq:ee04}, we obtain
\begin{align}
\label{ff01}
&\mu_{+}v^{\prime}(x)+\sigma_{+}^2v^{\prime\prime}(x)
\nn\\
=&~C\left(a_{11}\left(\mu_{+}\theta_{+}+\sigma_{+}^2\theta_{+}^2\right)e^{\theta_{+}(x-a)}+a_{12}\left(\mu_{+}\theta_{-}+\sigma_{+}^2\theta_{-}^2\right)e^{\theta_{-}(x-a)}\right)
\nn\\
=&~C\left(a_{11}\left(2q-\mu_{+}\theta_{+}\right)e^{\theta_{+}(x-a)}+a_{12}\left(2q-\mu_{+}\theta_{-}\right)e^{\theta_{-}(x-a)}\right),\quad x\in [a,\infty).
\end{align}
From \eqref{3.40.w}, along with the facts that $2q-\mu_{+}\theta_{-}>0$, and $a_{11}>0$, we deduce that the function on the right-hand side of \eqref{ff01} is strictly increasing, and attains its minimum at $x=a$:
\begin{align}
\label{ff055}
&\sigma_{+}^2v^{\prime\prime}(a+)+\mu_{+}v^{\prime}(a+)
\nn\\
=&~
2\left(qv(a+)-\mu_{+}v^{\prime}(a+)-\frac{1}{2}\sigma_{+}^2v^{\prime\prime}(a+)\right)
+\sigma_{+}^2v^{\prime\prime}(a+)+\mu_{+}v^{\prime}(a+)
\nn\\
=&~2qv(a+)-\mu_{+}v^{\prime}(a+)
\nn\\
=&~2qv(a-)-\mu_{+}v^{\prime}(a-)
\nn\\
=&~C\left[2q\left(e^{\delta_{+}a}-e^{\delta_{-}a}\right)-\mu_{+}\left(\delta_{+}e^{\delta_{+}a}-\delta_{-}e^{\delta_{-}a}\right)\right] 
\nn\\
{=}&{
-C\left[\mu_{+}-2q\frac{e^{\delta_{+}a}-e^{\delta_{-}a}}{\delta_{+}e^{\delta_{+}a}-\delta_{-}e^{\delta_{-}a}}\right]}
\nn\\
{=}&{
-C\frac{\mu_{+}^{2}+2q\sigma_{+}^{2}}{\sigma_{+}^{2}}\left(x_{0}^{+}-a\right)}\nn\\
{\geq} &{0.}
\end{align}
\end{itemize}

Therefore, when $\mu_{-}\le0,\, \mu_{+}>0$ and $x_{0}^{+}\leq a$,
the solution to \eqref{eq:a046} takes the form \eqref{g22.g22}.
\end{proof}

\subsubsection{Identification of the uniform (in $x$) maximizer of $V_{\underline{x},\overline{x}}(x)$ over $(\underline{x},\overline{x})$}
\label{subs.3.2.2}

By analyzing various parameter configurations under case $\mu_{-}\leq 0,\, \mu_{+}>0$, we identify four mutually exclusive sub-cases denoted by $(\text{II}_{1})$-$(\text{II}_{3})$. For each sub-case, we specify the function $g$ and its inflection points $(a_i)_{1\leq i\leq 3}$, and derive conclusions based on the constructions of $g_{21}$ and $g_{22}$ presented in Subsection \ref{subsubsec.3.2.1}.
\begin{itemize}

\item[($\text{II}_{1}$)] If $x_0^{+}>a$ and either one of the following conditions holds
\begin{itemize}
\item[$\bullet$] $a_{24}\geq 0$,
\item[$\bullet$] $a_{24}<0$ (recall that $a_{23}\theta_{+}^{2}+a_{24}\theta_{-}^{2}<0$ in this case),
\end{itemize}
letting
\begin{align}
g:=g_{21}, \, a_1:=0, \, a_2:=a, \, a_3:=x_0^{+}+\frac{\ln{\left(1\vee\frac{-a_{24}\theta_{-}^2}{a_{23}\theta_{+}^2}\right)}}{\theta_{+}-\theta_{-}},\nn
\end{align}
then $g$ is \textbf{piecewise concave-convex} with inflection points $(a_i)_{1\leq i\leq 3}$ satisfying $0=a_{1}<a_{2}<a_{3}$.

\item[($\text{II}_{2}$)] If $x_0^{+}\le a$, $a_{12}<0$ and $a_{11}\theta_{+}^{2}+a_{12}\theta_{-}^{2}<0$, letting
\begin{align}
g:=g_{22},\, a_1:=0,\, a_2:=a, \, a_3:=a+\frac{\ln{\frac{-a_{12}\theta_{-}^2}{a_{11}\theta_{+}^2}}}{\theta_{+}-\theta_{-}},\nn
\end{align}
then $g$ is \textbf{piecewise concave-convex} with inflection points $(a_i)_{1\leq i\leq 3}$ satisfying $0=a_{1}<a_{2}<a_{3}$.

\item[($\text{II}_{3}$)] If $x_0^{+}\le a$ and either one of the following conditions holds
\begin{itemize}
\item[$\bullet$] $a_{12}\geq 0$,
\item[$\bullet$] $a_{12}<0$ and $
a_{11}\theta_{+}^{2}+a_{12}\theta_{-}^{2}\geq 0$,
\end{itemize}
letting
\begin{align}
g:=g_{22}, \, a_1:=0,\, a_{2}:=0,\, a_{3}:=0,\nn
\end{align}
then $g$ is \textbf{piecewise concave-convex} with all inflection points coinciding at $0$.
\end{itemize}
We note that the three sub-cases mentioned above are collectively exhaustive. Under Cases $(\text{II}_{1})$-$(\text{II}_{2})$, let us define 
\begin{align}
\label{parametr.maxmizers}
(\underline{x}^{\star},\overline{x}^{\star}):=\left\{
\begin{array}{ll}
(\hat{z}_1,\hat{z}_2),&\text{if } \beta\in (A_1\cup A_3)\backslash\{\omega_1(x_2)\}, \\
(0,x_2),&\text{if } \beta\in (A_1\cup A_3)\cap\{\omega_1(x_2)\}, \\
(\tilde{z}_1,\tilde{z}_2),& \text{if } \beta\in A_2\cap \overline{A}_3, \\
(\tilde{z}_1,\tilde{z}_2),&\text{if } \beta\in \overline{A_1\cup A_2\cup A_3}\backslash\{\omega_1(x_2)\},\\
\text{ any one of } 
(0,x_2) \text{ and } (\tilde{z}_1,\tilde{z}_2),&\text{if }\beta\in \overline{A_1\cup A_2\cup A_3}\cap\{\omega_1(x_2)\},
\end{array}
\right.
\end{align}
where $(\hat{z}_1,\hat{z}_2)$, $(\tilde{z}_1,\tilde{z}_2)$, $A_{1}$, $A_{2}$, and $A_{3}$ are given by \eqref{def.max.points}

\begin{lemma}
\label{lem3.4}
Let $(\underline{x}^{\star},\overline{x}^{\star})$ be defined by \eqref{parametr.maxmizers}. Then
\begin{align}
\label{3.95.77}
g^{\prime}(\overline{x}^{\star})=\inf \{g^{\prime}(\overline{x}): \text{ \emph{there exists} } \underline{x}\geq0 \text{ \emph{such that} } \psi(\underline{x},\overline{x})=\beta\}.
\end{align}
\end{lemma}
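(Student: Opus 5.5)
The plan is to recast \eqref{3.95.77} as a one-dimensional constrained minimisation over the upper barrier alone, and then solve it branch by branch using the piecewise concave-convex shape of $g$ (here $0=a_1<a_2<a_3$ in Cases $(\text{II}_1)$--$(\text{II}_2)$). For fixed $y>0$ put
\[
\Psi(y):=\sup_{0\le x<y}\psi(x,y).
\]
Since $\psi(y,y)=0$ and $\psi(\cdot,y)$ is continuous, the intermediate value theorem shows that $y$ admits some $\underline{x}$ with $\psi(\underline{x},y)=\beta$ if and only if $\Psi(y)\ge\beta$. The right-hand side of \eqref{3.95.77} is therefore $\inf\{g'(y):\Psi(y)\ge\beta\}$. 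Next I would locate the maximisers in $x$. From $\partial_x\psi(x,y)=-(1-g'(x)/g'(y))$, the function $\psi(\cdot,y)$ decreases where $g'<g'(y)$ and increases where $g'>g'(y)$. Hence the supremum is attained either at the left endpoint $x=0=(g')_1^{-1}(g'(y))$ or at a point where $g'$ down-crosses the level $g'(y)$. On the concave piece $[a_2,a_3]$ this point is $(g')_3^{-1}(g'(y))$. This identifies $\Psi$ with the maximum of the two integrals appearing in $\omega_1$ and $\omega_2$. The thresholds $x_1,x_2$ in \eqref{13.17.g.} record where the competing candidates switch order, which is exactly how $\omega_1,\omega_2$ are pieced together.

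The second step reduces the admissible $y$ to the two convex pieces $[a_1,a_2]$ and $[a_3,\infty)$. Suppose $y$ lies in the concave part $(a_2,a_3)$, or generally suppose $g''(y)<0$. Then $y':=(g')_4^{-1}(g'(y))>y$ satisfies $g'(y')=g'(y)$, and $g'\le g'(y)$ on $[y,y']$. Therefore
\[
\psi(x,y')=\psi(x,y)+\int_y^{y'}\Bigl(1-\frac{g'(z)}{g'(y)}\Bigr)\mathrm{d}z\ge\psi(x,y),
\]
so $y'$ is at least as good. It then suffices to minimise $g'$ separately along the branch $y\in[a_1,a_2]$ (where $\omega_1$ is relevant) and along $y\in[a_3,\infty)$ (where $\omega_2$, and beyond $x_1$ again $\omega_1$, are relevant). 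On each convex branch $g'$ is increasing, so minimising $g'(y)$ amounts to minimising $y$. By an envelope-theorem computation, $\frac{\mathrm{d}}{\mathrm{d}y}\omega_i(y)=\frac{g''(y)}{g'(y)^2}\int_{x(y)}^{y}g'(z)\,\mathrm{d}z>0$, so each $\omega_i$ is strictly increasing and $\omega_i^{-1}(\beta)$ is the smallest feasible $y$ on its branch. The branch-wise minima are thus $g'(\hat z_2)$ and $g'(\tilde z_2)$.

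Finally I would compare the two branches. On $A_1$ the $\omega_1$-branch gives the smaller slope, and on $A_2$ the $\omega_2$-branch does. On $A_3$, that is $\beta>\omega_2(x_1)$, the $\omega_2$-branch has already merged into $\omega_1$, so the minimiser is $(\hat z_1,\hat z_2)$. This reproduces each line of \eqref{parametr.maxmizers}. I expect the main obstacle to be the boundary and tie cases. When $\beta=\omega_1(x_2)$, the point $x_2$ is where the $x$-maximiser jumps between the convex piece $[a_1,a_2]$ and $(g')_2^{-1}$, and $\omega_1$ is only defined off the gap $[(g')_2^{-1}(g'(x_2)),x_2)$. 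Here I would check directly that $\psi(0,x_2)=\beta$ and that no $y$ in the gap is feasible with smaller slope, using the definition of $x_2$ as an infimum. When $\beta\notin A_1\cup A_2\cup A_3$, equality of slopes makes either choice attain the infimum. Continuity and strict monotonicity of $\omega_i$ across $x_1$ and $x_2$ will need care, since $(g')_1^{-1}$ and $(g')_3^{-1}$ are only generalised inverses.
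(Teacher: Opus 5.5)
Your route differs from the paper's. The paper fixes one configuration, $\beta\in(A_1\cup A_3)\setminus\{\omega_1(x_2)\}$ with $\overline{x}^{\star}<(g')_2^{-1}(g'(x_2))$ (so $\underline{x}^{\star}=0$). It then shows $g'(\overline{x})\ge g'(\overline{x}^{\star})$ for every competitor by contradiction, split by where $\underline{x}$ and $\overline{x}$ lie, and leaves the other lines of \eqref{parametr.maxmizers} as ``similar''. You instead compute the infimum directly through the profile $\Psi(y)=\sup_{x<y}\psi(x,y)$. This covers all lines at once and explains the shape of \eqref{parametr.maxmizers}. The following steps are correct:
\begin{itemize}
\item feasibility holds iff $\Psi(y)\ge\beta$;
\item concave upper barriers reduce to $(g')_4^{-1}(g'(y))$;
\item $\Psi=\omega_2$ on $[a_3,\infty)$, so $\tilde z_2$ is the smallest feasible point there.
\end{itemize}

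The step that fails is ``$\omega_1^{-1}(\beta)$ is the smallest feasible $y$ on $[a_1,a_2]$''. On the first convex branch $\Psi(y)=\psi(0,y)$ for every $y\in[0,a_2]$. But $\omega_1$ is defined only on $[0,(g')_2^{-1}(g'(x_2)))\cup[x_2,\infty)$. The threshold $x_2$ is not a switch between maximisers of $\psi(\cdot,y)$ for a fixed $y$; only $x_1$ plays that role. Instead, $x_2$ compares two upper barriers with equal slope, one on each convex branch.

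Hence the failure covers every $\beta$ with $\psi\bigl(0,(g')_2^{-1}(g'(x_2))\bigr)\le\beta\le\psi(0,a_2)$. The tie $\beta=\omega_1(x_2)$ you single out is, when $x_2>b_1$, only the left endpoint of this interval. For such $\beta$ the smallest feasible first-branch point lies in $[(g')_2^{-1}(g'(x_2)),a_2]$, outside the domain of $\omega_1$, while $\hat z_2\ge x_2$ sits on the last branch. So your branchwise minima are not $g'(\hat z_2)$ and $g'(\tilde z_2)$.

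The fix is a second equal-slope comparison. For such $y$ set $y'=(g')_4^{-1}(g'(y))\in[x_2,b_2]$. Then $\psi(0,y')-\psi(0,y)=I(y')$, where $I(w):=\int_{(g')_2^{-1}(g'(w))}^{w}\bigl(1-g'(z)/g'(w)\bigr)\mathrm{d}z$, and you need $I\ge0$ on $[x_2,b_2]$. The infimum definition of $x_2$ alone does not give this. It follows from $I(x_2)\ge0$ together with $I'(w)=\frac{g''(w)}{g'(w)^2}\bigl[g(w)-g((g')_2^{-1}(g'(w)))\bigr]>0$ on $(b_1,b_2)$.

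Consequences of this comparison:
\begin{itemize}
\item $\Psi(y')\ge\beta$, so $g'(y)=g'(y')\ge g'(\tilde z_2)$.
\item Since $\omega_2=\Psi\ge\psi(0,\cdot)$ on $[a_3,\infty)$, you get $\tilde z_2\le\hat z_2$ whenever $\hat z_2\ge x_2$. The infimum is therefore $\min\{g'(\hat z_2),g'(\tilde z_2)\}$ in every case.
\item Your $A_3$ line needs $g'(\hat z_2)\le g'(\tilde z_2)$ there, i.e. $A_2\cap A_3=\emptyset$. The same monotonicity of $I$, used the other way, gives it: if $x_1<\tilde z_2<x_2$, then $\psi\bigl(0,(g')_2^{-1}(g'(\tilde z_2))\bigr)>\beta$, which forces $\beta\in A_1$. ``$\omega_2$ has merged into $\omega_1$'' holds only when $\tilde z_2\ge x_2$.
\end{itemize}
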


\begin{proof}
We prove \eqref{3.95.77} only for the case where $\beta\in (A_1\cup A_3)\backslash\{\omega_1(x_2)\}$ and $\overline{x}^{\star}\in (0,(g^{\prime})^{-1}_2(g^{\prime}(x_2)))$, as this scenario sufficiently captures the core proof strategy.
Then $\underline{x}^{\star}=0$, $\overline{x}^{\star}=\omega_{1}^{-1}(\beta)$, $\psi(\underline{x}^{\star},\overline{x}^{\star})=\beta$, and $g^{\prime}(0)<g^{\prime}(\overline{x}^{\star})<g^{\prime}(x_2)<g^{\prime}(a)$. Let $(\underline{x},\overline{x})\neq (\underline{x}^{\star},\overline{x}^{\star})$ be such that $\psi(\underline{x},\overline{x})=\beta$. We examine the following cases.
\begin{itemize}
\item Suppose $\underline{x}=\underline{x}^{\star}=0$ and $\overline{x}\in [0,a]$. By the continuity and strict increasing property of the function 
$x\mapsto \psi(0,x)$ on $[0,a]$, we must have $\overline{x}=\overline{x}^{\star}$. This contradicts the assumption $(\underline{x},\overline{x})\neq (\underline{x}^{\star},\overline{x}^{\star})$.
\item Suppose $\underline{x}\in (0, \overline{x}^{\star})$ and $\overline{x}\in [0,a]$. By the continuity and strict decreasing property of the function 
$x\mapsto \psi(x,\overline{x}^{\star})$ on $[0,\overline{x}^{\star}]$, it follows that
\begin{align} \psi(\underline{x},\overline{x}^{\star})<\psi(\underline{x}^{\star},\overline{x}^{\star})=\beta.\nn
\end{align}
Combined with the strict increasing property of the function $x\mapsto \psi(\underline{x},x)$ on $[\underline{x},a]$ and $\psi(\underline{x},\overline{x})=\beta$, this yields
$\overline{x}\in (\overline{x}^{\star},a]$. Since $g^{\prime}$ is increasing on $[0,a]$, it follows that $g^{\prime}(\overline{x})>g^{\prime}(\overline{x}^{\star})$.
\item Suppose $\underline{x}\in [\overline{x}^{\star},a]$ and $\overline{x}\in [0,a]$. Since $g^{\prime}$ is increasing on $[0,a]$, we have $g^{\prime}(\overline{x})>g^{\prime}(\underline{x})\geq g^{\prime}(\overline{x}^{\star})$.
\item Suppose $\underline{x} \in [0,a]$ and $\overline{x}\in (a,a_{3}]$. We prove $g^{\prime}(\overline{x}^{\star})\leq g^{\prime}(\overline{x})$ by contradiction. Indeed, if $g^{\prime}(\overline{x}^{\star})> g^{\prime}(\overline{x})$, we examine the following cases.
\begin{itemize}
\item When $g^{\prime}(\overline{x})\leq g^{\prime}(0)$, then ${g^{\prime}(x)}> {g^{\prime}(\overline{x})}$ for all $x\in (\underline{x},\overline{x})$, implying
\begin{align*}
\psi(\underline{x},\overline{x})=\int_{\underline{x}}^{\overline{x}}\left(1-\frac{g^{\prime}(z)}{g^{\prime}(\overline{x})}\right)\mathrm{d}z<0,
\end{align*}
which contradicts $\psi(\underline{x},\overline{x})=\beta$.

\item When $g^{\prime}(\overline{x})> g^{\prime}(0)$, then since $g^{\prime}(\overline{x}^{\star})> g^{\prime}(\overline{x})$ we have $(g^{\prime})_{2}^{-1}(g^{\prime}(\overline{x}))\in (0,\overline{x}^{\star})$. Therefore, by the strict decreasing property of the function $ x\mapsto \int_{0}^{(g^{\prime})_{2}^{-1}(g^{\prime}(x))}\left(1-\frac{g^{\prime}(z)}{g^{\prime}(x)}\right)\mathrm{d}z$ on $[(g^{\prime})_{3}^{-1}(g^{\prime}(\overline{x}^{\star})),\overline{x}]$, we obtain
\begin{align*}
\psi(\underline{x},\overline{x})&=\int_{\underline{x}}^{\underline{x}\vee (g^{\prime})_{2}^{-1}(g^{\prime}(\overline{x}))}\left(1-\frac{g^{\prime}(z)}{g^{\prime}(\overline{x})}\right)\mathrm{d}z
+
\int_{\underline{x}\vee (g^{\prime})_{2}^{-1}(g^{\prime}(\overline{x}))}^{\overline{x}}\left(1-\frac{g^{\prime}(z)}{g^{\prime}(\overline{x})}\right)\mathrm{d}z
\nn\\
&< 
\int_{\underline{x}}^{\underline{x}\vee (g^{\prime})_{2}^{-1}(g^{\prime}(\overline{x}))}\left(1-\frac{g^{\prime}(z)}{g^{\prime}(\overline{x})}\right)\mathrm{d}z
\nn\\
&= 
\mathbf{1}_{\{\underline{x}< (g^{\prime})_{2}^{-1}(g^{\prime}(\overline{x}))\}}\int_{\underline{x}}^{(g^{\prime})_{2}^{-1}(g^{\prime}(\overline{x}))}\left(1-\frac{g^{\prime}(z)}{g^{\prime}(\overline{x})}\right)\mathrm{d}z
\nn\\
&\leq 
\mathbf{1}_{\{\underline{x}< (g^{\prime})_{2}^{-1}(g^{\prime}(\overline{x}))\}}\int_{0}^{(g^{\prime})_{2}^{-1}(g^{\prime}(\overline{x}))}\left(1-\frac{g^{\prime}(z)}{g^{\prime}(\overline{x})}\right)\mathrm{d}z
\nn\\
&\leq 
\mathbf{1}_{\{\underline{x}< (g^{\prime})_{2}^{-1}(g^{\prime}(\overline{x}))\}}\int_{0}^{(g^{\prime})_{2}^{-1}(g^{\prime}((g^{\prime})_{3}^{-1}(g^{\prime}(\overline{x}^{\star}))))}\left(1-\frac{g^{\prime}(z)}{g^{\prime}((g^{\prime})_{3}^{-1}(g^{\prime}(\overline{x}^{\star})))}\right)\mathrm{d}z
\nn\\
&=
\mathbf{1}_{\{\underline{x}< (g^{\prime})_{2}^{-1}(g^{\prime}(\overline{x}))\}}\int_{0}^{\overline{x}^{\star}}\left(1-\frac{g^{\prime}(z)}{g^{\prime}(\overline{x}^{\star})}\right)\mathrm{d}z
\nn\\
&=\beta \mathbf{1}_{\{\underline{x}< (g^{\prime})_{2}^{-1}(g^{\prime}(\overline{x}))\}},
\end{align*}
which contradicts the fact that $\psi(\underline{x},\overline{x})=\beta$.
\end{itemize}

\item Suppose $\underline{x} \in [0,a]$ and $\overline{x}\in (a_{3},\infty)$. We prove $g^{\prime}(\overline{x}^{\star})\leq g^{\prime}(\overline{x})$ by contradiction. Indeed, if $g^{\prime}(\overline{x}^{\star})> g^{\prime}(\overline{x})$, we examine the following cases.
\begin{itemize}
\item When $\int_{\underline{x}}^{(g^{\prime})_{3}^{-1}(g^{\prime}(\overline{x}))}\left(1-\frac{g^{\prime}(z)}{g^{\prime}(\overline{x})}\right)\mathrm{d}z<0$, then
\begin{align}
\beta=\psi(\underline{x},\overline{x})&=\int_{\underline{x}}^{(g^{\prime})_{3}^{-1}(g^{\prime}(\overline{x}))}\left(1-\frac{g^{\prime}(z)}{g^{\prime}(\overline{x})}\right)\mathrm{d}z
+
\int_{(g^{\prime})_{3}^{-1}(g^{\prime}(\overline{x}))}^{\overline{x}}\left(1-\frac{g^{\prime}(z)}{g^{\prime}(\overline{x})}\right)\mathrm{d}z
\nn\\
&< \int_{(g^{\prime})_{3}^{-1}(g^{\prime}(\overline{x}))}^{\overline{x}}\left(1-\frac{g^{\prime}(z)}{g^{\prime}(\overline{x})}\right)\mathrm{d}z
\nn\\
&\leq \omega_{2}(\overline{x}),\nn
\end{align}
which implies $\omega_{2}^{-1}(\beta)\leq \overline{x}$. Combined with $g^{\prime}(\overline{x}^{\star})> g^{\prime}(\overline{x})$ and the monotonicity of $g^{\prime}$ on $[a_{3},\infty)$, this yields 
\begin{align*}
g^{\prime}(\omega_{2}^{-1}(\beta))\leq g^{\prime}(\overline{x})< g^{\prime}(\overline{x}^{\star})=g^{\prime}(\omega_{1}^{-1}(\beta)),
\end{align*}
that is $\beta\in A_{2}$, contradicting the assumption of $\beta\in (A_1\cup A_3)\backslash\{\omega_1(x_2)\}$ (recall that $A_{2}\cap (A_{1}\cup A_{3})=\emptyset$).
\item When $\int_{\underline{x}}^{(g^{\prime})_{3}^{-1}(g^{\prime}(\overline{x}))}\left(1-\frac{g^{\prime}(z)}{g^{\prime}(\overline{x})}\right)\mathrm{d}z\geq 0$, then $g^{\prime}(\underline{x})<g^{\prime}(\overline{x})<g^{\prime}(\overline{x}^{\star})<g^{\prime}(a)$, $\overline{x}<x_{2}$, and
\begin{align}
\beta=\psi(\underline{x},\overline{x})&=\int_{\underline{x}}^{(g^{\prime})_{2}^{-1}(g^{\prime}(\overline{x}))}\left(1-\frac{g^{\prime}(z)}{g^{\prime}(\overline{x})}\right)\mathrm{d}z
+
\int_{(g^{\prime})_{2}^{-1}(g^{\prime}(\overline{x}))}^{\overline{x}}\left(1-\frac{g^{\prime}(z)}{g^{\prime}(\overline{x})}\right)\mathrm{d}z
\nn\\
&< \int_{\underline{x}}^{(g^{\prime})_{2}^{-1}(g^{\prime}(\overline{x}))}\left(1-\frac{g^{\prime}(z)}{g^{\prime}(\overline{x})}\right)\mathrm{d}z
\nn\\
&\leq \int_{0}^{(g^{\prime})_{2}^{-1}(g^{\prime}(\overline{x}))}\left(1-\frac{g^{\prime}(z)}{g^{\prime}(\overline{x})}\right)\mathrm{d}z
\nn\\
&\leq \int_{0}^{(g^{\prime})_{2}^{-1}(g^{\prime}(\overline{x}))}\left(1-\frac{g^{\prime}(z)}{g^{\prime}(\overline{x}^{\star})}\right)\mathrm{d}z
\nn\\
&\leq \int_{0}^{\overline{x}^{\star}}\left(1-\frac{g^{\prime}(z)}{g^{\prime}(\overline{x}^{\star})}\right)\mathrm{d}z
=\beta,\nn
\end{align}
a contradiction.
\end{itemize}

\item Suppose that $\underline{x}\in(a,\infty)$ and $\overline{x}\in(a_{3},\infty)$. One can check that
\begin{align*}
\beta&=\psi(\underline{x},\overline{x})
\nn\\
&=\left[\psi(\underline{x},(g^{\prime})_{3}^{-1}(g^{\prime}(\overline{x})))
+\psi((g^{\prime})_{3}^{-1}(g^{\prime}(\overline{x})),\overline{x})\right]\mathbf{1}_{\{g^{\prime}(\overline{x})\leq g^{\prime}(a),\, \underline{x}\leq (g^{\prime})_{3}^{-1}(g^{\prime}(\overline{x}))\}}
\nn\\
&\quad \,\,+\left[-\int_{(g^{\prime})_{3}^{-1}(g^{\prime}(\overline{x}))}^{\underline{x}}\left(1-\frac{g^{\prime}(z)}{g^{\prime}(\overline{x})}\right)\mathrm{d}z
+\psi((g^{\prime})_{3}^{-1}(g^{\prime}(\overline{x})),\overline{x})\right]\mathbf{1}_{\{g^{\prime}(\overline{x})\leq g^{\prime}(a),\, \underline{x}> (g^{\prime})_{3}^{-1}(g^{\prime}(\overline{x}))\}}
\nn\\
&\quad \,\,+\psi(\underline{x},\overline{x})\mathbf{1}_{\{g^{\prime}(\overline{x})> g^{\prime}(a)\}}
\nn\\
&\leq \psi((g^{\prime})_{3}^{-1}(g^{\prime}(\overline{x})),\overline{x}) \mathbf{1}_{\{g^{\prime}(\overline{x})\leq g^{\prime}(a)\}} 
+\psi(0,\overline{x})\mathbf{1}_{\{g^{\prime}(\overline{x})> g^{\prime}(a)\}}
\nn\\
&\leq \omega_{2}(\overline{x}).
\end{align*}
\begin{itemize}
\item If $\beta\in A_{1}$, then
\begin{align}
g^{\prime}(\overline{x}^{\star})=g^{\prime}(\omega_1^{-1}(\beta))<g^{\prime}(\omega_2^{-1}(\beta))\leq g^{\prime}(\overline{x}).\nn
\end{align}
\item If $\beta\in A_{3}$, then $\omega_{2}(\overline{x})\geq \beta > \omega_{2}(x_{1})$, that is $\overline{x}>x_{1}$. In addition, when $\overline{x}< x_{2}$, we have
\begin{align}
\psi(0,\overline{x}^{\star})=\beta\leq \omega_{2}(\overline{x})&=
\psi(0,(g^{\prime})_{2}^{-1}(g^{\prime}(\overline{x})))+\psi((g^{\prime})_{2}^{-1}(g^{\prime}(\overline{x})),\overline{x})
\nn\\
&<\psi(0,(g^{\prime})_{2}^{-1}(g^{\prime}(\overline{x}))),
\nn
\end{align}
which implies $\overline{x}^{\star}< (g^{\prime})_{2}^{-1}(g^{\prime}(\overline{x}))$, which combined with the increasing property of $g^{\prime}$ on $[0,a]$ gives $g^{\prime}(\overline{x}^{\star})< g^{\prime}(\overline{x})$. While, when $\overline{x} \geq x_{2}$, by the assumption of $\overline{x}^{\star}<(g^{\prime})^{-1}_2(g^{\prime}(x_2))$ we have
\begin{align}
g^{\prime}(\overline{x}^{\star})<g^{\prime}((g^{\prime})^{-1}_2(g^{\prime}(x_2)))=g^{\prime}(x_2)\leq g^{\prime}(\overline{x}).\nn
\end{align}
\end{itemize}
\end{itemize}
The proof is complete.
\end{proof}

Under the case of $\mu_{-}\leq 0, \,\mu_{+}>0$, the optimal strategy and value function of the control problem \eqref{eq:a008.aux.} are presented in the following Theorem \ref{Theorem3.4}.
To state the results of Theorem \ref{Theorem3.4}, we may need to impose the following assumption.
\begin{itemize}
\item[] \textbf{Assumption A}: For any pair $(\underline{x},\overline{x})$ such that $\psi(\underline{x},\overline{x})=\beta$, $\underline{x}\geq 0$, $\overline{x}\geq a_{3}$ and $g^{\prime}(0)< g^{\prime}(\overline{x})< g^{\prime}(a)$, we have
\begin{align}
\psi(0,(g^{\prime})_{3}^{-1}(g^{\prime}(\overline{x})))\geq \beta. 
\nn
\end{align}
\end{itemize}
\begin{remark}\label{remark3.7}
For any pair $(\underline{x},\overline{x})$ satisfying $\psi(\underline{x},\overline{x})=\beta$, $\underline{x}\geq 0$, $\overline{x}\geq a_{3}$, and\, $g^{\prime}(0)< g^{\prime}(\overline{x})< g^{\prime}(a)$, we have
\begin{align}
\psi\bigl(0,(g^{\prime})_{3}^{-1}(g^{\prime}(\overline{x}))\bigr)
&=\int_{0}^{(g^{\prime})_{3}^{-1}(g^{\prime}(\overline{x}))}\left(1-\frac{g^{\prime}(z)}{g^{\prime}(\overline{x})}\right)\mathrm{d}z
\nn\\
&\geq \int_{0}^{a_{3}}\left(1-\frac{g^{\prime}(z)}{g^{\prime}(a_{3})}\right)\mathrm{d}z
=\psi(0,a_{3}),\nn
\end{align}
since the function $\displaystyle x\mapsto\int_{0}^{(g^{\prime})_{3}^{-1}(g^{\prime}(x))}\left(1-\frac{g^{\prime}(z)}{g^{\prime}(x)}\right)\mathrm{d}z$ is increasing on $[a_{3},(g^{\prime})_{4}^{-1}(g^{\prime}(a))]$.
Consequently, a sufficient condition for \textbf{Assumption A} to hold is
\begin{align*}
\psi(0,a_{3})\geq \beta.
\end{align*}
\end{remark}

\begin{theorem}
\label{Theorem3.4}
Suppose $\mu_{-}\leq 0,\,\mu_{+}>0$. Recall that $\psi$ and $(\underline{x}^{\star},\overline{x}^{\star})$ are defined by \eqref{def.psi.} and \eqref{parametr.maxmizers}. Let $g$ and $(a_{i})_{1\leq i\leq 3}$ be defined separately in three cases $(\text{\emph{II}}_{1})$-$(\text{\emph{II}}_{3})$.

Under cases $(\text{\emph{II}}_{1})$-$(\text{\emph{II}}_{2})$ with $\overline{x}^{*}\geq a$, the reinsurance-dividend strategy $(u^{\star},D^{\star})$ given by
\begin{equation}
\label{optimal.stra.3.4}
\begin{cases} 
u_{t}^{\star}=\frac{\mu_{+}\left(U^{u^{\star},D^{\underline{x}^{\star},\overline{x}^{\star}}}_{t}+\frac{a_{22}}{a_{21}}a\right)}{\sigma_{+}^2(1-\gamma_{+})}\emph{\textbf{1}}_{\{U^{u^{\star},D^{\underline{x}^{\star},\overline{x}^{\star}}}_{t}\in[a,\,a\vee x_0^{+})\}}+\emph{\textbf{1}}_{\{U^{u^{\star},D^{\underline{x}^{\star},\overline{x}^{\star}}}_{t}\notin[a,\,a\vee x_0^{+})\}},& t\geq 0, \\
D^{\star}_{t}=D^{\underline{x}^{\star},\overline{x}^{\star}}_{t},& t\geq 0.
\end{cases}
\end{equation}
is optimal for the auxiliary control problem \eqref{eq:a008.aux.}, and, the associated value function is
\begin{equation}\label{eq:vfunc.3.4}
V_{\underline{x}^{\star},\overline{x}^{\star}}(x)=
\left\{\begin{array}{ll}
\frac{g(x)}{g^{\prime}(\overline{x}^{\star})},&x\in[0,\overline{x}^{\star}), \\
x-\overline{x}^{\star}+\frac{g(\overline{x}^{\star})}{g^{\prime}(\overline{x}^{\star})}=x-\underline{x}^{\star}-\beta+\frac{g(\underline{x}^{\star})}{g^{\prime}(\overline{x}^{\star})},& x\in[\overline{x}^{\star},\infty).
\end{array}
\right.
\end{equation}

Under cases $(\text{\emph{II}}_{1})$-$(\text{\emph{II}}_{2})$ with $\overline{x}^{*}\in [0,a)$ and \textbf{Assumption A} holding, 
the reinsurance-dividend strategy $(u^{\star},D^{\star})$ given by \eqref{optimal.stra.3.4} is optimal for the auxiliary control problem \eqref{eq:a008.aux.}, and the associated value function is given by \eqref{eq:vfunc.3.4}.

Under Case $(\text{\emph{II}}_{3})$, the reinsurance-dividend strategy $(u^{\star},D^{\star})$ defined by
\begin{align}
\begin{cases}
u_{t}^{\star}=1,&t\geq 0, \\
D^{\star}_{t}=D^{\underline{x}^{\star},\overline{x}^{\star}}_{t},&t\geq 0,
\end{cases}\nn
\nn
\end{align}
is optimal for \eqref{eq:a008.aux.}. The corresponding value function is
\begin{equation}
V_{0,\overline{x}^{\star}}(x)=
\left\{\begin{array}{ll}
\frac{g(x)}{g^{\prime}(\overline{x}^{\star})},&x\in[0,\overline{x}^{\star}), \\
x-\beta,& x\in[\overline{x}^{\star},\infty),
\end{array}
\right.\nn
\end{equation}
where $\overline{x}^{\star}$ is uniquely determined by the condition $\psi(0,\overline{x}^{\star})=\beta$.
\end{theorem}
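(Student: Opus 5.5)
The plan follows the template of Theorem \ref{thm3.2.ww}. By Lemma \ref{lem3.1.w}, the auxiliary problem \eqref{eq:a008.aux.} splits into the intermediate problems \eqref{auxiliary.contr.p.} and the pointwise maximization \eqref{point.point.optimization}.

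\textbf{Step 1 (intermediate problems).} Fix admissible barriers $(\underline{x},\overline{x})$. Theorem \ref{thm:case2.solution.eq.a046} supplies the solution $Cg$ of \eqref{eq:a046}, with $g=g_{21}$ or $g_{22}$ according to the sub-case, and Theorem \ref{thm3.1.w} identifies $V_{\underline{x},\overline{x}}$. As in \eqref{eq:ee07.w}--\eqref{3.20.w}, the $C^1$ fit at $\overline{x}$ and the boundary condition \eqref{boundary.condition} force $C=1/g'(\overline{x})$ and $\psi(\underline{x},\overline{x})=\beta$. The resulting value function is
\[
V_{\underline{x},\overline{x}}(x)=g(x)/g'(\overline{x}) \text{ on } [0,\overline{x}), \qquad V_{\underline{x},\overline{x}}(x)=g(\underline{x})/g'(\overline{x})+x-\underline{x}-\beta \text{ beyond } \overline{x}.
\]
The associated reinsurance control is the feedback maximizer from Theorem \ref{thm:case2.solution.eq.a046}, which is exactly the first line of \eqref{optimal.stra.3.4}. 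Its feedback form depends only on the position relative to $a$ and $x_0^+$, not on the barriers.

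\textbf{Step 2 (uniform maximizer).} It remains to show $V_{\underline{x}^\star,\overline{x}^\star}\ge V_{\underline{x},\overline{x}}$ for every pair with $\psi(\underline{x},\overline{x})=\beta$. I would compare on three regions, as in \eqref{eq:v-v.4}--\eqref{eq:v-v.6}.

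On $[0,\min(\overline{x}^\star,\overline{x}))$ the difference is $g(x)\bigl(1/g'(\overline{x}^\star)-1/g'(\overline{x})\bigr)$. This is nonnegative by Lemma \ref{lem3.4}, which gives $g'(\overline{x}^\star)\le g'(\overline{x})$.

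For $x\ge \overline{x}^\star$ I would write
\[
V_{\underline{x}^\star,\overline{x}^\star}(x)-\frac{g(x)}{g'(\overline{x})}=\int_{\overline{x}^\star}^{x}\Bigl(1-\frac{g'(z)}{g'(\overline{x})}\Bigr)dz+\Bigl[\frac{g(\overline{x}^\star)}{g'(\overline{x}^\star)}-\frac{g(\overline{x}^\star)}{g'(\overline{x})}\Bigr]+\cdots,
\]
and also compare the two affine tails. The tail comparison reduces to showing
\[
\underline{x}^\star-\frac{g(\underline{x}^\star)}{g'(\overline{x}^\star)}\ \ge\ \underline{x}-\frac{g(\underline{x})}{g'(\overline{x})}.
\]
This is the main obstacle. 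Unlike in the convex case, $g$ is concave on $[a,a_3]$, so $1-g'(z)/g'(\overline{x})$ can change sign. I would exploit the definitions of $\hat z_i$, $\tilde z_i$ and $A_1,A_2,A_3$ through \eqref{parametr.maxmizers}. The point is that $\underline{x}^\star$ is either $0$ or $(g')_3^{-1}(g'(\overline{x}^\star))$, so the map $y\mapsto y-g(y)/g'(\overline{x}^\star)$ attains its maximum over $[0,\overline{x}^\star]$ at $\underline{x}^\star$, by the first-order condition $g'(\underline{x}^\star)=g'(\overline{x}^\star)$ or the endpoint. Combined with $g'(\overline{x}^\star)\le g'(\overline{x})$, this would give the tail inequality. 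On the middle region $[\overline{x}^\star,\overline{x}]$ I would use the identity $\psi(\underline{x}^\star,\overline{x}^\star)=\beta$ and that same maximality property to show that $x-\beta-\underline{x}^\star+g(\underline{x}^\star)/g'(\overline{x}^\star)\ge g(x)/g'(\overline{x})$.

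\textbf{Step 3 (the case $\overline{x}^\star<a$).} Here $\underline{x}^\star=0$, and the maximality of $y\mapsto y-g(y)/g'(\overline{x}^\star)$ at $0$ can fail for competitors with $\overline{x}\ge a_3$ and $g'(0)<g'(\overline{x})<g'(a)$. For such pairs, the point $(g')_3^{-1}(g'(\overline{x}))$ could give a larger tail. \textbf{Assumption A} rules this out: $\psi(0,(g')_3^{-1}(g'(\overline{x})))\ge\beta$ yields the required tail inequality.

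\textbf{Step 4 (Case (II$_3$)).} Here $g=g_{22}$ is convex on $[0,\infty)$, so the proof of Theorem \ref{thm3.2.ww} applies verbatim and gives $(0,\overline{x}^\star)$ with $u^\star\equiv1$.

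Finally, Lemma \ref{lem3.1.w} transfers optimality in \eqref{point.point.optimization} to optimality in \eqref{eq:a008.aux.}.
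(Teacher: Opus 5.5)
The structure of your plan matches the paper's: Lemma~\ref{lem3.1.w}, the family $V_{\underline{x},\overline{x}}$ subject to $\psi(\underline{x},\overline{x})=\beta$, Lemma~\ref{lem3.4} on $[0,\min(\overline{x},\overline{x}^\star))$, \textbf{Assumption A} when $\overline{x}^\star<a$, and Case (II$_3$) handled as in Theorem~\ref{thm3.2.ww}. But the core of Step 2 is wrong. For $x\ge\max(\overline{x},\overline{x}^\star)$,
\[
V_{\underline{x}^\star,\overline{x}^\star}(x)-V_{\underline{x},\overline{x}}(x)=\Bigl[\underline{x}-\frac{g(\underline{x})}{g'(\overline{x})}\Bigr]-\Bigl[\underline{x}^\star-\frac{g(\underline{x}^\star)}{g'(\overline{x}^\star)}\Bigr].
\]
So you need $\underline{x}^\star-g(\underline{x}^\star)/g'(\overline{x}^\star)\le \underline{x}-g(\underline{x})/g'(\overline{x})$, which is the reverse of what you set out to prove.

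Your justification is also false. The map $y\mapsto y-g(y)/g'(\overline{x}^\star)$ has derivative $1-g'(y)/g'(\overline{x}^\star)$, so $\underline{x}^\star$ is a local \emph{minimizer}, not a maximizer. If $\underline{x}^\star=0$ and $\overline{x}^\star<a$, convexity of $g$ on $[0,a]$ makes the map nondecreasing on $[0,\overline{x}^\star]$: it equals $0$ at $0$ and $\psi(0,\overline{x}^\star)=\beta$ at $\overline{x}^\star$. If $\underline{x}^\star=(g')_3^{-1}(g'(\overline{x}^\star))$, the derivative changes from negative to positive there, because $g'$ is decreasing on $[a,a_3]$. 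Even taken at face value, your chain does not close. Maximality would give $\underline{x}^\star-g(\underline{x}^\star)/g'(\overline{x}^\star)\ge \underline{x}-g(\underline{x})/g'(\overline{x}^\star)$. Separately, $g\ge0$ and $g'(\overline{x})\ge g'(\overline{x}^\star)$ give $\underline{x}-g(\underline{x})/g'(\overline{x})\ge \underline{x}-g(\underline{x})/g'(\overline{x}^\star)$. These two inequalities cannot be combined.

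Replacing ``maximum'' by ``minimum'' does not yield a one-line proof either, because $\underline{x}^\star$ is only a local minimizer. With $\underline{x}^\star=0$ and $g'(\overline{x}^\star)<g'(a)$, the map dips on the concave piece, and that dip is exactly what \textbf{Assumption A} must control. Using both constraints $\psi=\beta$ and $g(0)=0$, the tail difference equals $\psi(0,\overline{x})-\psi(0,\overline{x}^\star)$.

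The paper gets the sign from a case analysis over where both pairs sit relative to $a$ and $a_3$. On each interval the difference is piecewise monotone: its derivative is $1-g'(x)/g'(\overline{x})$ on $(\overline{x}^\star,\overline{x}]$ and $g'(x)/g'(\overline{x}^\star)-1$ on $(\overline{x},\overline{x}^\star]$. So only endpoint values need checking, plus, in the \textbf{Assumption A} case, the interior local minimum of $\int_0^x(1-g'(z)/g'(\overline{x}))\,dz-\beta$ at $(g')_3^{-1}(g'(\overline{x}))$. \textbf{Assumption A} is precisely nonnegativity at that interior point of the middle region, not a statement about the tail.

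Finally, you only treat competitors with $\overline{x}\ge\overline{x}^\star$. Lemma~\ref{lem3.4} gives $g'(\overline{x})\ge g'(\overline{x}^\star)$, not $\overline{x}\ge\overline{x}^\star$. Because $g$ is not convex, pairs with $\overline{x}\le a<a_3<\overline{x}^\star$ do occur. On $(\overline{x},\overline{x}^\star]$ you must compare $g(x)/g'(\overline{x}^\star)$ with the affine piece $x-\underline{x}-\beta+g(\underline{x})/g'(\overline{x})$, and your plan never addresses this.
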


\begin{proof}
It suffices to prove the theorem for the case $0=a_{1}<a_{2}<a_{3}$ (see Case $\text{{II}}_{1}$-$\text{II}_{2}$), as the scenario of $0=a_{1}=a_{2}=a_{3}$ (see Case $\text{{II}}_{3}$) follows analogously to Theorem \ref{thm3.2.ww}. By Lemma \ref{lem3.1.w}, we decouple the auxiliary optimization problem \eqref{eq:a008.aux.} into problems \eqref{auxiliary.contr.p.} and problem \eqref{point.point.optimization}. Applying theorem \ref{thm3.1.w} and arguments similar to those in the proof of Theorem \ref{thm3.2.ww}, the value function of \eqref{auxiliary.contr.p.} is given by 
\begin{equation}
V_{\underline{x},\overline{x}}(x)=
\left\{\begin{array}{ll}
\frac{g(x)}{g^{\prime}(\overline{x})},&x\in[0,\overline{x}), \\
\frac{g(\underline{x})}{g^{\prime}(\overline{x})}+x-\underline{x}-\beta,& x\in[\overline{x},\infty),
\end{array}
\right.
\end{equation}
where $g$ is defined at the beginning of Subsection \ref{subs.3.2.2}, and the pair $(\underline{x}, \overline{x})$ satisfies
\begin{align}
\label{3.20.w2}
\psi(\underline{x},\overline{x})=\beta.
\end{align}
We proceed to verify that $(\underline{x}^{\star},\overline{x}^{\star})$ given by \eqref{parametr.maxmizers} solves problem \eqref{point.point.optimization}. 

Suppose that $(\underline{x}^{\star},\overline{x}^{\star})$ is such that $\underline{x}^{\star}=0$ and $\overline{x}^{\star}\geq a$. Then $\overline{x}^{\star}\geq x_{2}$.
We examine the following cases.
\begin{itemize}
\item Consider a solution $(\underline{x},\overline{x})$ of \eqref{3.20.w2} such that $0\leq \underline{x}\leq \overline{x}\leq a$. By Lemma \ref{lem3.4}, we have $g^{\prime}(\overline{x})\geq g^{\prime}(\overline{x}^{\star})$. 
\begin{itemize}
\item For $x\in [0,\overline{x}]$,
\begin{align}
\label{al.01}
V_{\underline{x}^{\star},\overline{x}^{\star}}(x)-V_{\underline{x},\overline{x}}(x)=g(x)\left(\frac{1}{g^{\prime}(\overline{x}^{\star})}-\frac{1}{g^{\prime}(\overline{x})}\right)\geq 0.
\end{align}
\item For $x\in(\overline{x},\overline{x}^{\star}]$,
\begin{align}
\label{al.02}
V_{\underline{x}^{\star},\overline{x}^{\star}}(x)-V_{\underline{x},\overline{x}}(x)=&~
\frac{g(x)}{g^{\prime}(\overline{x}^{\star})}-\left[x-\underline{x}-\beta+\frac{g(\underline{x})}{g^{\prime}(\overline{x})}\right]
\nn\\
\geq&~\beta -\int_{\underline{x}}^{x}\left(1-\frac{g^{\prime}(z)}{g^{\prime}(\overline{x}^{\star})}\right)\mathrm{d}z.
\end{align}

In case where $g^{\prime}(\overline{x}^{\star})\geq g^{\prime}(a)$, we obtain
\begin{align}
\label{7.10.3.100}
\int_{\underline{x}}^{x}\left(1-\frac{g^{\prime}(z)}{g^{\prime}(\overline{x}^{\star})}\right)\mathrm{d}z
=&~\int_{0}^{\overline{x}^{\star}}\left(1-\frac{g^{\prime}(z)}{g^{\prime}(\overline{x}^{\star})}\right)\mathrm{d}z-\int_{0}^{\underline{x}}\left(1-\frac{g^{\prime}(z)}{g^{\prime}(\overline{x}^{\star})}\right)\mathrm{d}z
\nn\\
&-\int_{x}^{\overline{x}^{\star}}\left(1-\frac{g^{\prime}(z)}{g^{\prime}(\overline{x}^{\star})}\right)\mathrm{d}z
\nn\\
\leq&~\int_{0}^{\overline{x}^{\star}}\left(1-\frac{g^{\prime}(z)}{g^{\prime}(\overline{x}^{\star})}\right)\mathrm{d}z=\beta,
\end{align}
since ${g^{\prime}(x)}\leq {g^{\prime}(\overline{x}^{\star})}$ for any $x\in(\overline{x},\overline{x}^{\star}]$.

In case where $g^{\prime}(\overline{x}^{\star})< g^{\prime}(a)$, 
consider the function
\begin{align*}
x\mapsto V_{\underline{x}^{\star},\overline{x}^{\star}}(x)-V_{\underline{x},\overline{x}}(x)=\frac{g(x)}{g^{\prime}(\overline{x}^{\star})}-\left[x-\underline{x}-\beta+\frac{g(\underline{x})}{g^{\prime}(\overline{x})}\right],
\qquad x\in(\overline{x},\overline{x}^{\star}].
\end{align*}
Its derivative is
$\frac{g^{\prime}(x)}{g^{\prime}(\overline{x}^{\star})}-1.$
It follows from $g^{\prime}(\overline{x})\geq g^{\prime}(\overline{x}^{\star})$ and the piecewise concave-convex shape of $g$ that the function $x\mapsto V_{\underline{x}^{\star},\overline{x}^{\star}}(x)-V_{\underline{x},\overline{x}}(x)$ is increasing on $[\overline{x},(g^{\prime})_{3}^{-1}(g^{\prime}(\overline{x}^{\star}))]$ and decreasing on $[(g^{\prime})_{3}^{-1}(g^{\prime}(\overline{x}^{\star})),\overline{x}^{\star}].$
At the two endpoints, the matching condition $\psi(\underline{x},\overline{x})=\beta$ gives
\begin{align*}
V_{\underline{x}^{\star},\overline{x}^{\star}}(\overline{x})-V_{\underline{x},\overline{x}}(\overline{x})
&=g(\overline{x})\left(\frac{1}{g^{\prime}(\overline{x}^{\star})}-\frac{1}{g^{\prime}(\overline{x})}\right)\geq0,
\end{align*}
and, since $\underline{x}^{\star}=0$,
\begin{align*}
V_{\underline{x}^{\star},\overline{x}^{\star}}(\overline{x}^{\star})-V_{\underline{x},\overline{x}}(\overline{x}^{\star})
&=\underline{x}-\frac{g(\underline{x})}{g^{\prime}(\overline{x})}
=\int_{0}^{\underline{x}}\left(1-\frac{g^{\prime}(z)}{g^{\prime}(\overline{x})}\right)\mathrm{d}z\geq0.
\end{align*}
Therefore, we have 
\begin{align}
V_{\underline{x}^{\star},\overline{x}^{\star}}(x)-V_{\underline{x},\overline{x}}(x)
\geq 0, \quad x\in(\overline{x},\overline{x}^{\star}].\nn
\end{align}
\item For $x\in(\overline{x}^{\star},\infty)$,
\begin{align} 
\label{al.03}
V_{0,\overline{x}^{\star}}(x)-V_{\underline{x},\overline{x}}(x)&=
V_{0,\overline{x}^{\star}}(\overline{x}^{\star})-V_{\underline{x},\overline{x}}(\overline{x}^{\star})\geq 0.
\end{align}
\end{itemize}
\item Employing arguments analogous to \eqref{al.01}-\eqref{al.03}, one can verify that for any solution $(\underline{x},\overline{x})$ of \eqref{3.20.w2} satisfying $0\leq \underline{x}\leq a$ and $a<\overline{x}\leq a_{3}$, the inequality $V_{\underline{x}^{\star},\overline{x}^{\star}}(x)-V_{\underline{x},\overline{x}}(x)\geq 0$ holds for all $x\geq 0$.
\item Consider a solution $(\underline{x},\overline{x})$ of \eqref{3.20.w2} satisfying $0\leq \underline{x}\leq a$ and $\overline{x}>a_{3}$. By Lemma \ref{lem3.4}, $g^{\prime}(\overline{x})\geq g^{\prime}(\overline{x}^{\star})$. Since $g^{\prime}$ is strictly increasing on $[a_{3},\infty)$, it follows that $\overline{x}>\overline{x}^{\star}$.
\begin{itemize}
\item For $x\in [0,\overline{x}^{\star}]$,
\begin{align}
\label{al.04}
V_{\underline{x}^{\star},\overline{x}^{\star}}(x)-V_{\underline{x},\overline{x}}(x)=g(x)\left(\frac{1}{g^{\prime}(\overline{x}^{\star})}-\frac{1}{g^{\prime}(\overline{x})}\right)\geq 0.
\end{align}
\item For $x\in(\overline{x}^{\star},\overline{x}]$,
\begin{align}
\label{al.05}
V_{\underline{x}^{\star},\overline{x}^{\star}}(x)-V_{\underline{x},\overline{x}}(x)=&~
x-\beta-\frac{g(x)}{g^{\prime}(\overline{x})}
\nn\\
\geq&~\int_{0}^{x}\left(1-\frac{g^{\prime}(z)}{g^{\prime}(\overline{x})}\right)\mathrm{d}z-\beta
\nn\\
\geq&~\int_{0}^{\overline{x}^{\star}}\left(1-\frac{g^{\prime}(z)}{g^{\prime}(\overline{x})}\right)\mathrm{d}z-\beta
\nn\\
\geq&~\int_{0}^{\overline{x}^{\star}}\left(1-\frac{g^{\prime}(z)}{g^{\prime}(\overline{x}^{\star})}\right)\mathrm{d}z-\beta
\nn\\
=&~0.
\end{align}
\item For $x\in(\overline{x},\infty)$,
\begin{align} 
\label{al.06}
V_{0,\overline{x}^{\star}}(x)-V_{\underline{x},\overline{x}}(x)=&~
V_{0,\overline{x}^{\star}}(\overline{x})-V_{\underline{x},\overline{x}}(\overline{x}) \geq 0.
\end{align}
\end{itemize}

\item Consider a solution $(\underline{x},\overline{x})$ of \eqref{3.20.w2} such that either $a<\underline{x}<a_{3}<\overline{x}$ or $\min\{\underline{x}, \overline{x}\}\geq a_{3}$. The inequalities \eqref{al.04}-\eqref{al.06} remain valid.
\end{itemize}

Suppose $(\underline{x}^{\star},\overline{x}^{\star})$ is such that $\underline{x}^{\star}\in (a,a_{3}]$ and $\overline{x}^{\star}> a$. Then $\overline{x}^{\star}> a_{3}$, $\underline{x}^{\star}=(g^{\prime})_{3}^{-1}(g^{\prime}(\overline{x}^{\star}))$, $\overline{x}^{\star}<x_{1}$, and $g^{\prime}(0)<g^{\prime}(x_{1})<g^{\prime}(a)$.
We examine the following cases.
\begin{itemize}
\item Consider a solution $(\underline{x},\overline{x})$ of \eqref{3.20.w2} such that $0\leq \underline{x}\leq \overline{x}\leq a$. By Lemma \ref{lem3.4}, we have $g^{\prime}(\overline{x})\geq g^{\prime}(\overline{x}^{\star})$. 
\begin{itemize}
\item For $x\in [0,\overline{x}]$,
\begin{align}
V_{\underline{x}^{\star},\overline{x}^{\star}}(x)-V_{\underline{x},\overline{x}}(x)=g(x)\left(\frac{1}{g^{\prime}(\overline{x}^{\star})}-\frac{1}{g^{\prime}(\overline{x})}\right)\geq 0.\nn
\end{align}
\item For $x\in(\overline{x},\underline{x}^{\star}]$,
\begin{align}
V_{\underline{x}^{\star},\overline{x}^{\star}}(x)-V_{\underline{x},\overline{x}}(x)&=
\frac{g(x)}{g^{\prime}(\overline{x}^{\star})}-\left[x-\underline{x}-\beta+\frac{g(\underline{x})}{g^{\prime}(\overline{x})}\right]
\nn\\
&\geq \beta -\int_{\underline{x}}^{x}\left(1-\frac{g^{\prime}(z)}{g^{\prime}(\overline{x}^{\star})}\right)\mathrm{d}z
\nn\\
&\geq \beta -\int_{\underline{x}}^{\overline{x}}\left(1-\frac{g^{\prime}(z)}{g^{\prime}(\overline{x}^{\star})}\right)\mathrm{d}z-\int_{\overline{x}}^{x}\left(1-\frac{g^{\prime}(z)}{g^{\prime}(\overline{x}^{\star})}\right)\mathrm{d}z
\nn\\
&\geq \beta -\int_{\underline{x}}^{\overline{x}}\left(1-\frac{g^{\prime}(z)}{g^{\prime}(\overline{x}^{\star})}\right)\mathrm{d}z
\nn\\
&\geq \beta -\int_{\underline{x}}^{\overline{x}}\left(1-\frac{g^{\prime}(z)}{g^{\prime}(\overline{x})}\right)\mathrm{d}z
\nn\\
&=0,
\nn
\end{align}
where the last inequality uses $g^{\prime}(\overline{x})\geq g^{\prime}(\overline{x}^{\star})$, and the third inequality utilizes the fact that
\begin{align}
g^{\prime}(x)\geq g^{\prime}(\overline{x}^{\star}), \quad x\in(\overline{x},\underline{x}^{\star}].\nn
\end{align}
\item For $x\in (\underline{x}^{\star},\overline{x}^{\star}]$,
\begin{align}
\label{7.10.3.1060}
V_{\underline{x}^{\star},\overline{x}^{\star}}(x)-V_{\underline{x},\overline{x}}(x)&=\frac{g(x)}{g^{\prime}(\overline{x}^{\star})}-\left[x-\underline{x}-\beta+\frac{g(\underline{x})}{g^{\prime}(\overline{x})}\right]
\nn\\
&\geq \beta -\int_{\underline{x}}^{x}\left(1-\frac{g^{\prime}(z)}{g^{\prime}(\overline{x}^{\star})}\right)\mathrm{d}z.
\end{align}
Note that the function $x\mapsto \int_{x}^{(g^{\prime})_{3}^{-1}(g^{\prime}(x_{1}))}\left(1-\frac{g^{\prime}(z)}{g^{\prime}(x_{1})}\right)\mathrm{d}z$ is decreasing on $[0,(g^{\prime})_{2}^{-1}(g^{\prime}(x_{1}))]$ and increasing on $[(g^{\prime})_{2}^{-1}(g^{\prime}(x_{1})),(g^{\prime})_{3}^{-1}(g^{\prime}(x_{1}))]$. Additionally, by the definition of $x_{1}$, we have
\begin{align*}
&\int_{0}^{(g^{\prime})_{3}^{-1}(g^{\prime}(x_{1}))}\left(1-\frac{g^{\prime}(z)}{g^{\prime}(x_{1})}\right)\mathrm{d}z=0,
\nn\\
&
\int_{(g^{\prime})_{3}^{-1}(g^{\prime}(x_{1}))}^{(g^{\prime})_{3}^{-1}(g^{\prime}(x_{1}))}\left(1-\frac{g^{\prime}(z)}{g^{\prime}(x_{1})}\right)\mathrm{d}z=0.
\end{align*}
Consequently, since the function $x\mapsto \int_{\underline{x}}^{(g^{\prime})_{3}^{-1}(g^{\prime}(x))}\left(1-\frac{g^{\prime}(z)}{g^{\prime}(x)}\right)\mathrm{d}z$ is increasing on $[a_{3},\infty)$, we obtain
\begin{align}
\label{7.10.3.106}
0&\geq \int_{\underline{x}}^{(g^{\prime})_{3}^{-1}(g^{\prime}(x_{1}))}\left(1-\frac{g^{\prime}(z)}{g^{\prime}(x_{1})}\right)\mathrm{d}z
\nn\\
&\geq \int_{\underline{x}}^{(g^{\prime})_{3}^{-1}(g^{\prime}(\overline{x}^{\star}))}\left(1-\frac{g^{\prime}(z)}{g^{\prime}(\overline{x}^{\star})}\right)\mathrm{d}z.
\end{align}
By \eqref{7.10.3.106}, it follows that
\begin{align}
\int_{\underline{x}}^{x}\left(1-\frac{g^{\prime}(z)}{g^{\prime}(\overline{x}^{\star})}\right)\mathrm{d}z
&=\int_{\underline{x}}^{(g^{\prime})_{3}^{-1}(g^{\prime}(\overline{x}^{\star}))}\left(1-\frac{g^{\prime}(z)}{g^{\prime}(\overline{x}^{\star})}\right)\mathrm{d}z
+\int_{(g^{\prime})_{3}^{-1}(g^{\prime}(\overline{x}^{\star}))}^{x}\left(1-\frac{g^{\prime}(z)}{g^{\prime}(\overline{x}^{\star})}\right)\mathrm{d}z
\nn\\
&\leq \int_{(g^{\prime})_{3}^{-1}(g^{\prime}(\overline{x}^{\star}))}^{x}\left(1-\frac{g^{\prime}(z)}{g^{\prime}(\overline{x}^{\star})}\right)\mathrm{d}z
\nn\\
&\leq \int_{(g^{\prime})_{3}^{-1}(g^{\prime}(\overline{x}^{\star}))}^{\overline{x}^{\star}}\left(1-\frac{g^{\prime}(z)}{g^{\prime}(\overline{x}^{\star})}\right)\mathrm{d}z
\nn\\
&=\beta,\quad x\in (\underline{x}^{\star},\overline{x}^{\star}].\nn
\end{align}
Combining this with \eqref{7.10.3.1060} yields
\begin{align*}
V_{\underline{x}^{\star},\overline{x}^{\star}}(x)-V_{\underline{x},\overline{x}}(x)\geq 0,\quad x\in (\underline{x}^{\star},\overline{x}^{\star}].
\end{align*}
\item For $x\in(\overline{x}^{\star},\infty)$,
\begin{align} 
V_{0,\overline{x}^{\star}}(x)-V_{\underline{x},\overline{x}}(x)=&~
x-\beta-(\frac{g(\underline{x})}{g^{\prime}(\overline{x})}+x-\underline{x}-\beta)
\nn\\
=&~\underline{x}-\frac{g(\underline{x})}{g^{\prime}(\overline{x})}
\nn\\
=&~\int_{0}^{\underline{x}}\left(1-\frac{g^{\prime}(z)}{g^{\prime}(\overline{x})}\right)\mathrm{d}z\geq 0,\nn
\end{align}
since $g^{\prime}$ is increasing on $[0,a]$.
\end{itemize}
\item Consider a solution $(\underline{x},\overline{x})$ of \eqref{3.20.w2} satisfying $0\leq \underline{x}\leq a$ and $a<\overline{x}\leq a_{3}$. Lemma \ref{lem3.4} implies $g^{\prime}(\overline{x})\geq g^{\prime}(\overline{x}^{\star})$, so $a<\overline{x}< \underline{x}^{\star} \leq a_{3}$.
\begin{itemize}
\item For $x\in [0,\overline{x}]$,
\begin{align}
V_{\underline{x}^{\star},\overline{x}^{\star}}(x)-V_{\underline{x},\overline{x}}(x)=g(x)\left(\frac{1}{g^{\prime}(\overline{x}^{\star})}-\frac{1}{g^{\prime}(\overline{x})}\right)\geq 0.\nn
\end{align}
\item For $x\in(\overline{x},\overline{x}^{\star}]$,
\begin{align}
\label{7.10.3.108}
V_{\underline{x}^{\star},\overline{x}^{\star}}(x)-V_{\underline{x},\overline{x}}(x)&=
\frac{g(x)}{g^{\prime}(\overline{x}^{\star})}-\left[x-\underline{x}-\beta+\frac{g(\underline{x})}{g^{\prime}(\overline{x})}\right]
\nn\\
&\geq \beta -\int_{\underline{x}}^{x}\left(1-\frac{g^{\prime}(z)}{g^{\prime}(\overline{x}^{\star})}\right)\mathrm{d}z.
\end{align}
The function $x\mapsto \int_{\underline{x}}^{x}\left(1-\frac{g^{\prime}(z)}{g^{\prime}(\overline{x}^{\star})}\right)\mathrm{d}z$decreases on $[\overline{x},\underline{x}^{\star}]$, and increases on $[\underline{x}^{\star},\overline{x}^{\star}]$. Moreover, we have
\begin{align}
\int_{\underline{x}}^{\overline{x}}\left(1-\frac{g^{\prime}(z)}{g^{\prime}(\overline{x}^{\star})}\right)\mathrm{d}z\leq \int_{\underline{x}}^{\overline{x}}\left(1-\frac{g^{\prime}(z)}{g^{\prime}(\overline{x})}\right)\mathrm{d}z=\beta,\nn
\end{align}
and, by \eqref{7.10.3.106},
\begin{align}
\int_{\underline{x}}^{\overline{x}^{\star}}\left(1-\frac{g^{\prime}(z)}{g^{\prime}(\overline{x}^{\star})}\right)\mathrm{d}z
&=
\int_{\underline{x}}^{\underline{x}^{\star}}\left(1-\frac{g^{\prime}(z)}{g^{\prime}(\overline{x}^{\star})}\right)\mathrm{d}z
+
\int_{\underline{x}^{\star}}^{\overline{x}^{\star}}\left(1-\frac{g^{\prime}(z)}{g^{\prime}(\overline{x}^{\star})}\right)\mathrm{d}z
\nn\\
&\leq \int_{\underline{x}^{\star}}^{\overline{x}^{\star}}\left(1-\frac{g^{\prime}(z)}{g^{\prime}(\overline{x}^{\star})}\right)\mathrm{d}z
\nn\\
&=\beta.\nn
\end{align}
Therefore,
\begin{align}
\label{7.10.3.109}
\int_{\underline{x}}^{x}\left(1-\frac{g^{\prime}(z)}{g^{\prime}(\overline{x}^{\star})}\right)\mathrm{d}z\leq \beta, \quad x\in(\overline{x},\overline{x}^{\star}].
\end{align}

Combining \eqref{7.10.3.108} and \eqref{7.10.3.109} yields
\begin{align*}
V_{\underline{x}^{\star},\overline{x}^{\star}}(x)-V_{\underline{x},\overline{x}}(x)\geq 0, \quad x\in(\overline{x},\overline{x}^{\star}].
\end{align*}

\item For $x\in(\overline{x}^{\star},\infty)$,
\begin{align} 
V_{0,\overline{x}^{\star}}(x)-V_{\underline{x},\overline{x}}(x)=&~
V_{0,\overline{x}^{\star}}(\overline{x}^{\star})-V_{\underline{x},\overline{x}}(\overline{x}^{\star})\geq 0.\nn
\end{align}
\end{itemize}

\item Consider a solution $(\underline{x},\overline{x})$ of \eqref{3.20.w2} satisfying $0\leq \underline{x}\leq a$ and $\overline{x}>a_{3}$. Lemma \ref{lem3.4} gives $g^{\prime}(\overline{x})\geq g^{\prime}(\overline{x}^{\star})$. Since $g^{\prime}$ is strict increasing on $[a_{3},\infty)$, we have $\overline{x}>\overline{x}^{\star}$.
\begin{itemize}
\item For $x\in [0,\overline{x}^{\star}]$,
\begin{align}
\label{al.1}
V_{\underline{x}^{\star},\overline{x}^{\star}}(x)-V_{\underline{x},\overline{x}}(x)=g(x)\left(\frac{1}{g^{\prime}(\overline{x}^{\star})}-\frac{1}{g^{\prime}(\overline{x})}\right)\geq 0.
\end{align}
\item For $x\in(\overline{x}^{\star},\overline{x}]$,
\begin{align}
\label{al.2}
V_{\underline{x}^{\star},\overline{x}^{\star}}(x)-V_{\underline{x},\overline{x}}(x)&=
V_{\underline{x}^{\star},\overline{x}^{\star}}(\overline{x}^{\star})-V_{\underline{x},\overline{x}}(\overline{x}^{\star})+
x-\overline{x}^{\star}-\frac{g(x)-g(\overline{x}^{\star})}{g^{\prime}(\overline{x})}
\nn\\
&\geq x-\overline{x}^{\star}-\frac{g(x)-g(\overline{x}^{\star})}{g^{\prime}(\overline{x})}
\nn\\
&=\int_{\overline{x}^{\star}}^{x}\left(1-\frac{g^{\prime}(z)}{g^{\prime}(\overline{x})}\right)\mathrm{d}z
\nn\\
&\geq 0,
\end{align}
where the last inequality follows from
\begin{align}
g^{\prime}(x)\leq g^{\prime}(\overline{x}), \quad x\in(\overline{x}^{\star},\overline{x}].\nn
\end{align}
\item For $x\in(\overline{x},\infty)$,
\begin{align} 
\label{al.3}
V_{\underline{x}^{\star},\overline{x}^{\star}}(x)-V_{\underline{x},\overline{x}}(x)&=
V_{\underline{x}^{\star},\overline{x}^{\star}}(\overline{x})-V_{\underline{x},\overline{x}}(\overline{x})
\nn\\
&\geq 0.
\end{align}
\end{itemize}

\item Consider a solution $(\underline{x},\overline{x})$ of \eqref{3.20.w2} such that 
either $a<\underline{x}<a_{3}<\overline{x}$ or $\min\{\underline{x}, \overline{x}\}\geq a_{3}$. The inequalities \eqref{al.1}-\eqref{al.3} remain valid.
\end{itemize}

Suppose $(\underline{x}^{\star},\overline{x}^{\star})$ satisfies $0\leq \underline{x}^{\star}<\overline{x}^{\star}\leq a$. Then we must have $g^{\prime}(0)<g^{\prime}(x_{2})<g^{\prime}(a)$, $\underline{x}^{\star}=0$, and $\overline{x}^{\star}\in (0,
(g^{\prime})_{2}^{-1}(g^{\prime}(x_{2})))$.
We examine the following cases.
\begin{itemize}
\item Consider a solution $(\underline{x},\overline{x})$ of \eqref{3.20.w2} satisfying $0\leq \underline{x}\leq \overline{x}\leq a$. By Lemma \ref{lem3.4}, we have $g^{\prime}(\overline{x})\geq g^{\prime}(\overline{x}^{\star})$. If $\underline{x}=0$, then we must have $\overline{x}=\overline{x}^{\star}$, which implies $V_{\underline{x}^{\star},\overline{x}^{\star}}(x)=V_{\underline{x},\overline{x}}(x)$ for all $x\geq 0$. Thus, we focus on the nontrivial case $\underline{x}>0$, which implies $\overline{x}>\overline{x}^{\star}$.
\begin{itemize}
\item For $x\in [0,\overline{x}^{\star}]$,
\begin{align}
V_{\underline{x}^{\star},\overline{x}^{\star}}(x)-V_{\underline{x},\overline{x}}(x)=g(x)\left(\frac{1}{g^{\prime}(\overline{x}^{\star})}-\frac{1}{g^{\prime}(\overline{x})}\right)\geq 0.\nn
\end{align}
\item For $x\in(\overline{x}^{\star},\overline{x}]$,
\begin{align}
V_{\underline{x}^{\star},\overline{x}^{\star}}(x)-V_{\underline{x},\overline{x}}(x)&=
x-\beta-\frac{g(x)}{g^{\prime}(\overline{x})}
\nn\\
&= \int_{0}^{x}\left(1-\frac{g^{\prime}(z)}{g^{\prime}(\overline{x})}\right)\mathrm{d}z-\beta
\nn\\
&\geq
\int_{0}^{\overline{x}^{\star}}\left(1-\frac{g^{\prime}(z)}{g^{\prime}(\overline{x})}\right)\mathrm{d}z-\beta
\nn\\
&\geq \int_{0}^{\overline{x}^{\star}}\left(1-\frac{g^{\prime}(z)}{g^{\prime}(\overline{x}^{\star})}\right)\mathrm{d}z-\beta
\nn\\
&= 0,
\nn
\end{align}
where the second inequality uses the fact that $g^{\prime}$ is increasing on $[0,a]$.
\item For $x\in(\overline{x},\infty)$,
\begin{align} 
V_{0,\overline{x}^{\star}}(x)-V_{\underline{x},\overline{x}}(x)=&~
V_{0,\overline{x}^{\star}}(\overline{x})-V_{\underline{x},\overline{x}}(\overline{x})\geq 0.\nn
\end{align}
\end{itemize}
\item Consider a solution $(\underline{x},\overline{x})$ of \eqref{3.20.w2} satisfying $\underline{x}\in [0,a]$ and $\overline{x}\in (a, a_{3}]$. By Lemma \ref{lem3.4}, $g^{\prime}(\overline{x})\geq g^{\prime}(\overline{x}^{\star})$, and it holds that $g^{\prime}(\underline{x})<g^{\prime}(\overline{x})$.
\begin{itemize}
\item For $x\in [0,\overline{x}^{\star}]$,
\begin{align}
V_{\underline{x}^{\star},\overline{x}^{\star}}(x)-V_{\underline{x},\overline{x}}(x)=g(x)\left(\frac{1}{g^{\prime}(\overline{x}^{\star})}-\frac{1}{g^{\prime}(\overline{x})}\right)\geq 0.\nn
\end{align}
\item For $x\in(\overline{x}^{\star},(g^{\prime})_{2}^{-1}(g^{\prime}(\overline{x}))]$,
\begin{align}
V_{\underline{x}^{\star},\overline{x}^{\star}}(x)-V_{\underline{x},\overline{x}}(x)&=
x-\beta-\frac{g(x)}{g^{\prime}(\overline{x})}
\nn\\
&= \int_{0}^{x}\left(1-\frac{g^{\prime}(z)}{g^{\prime}(\overline{x})}\right)\mathrm{d}z-\beta
\nn\\
&\geq \int_{0}^{\overline{x}^{\star}}\left(1-\frac{g^{\prime}(z)}{g^{\prime}(\overline{x})}\right)\mathrm{d}z-\beta
\nn\\
&\geq \int_{0}^{\overline{x}^{\star}}\left(1-\frac{g^{\prime}(z)}{g^{\prime}(\overline{x}^{\star})}\right)\mathrm{d}z-\beta
\nn\\
&= 0.
\nn
\end{align}
\item For $x\in((g^{\prime})_{2}^{-1}(g^{\prime}(\overline{x})), \overline{x}]$,
\begin{align}
V_{\underline{x}^{\star},\overline{x}^{\star}}(x)-V_{\underline{x},\overline{x}}(x)&=
x-\beta-\frac{g(x)}{g^{\prime}(\overline{x})}
\nn\\
&= \int_{0}^{x}\left(1-\frac{g^{\prime}(z)}{g^{\prime}(\overline{x})}\right)\mathrm{d}z-\beta
\nn\\
&\geq \int_{0}^{\overline{x}}\left(1-\frac{g^{\prime}(z)}{g^{\prime}(\overline{x})}\right)\mathrm{d}z-\beta
\nn\\
&\geq \int_{\underline{x}}^{\overline{x}}\left(1-\frac{g^{\prime}(z)}{g^{\prime}(\overline{x})}\right)\mathrm{d}z-\beta
\nn\\
&= 0,
\nn
\end{align}
where we used $g^{\prime}(x)\geq g^{\prime}(\overline{x})$ for $x\in ((g^{\prime})_{2}^{-1}(g^{\prime}(\overline{x})), \overline{x}]$, and $g^{\prime}(x)\leq g^{\prime}(\underline{x})< g^{\prime}(\overline{x})$ for any $x\in [0, \underline{x}]$.
\item For $x\in(\overline{x},\infty)$,
\begin{align} 
V_{0,\overline{x}^{\star}}(x)-V_{\underline{x},\overline{x}}(x)=&~
V_{0,\overline{x}^{\star}}(\overline{x})-V_{\underline{x},\overline{x}}(\overline{x})\geq 0.\nn
\end{align}
\end{itemize}
\item Consider a solution $(\underline{x},\overline{x})$ of \eqref{3.20.w2} satisfying $\underline{x}\geq 0$ and $\overline{x}\in [a_{3},\infty)$. By Lemma \ref{lem3.4}, $g^{\prime}(\overline{x})\geq g^{\prime}(\overline{x}^{\star})>g^{\prime}(0)$. 
\begin{itemize}
\item For $x\in [0,\overline{x}^{\star}]$,
\begin{align}
V_{\underline{x}^{\star},\overline{x}^{\star}}(x)-V_{\underline{x},\overline{x}}(x)=g(x)\left(\frac{1}{g^{\prime}(\overline{x}^{\star})}-\frac{1}{g^{\prime}(\overline{x})}\right)\geq 0.\nn
\end{align}
\item For $x\in(\overline{x}^{\star},\overline{x}]$,
\begin{align}
V_{\underline{x}^{\star},\overline{x}^{\star}}(x)-V_{\underline{x},\overline{x}}(x)&=
x-\beta-\frac{g(x)}{g^{\prime}(\overline{x})}
\nn\\
&= \int_{0}^{x}\left(1-\frac{g^{\prime}(z)}{g^{\prime}(\overline{x})}\right)\mathrm{d}z-\beta.
\nn
\end{align}
If $g^{\prime}(0)< g^{\prime}(\overline{x})< g^{\prime}(a)$, the difference $V_{\underline{x}^{\star},\overline{x}^{\star}}(x)-V_{\underline{x},\overline{x}}(x)$ increases on $[\overline{x}^{\star},(g^{\prime})_{2}^{-1}(g^{\prime}(\overline{x}))]$, decreases on $[(g^{\prime})_{2}^{-1}(g^{\prime}(\overline{x})),(g^{\prime})_{3}^{-1}(g^{\prime}(\overline{x}))]$, and increases on $[(g^{\prime})_{3}^{-1}(g^{\prime}(\overline{x})),\overline{x}]$. Consequently, the minimum occurs at $\overline{x}^{\star}$ or $(g^{\prime})_{3}^{-1}(g^{\prime}(\overline{x}))$, leading to
\begin{align}
V_{\underline{x}^{\star},\overline{x}^{\star}}(x)-V_{\underline{x},\overline{x}}(x)\geq 0 \,\text{ on }\, (\overline{x}^{\star},\overline{x}] &\Longleftrightarrow \int_{0}^{(g^{\prime})_{3}^{-1}(g^{\prime}(\overline{x}))}\left(1-\frac{g^{\prime}(z)}{g^{\prime}(\overline{x})}\right)\mathrm{d}z\geq \beta
\nn\\
&\Longleftrightarrow \psi(0,(g^{\prime})_{3}^{-1}(g^{\prime}(\overline{x})))\geq \beta.
\end{align}

If $g^{\prime}(\overline{x}) \geq g^{\prime}(a)$, then $V_{\underline{x}^{\star},\overline{x}^{\star}}(x)-V_{\underline{x},\overline{x}}(x)$ is increasing on $[\overline{x}^{\star},\overline{x}]$. Since $V_{\underline{x}^{\star},\overline{x}^{\star}}(\overline{x}^{\star})-V_{\underline{x},\overline{x}}(\overline{x}^{\star})\geq 0$, we conclude that $V_{\underline{x}^{\star},\overline{x}^{\star}}(x)-V_{\underline{x},\overline{x}}(x)\geq 0$ for all $x\in [\overline{x}^{\star},\overline{x}]$.
\item For $x\in(\overline{x},\infty)$,
\begin{align} 
V_{\underline{x}^{\star},\overline{x}^{\star}}(x)-V_{\underline{x},\overline{x}}(x)=
V_{0,\overline{x}^{\star}}(\overline{x})-V_{\underline{x},\overline{x}}(\overline{x})\geq 0, \quad \text{ if }\,\, g^{\prime}(\overline{x}) \geq g^{\prime}(a), \nn
\end{align}
and
\begin{align} 
V_{\underline{x}^{\star},\overline{x}^{\star}}(x)-V_{\underline{x},\overline{x}}(x)&=
V_{0,\overline{x}^{\star}}(\overline{x})-V_{\underline{x},\overline{x}}(\overline{x})
\nn\\
&\geq 0,\quad \text{ if }\,\, g^{\prime}(0)< g^{\prime}(\overline{x})< g^{\prime}(a)\,\text{ and }\,\psi(0,(g^{\prime})_{3}^{-1}(g^{\prime}(\overline{x})))\geq \beta.\nn
\end{align}

Therefore, under \textbf{Assumption A}, we conclude that $V_{\underline{x}^{\star},\overline{x}^{\star}}(x)-V_{\underline{x},\overline{x}}(x)\geq 0$ for all $x\geq 0$, and all solutions $(\underline{x},\overline{x})$ of \eqref{3.20.w2} such that $\underline{x}\geq 0$ and $\overline{x}\in [a_{3},\infty)$.
\end{itemize}

\end{itemize}

The proof is complete.
\end{proof}

\begin{remark}
From the proof of Theorem \ref{Theorem3.4}, it is clear that, in cases $(\text{\emph{II}}_{1})$-$(\text{\emph{II}}_{2})$ with $\overline{x}^{*}\in [0,a)$, \textbf{Assumption A} is necessary and sufficient for the existence of an optimal strategy for the auxiliary stochastic control problem \eqref{eq:a008.aux.}
\end{remark}

\subsection{Case $\mu_{-}>0, \mu_{+}\le0$}
\label{subsec.3.3}

\subsubsection{Construction of a solution to \eqref{eq:a046}}
\label{subsubsec.3.3.1}
\begin{theorem}
Suppose that $\mu_{-}>0$ and $\mu_{+}\leq0$. Then, for every constant $C>0$, a non-negative, strictly increasing, continuously differentiable, and piecewise twice continuously differentiable solution to \eqref{eq:a046} is given as follows. If $x_{0}^{-}\geq a$, then
\begin{align}
\label{3.44.g.}
v(x)=&~\left\{
\begin{array}{ll}
Cx^{\gamma_{-}},& x\in[0,a], \\
C(a_{31}e^{\theta_{+}(x-a)}+a_{32}e^{\theta_{-}(x-a)}),& x\in [a,\infty),
\end{array}
\right.
\\
:=&~C\,g_{31}(x),\nn
\end{align}
where
\begin{align}\label{3.40.g.}
a_{31}=\dfrac{\gamma_{-}a^{\gamma_{-}-1}-\theta_{-}a^{\gamma_{-}}}{\theta_{+}-\theta_{-}},\quad 
a_{32}=\dfrac{\theta_{+}a^{\gamma_{-}}-\gamma_{-}a^{\gamma_{-}-1}}{\theta_{+}-\theta_{-}}.
\end{align}
If $x_{0}^{-}<a$, then
\begin{align}
\label{3.63.g.}
v(x)=&~\left\{
\begin{array}{ll}
Cx^{\gamma_{-}},& x\in[0,x_0^{-}], \\
C(a_{33}e^{\delta_{+}(x-x_0^{-})}+a_{34}e^{\delta_{-}(x-x_0^{-})}),& x\in [x_0^{-},a], \\
C(a_{35}e^{\theta_{+}(x-a)}+a_{36}e^{\theta_{-}(x-a)}),& x\in [a,\infty),
\end{array}
\right.
\\
:=&~C\,g_{32}(x),\nn
\end{align}
where
\begin{align}\label{3.48.g.}
\begin{cases}
a_{33}=\dfrac{\gamma_{-}(x_{0}^{-})^{\gamma_{-}-1}-\delta_{-}(x_{0}^{-})^{\gamma_{-}}}{\delta_{+}-\delta_{-}},\\[1.2ex]
a_{34}=\dfrac{\delta_{+}(x_{0}^{-})^{\gamma_{-}}-\gamma_{-}(x_{0}^{-})^{\gamma_{-}-1}}{\delta_{+}-\delta_{-}},\\[1.2ex]
a_{35}=\dfrac{a_{33}\left(\delta_{+}-\theta_{-}\right)e^{\delta_{+}(a-x_{0}^{-})}+a_{34}\left(\delta_{-}-\theta_{-}\right)e^{\delta_{-}(a-x_{0}^{-})}}{\theta_{+}-\theta_{-}},\\[1.2ex]
a_{36}=\dfrac{a_{33}\left(\theta_{+}-\delta_{+}\right)e^{\delta_{+}(a-x_{0}^{-})}+a_{34}\left(\theta_{+}-\delta_{-}\right)e^{\delta_{-}(a-x_{0}^{-})}}{\theta_{+}-\theta_{-}}.
\end{cases}
\end{align}
In both cases, a maximizing control in \eqref{eq:a046} is
\begin{align*}
u^{\star}(x)=
\begin{cases}
-\frac{\mu_{-}v^{\prime}(x)}{\sigma_{-}^{2}v^{\prime\prime}(x)}=\frac{\mu_{-}x}{\sigma_{-}^{2}(1-\gamma_{-})},&x\in[0,a\wedge x_{0}^{-}],\\[1.2ex]
1,&x>a\wedge x_{0}^{-}.
\end{cases}
\end{align*}
\end{theorem}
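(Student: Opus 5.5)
We follow the guess-and-verify pattern of Theorems \ref{thm:case1.solution.eq.a046} and \ref{thm:case2.solution.eq.a046}, now with the roles of the two regimes interchanged. The proof has three stages: the lower regime near $0$, the lower regime up to $a$, and the upper regime above $a$.

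\textbf{Near $0$.} Since $\mu_->0$ and $v(0)=0$ with $v'>0$, we conjecture $v''<0$ and $u^\star=-\mu_-v'/(\sigma_-^2v'')<1$. On this set \eqref{eq:a046} reduces to
\[
-\frac{\mu_-^2(v')^2}{2\sigma_-^2 v''}-qv=0 .
\]
The power ansatz $v=Cx^{\gamma}$ gives $\gamma=\gamma_-$. For this solution the feedback is $u^\star(x)=\mu_-x/(\sigma_-^2(1-\gamma_-))$. It is strictly increasing, and it equals $1$ exactly at $x=x_0^-$. Hence the power solution is valid on $[0,a\wedge x_0^-]$: concavity holds there and $u^\star\le 1$.

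\textbf{Case $x_0^-\ge a$.} On $[a,\infty)$ with $\mu_+\le 0$, we argue as in the case $\mu_\pm\le 0$ that a strictly increasing solution must have $v''>0$ and $\tfrac12\sigma_+^2v''+\mu_+v'\ge 0$, so that $u=1$. The solution is then $a_{31}e^{\theta_+(x-a)}+a_{32}e^{\theta_-(x-a)}$, and $C^1$ pasting at $a$ yields \eqref{3.40.g.}. We must then verify three facts on $[a,\infty)$:
\begin{itemize}
\item $a_{31}>0$; this is immediate since $\theta_-<0<\gamma_-a^{-1}$.
\item $v''>0$.
\item $\mu_+v'+\tfrac12\sigma_+^2v''\ge 0$.
\end{itemize}
Following \eqref{eq:ee08}, write $-\mu_+v'-\tfrac12\sigma_+^2v''=\frac{Cq}{\theta_+-\theta_-}(\zeta_1e^{\theta_+(x-a)}+\zeta_2e^{\theta_-(x-a)})$ with new coefficients built from $a^{\gamma_-}$ and $\gamma_-a^{\gamma_--1}$. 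It then suffices to show that $\zeta_1<0$ and that the value at $a+$ is nonpositive.

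\textbf{This is the main obstacle.} At $a$ the left piece is concave, since $v''(a-)<0$, while the right piece must be convex. So $v''$ jumps, and the sign of $v''(a+)$ is not inherited from the left. Using the ODE and $C^1$ continuity,
\[
\tfrac12\sigma_+^2v''(a+)=qv(a)-\mu_+v'(a)=C\big(qa^{\gamma_-}-\mu_+\gamma_-a^{\gamma_--1}\big)>0,
\]
because $\mu_+\le0$. The same identity gives $\tfrac12\sigma_+^2v''(a+)+\mu_+v'(a+)=qv(a)>0$. Monotonicity of each expression in $x$ is then obtained from the sign of the dominant $e^{\theta_+}$ coefficient, as in Subsection \ref{subsec.3.1}.

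\textbf{Case $x_0^-<a$.} Beyond $x_0^-$ the constraint $u\le1$ binds. On $[x_0^-,a]$ we use $u=1$ with general solution $a_{33}e^{\delta_+(x-x_0^-)}+a_{34}e^{\delta_-(x-x_0^-)}$, and $C^1$ fitting at $x_0^-$ gives $a_{33},a_{34}$. We must check on this interval that $\tfrac12\sigma_-^2v''+\mu_-v'\ge0$; this is the condition for $u=1$ to be the maximizer, whether $v''$ is positive or negative. At $x_0^-$ the left limit satisfies $\sigma_-^2v''+\mu_-v'=0$, exactly as in the second-order fit \eqref{3.39.w}. Writing $\sigma_-^2v''+\mu_-v'$ via $2q-\mu_-\delta_\pm>0$ shows it is increasing, because $a_{33}>0$. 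It is therefore nonnegative, which is the condition needed. Finally we paste at $a$ to the upper piece, getting $a_{35},a_{36}$ from \eqref{3.48.g.}, and repeat the upper-regime verification with $v(a),v'(a)>0$. Again $\tfrac12\sigma_+^2v''(a+)=qv(a)-\mu_+v'(a)>0$, and this gives both required inequalities on $[a,\infty)$.
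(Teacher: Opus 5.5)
This is the paper's argument essentially step for step: the power solution $Cx^{\gamma_-}$ up to $a\wedge x_0^-$, exponential pieces with $u=1$ glued by $C^1$ pasting, and sign checks on each piece. It is correct. Your identity $\tfrac12\sigma_+^2v''(a+)=qv(a)-\mu_+v'(a)>0$ is a cleaner route to $v''(a+)>0$ than the paper's comparison $a_{31}>|a_{32}|$, $\theta_+\ge|\theta_-|$. Moreover, $\tfrac12\sigma_+^2v''+\mu_+v'=qv$ holds at every $x\ge a$, as the paper itself uses in the case $x_0^-<a$. So the $\zeta_1,\zeta_2$ decomposition is unnecessary.

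One statement needs fixing. On $[x_0^-,a]$, $\mu_->0$ and $v''$ may be negative. There, $u=1$ maximizes $\tfrac12\sigma_-^2u^2v''+\mu_-uv'$ over $[0,1]$ if and only if $\sigma_-^2v''+\mu_-v'\ge0$, i.e. the vertex $-\mu_-v'/(\sigma_-^2v'')$ is at least $1$. The condition $\tfrac12\sigma_-^2v''+\mu_-v'\ge0$ is not equivalent; it only says that $u=1$ beats $u=0$. Fortunately, the quantity you then prove nonnegative is the correct one.

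Your monotonicity claim for that quantity needs $a_{34}<0$ in addition to $a_{33}>0$. This is forced by its zero value at $x_0^-$, since $a_{33}>0$ and $2q-\mu_-\delta_\pm>0$. The same fact $a_{34}<0$ is also what gives the $v'(a)>0$ you use when pasting at $a$.
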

\begin{proof}
When confined in the interval $[0,a]$, \eqref{eq:a046} reduces to \eqref{eq:ee01}.
To guess the geometric nature of the solution to \eqref{eq:ee01}, we make the following observations.
\begin{itemize}
\item If there is a sub-interval $(b,c)\subseteq [0,a]$ ($b<c$) such that $v^{\prime\prime}(x)<0$ for all $x\in (b,c)$, then, for any fixed $x\in (b,c)$, the quadratic trinomial in $u\in[0,1]$ on the left hand side of \eqref{eq:ee01} attains its maximum at $u=\min\{-\frac{\mu_{-}v^{\prime}(x)}{\sigma_{-}^2v^{\prime\prime}(x)},1\}$. Hence, on $(b,c)$, \eqref{eq:ee01} is reduced to
\begin{align}
\left\{
\begin{array}{ll}
-\frac{\mu_{-}^{2}\left[v^{\prime}(x)\right]^{2}}{2\sigma_{-}^{2}v^{\prime\prime}(x)}-qv(x)=0,& x\in (b,c)\cap \{x\le a: -\frac{\mu_{-}v^{\prime}(x)}{\sigma_{-}^2v^{\prime\prime}(x)}< 1\}, \\
\frac{1}{2}\sigma_{-}^2v^{\prime\prime}(x)+\mu_{-}v^{\prime}(x)-qv(x)=0,& x\in (b,c)\cap \{x\le a: -\frac{\mu_{-}v^{\prime}(x)}{\sigma_{-}^2v^{\prime\prime}(x)}\geq 1\}.
\end{array}
\right.
\nn
\end{align}

\item If there exists a sub-interval $(b,c)\subseteq [0,a]$ ($b<c$) such that $v^{\prime\prime}(x)=0$ for all $x\in (b,c)$, then, $v^{\prime}(x)\equiv v$ on $(b,c)$ for some constant $v\geq 0$; and, for any fixed $x\in (b,c)$, the quadratic trinomial in $u\in[0,1]$ on the left hand side of \eqref{eq:ee01} reduces to a linear function that attains its maximum at $u=1$ (since $\mu_{-}> 0$ and $v^{\prime}(x)\geq 0$). Hence, on the interval $(b,c)$, \eqref{eq:ee01} is again reduced to
\begin{align}
\left\{
\begin{array}{ll}
\mu_{-} v-qv(x)=0, & x\in (b,c),\\
v^{\prime}(x)=v, & x\in (b,c),\nn
\end{array}
\right.
\end{align}
to which the unique solution $v$ is a constant, which is not strictly increasing.
\item If there is a sub-interval $(b,c)\subseteq [0,a]$ ($b<c$) such that $v^{\prime\prime}(x)>0$ for all $x\in (b,c)$, then, for any fixed $x\in (b,c)$, the quadratic trinomial in $u\in[0,1]$ on the left hand side of \eqref{eq:ee01} attains its maximum at $u=1$. Hence, on $(b,c)$, \eqref{eq:ee01} is reduced to
\begin{align}
\frac{1}{2}\sigma_{-}^2v^{\prime\prime}(x)+\mu_{-}v^{\prime}(x)-qv(x)=0,\quad x\in (b,c).
\nn
\end{align}
\end{itemize}

We begin by assuming that $x_{0}^{-}\geq a$ and that the solution $v$ to \eqref{eq:ee01} satisfies
\begin{align}
\label{3.31.g.}
\left\{
\begin{array}{ll}
v^{\prime\prime}(x)<0,& x\in [0,a], \\
-\frac{\mu_{-}v^{\prime}(x)}{\sigma_{-}^2v^{\prime\prime}(x)}< 1,& x\in [0,a].
\end{array}
\right.
\end{align}
Within the interval $[0,a]$, under the assumptions stated in \eqref{3.31.g.}, \eqref{eq:ee01} simplifies to
\begin{align}
-\frac{\mu_{-}^{2}\left[v^{\prime}(x)\right]^{2}}{2\sigma_{-}^{2}v^{\prime\prime}(x)}-qv(x)=0,\quad x\in [0,a].
\nn
\end{align}
The general solution to this ordinary differential equation is
\begin{align}
\label{3.32.g.}
v(x)=Cx^{\gamma_{-}},\quad x\in [0,a],
\end{align}
where $\gamma_{-}=\frac{2\sigma_{-}^2q}{\mu_{-}^2+2\sigma_{-}^2q}$, and $C>0$ is some undetermined constant.
By \eqref{3.32.g.} and the assumption of $x_{0}^{-}=\frac{\sigma_{-}^2 \left(1-\gamma_{-}\right)}{\mu_{-}}>a$, we compute
\begin{align}
\label{3.33.w}
\left\{\begin{array}{ll}
v^{\prime\prime}(x)=C\gamma_{-}(\gamma_{-}-1)x^{\gamma_{-}-2}<0, & x\in [0,a], \\
-\frac{\mu_{-}v^{\prime}(x)}{\sigma_{-}^2v^{\prime\prime}(x)}=\frac{\mu_{-}x}{\sigma_{-}^2 \left(1-\gamma_{-}\right)}<1, & x\in [0,a],
\end{array}
\right.
\end{align}
which confirms that the assumptions in \eqref{3.31.g.} hold.

Next, we proceed to seek a non-negative and strictly increasing solution to \eqref{eq:a046} over the interval $[a,\infty)$. On this domain, \eqref{eq:a046} reduces to \eqref{eq:ee03}.
We now impose the following structural conditions
\begin{align}
\label{3.37.g.}
\left\{
\begin{array}{ll}
v^{\prime\prime}(x)>0,& x\in [a,\infty), \\
-\frac{\mu_{+}v^{\prime}(x)}{\sigma_{+}^2v^{\prime\prime}(x)}\le\frac{1}{2},& x\in [a,\infty).
\end{array}
\right.
\end{align}
Under the assumptions of \eqref{3.37.g.}, the maximum of the left hand side of \eqref{eq:ee03} occurs at $u=1$, then \eqref{eq:ee03} is simplified to
\begin{align}
\frac{1}{2}\sigma_{+}^2v^{\prime\prime}(x)+\mu_{+}v^{\prime}(x)-qv(x)=0,\quad x\in [a,\infty).
\nn
\end{align}
Its general solution takes the form
\begin{align}
\label{3.38.g.}
v(x)=C(a_{31}e^{\theta_{+}(x-a)}+a_{32}e^{\theta_{-}(x-a)}),\quad x\in [a,\infty).
\end{align}
Matching the value and the first derivative at $a$ yields the stated $a_{31}$ and $a_{32}$. 
Note also that
\begin{equation}\label{3.41.g.}
a_{31}+a_{32}=\frac{\theta_{+}a^{\gamma_{-}}-\theta_{-}a^{\gamma_{-}}}{\theta_{+}-\theta_{-}}=a^{\gamma_{-}}>0.
\end{equation}
Combining $a_{31}>0$ and \eqref{3.41.g.}, one deduces that
\begin{itemize}
\item If $a_{32}\geq 0$, then, by \eqref{3.38.g.}, it is obvious that $v^{\prime\prime}(x)> 0$ for any $x\in[a,\infty)$.

\item If $a_{32}<0$, observe that
\begin{align}
v^{\prime\prime}(x)=C(a_{31}\theta_{+}^{2}e^{\theta_{+}(x-a)}+a_{32}\theta_{-}^{2}e^{\theta_{-}(x-a)}),\quad x\in [a,\infty),\nn
\end{align}
is strictly increasing on $[a,\infty)$ with
$
v^{\prime\prime}(a+)
=C(a_{31}\theta_{+}^{2}+a_{32}\theta_{-}^{2})>0,
$
due to $a_{31}>|a_{32}|$ and $\theta_{+}>|\theta_{-}|$. It follows that $v^{\prime\prime}(x)>0$ for any $x\in[a,\infty)$.
\end{itemize}
Then the first inequality of \eqref{3.37.g.} is verified. To establish the second inequality, we compute
\begin{align}
\label{3.42.g.}
&-\mu_{+}v^{\prime}(x)-\frac{1}{2}\sigma_{+}^2v^{\prime\prime}(x)
\nn\\
=&~ Ca_{31}e^{\theta_{+}(x-a)}\left(-\mu_{+}\theta_{+}-\frac{1}{2}\sigma_{+}^2\theta_{+}^2\right)
+Ca_{32}e^{\theta_{-}(x-a)}\left(-\mu_{+}\theta_{-}-\frac{1}{2}\sigma_{+}^2\theta_{-}^2\right)
\nn\\
=&~
-\frac{Cqa^{\gamma_{-}-1}}{\theta_{+}-\theta_{-}}\left((\gamma_{-}-\theta_{-}a)e^{\theta_{+}(x-a)}
+(\theta_{+}a-\gamma_{-})e^{\theta_{-}(x-a)}\right),
\quad x\in [a,\infty).
\end{align}
Two cases arise
\begin{itemize}
\item If $\theta_{+}a-\gamma_{-}\ge0$, then the function on the RHS of \eqref{3.42.g.} is non-positive valued.
\item If $\theta_{+}a-\gamma_{-}<0$, then the function on the RHS of \eqref{3.42.g.} is strictly decreasing in $x$, achieving its maximum at $x=a$
\begin{align}
\label{3.43.g.}
-\mu_{+}v^{\prime}(a+)-\frac{1}{2}\sigma_{+}^2v^{\prime\prime}(a+)
=-\frac{Cqa^{\gamma_{-}}}{\theta_{+}-\theta_{-}}(\theta_{+}-\theta_{-})<0.
\end{align}
\end{itemize}
In both cases, the inequality $-\frac{\mu_{+}v^{\prime}(x)}{\sigma_{+}^2v^{\prime\prime}(x)}\le\frac{1}{2}$ holds for all $x\in[a,\infty)$, thereby verifying the second condition in \eqref{3.37.g.}.
In summary, the constructed function \eqref{3.38.g.} solves \eqref{eq:a046} and the maximizer is $u^{\star}(x)=1$ on $[a,\infty)$.

We now suppose that $x_0^{-}<a$. From our earlier analysis, the solution $v$ to \eqref{eq:a046} on $[0,x_0^{-}]$ must take the power law form, that is,
\begin{align}
\label{3.45.g.}
v(x)=Cx^{\gamma_{-}},\quad x\in [0,x_0^{-}],
\end{align}
where $\gamma_{-}=\frac{2\sigma_{-}^2q}{\mu_{-}^2+2\sigma_{-}^2q}$ and $C>0$ is a constant.
On the subsequent interval $[x_0^{-},a]$, the simultaneous satisfaction of $v^{\prime\prime}(x)<0$ and $-\frac{\mu_{-}v^{\prime}(x)}{\sigma_{-}^2v^{\prime\prime}(x)}<1$ is no longer possible. Consequently, we claim that \eqref{eq:ee01} reduces to
\begin{align}
\frac{1}{2}\sigma_{-}^2v^{\prime\prime}(x)+\mu_{-}v^{\prime}(x)-qv(x)=0,\quad x\in [x_0^{-},a].
\nn
\end{align}
Its general solution is
\begin{align}
\label{3.46.g.}
v(x)=C(a_{33}e^{\delta_{+}(x-x_0^{-})}+a_{34}e^{\delta_{-}(x-x_0^{-})}),\quad x\in [x_0^{-},a].
\end{align}
We expect that $v$ is continuously differentiable, then
\begin{equation}\label{3.47.g.}
\begin{cases}
(x_0^{-})^{\gamma_{-}}=a_{33}+a_{34},\\
\gamma_{-}(x_0^{-})^{(\gamma_{-}-1)}=\delta_{+}a_{33}+\delta_{-}a_{34}.\\
\end{cases}
\end{equation}
Solving \eqref{3.47.g.} yields $a_{33},a_{34}$ are determined in \eqref{3.48.g.}. 
It follows immediately that $a_{33}>0$ and
\begin{equation}\label{3.49.g.}
a_{33}+a_{34}=\frac{\delta_{+}(x_0^{-})^{\gamma_{-}}-\delta_{-}(x_0^{-})^{\gamma_{-}}}{\delta_{+}-\delta_{-}}=(x_0^{-})^{\gamma_{-}}>0.
\end{equation}
Combining $a_{33}>0$ and \eqref{3.49.g.}, we now examine the sign of $v^{\prime\prime}$ on $[x_0^{-},a]$ under various cases. 
\begin{itemize}
\item If $a_{34}\geq 0$, then, by \eqref{3.46.g.}, it is obvious that $v^{\prime\prime}(x)> 0$ for any $x\in[x_0^{-},a]$.

\item If $a_{34}<0$, then
\begin{align}
v^{\prime\prime}(x)=C(a_{33}\delta_{+}^2 e^{\delta_{+}(x-x_0^{-})}+a_{34}\delta_{-}^2e^{\delta_{-}(x-x_0^{-})}),\quad x\in [x_0^{-},a],\nn
\end{align}
is strictly increasing on $[x_0^{-},a]$. At $x=x_0^{-}$, we have $v^{\prime\prime}(x_0^{-}+)=C(a_{33}\delta_{+}^{2}+a_{34}\delta_{-}^{2})$.
\begin{itemize}
\item[$\bullet$] If $a_{33}\delta_{+}^{2}+a_{34}\delta_{-}^{2}\ge0$, we have $v^{\prime\prime}(x)>0$ for all $x\in[x_0^{-},a]$. 
\item[$\bullet$] If $a_{33}\delta_{+}^{2}+a_{34}\delta_{-}^{2}<0$, which occurs if and only if $\ln{\frac{-a_{34}\delta_{-}^2}{a_{33}\delta_{+}^2}}>0$.
\begin{itemize}
\item[$\bullet$] If $x_0^{-}+\frac{\ln{\frac{-a_{34}\delta_{-}^2}{a_{33}\delta_{+}^2}}}{\delta_{+}-\delta_{-}}\in(x_0^{-},a)$, we have
\begin{align}
\label{3.50.g.}
\left\{
\begin{array}{ll}
v^{\prime\prime}(x)>0,& x\in (x_0^{-}+\frac{\ln{\frac{-a_{34}\delta_{-}^2}{a_{33}\delta_{+}^2}}}{\delta_{+}-\delta_{-}},a], \\
v^{\prime\prime}(x)<0,& x\in [x_0^{-}, x_0^{-}+\frac{\ln{\frac{-a_{34}\delta_{-}^2}{a_{33}\delta_{+}^2}}}{\delta_{+}-\delta_{-}}).
\end{array}
\right.
\end{align}
\item[$\bullet$] If $x_0^{-}+\frac{\ln{\frac{-a_{34}\delta_{-}^2}{a_{33}\delta_{+}^2}}}{\delta_{+}-\delta_{-}}\in(a,\infty)$, we have
\begin{align}
\label{3.005.g.}
v^{\prime\prime}(x)<0,~~~ x\in [x_0^{-},a].
\end{align}
\end{itemize}
\end{itemize} 
From \eqref{3.46.g.}, we compute
\begin{align}
\label{3.52.g.}
&\mu_{-}v^{\prime}(x)+\sigma_{-}^2v^{\prime\prime}(x)
\nn\\
=&~ C\left(a_{33} \left(\mu_{-}\delta_{+}+\sigma_{-}^2\delta_{+}^{2}\right)
e^{\delta_{+}(x-x_0^{-})}
+ a_{34}\left(\mu_{-}\delta_{-}+\sigma_{-}^2\delta_{-}^{2}\right)
e^{\delta_{-}(x-x_0^{-})}\right)
\nn\\
=&~
C\left(a_{33} \left(2q-\mu_{-}\delta_{+}\right)
e^{\delta_{+}(x-x_0^{-})}
+ a_{34}\left(2q-\mu_{-}\delta_{-}\right)
e^{\delta_{-}(x-x_0^{-})}\right), \quad x\in [x_0^{-},a].
\end{align}

Using the definition of $\delta_{+}$, one gets
\begin{align}
2q-\mu_{-}\delta_{+}=&~2q-\mu_{-}\frac{-\mu_{-}+\sqrt{\mu_{-}^2+2q\sigma_{-}^2}}{\sigma_{-}^2}
\nn\\
=&~2q-\frac{\mu_{-}\left(-\mu_{-}+\sqrt{\mu_{-}^2+2q\sigma_{-}^2}\right)\left(\mu_{-}+\sqrt{\mu_{-}^2+2q\sigma_{-}^2}\right)}{\sigma_{-}^2\left(\mu_{-}+\sqrt{\mu_{-}^2+2q\sigma_{-}^2}\right)}
\nn\\
=&~2q-\frac{2q\mu_{-}\sigma_{-}^2}{\sigma_{-}^2\left(\mu_{-}+\sqrt{\mu_{-}^2+2q\sigma_{-}^2}\right)}
\nn\\
=&~\frac{2q\sqrt{\mu_{-}^2+2q\sigma_{-}^2}}{\mu_{-}+\sqrt{\mu_{-}^2+2q\sigma_{-}^2}}>0.
\end{align}
\end{itemize}
This, together with $a_{33}>0$ and the fact of $2q-\mu_{-}\delta_{-}>0$, implies that the function on the right-hand side of \eqref{3.52.g.} is strictly increasing in $x$. In this case, the minimum occurs at $x=x_{0}^{-}$
\begin{align}
\label{3.53.g.}
&\sigma_{-}^2v^{\prime\prime}(x_{0}^{-}+)+\mu_{-}v^{\prime}(x_{0}^{-}+)
\nn\\
=&~
2\left(qv(x_{0}^{-}+)-\mu_{-}v^{\prime}(x_{0}^{-}+)-\frac{1}{2}\sigma_{-}^2v^{\prime\prime}(x_{0}^{-}+)\right)
+\sigma_{-}^2v^{\prime\prime}(x_{0}^{-}+)+\mu_{-}v^{\prime}(x_{0}^{-}+)
\nn\\
=&~2qv(x_{0}^{-}+)-\mu_{-}v^{\prime}(x_{0}^{-}+)
\nn\\
=&~2qv(x_{0}^{-}-)-\mu_{-}v^{\prime}(x_{0}^{-}-)
\nn\\
=&~C(x_0^{-})^{\gamma_{-}-1}(2qx_0^{-}-\mu_{-}\gamma_{-})=0,
\end{align}
where we have used the fact that
\begin{align}
\label{3.54.g.}
2qx_0^{-}-\mu_{-}\gamma_{-}=&~
2q\frac{\sigma_{-}^2 \left(1-\gamma_{-}\right)}{\mu_{-}}-\mu_{-}\frac{2\sigma_{-}^2 q}{\mu_{-}^2+2\sigma_{-}^2q}
\nn\\
=&~
2q\frac{\sigma_{-}^2 }{\mu_{-}}\frac{\mu_{-}^2}{\mu_{-}^2+2\sigma_{-}^2q}-\mu_{-}\frac{2\sigma_{-}^2 q}{\mu_{-}^2+2\sigma_{-}^2q}
\nn\\
=&~0.
\end{align}
We now verify the second-order smooth fit at $x_0^{-}$. From \eqref{3.45.g.} and the definition $x_0^{-}=\frac{\sigma_{-}^{2}(1-\gamma_{-})}{\mu_{-}}$, we obtain
\begin{align}
-\frac{\mu_{-}v^{\prime}(x_0^{-}-)}{\sigma_{-}^{2}v^{\prime\prime}(x_0^{-}-)}=\frac{\mu_{-}x_0^{-}}{\sigma_{-}^{2}(1-\gamma_{-})}=1,
\end{align}
and therefore
\begin{align}
\sigma_{-}^{2}v^{\prime\prime}(x_0^{-}-)+\mu_{-}v^{\prime}(x_0^{-}-)=0.
\label{eq:C2.x0.minus.left}
\end{align}
Equation \eqref{3.53.g.} gives
\begin{align}
\sigma_{-}^{2}v^{\prime\prime}(x_0^{-}+)+\mu_{-}v^{\prime}(x_0^{-}+)=0.
\label{eq:C2.x0.minus.right}
\end{align}
Since \eqref{3.47.g.} implies $v^{\prime}(x_0^{-}-)=v^{\prime}(x_0^{-}+)$, it follows that
\begin{align}
v^{\prime\prime}(x_0^{-}-)
=v^{\prime\prime}(x_0^{-}+).
\label{eq:C2.x0.minus}
\end{align}
Thus, $v$ is twice continuously differentiable at $x_0^{-}$.
Then, the function on the right-hand side of
\eqref{3.52.g.} is non-negative valued.
Consequently, we conclude that
\begin{itemize}
\item[$\bullet$] Suppose either $a_{34}\geq 0$ or $
a_{33}\delta_{+}^{2}+a_{34}\delta_{-}^{2}\geq 0$. Then
\begin{align}
v^{\prime\prime}(x)>0, \quad x\in [x_0^{-},a],\nn
\end{align}
confirming that $v$ given by \eqref{3.46.g.} and \eqref{3.48.g.} is indeed a solution to \eqref{eq:ee01} on $[x_0^{-},a]$.

\item[$\bullet$] Suppose $a_{34}<0$ and $
a_{33}\delta_{+}^{2}+a_{34}\delta_{-}^{2}<0$. By \eqref{3.50.g.} and \eqref{3.005.g.}, if $x_0^{-}+\frac{\ln{\frac{-a_{34}\delta_{-}^2}{a_{33}\delta_{+}^2}}}{\delta_{+}-\delta_{-}}\in(x_0^{-},a)$, then we have
\begin{align*}
\begin{cases}
v^{\prime\prime}(x)>0, & x\in (x_0^{-}+\frac{\ln{\frac{-a_{34}\delta_{-}^2}{a_{33}\delta_{+}^2}}}{\delta_{+}-\delta_{-}},a], \\
v^{\prime\prime}(x)<0, & x\in [x_0^{-}, x_0^{-}+\frac{\ln{\frac{-a_{34}\delta_{-}^2}{a_{33}\delta_{+}^2}}}{\delta_{+}-\delta_{-}}),
\\
\sigma_{-}^2 v^{\prime\prime}(x) +\mu_{-}v^{\prime}(x)\geq 0, & x\in [x_0^{-}, x_0^{-}+\frac{\ln{\frac{-a_{34}\delta_{-}^2}{a_{33}\delta_{+}^2}}}{\delta_{+}-\delta_{-}});
\end{cases}
\end{align*}
while, if
$x_0^{-}+\frac{\ln{\frac{-a_{34}\delta_{-}^2}{a_{33}\delta_{+}^2}}}{\delta_{+}-\delta_{-}}\in(a,\infty)$, then we have
\begin{align*}
\begin{cases}
v^{\prime\prime}(x)<0, & x\in [x_0^{-}, a),
\\
\sigma_{-}^2 v^{\prime\prime}(x) +\mu_{-}v^{\prime}(x)\geq 0, & x\in [x_0^{-}, a).
\end{cases}
\end{align*}
This confirms that $v$ given by \eqref{3.46.g.} and \eqref{3.48.g.} is indeed a solution to \eqref{eq:ee01} on $[x_0^{-},a]$.
\end{itemize}
We continue to seek a non-negative and strictly increasing solution to \eqref{eq:a046} on the interval $[a,\infty)$. When confined in the interval $[a,\infty)$, \eqref{eq:a046} reduces to \eqref{eq:ee03}.
We assume that
\begin{align}
\label{3.56.g.}
\left\{
\begin{array}{ll}
v^{\prime\prime}(x)>0,& x\in [a,\infty), \\
-\frac{\mu_{+}v^{\prime}(x)}{\sigma_{+}^2v^{\prime\prime}(x)}\le\frac{1}{2},& x\in [a,\infty).
\end{array}
\right.
\end{align}
Under \eqref{3.56.g.}, the maximum value on the left hand side of \eqref{eq:ee03} is obtained at $u=1$, then \eqref{eq:ee03} is simplified to
\begin{align}
\frac{1}{2}\sigma_{+}^2v^{\prime\prime}(x)+\mu_{+}v^{\prime}(x)-qv(x)=0,\quad x\in [a,\infty).
\nn
\end{align}
Its general solution is
\begin{align}
\label{3.57.g.}
v(x)=C(a_{35}e^{\theta_{+}(x-a)}+a_{36}e^{\theta_{-}(x-a)}),\quad x\in [a,\infty).
\end{align}
To ensure that $v$ is continuously differentiable, we must have
\begin{equation}\label{3.58.g.}
\begin{cases}
a_{33}e^{\delta_{+}(a-x_{0}^{-})}+a_{34}e^{\delta_{-}(a-x_{0}^{-})}=a_{35}+a_{36},\\
\delta_{+}a_{33}e^{\delta_{+}(a-x_{0}^{-})}+\delta_{-}a_{34}e^{\delta_{-}(a-x_{0}^{-})}=\theta_{+}a_{35}+\theta_{-}a_{36}.\\
\end{cases}
\end{equation}
Solving \eqref{3.58.g.} yields that $a_{35},a_{36}$ are determined in \eqref{3.48.g.}.
Note that 
\begin{align}
a_{35} &= \frac{\delta_{+}a_{33}e^{\delta_{+}(a-x_{0}^{-})}+\delta_{-}a_{34}e^{\delta_{-}(a-x_{0}^{-})}}{\theta_{+}-\theta_{-}}-\frac{\theta_{-}\left(a_{33}e^{\delta_{+}(a-x_{0}^{-})}+a_{34}e^{\delta_{-}(a-x_{0}^{-})}\right)}{\theta_{+}-\theta_{-}}>0,\nn
\end{align}
since 
\begin{align*}
a_{33}e^{\delta_{+}(a-x_{0}^{-})}+a_{34}e^{\delta_{-}(a-x_{0}^{-})}&=\frac{v(a-)}{C}>0,
\\
\delta_{+}a_{33}e^{\delta_{+}(a-x_{0}^{-})}+\delta_{-}a_{34}e^{\delta_{-}(a-x_{0}^{-})}&=\frac{v^{\prime}(a-)}{C}>0.
\end{align*}
Moreover, from the first equality of \eqref{3.58.g.} follows
\begin{equation}\label{3.60.g.}
a_{35}+a_{36}=\frac{v(a-)}{C}>0.
\end{equation}
Combining $a_{35}>0$ and \eqref{3.60.g.}, one deduces that
\begin{itemize}
\item If $a_{36}\geq 0$, then, by \eqref{3.57.g.}, it is obvious that $v^{\prime\prime}(x)> 0$ for any $x\in[a,\infty)$.

\item If $a_{36}<0$, then
\begin{align}
v^{\prime\prime}(x)=C(a_{35}\theta_{+}^2 e^{\theta_{+}(x-a)}+a_{36}\theta_{-}^2e^{\theta_{-}(x-a)}),\quad x\in [a,\infty),\nn
\end{align}
is strictly increasing on $[a,\infty)$ with
\begin{align}
v^{\prime\prime}(a+)
=&~C(a_{35}\theta_{+}^{2}+a_{36}\theta_{-}^{2})>0,
\end{align}
since $a_{35}>|a_{36}|$ and $\theta_{+}>|\theta_{-}|$. Then $v^{\prime\prime}(x)>0$ for any $x\in[a,\infty)$.

\end{itemize}
Then the first inequality of \eqref{3.56.g.} is verified. 
Since $v(a+)=v(a-)>0$, $v^{\prime}(a+)=v^{\prime}(a-)>0$, and
$v^{\prime\prime}(x)>0$ on $[a,\infty)$, we have
$v^{\prime}(x)>0$ and $v(x)>0$ for all $x\in[a,\infty)$. 
From \eqref{3.57.g.} and the fact that
$
\frac{1}{2}\sigma_+^2\theta_\pm^2+\mu_+\theta_\pm-q=0,
$
it follows that 
\begin{align}\label{3.61.g.}
&-\mu_+v^{\prime}(x)-\frac{1}{2}\sigma_+^2V^{\prime\prime}(x) \nn\\
={}&C a_{35}e^{\theta_+(x-a)}
\left(-\mu_+\theta_+-\frac{1}{2}\sigma_+^2\theta_+^2\right)
+C a_{36}e^{\theta_-(x-a)}
\left(-\mu_+\theta_--\frac{1}{2}\sigma_+^2\theta_-^2\right)
\nn\\
={}&-qC\left(a_{35}e^{\theta_+(x-a)}
+a_{36}e^{\theta_-(x-a)}\right)
=-qv(x)<0,\qquad x\in[a,\infty).
\end{align}
That is, we have $-\frac{\mu_{+}v^{\prime}(x)}{\sigma_{+}^2v^{\prime\prime}(x)}\le\frac{1}{2}$ for all $x\in[a,\infty)$. The second inequality of \eqref{3.56.g.} is verified.
Therefore, the function $v$ given by \eqref{3.57.g.} and \eqref{3.48.g.} is indeed a solution to \eqref{eq:a046} on $[a,\infty)$.
\end{proof}

\subsubsection{Identification of the uniform (in $x$) maximizer of $V_{\underline{x},\overline{x}}(x)$ over $(\underline{x},\overline{x})$}

By analyzing various parameter configurations under case $\mu_{-}>0,\, \mu_{+}\le0$, we identify two mutually exclusive and collectively exhaustive sub-cases denoted by $(\text{III}_{1})$-$(\text{III}_{2})$. For each sub-case, we specify the function $g$ and its inflection points $(a_i)_{1\leq i\leq 3}$, and derive conclusions based on the constructions of $g_{31}$ and $g_{32}$ presented in Subsection \ref{subsubsec.3.3.1}.
\begin{itemize}

\item[($\text{III}_{1}$)] If $x_0^{-}\geq a$, letting
\begin{align}
g:=g_{31}, \, a_1:=0, \, a_2:=0, \, a_3:=a,\nn
\end{align}
then $g$ is \textbf{piecewise concave-convex} with inflection points $(a_i)_{1\leq i\leq 3}$ satisfying $0=a_{1}=a_{2}<a_{3}$.

\item[($\text{III}_{2}$)] If $x_0^{-}< a$ and either one of the following conditions holds
\begin{itemize}
\item[$\bullet$] $a_{34}\geq 0$ or $
a_{33}\delta_{+}^{2}+a_{34}\delta_{-}^{2}\geq 0$,
\item[$\bullet$] $a_{34}<0$, $
a_{33}\delta_{+}^{2}+a_{34}\delta_{-}^{2}<0$,
\end{itemize}
letting
\begin{align}
g:=g_{32}, \, a_1=0, \, a_2=0, \, a_3=\left(x_0^{-}+\frac{\ln{\left(1\vee\frac{-a_{34}\delta_{-}^2}{a_{33}\delta_{+}^2}\right)}}{\delta_{+}-\delta_{-}}\right)\wedge a,\nn
\end{align}
then $g$ is \textbf{piecewise concave-convex} with inflection points $(a_i)_{1\leq i\leq 3}$ satisfying $0=a_{1}=a_{2}<a_{3}$.
\end{itemize}

\begin{theorem}
\label{Theorem3.5}
Suppose $\mu_{-}>0,\,\mu_{+}\le0$. Let $\psi$ and $\phi$ be defined by \eqref{def.psi.} and \eqref{phi.1}. Let $g$ and $(a_{i})_{1\leq i\leq 3}$ be specified separately in the four cases $(\text{\emph{III}}_{1})$-$(\text{\emph{III}}_{2})$.

Under Cases $(\text{\emph{III}}_{1})$-$(\text{\emph{III}}_{2})$, the reinsurance-dividend strategy $(u^{\star},D^{\star})$ given by
\begin{align}\label{optimal.stra.thm3.5}
\begin{cases} u_{t}^{\star}=\frac{\mu_{-}U^{u^{\star},D^{\underline{x}^{\star},\overline{x}^{\star}}}_{t}}{\sigma_{-}^2(1-\gamma_{-})}\emph{\textbf{1}}_{\{U^{u^{\star},D^{\underline{x}^{\star},\overline{x}^{\star}}}_{t}\in[0,\,a\wedge x_0^{-})\}}+\emph{\textbf{1}}_{\{U^{u^{\star},D^{\underline{x}^{\star},\overline{x}^{\star}}}_{t}\not\in[0,\,a\wedge x_0^{-})\}},& t\geq 0, \\ D^{\star}_{t}=D^{\underline{x}^{\star},\overline{x}^{\star}}_{t},& t\geq 0,
\end{cases}
\end{align}
is optimal for the auxiliary control problem \eqref{eq:a008.aux.}. The associated value function is
\begin{equation}
V_{\underline{x}^{\star},\overline{x}^{\star}}(x)=
\left\{\begin{array}{ll}
\frac{g(x)}{g^{\prime}(\overline{x}^{\star})},&x\in[0,\overline{x}^{\star}), \\
x-\overline{x}^{\star}+\frac{g(\overline{x}^{\star})}{g^{\prime}(\overline{x}^{\star})}=x-\underline{x}^{\star}-\beta+\frac{g(\underline{x}^{\star})}{g^{\prime}(\overline{x}^{\star})},& x\in[\overline{x}^{\star},\infty),
\end{array}
\right.\nn
\end{equation}
with
\begin{align}\label{optimal.stra.thm3.5}
(\underline{x}^{\star},\overline{x}^{\star})
=((g^{\prime})_{3}^{-1}(g^{\prime}(\phi^{-1}(\beta))),\phi^{-1}(\beta)).
\end{align}
\end{theorem}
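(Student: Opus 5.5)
The plan follows the template of Theorems \ref{thm3.2.ww} and \ref{Theorem3.4}. By Lemma \ref{lem3.1.w}, I split the auxiliary problem \eqref{eq:a008.aux.} into the inner problem \eqref{auxiliary.contr.p.} and the outer problem \eqref{point.point.optimization}.

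For the inner problem, I combine Theorem \ref{thm3.1.w} with the construction of $g_{31}$ and $g_{32}$ in Subsection \ref{subsubsec.3.3.1}. This gives
\begin{align*}
V_{\underline{x},\overline{x}}(x)=\frac{g(x)}{g'(\overline{x})} \quad\text{on } [0,\overline{x}),
\end{align*}
continued linearly with slope one on $[\overline{x},\infty)$. The admissible pairs are exactly those with $\psi(\underline{x},\overline{x})=\beta$. The associated feedback control is the maximizer $u^\star$ from that theorem, namely $\mu_-x/(\sigma_-^2(1-\gamma_-))$ on $[0,a\wedge x_0^-)$ and $1$ elsewhere. This is \eqref{optimal.stra.thm3.5}.

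In both sub-cases $(\text{III}_1)$ and $(\text{III}_2)$, $g$ is concave on $[0,a_3]$ and convex on $[a_3,\infty)$, since $a_1=a_2=0$. Hence $g'$ decreases on $[0,a_3]$ and increases to $\infty$ on $[a_3,\infty)$.

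For the outer problem, I first show that $\phi(x)=\psi((g')_3^{-1}(g'(x)),x)$ is continuous and strictly increasing on $[a_3,\infty)$, with $\phi(a_3)=0$ and $\phi(\infty)=\infty$. For $x>a_3$ with $g'(x)<g'(0)$, the lower endpoint $y(x)=(g')_3^{-1}(g'(x))\in(0,a_3)$ satisfies $g'(y(x))=g'(x)$. Differentiating,
\begin{align*}
\phi'(x)=\frac{g''(x)}{g'(x)^2}\int_{y(x)}^{x}g'(z)\,\mathrm{d}z>0,
\end{align*}
because the boundary term at $y(x)$ vanishes. Once $g'(x)\ge g'(0)$, the endpoint is frozen at $0$ and the same monotonicity as in \eqref{mon.prop.} applies. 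Thus $\overline{x}^\star=\phi^{-1}(\beta)$ and $\underline{x}^\star=(g')_3^{-1}(g'(\overline{x}^\star))$ are well defined, and $\psi(\underline{x}^\star,\overline{x}^\star)=\beta$.

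Next comes the analogue of Lemma \ref{lem3.4}: $g'(\overline{x}^\star)=\inf\{g'(\overline{x}):\psi(\underline{x},\overline{x})=\beta\}$. Take any admissible $(\underline{x},\overline{x})$ and suppose $g'(\overline{x})<g'(\overline{x}^\star)$.
\begin{itemize}
\item If $\overline{x}\le a_3$, then $g'$ is decreasing on $[\underline{x},\overline{x}]$, so the integrand $1-g'(z)/g'(\overline{x})$ is nonpositive and $\psi<0$, a contradiction.
\item If $\overline{x}>a_3$, then $\overline{x}<\overline{x}^\star$. Moreover $1-g'(z)/g'(\overline{x})\le0$ between $\underline{x}$ and $(g')_3^{-1}(g'(\overline{x}))$ whenever $\underline{x}$ lies below that point. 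Hence $\psi(\underline{x},\overline{x})\le\phi(\overline{x})<\phi(\overline{x}^\star)=\beta$, again a contradiction.
\end{itemize}

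With $g'(\overline{x})\ge g'(\overline{x}^\star)$ in hand, I compare $V_{\underline{x}^\star,\overline{x}^\star}$ with $V_{\underline{x},\overline{x}}$ piecewise in $x$, exactly as in \eqref{al.01}--\eqref{al.06} and \eqref{al.1}--\eqref{al.3}.
\begin{itemize}
\item On $[0,\min(\overline{x},\overline{x}^\star)]$, the difference equals $g(x)\bigl(1/g'(\overline{x}^\star)-1/g'(\overline{x})\bigr)\ge0$.
\item Beyond the larger barrier, both functions have slope one, so the difference is constant and equals its value at that barrier.
\item In between, I write the difference as $\beta-\int_{\underline{x}}^{x}\bigl(1-g'(z)/g'(\overline{x}^\star)\bigr)\mathrm{d}z$ or $\int_{\overline{x}^\star}^{x}\bigl(1-g'(z)/g'(\overline{x})\bigr)\mathrm{d}z$, as appropriate. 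I then use the concave–convex shape of $g$ to locate the extrema of this expression.
\end{itemize}
The key estimate is
\begin{align*}
\int_{\underline{x}}^{x}\Bigl(1-\frac{g'(z)}{g'(\overline{x}^\star)}\Bigr)\mathrm{d}z\le\beta, \qquad x\le\overline{x}^\star.
\end{align*}
To prove it, I split the integral at $\underline{x}^\star$. The piece over $[\underline{x},\underline{x}^\star]$ is $\le0$ because $g'\ge g'(\overline{x}^\star)$ there. The piece over $[\underline{x}^\star,x]$ is $\le\psi(\underline{x}^\star,\overline{x}^\star)=\beta$ because the integrand is nonnegative on $[\underline{x}^\star,\overline{x}^\star]$.

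I expect this final case analysis to be the main obstacle. The case $\underline{x}^\star=0$, which occurs when $g'(\overline{x}^\star)\ge g'(0)$, has to be checked separately; there I argue as in \eqref{eq:v-v.5}. Unlike Case II, however, no Assumption A should be needed, because the concave region $[0,a_3]$ is a single interval.
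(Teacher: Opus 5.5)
Your proposal is correct and takes essentially the paper's route. You decouple via Lemma~\ref{lem3.1.w}, reduce to pairs with $\psi(\underline{x},\overline{x})=\beta$, use $\psi(\underline{x},\overline{x})\le\phi(\overline{x})$ and the monotonicity of $\phi$ to rule out $\overline{x}\le a_3$ and force $\overline{x}\ge\overline{x}^{\star}$ (your slope form $g'(\overline{x})\ge g'(\overline{x}^{\star})$ is equivalent, since both barriers lie in the convex region), and then compare piecewise as in Theorem~\ref{thm3.2.ww}.

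The two difficulties you anticipate do not arise. First, $g(x)=x^{\gamma_{-}}$ near $0$ with $\gamma_{-}<1$ gives $g'(0+)=+\infty$, so $\underline{x}^{\star}>0$ always. Second, every competitor has $\overline{x}\ge\overline{x}^{\star}$, so only the term $\int_{\overline{x}^{\star}}^{x}\bigl(1-g'(z)/g'(\overline{x})\bigr)\mathrm{d}z\ge0$ on $(\overline{x}^{\star},\overline{x}]$ is needed, and your key estimate is unnecessary here.
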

\begin{proof}
We only need to give a proof for the scenario of $0=a_{1}=a_{2}<a_{3}$ (see case $\text{{III}}_{2}$), the proof for the scenario of $0=a_{1}<a_{2}<a_{3}$ (see case $\text{{III}}_{1}$) is similar to that of Theorem \ref{thm3.2.ww}. By Lemma \ref{lem3.1.w}, we can decouple the auxiliary optimization problem \eqref{eq:a008.aux.} into problem \eqref{auxiliary.contr.p.} and problem \eqref{point.point.optimization}. By theorem \ref{thm3.1.w} and similar arguments adopted at the proof of Theorem \ref{thm3.2.ww}, the value function of the auxiliary optimization problem \eqref{auxiliary.contr.p.} is given by 
\begin{equation}
V_{\underline{x},\overline{x}}(x)=
\left\{\begin{array}{ll}
\frac{g(x)}{g^{\prime}(\overline{x})},&x\in[0,\overline{x}), \\
\frac{g(\underline{x})}{g^{\prime}(\overline{x})}+x-\underline{x}-\beta,& x\in[\overline{x},\infty),
\end{array}
\right.
\end{equation}
where $g=g_{31}$ is defined in \eqref{3.44.g.}, and $\underline{x}$ and $\overline{x}$ satisfy
\begin{align}
\psi(\underline{x},\overline{x})=\beta.
\end{align}
We next verify that $(\underline{x}^{\star},\overline{x}^{\star})$ given by \eqref{optimal.stra.thm3.5} is the solution to problem \eqref{point.point.optimization}. 

Suppose $(\underline{x},\overline{x})$ with $\underline{x}\in(0,a_3)$ is any solution to $\psi(\underline{x},\overline{x})=\beta$ different from $(\underline{x}^{\star},\overline{x}^{\star})$. Then, we have the following observations.
\begin{itemize}
\item Suppose $\overline{x}\in(0,a_3]$. It follows form the concavity of $g=g_{31}$ on $[0,a_3]$ that 
\begin{eqnarray}
\psi(\underline{x},\overline{x})=\int_{\underline{x}}^{\overline{x}}\left(1-\frac{g^{\prime}(z)}{g^{\prime}(\overline{x})}\right)\mathrm{d}z<0,
\end{eqnarray}
which contradicts the assumption of $\psi(\underline{x},\overline{x})=\beta$. Hence, this case is inadmissible.
\item Suppose $\overline{x}\in(a_3,\infty)$. It follows from the construction of $g=g_{31}$ defined in \eqref{3.44.g.} that 
\begin{align}\label{psi'xy2}
\left\{
\begin{array}{ll}
\frac{\partial}{\partial x}\psi(x,y)=-\left(1-\frac{g^{\prime}(x)}{g^{\prime}(y)}\right)>0,&0\leq x<(g^{\prime})^{-1}_3(g^{\prime}(y))<y<\infty, \\
\frac{\partial}{\partial x}\psi(x,y)=-\left(1-\frac{g^{\prime}(x)}{g^{\prime}(y)}\right)<0,&(g^{\prime})^{-1}_3(g^{\prime}(y))\leq x<y<\infty,\\
\frac{\partial}{\partial y}\psi(x,y)=\int_{x}^{y}\frac{g^{\prime}(z)g^{\prime\prime}(y)}{\left[g^{\prime}(y)\right]^{2}}\mathrm{d}z>0,& 0<x<y<\infty,
\end{array}
\right.
\end{align}
which, combined with $\psi(\underline{x}^{\star},\overline{x}^{\star})=\beta$, implies $\overline{x}>\overline{x}^{\star}$. Using similar arguments adopted at the proof of Theorem \ref{thm3.2.ww}, we obtain $V_{\underline{x}^{\star},\overline{x}^{\star}}(x)\geq V_{\underline{x},\overline{x}}(x)$ for all $x\in(0,\infty)$.
\end{itemize}
In summary, for $(\underline{x},\overline{x})$ with $\underline{x}\in(0,a_3)$ being any solution to $\psi(\underline{x},\overline{x})=\beta$ different from $(\underline{x}^{\star},\overline{x}^{\star})$, we have necessarily $V_{\underline{x}^{\star},\overline{x}^{\star}}(x)\geq V_{\underline{x},\overline{x}}(x)$ for all $x\in(0,\infty)$.

Suppose $(\underline{x},\overline{x})$ with $\underline{x}\in[a_3,\infty)$ is any solution to $\psi(\underline{x},\overline{x})=\beta$ different from $(\underline{x}^{\star},\overline{x}^{\star})$. Then, it should be $\overline{x}\in[a_3,\infty)$.
It follows from \eqref{psi'xy2} that $\psi(\cdot,y)$ obtains its maximizer at $(g^{\prime})^{-1}_3(g^{\prime}(y))$, hence, we have 
\begin{eqnarray}\label{eq:beta<psi}
\beta=\psi(\underline{x},\overline{x})\leq \psi((g^{\prime})^{-1}_3(g^{\prime}(\overline{x})),\overline{x}).
\end{eqnarray}
It is easy to check that the function
$$\phi: x\mapsto \psi((g^{\prime})^{-1}_3(g^{\prime}(x)),x),$$
is increasing on $[a_3,\infty)$, which, combined with \eqref{eq:beta<psi} and $\psi(\underline{x}^{\star},\overline{x}^{\star})=\beta$, implies that $\overline{x}>\overline{x}^{\star}$. Using similar arguments adopted at the proof of Theorem \ref{thm3.2.ww}, we obtain $V_{\underline{x}^{\star},\overline{x}^{\star}}(x)\geq V_{\underline{x},\overline{x}}(x)$ for all $x\in(0,\infty)$. 

In conclusion, $(\underline{x}^{\star},\overline{x}^{\star})$ given by \eqref{optimal.stra.thm3.5} is the solution to problem \eqref{point.point.optimization}. Furthermore, based on Theorem \ref{thm3.1.w} and the construction of $V_{\underline{x},\overline{x}}$ in Section \ref{subsubsec.3.3.1}, the optimal reinsurance strategy for the control problem \eqref{auxiliary.contr.p.} with $(\underline{x},\overline{x})=(\underline{x}^{\star},\overline{x}^{\star})$ is given by \eqref{optimal.stra.thm3.5}. The proof is completed.
\end{proof}

\subsection{Case $\mu_{\pm}>0$}
\label{subsec.3.4}

\subsubsection{Construction of a solution to \eqref{eq:a046}}
\label{subsubsec.3.4.1}
\begin{theorem}
Suppose that $\mu_{-}>0$ and $\mu_{+}>0$. Then, for every constant $C>0$, a non-negative, strictly increasing, continuously differentiable, and piecewise twice continuously differentiable solution to \eqref{eq:a046} is given as follows. If $a\leq x_{1}^{+}\wedge x_{0}^{-}$, then
\begin{align}\label{3.94.g.}
v(x)=&~\left\{
\begin{array}{ll}
Cx^{\gamma_{-}},&x\in[0,a],\\
{C(a_{41}\frac{x}{a}+a_{42})^{\gamma_+}},&x\in [a,x_{1}^{+}),\\
C(a_{43}e^{\theta_{+}(x-x_{1}^{+})}+a_{44}e^{\theta_{-}(x-x_{1}^{+})}),&x\in[x_1^{+},\infty),
\end{array}
\right.
\\
:=&~C\,g_{41}(x),\nn
\end{align}
where
\begin{align}\label{3.67.g.}
\begin{cases}
a_{41}=\dfrac{\gamma_{-}}{\gamma_{+}}a^{\frac{\gamma_{-}}{\gamma_{+}}},\\[1.2ex]
a_{42}=a^{\frac{\gamma_{-}}{\gamma_{+}}}\left(1-\dfrac{\gamma_{-}}{\gamma_{+}}\right),\\[1.2ex]
a_{43}=\dfrac{\left(a_{41}\dfrac{x_{1}^{+}}{a}+a_{42}\right)^{\gamma_{+}-1}\left[\gamma_{+}\dfrac{a_{41}}{a}-\theta_{-}\left(a_{41}\dfrac{x_{1}^{+}}{a}+a_{42}\right)\right]}{\theta_{+}-\theta_{-}},\\[1.2ex]
a_{44}=\dfrac{\left(a_{41}\dfrac{x_{1}^{+}}{a}+a_{42}\right)^{\gamma_{+}-1}\left[\theta_{+}\left(a_{41}\dfrac{x_{1}^{+}}{a}+a_{42}\right)-\gamma_{+}\dfrac{a_{41}}{a}\right]}{\theta_{+}-\theta_{-}}.
\end{cases}
\end{align}
If $x_{1}^{+}<a\leq x_{0}^{-}$, then
\begin{align}
\label{3.95.g.}
v(x)=&~\left\{
\begin{array}{ll}
Cx^{\gamma_{-}},& x\in[0,a], \\
C(a_{31}e^{\theta_{+}(x-a)}+a_{32}e^{\theta_{-}(x-a)}),& x\in [a,\infty),
\end{array}
\right.
\\
:=&~C\,g_{42}(x).\nn
\end{align}
If $x_{0}^{-}<a\leq x_{2}^{+}$, then
\begin{align}\label{3.114.g.}
v(x)=&~\left\{
\begin{array}{ll}
Cx^{\gamma_{-}},&x\in[0,x_0^{-}],\\
C(a_{33}e^{\delta_{+}(x-x_0^{-})}+a_{34}e^{\delta_{-}(x-x_0^{-})}),&x\in [x_0^{-},a],\\
{C\left(a_{45}\frac{x}{a}+a_{46}\right)^{\gamma_{+}}},&x\in [a,x_{2}^{+}),\\
C(a_{47}e^{\theta_{+}(x-x_{2}^{+})}+a_{48}e^{\theta_{-}(x-x_{2}^{+})}),&x\in [x_{2}^{+},\infty),
\end{array}
\right.
\\
:=&~C\,g_{43}(x),\nn
\end{align}
where
\begin{align}\label{3.101.g.}
\begin{cases}
a_{45}=\dfrac{a}{\gamma_{+}}\left(\delta_{+}a_{33}e^{\delta_{+}(a-x_{0}^{-})}+\delta_{-}a_{34}e^{\delta_{-}(a-x_{0}^{-})}\right)
\left(a_{33}e^{\delta_{+}(a-x_{0}^{-})}+a_{34}e^{\delta_{-}(a-x_{0}^{-})}\right)^{\frac{1}{\gamma_{+}}-1},\\[1.2ex]
a_{46}=\left(a_{33}e^{\delta_{+}(a-x_{0}^{-})}+a_{34}e^{\delta_{-}(a-x_{0}^{-})}\right)^{\frac{1}{\gamma_{+}}}-a_{45},\\[1.2ex]
a_{47}=\dfrac{\left(a_{45}\dfrac{x_{2}^{+}}{a}+a_{46}\right)^{\gamma_{+}-1}\left[\gamma_{+}\dfrac{a_{45}}{a}-\theta_{-}\left(a_{45}\dfrac{x_{2}^{+}}{a}+a_{46}\right)\right]}{\theta_{+}-\theta_{-}},\\[1.2ex]
a_{48}=\dfrac{\left(a_{45}\dfrac{x_{2}^{+}}{a}+a_{46}\right)^{\gamma_{+}-1}\left[\theta_{+}\left(a_{45}\dfrac{x_{2}^{+}}{a}+a_{46}\right)-\gamma_{+}\dfrac{a_{45}}{a}\right]}{\theta_{+}-\theta_{-}}.
\end{cases}
\end{align}
If $a>x_{0}^{-}\vee x_{2}^{+}$, then
\begin{align}
\label{3.tt1.g.}
v(x)=&~\left\{
\begin{array}{ll}
Cx^{\gamma_{-}},& x\in[0,x_0^{-}], \\
C(a_{33}e^{\delta_{+}(x-x_0^{-})}+a_{34}e^{\delta_{-}(x-x_0^{-})}),& x\in [x_0^{-},a], \\
C(a_{35}e^{\theta_{+}(x-a)}+a_{36}e^{\theta_{-}(x-a)}),& x\in [a,\infty),
\end{array}
\right.
\\
:=&~C\,g_{44}(x).\nn
\end{align}
Here, $a_{31},a_{32}$ are given by \eqref{3.40.g.}, and $a_{33},a_{34},a_{35},a_{36}$ are given by \eqref{3.48.g.}.
Moreover, the maximizing control can be written in the following piecewise form:
\begin{align*}
u^{\star}(x)=\begin{cases}
\dfrac{\mu_{-}x}{\sigma_{-}^{2}(1-\gamma_{-})},&x\in[0,a\wedge x_{0}^{-}],\\[1.2ex]
\dfrac{\mu_{+}\left(x+\dfrac{a_{42}}{a_{41}}a\right)}{\sigma_{+}^{2}(1-\gamma_{+})},&x\in(a,x_{1}^{+}),\quad a\leq x_{1}^{+}\wedge x_{0}^{-},\\[1.2ex]
\dfrac{\mu_{+}\left(x+\dfrac{a_{46}}{a_{45}}a\right)}{\sigma_{+}^{2}(1-\gamma_{+})},&x\in(a,x_{2}^{+}),\quad x_{0}^{-}<a\leq x_{2}^{+},\\[1.2ex]
1,&\text{otherwise.}
\end{cases}
\end{align*}
\end{theorem}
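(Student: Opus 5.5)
The plan is to reuse the two building blocks already verified in the previous subsections and glue them at $a$. On $[0,a]$ the equation \eqref{eq:a046} reduces to \eqref{eq:ee01} with $\mu_->0$. This is exactly the situation of Subsection \ref{subsubsec.3.3.1}, so the lower piece is already known. When $x_0^-\ge a$ it is $Cx^{\gamma_-}$, with maximizer $u^\star=\mu_-x/(\sigma_-^2(1-\gamma_-))<1$. When $x_0^-<a$ it is the power law on $[0,x_0^-]$ followed by the $\delta_\pm$-exponential piece with coefficients $a_{33},a_{34}$. The $C^2$ fit at $x_0^-$ and the conditions $v''<0$ or $\sigma_-^2v''+\mu_-v'\ge0$ on $[x_0^-,a]$ are proved verbatim there; the proof of that theorem never used the sign of $\mu_+$ on $[0,a]$.

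On $[a,\infty)$ the equation is \eqref{eq:ee03.{2}} with $\mu_+>0$. This is the situation of Subsection \ref{subsubsec.3.2.1}, except that the data at $a$ now come from the new lower piece. The lower piece enters only through the pair $(v(a),v'(a))$. I will therefore redo the case analysis of Theorem \ref{thm:case2.solution.eq.a046} with $S_1:=v(a-)/C$ and $S_2:=v'(a-)/C$. For $g_{41}$ these are $S_1=a^{\gamma_-}$ and $S_2=\gamma_-a^{\gamma_--1}$; for $g_{43}$ they are the $a_{33},a_{34}$ expressions.

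\emph{Step 1 (regime $v''<0$, $u^\star<1$ right of $a$).} I fit $C(\alpha x/a+\beta')^{\gamma_+}$ to $(S_1,S_2)$ in $C^1$. This gives $a_{41},a_{42}$ from $S_1,S_2$ with $S_2/S_1=\gamma_-/a$, and $a_{45},a_{46}$ in general. The resulting control $u^\star(x)=\mu_+(x+\beta'a/\alpha)/(\sigma_+^2(1-\gamma_+))$ is increasing. The switching point $x_1^+$ (resp.\ $x_2^+$) is defined by $u^\star=1$; for $g_{41}$ this reproduces $x_1^+=\sigma_+^2(1-\gamma_+)/\mu_++(1-\gamma_+/\gamma_-)a$. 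This regime occurs precisely when $u^\star(a+)<1$, i.e.\ $a<x_1^+$ (resp.\ $a<x_2^+$). Past that point I attach the $\theta_\pm$-solution with $C^1$ matching, which yields $a_{43},a_{44},a_{47},a_{48}$. I then repeat the argument of \eqref{3.37.w}--\eqref{3.41.w}. The identity $\sigma_+^2v''+\mu_+v'=2qv-\mu_+v'$ at the junction equals $0$ by the definition of the switching point. Combined with the monotonicity of $a_{43}(2q-\mu_+\theta_+)e^{\theta_+(\cdot)}+a_{44}(2q-\mu_+\theta_-)e^{\theta_-(\cdot)}$, this gives $\sigma_+^2v''+\mu_+v'\ge0$, so either $v''>0$ or $v''<0$ with $u$ capped at $1$; in both cases $\max_u=$ the $u=1$ expression. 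The same computation also gives $C^2$ fit at the switching point.

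\emph{Step 2 (regime $u^\star=1$ immediately right of $a$).} This is the case $x_1^+<a\le x_0^-$ or $a>x_0^-\vee x_2^+$. The $\theta_\pm$-piece with coefficients $a_{31},a_{32}$ (resp.\ $a_{35},a_{36}$) is $C^1$-matched at $a$, exactly as in \eqref{3.40.g.} and \eqref{3.48.g.}. The check mirrors \eqref{ff055}: $\sigma_+^2v''(a+)+\mu_+v'(a+)=C(2qS_1-\mu_+S_2)$, and $2qS_1-\mu_+S_2\ge0$ is equivalent to $x_1^+\le a$ (resp.\ $x_2^+\le a$). The increasing property from Step 1 then propagates the inequality to $[a,\infty)$. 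Positivity of $a_{31}$ and $a_{35}$, and hence $v'>0$ and $v>0$, follows as in Subsection \ref{subsubsec.3.3.1}.

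The main obstacle is the sign bookkeeping at $a$. I must show that $x_1^+$ (resp.\ $x_2^+$), defined through the power-law fit, is exactly the threshold at which $2qS_1-\mu_+S_2$ changes sign. I must also show $a_{41},a_{45}>0$, so that the power-law piece is increasing with $v''<0$, and confirm that the four cases exhaust all configurations. I would handle this by writing $2qS_1-\mu_+S_2=S_1\bigl(2q-\mu_+S_2/S_1\bigr)$ and expressing the switching point as $a+\sigma_+^2(1-\gamma_+)/\mu_+-\gamma_+S_1/S_2$. This is the same algebra as the computation following \eqref{eq:a23theta+a24theta}, and it makes both the equivalence and the monotonicity transparent.
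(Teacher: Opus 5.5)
Your proposal is correct and follows essentially the paper's route. You reuse the $[0,a]$ piece from Subsection~\ref{subsubsec.3.3.1}. On $[a,\infty)$ you either fit the $\gamma_+$-power law up to the point where $u^\star=1$ and then continue with the $\theta_\pm$-solution, or you attach the $\theta_\pm$-solution directly at $a$. In both cases you check $\sigma_+^2v''+\mu_+v'\ge0$ through its value at the junction and monotonicity.

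Your parametrization by $(S_1,S_2)=(v(a-),v'(a-))/C$, with switching point $a+\sigma_+^2(1-\gamma_+)/\mu_+-\gamma_+S_1/S_2$, is only a bookkeeping improvement: it handles the four cases at once. It also makes explicit the equivalence $a\ge x_2^+\iff 2qS_1-\mu_+S_2\ge0$, which the paper leaves unfinished in the last case. One small correction: the monotonicity of $\sigma_+^2v''+\mu_+v'$ holds only when the $e^{\theta_-(\cdot)}$-coefficient is nonpositive. When that coefficient is positive, conclude directly from $v''>0$ and $v'>0$.
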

\begin{proof}
Recall that $x^+_1:=\frac{\sigma^2_+(1-\gamma_+)}{\mu_+}+\left(1-\frac{\gamma_+}{\gamma_-}\right)a$. We begin by assuming that $a\leq x_1^{+}\wedge x_0^{-}$.
Based on the previous analysis, one knows that, on the interval $[0,a]$, the solution $v$ to \eqref{eq:a046} (or, equivalently, \eqref{eq:ee01}) is given by \eqref{3.32.g.}.
On the interval $[a,x_1^{+})$, we conjecture that the solution $v$ to \eqref{eq:a046} satisfies
\begin{align}
\label{3.64.g.}
\left\{
\begin{array}{ll}
v^{\prime\prime}(x)<0,& x\in [a,x_{1}^{+}), \\
-\frac{\mu_{+}v^{\prime}(x)}{\sigma_{+}^2v^{\prime\prime}(x)}< 1,& x\in [a,x_{1}^{+}).
\end{array}
\right.
\end{align}
Under \eqref{3.64.g.}, on the interval $[a,x_{1}^{+})$, \eqref{eq:a046} (or, equivalently, \eqref{eq:ee03}) is reduced to
\begin{align}
-\frac{\mu_{+}^{2}\left[v^{\prime}(x)\right]^{2}}{2\sigma_{+}^{2}v^{\prime\prime}(x)}-qv(x)=0,\quad x\in [a,x_{1}^{+}),
\nn
\end{align}
whose solution is given by
\begin{equation}\label{3.65.g.}
v(x)={C\left(a_{41}\frac{x}{a}+a_{42}\right)^{\gamma_+}},\quad x\in [a,x_{1}^{+}),
\end{equation}
where $a_{41},\,a_{42}$ are constants to be determined. Using the smooth fit condition yields
\begin{equation}\label{3.66.g.}
\begin{cases}
a^{\gamma_{-}}={(a_{41}+a_{42})^{\gamma_+}},\\
\gamma_{-}a^{(\gamma_{-}-1)}={\gamma_+\frac{a_{41}}{a}(a_{41}+a_{42})^{\gamma_+-1}}.
\end{cases}
\end{equation}
Solving \eqref{3.66.g.} gives $a_{41}$ and $a_{42}$ are given by \eqref{3.67.g.}.
By \eqref{3.65.g.} and \eqref{3.67.g.}, we verify that
\begin{align}\label{3.68.g.}
\begin{cases}
v^{\prime\prime}(x)=C{\gamma_+(\gamma_+-1)\left(a_{41}\frac{x}{a}+a_{42}\right)^{\gamma_+-2}\frac{a_{41}^2}{a^2}<0},\quad &x\in [a,x_{1}^{+}),\\
-\frac{\mu_{+}v^{\prime}(x)}{\sigma_{+}^2v^{\prime\prime}(x)}=
{\frac{\mu_+\left(x+\frac{a_{42}}{a_{41}}a\right)}{\sigma^2_+(1-\gamma_+)}<1},\quad &x\in [a,x_{1}^{+}),
\end{cases}
\end{align}
confirming that condition \eqref{3.64.g.} is indeed satisfied.
On the interval $[x_1^{+},\infty)$, \eqref{eq:a046} shall be simplified to
\begin{align}
\frac{1}{2}\sigma_{+}^2v^{\prime\prime}(x)+\mu_{+}v^{\prime}(x)-qv(x)=0,\quad x\in [x_{1}^{+},\infty),
\nn
\end{align}
whose solution is
\begin{align}
\label{3.71.g.}
v(x)=C(a_{43}e^{\theta_{+}(x-x_{1}^{+})}+a_{44}e^{\theta_{-}(x-x_{1}^{+})}),\quad x\in [x_{1}^{+},\infty),
\end{align}
where $a_{43}, \,a_{44}$ are constants to be determined. Imposing the smooth-pasting conditions at $x=x_1^{+}$ leads to
\begin{align}
\label{3.72.g.}
\left\{
\begin{array}{ll}
{\left(a_{41}\frac{x^+_1}{a}+a_{42}\right)^{\gamma_+}}=a_{43}+a_{44},& \\
{\gamma_+\frac{a_{41}}{a}\left(a_{41}\frac{x^+_1}{a}+a_{42}\right)^{\gamma_+-1}}=a_{43}\theta_{+}+a_{44}\theta_{-}.&
\end{array}
\right.
\end{align}
Here, we also note that the second equality of \eqref{3.72.g.} implies that
\begin{align}
v^{\prime}(x)>0,\quad x\in [x_{1}^{+},\infty).\nn
\end{align}
Solving \eqref{3.72.g.} yields that $a_{43}$ and $a_{44}$ are given by \eqref{3.67.g.}.
{By $a_{41}>0$ and $\theta_-<0$, we have $a_{43}>0$ and $a_{43}+a_{44}>0$.}
Therefore, one deduces that
\begin{itemize}
\item If $a_{44}\geq 0$, then, by \eqref{3.71.g.}, it is obvious that $v^{\prime\prime}(x)> 0$ for any $x\in[x_1^{+},\infty)$.

\item If $a_{44}<0$, then
\begin{align}
v^{\prime\prime}(x)=C(a_{43}\theta_{+}^{2}e^{\theta_{+}(x-x_1^{+})}+a_{44}\theta_{-}^{2}e^{\theta_{-}(x-x_1^{+})}),\quad x\in [x_1^{+},\infty),\nn
\end{align}
is strictly increasing on $[x_1^{+},\infty)$. At $x=x_1^{+}$, we have $v^{\prime\prime}(x_1^{+})
=C(a_{43}\theta_{+}^{2}+a_{44}\theta_{-}^{2})$. {We claim that $a_{43}\theta_{+}^{2}+a_{44}\theta_{-}^{2}<0$. Indeed, by \eqref{3.67.g.}, we have
\begin{eqnarray}
a_{43}\theta_{+}^{2}+a_{44}\theta_{-}^{2}
\hspace{-0.3cm}&=&\hspace{-0.3cm}
\frac{\left(a_{41}\frac{x^+_1}{a}+a_{42}\right)^{\gamma_+-1}\left[\gamma_+\frac{a_{41}}{a}-\theta_-\left(a_{41}\frac{x^+_1}{a}+a_{42}\right)\right]}{\theta_+-\theta_-}\theta^2_+
\nn\\
\hspace{-0.3cm}&&\hspace{-0.3cm}
+\frac{\left(a_{41}\frac{x^+_1}{a}+a_{42}\right)^{\gamma_+-1}\left[\theta_+\left(a_{41}\frac{x^+_1}{a}+a_{42}\right)-\gamma_+\frac{a_{41}}{a}\right]}{\theta_+-\theta_-}\theta^2_-
\nn\\
\hspace{-0.3cm}&=&\hspace{-0.3cm}
\left(a_{41}\frac{x^+_1}{a}+a_{42}\right)^{\gamma_+-1}\left[\gamma_+\frac{a_{41}}{a}(\theta_++\theta_-)-\left(a_{41}\frac{x^+_1}{a}+a_{42}\right)\theta_-\theta_+\right]
\nn\\
\hspace{-0.3cm}&=&\hspace{-0.3cm}
-\left(a_{41}\frac{x^+_1}{a}+a_{42}\right)^{\gamma_+-1}\frac{\mu_+\gamma_-}{\sigma^2_+}a^{\frac{\gamma_-}{\gamma_+}-1}<0.
\end{eqnarray}
} 
Noting that $a_{43}\theta_{+}^{2}+a_{44}\theta_{-}^{2}<0$ occurs if and only if $\ln{\frac{-a_{44}\theta_{-}^2}{a_{43}\theta_{+}^2}}>0$,
we have
\begin{align}
\label{3.76.g.}
\left\{
\begin{array}{ll}
v^{\prime\prime}(x)>0,& x\in (x_1^{+}+\frac{\ln{\frac{-a_{44}\theta_{-}^2}{a_{43}\theta_{+}^2}}}{\theta_{+}-\theta_{-}},\infty), \\
v^{\prime\prime}(x)<0,& x\in [x_1^{+}, x_1^{+}+\frac{\ln{\frac{-a_{44}\theta_{-}^2}{a_{43}\theta_{+}^2}}}{\theta_{+}-\theta_{-}}).
\end{array}
\right.
\end{align}

By \eqref{3.71.g.}, one gets
\begin{align}
\label{3.77.g.}
&\sigma_{+}^2v^{\prime\prime}(x)+\mu_{+}v^{\prime}(x)
\nn\\
=&~ C\left(a_{43} \left(\sigma_{+}^2\theta_{+}^{2}+\mu_{+}\theta_{+}\right)
e^{\theta_{+}(x-x_1^{+})}
+ a_{44}\left(\sigma_{+}^2\theta_{-}^{2}+\mu_{+}\theta_{-}\right)
e^{\theta_{-}(x-x_1^{+})}\right)
\nn\\
=&~
C\left(a_{43} \left(2q-\mu_{+}\theta_{+}\right)
e^{\theta_{+}(x-x_1^{+})}
+ a_{44}\left(2q-\mu_{+}\theta_{-}\right)
e^{\theta_{-}(x-x_1^{+})}\right), \quad x\in [x_1^{+},\infty).
\end{align}
Based on the previously proven $2q-\mu_{+}\theta_{+}>0$ and $a_{43}>0$, we know that the function on the RHS of \eqref{3.77.g.} is strictly increasing in $x$, since the dominant exponential term (with $\theta_{+}>\theta_{-}$) grows faster. In this case, the minimum occurs at $x=x_{1}^{+}$:
\begin{align}
\label{3.78.g.}
&\sigma_{+}^2v^{\prime\prime}(x_{1}^{+}+)+\mu_{+}v^{\prime}(x_{1}^{+}+)
\nn\\
=&~
2\left(qv(x_{1}^{+}+)-\mu_{+}v^{\prime}(x_{1}^{+}+)-\frac{1}{2}\sigma_{+}^2v^{\prime\prime}(x_{1}^{+}+)\right)
+\sigma_{+}^2v^{\prime\prime}(x_{1}^{+}+)+\mu_{+}v^{\prime}(x_{1}^{+}+)
\nn\\
=&~2qv(x_{1}^{+}+)-\mu_{+}v^{\prime}(x_{1}^{+}+)
\nn\\
=&~2qv(x_{1}^{+}-)-\mu_{+}v^{\prime}(x_{1}^{+}-)
\nn\\
=&~{C\left(a_{41}\frac{x^+_1}{a}+a_{42}\right)^{\gamma_+-1}\left(2qa_{41}\frac{x^+_1}{a}+2qa_{42}-\mu_+\gamma_+\frac{a_{41}}{a}\right)} 
\nn\\
=&~0,
\end{align}
where we have used \eqref{3.67.g.} and the definition of $x^+_1$.
{
We further verify the second-order smooth fit at $x_1^{+}$. By \eqref{3.68.g.} and the definition of $x_1^{+}$,
\begin{align}
-\frac{\mu_{+}v^{\prime}(x_1^{+}-)}{\sigma_{+}^{2}v^{\prime\prime}(x_1^{+}-)}=1,
\end{align}
so that
\begin{align}
\sigma_{+}^{2}v^{\prime\prime}(x_1^{+}-)+\mu_{+}v^{\prime}(x_1^{+}-)=0.
\label{eq:C2.x1.plus.left}
\end{align}
Equation \eqref{3.78.g.} yields
\begin{align}
\sigma_{+}^{2}v^{\prime\prime}(x_1^{+}+)+\mu_{+}v^{\prime}(x_1^{+}+)=0.
\label{eq:C2.x1.plus.right}
\end{align}
Since \eqref{3.72.g.} gives $v^{\prime}(x_1^{+}-)=v^{\prime}(x_1^{+}+)$, we conclude that
\begin{align}
v^{\prime\prime}(x_1^{+}-)
=v^{\prime\prime}(x_1^{+}+).
\label{eq:C2.x1.plus}
\end{align}
Thus, $v$ is twice continuously differentiable at $x_1^{+}$.}
By \eqref{3.78.g.} and the strict increasing property of $\sigma_{+}^2v^{\prime\prime}+\mu_{+}v^{\prime}$ on $[x_{1}^{+},\infty)$, we have
\begin{align}\label{3.7801.g.}
\sigma_{+}^2v^{\prime\prime}(x)+\mu_{+}v^{\prime}(x)\geq 0, \quad x\in [x_{1}^{+},\infty).
\end{align}
\end{itemize}
Therefore, we conclude that
\begin{itemize}
\item[$\bullet$] {Suppose $a_{44}\geq 0$.}
Then
\begin{align}
v^{\prime\prime}(x)>0, \quad x\in (x_{1}^{+},\infty),\nn
\end{align}
confirming that $v$ given by \eqref{3.71.g.} and \eqref{3.67.g.} is indeed a solution to \eqref{eq:ee03.{2}} on $[x_{1}^{+},\infty)$.
\item[$\bullet$] {
Suppose $a_{44}<0$.}
By \eqref{3.76.g.} and \eqref{3.7801.g.}, we have
\begin{align}
\label{3.7611.g.}
\left\{
\begin{array}{ll}
v^{\prime\prime}(x)>0,& x\in (x_1^{+}+\frac{\ln{\frac{-a_{44}\theta_{-}^2}{a_{43}\theta_{+}^2}}}{\theta_{+}-\theta_{-}},\infty), \\
v^{\prime\prime}(x)<0,& x\in [x_1^{+}, x_1^{+}+\frac{\ln{\frac{-a_{44}\theta_{-}^2}{a_{43}\theta_{+}^2}}}{\theta_{+}-\theta_{-}}),
\\
\sigma_{+}^2 v^{\prime\prime}(x) +\mu_{+}v^{\prime}(x)\geq 0,& x\in [x_1^{+}, x_1^{+}+\frac{\ln{\frac{-a_{44}\theta_{-}^2}{a_{43}\theta_{+}^2}}}{\theta_{+}-\theta_{-}}),
\end{array}
\right.
\end{align}
confirming that $v$ given by \eqref{3.71.g.} and \eqref{3.67.g.} is indeed a solution to \eqref{eq:ee03.{2}} on $[x_{1}^{+},\infty)$.
\end{itemize}
In summary, when $\mu_{\pm}>0$ and {$a\leq x^+_1\wedge x_0^{-}$},
the solution to \eqref{eq:a046} takes the form \eqref{3.94.g.}.

We now proceed under the assumption that {$x^+_1<a \leq x_0^{-}$.}
Based on the previous analysis, the solution $v$ of \eqref{eq:a046} on $[0,a]$ should be given by \eqref{3.32.g.}.
On the interval $[a,\infty)$, based on the previous analysis, the solution $v$ of \eqref{eq:a046} on $[a,\infty)$ should be given by \eqref{3.38.g.}. Since $a_{31}>0$ and $a_{31}+a_{32}>0$ (see \eqref{3.40.g.} and \eqref{3.41.g.}), one deduces that

\begin{itemize}
\item If $a_{32}\geq 0$, then, by \eqref{3.38.g.}, it is obvious that $v^{\prime\prime}(x)> 0$ for any $x\in[a,\infty)$.

\item If $a_{32}<0$, then
\begin{align}
v^{\prime\prime}(x)=C(a_{31}\theta_{+}^{2}e^{\theta_{+}(x-a)}+a_{32}\theta_{-}^{2}e^{\theta_{-}(x-a)}),\quad x\in [a,\infty),\nn
\end{align}
is strictly increasing on $[a,\infty)$. At $x=a$, we have $v^{\prime\prime}(a)=C(a_{31}\theta_{+}^{2}+a_{32}\theta_{-}^{2})$. If $a_{31}\theta_{+}^{2}+a_{32}\theta_{-}^{2}\ge0$, we have $v^{\prime\prime}(x)>0$ for all $x\in[a,\infty)$. If $a_{31}\theta_{+}^{2}+a_{32}\theta_{-}^{2}<0$, which occurs if and only if $\ln{\frac{-a_{32}\theta_{-}^2}{a_{31}\theta_{+}^2}}>0$,
then
\begin{align}
\label{3.1.wwc}
\left\{
\begin{array}{ll}
v^{\prime\prime}(x)>0,& x\in (a+\frac{\ln{\frac{-a_{32}\theta_{-}^2}{a_{31}\theta_{+}^2}}}{\theta_{+}-\theta_{-}},\infty), \\
v^{\prime\prime}(x)<0,& x\in [a, a+\frac{\ln{\frac{-a_{32}\theta_{-}^2}{a_{31}\theta_{+}^2}}}{\theta_{+}-\theta_{-}}).
\end{array}
\right.
\end{align}
By \eqref{3.38.g.}, one gets
\begin{align}
\label{3.2.wwc}
&\sigma_{+}^2v^{\prime\prime}(x)+\mu_{+}v^{\prime}(x)
\nn\\
=&~ C\left(a_{31} \left(\sigma_{+}^2\theta_{+}^{2}+\mu_{+}\theta_{+}\right)
e^{\theta_{+}(x-a)}
+ a_{32}\left(\sigma_{+}^2\theta_{-}^{2}+\mu_{+}\theta_{-}\right)
e^{\theta_{-}(x-a)}\right)
\nn\\
=&~
C\left(a_{31} \left(2q-\mu_{+}\theta_{+}\right)
e^{\theta_{+}(x-a)}
+ a_{32}\left(2q-\mu_{+}\theta_{-}\right)
e^{\theta_{-}(x-a)}\right), \quad x\in [a,\infty).
\end{align}
Based on the previously proven $2q-\mu_{+}\theta_{+}>0$ and $a_{31}>0$ and the fact of $2q-\mu_{+}\theta_{-}>0$, implies that the function on the RHS of \eqref{3.2.wwc} is strictly increasing in $x$, since the dominant exponential term (with $\theta_{+}>\theta_{-}$) grows faster. In this case, the minimum occurs at $x=a$:
\begin{align}
\label{3.3.wwc}
&\sigma_{+}^2v^{\prime\prime}(a+)+\mu_{+}v^{\prime}(a+)
\nn\\
=&~
2\left(qv(a+)-\mu_{+}v^{\prime}(a+)-\frac{1}{2}\sigma_{+}^2v^{\prime\prime}(a+)\right)
+\sigma_{+}^2v^{\prime\prime}(a+)+\mu_{+}v^{\prime}(a+)
\nn\\
=&~2qv(a+)-\mu_{+}v^{\prime}(a+)
\nn\\
=&~2qv(a-)-\mu_{+}v^{\prime}(a-)
\nn\\
=&~Ca^{\gamma_{-}-1}(2qa-\mu_{+}\gamma_{-})\nn\\
=&~Ca^{\gamma_--1}(a-x^+_1)\frac{2q\gamma_-}{\gamma_+}
\geq 0.
\end{align}
\end{itemize}
Therefore, we conclude that
\begin{itemize}
\item[$\bullet$] Suppose either $a_{32}\geq 0$ or $
a_{31}\theta_{+}^{2}+a_{32}\theta_{-}^{2}\geq 0$. Then,
\begin{align}
v^{\prime\prime}(x)>0, \quad x\in (a,\infty),\nn
\end{align}
confirming that $v$ given by \eqref{3.38.g.} and \eqref{3.40.g.} is indeed a solution to \eqref{eq:a046} on $[a,\infty)$.

\item[$\bullet$] Suppose $a_{32}<0$, $
a_{31}\theta_{+}^{2}+a_{32}\theta_{-}^{2}<0$. 
By \eqref{3.1.wwc} and \eqref{3.3.wwc}, we have
\begin{align}
\label{3.1.wwc112}
\left\{
\begin{array}{ll}
v^{\prime\prime}(x)>0,& x\in (a+\frac{\ln{\frac{-a_{32}\theta_{-}^2}{a_{31}\theta_{+}^2}}}{\theta_{+}-\theta_{-}},\infty), \\
v^{\prime\prime}(x)<0,& x\in [a, a+\frac{\ln{\frac{-a_{32}\theta_{-}^2}{a_{31}\theta_{+}^2}}}{\theta_{+}-\theta_{-}}),
\\
\sigma_{+}^2 v^{\prime\prime}(x) +\mu_{+}v^{\prime}(x)\geq 0,& x\in [a, a+\frac{\ln{\frac{-a_{32}\theta_{-}^2}{a_{31}\theta_{+}^2}}}{\theta_{+}-\theta_{-}}),
\end{array}
\right.
\end{align}
confirming that $v$ given by \eqref{3.38.g.} and \eqref{3.40.g.} is indeed a solution to \eqref{eq:a046} on $[a,\infty)$.
\end{itemize}
In summary, when $\mu_{\pm}>0$ and $x_1^{+}<a\leq x_0^{-}$, 
the solution to \eqref{eq:a046} is given by \eqref{3.95.g.}.

Define $x^+_2=\frac{\sigma^2_+(1-\gamma_+)}{\mu_+}-\frac{a_{46}}{a_{45}}a$.
We now proceed under the assumption that {$x_0^{-}< a \leq x_2^{+}$}. Based on the previous analysis, one knows that the solution $v$ to \eqref{eq:a046} on $[0,x_0^{-}]$ should be given by \eqref{3.45.g.} and the solution $v$ of \eqref{eq:a046} on $[x_0^{-},a]$ should be given by \eqref{3.46.g.}. By \eqref{3.47.g.} and \eqref{eq:C2.x0.minus}, these two pieces are twice continuously differentiable across $x_0^{-}$.
On the interval $[a,x_2^{+})$, we assume that the solution $v$ to \eqref{eq:a046} satisfies
\begin{align}
\label{3.98.g.}
\left\{
\begin{array}{ll}
v^{\prime\prime}(x)<0,& x\in [a,x_{2}^{+}), \\
-\frac{\mu_{+}v^{\prime}(x)}{\sigma_{+}^2v^{\prime\prime}(x)}< 1,& x\in [a,x_{2}^{+}).
\end{array}
\right.
\end{align}
Under \eqref{3.98.g.}, on the interval $[a,x_{2}^{+})$, \eqref{eq:a046} is reduced to
\begin{align}
-\frac{\mu_{+}^{2}\left[v^{\prime}(x)\right]^{2}}{2\sigma_{+}^{2}v^{\prime\prime}(x)}-qv(x)=0,\quad x\in [a,x_{2}^{+}),
\nn
\end{align}
whose solution is given by
\begin{equation}\label{3.99.g.}
v(x)=C\left(a_{45}\frac{x}{a}+a_{46}\right)^{\gamma_{+}},\quad x\in [a,x_{2}^{+}),
\end{equation}
where $a_{45},\,a_{46}$ are unknown constants to be determined. Using the smooth fit condition yields
\begin{equation}\label{3.100.g.}
\begin{cases}
a_{33}e^{\delta_+(a-x^-_0)}+a_{34}e^{\delta_-(a-x^-_0)}={(a_{45}+a_{46})^{\gamma_+}},\\
a_{33}\delta_+e^{\delta_+(a-x^-_0)}+a_{34}\delta_-e^{\delta_-(a-x^-_0)}={\gamma_+(a_{45}+a_{46})^{\gamma_+-1}\frac{a_{45}}{a}},\\
\end{cases}
\end{equation}
solving \eqref{3.100.g.} gives $a_{45}$ and $a_{46}$ are given by \eqref{3.101.g.}. Note that
\begin{align}
a_{45}&=\frac{a}{\gamma_{+}}\left(\delta_{+}a_{33}e^{\delta_{+}(a-x_{0}^{-})}+\delta_{-}a_{34}e^{\delta_{-}(a-x_{0}^{-})}\right)
\left(a_{33}e^{\delta_{+}(a-x_{0}^{-})}+a_{34}e^{\delta_{-}(a-x_{0}^{-})}\right)^{\frac{1}{\gamma_{+}}-1}>0,\nn
\end{align}
since
\begin{align}
&\delta_{+}a_{33}e^{\delta_{+}(a-x_{0}^{-})}+\delta_{-}a_{34}e^{\delta_{-}(a-x_{0}^{-})}=\frac{v^{\prime}(a-)}{C}>0,\nn\\
&a_{33}e^{\delta_{+}(a-x_{0}^{-})}+a_{34}e^{\delta_{-}(a-x_{0}^{-})}=\frac{v(a-)}{C}>0.
\end{align}
By \eqref{3.99.g.} and \eqref{3.101.g.}, we verify that
\begin{align}\label{3.102.g.}
\begin{cases}
v^{\prime\prime}(x)={C\gamma_+(\gamma_+-1)\left(a_{45}\frac{x}{a}+a_{46}\right)^{\gamma_+-2}\frac{a_{45}^2}{a^2}<0,}
\quad &x\in [a,x_{2}^{+}),\\
-\frac{\mu_{+}v^{\prime}(x)}{\sigma_{+}^2v^{\prime\prime}(x)}={\frac{\mu_+\left(x+\frac{a_{46}}{a_{45}}a\right)}{\sigma^2_+(1-\gamma_+)}<1,}
\quad &x\in [a,x_{2}^{+}),\\
\end{cases}
\end{align}
confirming that condition \eqref{3.98.g.} is indeed satisfied.
On the interval $[x_2^{+},\infty)$, \eqref{eq:ee03.{2}} shall be simplified to
\begin{align}
\frac{1}{2}\sigma_{+}^2v^{\prime\prime}(x)+\mu_{+}v^{\prime}(x)-qv(x)=0,\quad x\in [x_{2}^{+},\infty),
\nn
\end{align}
whose solution is
\begin{align}
\label{3.105.g.}
v(x)=C(a_{47}e^{\theta_{+}(x-x_{2}^{+})}+a_{48}e^{\theta_{-}(x-x_{2}^{+})}),\quad x\in [x_{2}^{+},\infty),
\end{align}
where $a_{47}, \,a_{48}$ are unknown constants to be determined. Using the smooth fit condition, one gets
\begin{align}
\label{3.106.g.}
\left\{
\begin{array}{ll}
{\left(a_{45} \frac{x_{2}^{+}}{a}+a_{46}\right)^{\gamma_{+}}}=a_{47}+a_{48},& \\
{\gamma_+\frac{a_{45}}{a}\left(a_{45}\frac{x_{2}^{+}}{a}+a_{46}\right)^{\gamma_{+}-1}}=a_{47}\theta_{+}+a_{48}\theta_{-}.&
\end{array}
\right.
\end{align}
Here, we note that the second equality of \eqref{3.106.g.} implies that
\begin{align}
v^{\prime}(x)>0,\quad x\in [x_{2}^{+},\infty).\nn
\end{align}
Solving \eqref{3.106.g.} yields that $a_{47}$ and $a_{48}$ are given by \eqref{3.101.g.}.
By $a_{45}>0$ and $\theta_-<0$, we have $a_{47}>0$ and $a_{47}+a_{48}>0.$ 
Therefore, one deduces that
\begin{itemize}
\item If $a_{48}\geq 0$, then, by \eqref{3.105.g.}, it is obvious that $v^{\prime\prime}(x)> 0$ for any $x\in[x_2^{+},\infty)$.

\item If $a_{48}<0$, then
\begin{align}
v^{\prime\prime}(x)=C(a_{47}\theta_{+}^{2}e^{\theta_{+}(x-x_2^{+})}+a_{48}\theta_{-}^{2}e^{\theta_{-}(x-x_2^{+})}),\quad x\in [x_2^{+},\infty),\nn
\end{align}
is strictly increasing on $[x_2^{+},\infty)$. At $x=x_2^{+}$, we have $v^{\prime\prime}(x_2^{+})
=C(a_{47}\theta_{+}^{2}+a_{48}\theta_{-}^{2})$. 
We claim that $a_{47}\theta_{+}^{2}+a_{48}\theta_{-}^{2}<0.$ Indeed, by \eqref{3.101.g.}, we have
\begin{eqnarray}
a_{47}\theta_{+}^{2}+a_{48}\theta_{-}^{2}
\hspace{-0.3cm}&=&\hspace{-0.3cm}
\frac{\left(a_{45}\frac{x^+_2}{a}+a_{46}\right)^{\gamma_+-1}\left[\gamma_+\frac{a_{45}}{a}-\theta_-\left(a_{45}\frac{x^+_2}{a}+a_{46}\right)\right]}{\theta_+-\theta_-}\theta^2_+
\nn\\
\hspace{-0.3cm}&&\hspace{-0.3cm}
+\frac{\left(a_{45}\frac{x^+_2}{a}+a_{46}\right)^{\gamma_+-1}\left[\theta_+\left(a_{45}\frac{x^+_2}{a}+a_{46}\right)-\gamma_+\frac{a_{45}}{a}\right]}{\theta_+-\theta_-}\theta^2_-
\nn\\
\hspace{-0.3cm}&=&\hspace{-0.3cm}
\left(a_{45}\frac{x^+_2}{a}+a_{46}\right)^{\gamma_+-1}\left[\gamma_+\frac{a_{45}}{a}(\theta_++\theta_-)-\theta_+\theta_-\left(a_{45}\frac{x^+_2}{a}+a_{46}\right)\right]
\nn\\
\hspace{-0.3cm}&=&\hspace{-0.3cm}
-\frac{2q}{\sigma^2_+}\left(a_{45}\frac{x^+_2}{a}+a_{46}\right)^{\gamma_+}<0. 
\end{eqnarray}
Noting that $a_{47}\theta_{+}^{2}+a_{48}\theta_{-}^{2}<0$ occurs if and only if $\ln{\frac{-a_{48}\theta_{-}^2}{a_{47}\theta_{+}^2}}> 0$,
we have
\begin{align}
\label{3.110.g.}
\left\{
\begin{array}{ll}
v^{\prime\prime}(x)>0,& x\in (x_2^{+}+\frac{\ln{\frac{-a_{48}\theta_{-}^2}{a_{47}\theta_{+}^2}}}{\theta_{+}-\theta_{-}},\infty), \\
v^{\prime\prime}(x)<0,& x\in [x_2^{+}, x_2^{+}+\frac{\ln{\frac{-a_{48}\theta_{-}^2}{a_{47}\theta_{+}^2}}}{\theta_{+}-\theta_{-}}).
\end{array}
\right.
\end{align}

By \eqref{3.105.g.}, one gets
\begin{align}
\label{3.111.g.}
&\sigma_{+}^2v^{\prime\prime}(x)+\mu_{+}v^{\prime}(x)
\nn\\
=&~ C\left(a_{47} \left(\sigma_{+}^2\theta_{+}^{2}+\mu_{+}\theta_{+}\right)
e^{\theta_{+}(x-x_2^{+})}
+ a_{48}\left(\sigma_{+}^2\theta_{-}^{2}+\mu_{+}\theta_{-}\right)
e^{\theta_{-}(x-x_2^{+})}\right)
\nn\\
=&~
C\left(a_{47} \left(2q-\mu_{+}\theta_{+}\right)
e^{\theta_{+}(x-x_2^{+})}
+ a_{48}\left(2q-\mu_{+}\theta_{-}\right)
e^{\theta_{-}(x-x_2^{+})}\right), \quad x\in [x_2^{+},\infty).
\end{align}
Based on the previously proven $2q-\mu_{+}\theta_{+}>0$ and $a_{47}>0$, the function on the RHS of \eqref{3.111.g.} is strictly increasing in $x$, since the dominant exponential term (with $\theta_{+}>\theta_{-}$) grows faster. In this case, the minimum occurs at $x=x_{2}^{+}$:
\begin{align}
\label{3.112.g.}
&\sigma_{+}^2v^{\prime\prime}(x_{2}^{+}+)+\mu_{+}v^{\prime}(x_{2}^{+}+)
\nn\\
=&~
2\left(qv(x_{2}^{+}+)-\mu_{+}v^{\prime}(x_{2}^{+}+)-\frac{1}{2}\sigma_{+}^2v^{\prime\prime}(x_{2}^{+}+)\right)
+\sigma_{+}^2v^{\prime\prime}(x_{2}^{+}+)+\mu_{+}v^{\prime}(x_{2}^{+}+)
\nn\\
=&~2qv(x_{2}^{+}+)-\mu_{+}v^{\prime}(x_{2}^{+}+)
\nn\\
=&~2qv(x_{2}^{+}-)-\mu_{+}v^{\prime}(x_{2}^{+}-)
\nn\\
=&~{C\left(a_{45}\frac{x^+_2}{a}+a_{46}\right)^{\gamma_+-1}\left[2q\left(a_{45}\frac{x^+_2}{a}+a_{46}\right)-\mu_+\gamma_+\frac{a_{45}}{a}\right]}
\nn\\
=&~0.
\end{align}
We further verify the second-order smooth fit at $x_2^{+}$. By \eqref{3.102.g.} and the definition of $x_2^{+}$,
\begin{align}
-\frac{\mu_{+}v^{\prime}(x_2^{+}-)}{\sigma_{+}^{2}v^{\prime\prime}(x_2^{+}-)}=1,
\end{align}
and hence
\begin{align}
\sigma_{+}^{2}v^{\prime\prime}(x_2^{+}-)+\mu_{+}v^{\prime}(x_2^{+}-)=0.
\label{eq:C2.x2.plus.left}
\end{align}
Equation \eqref{3.112.g.} gives
\begin{align}
\sigma_{+}^{2}v^{\prime\prime}(x_2^{+}+)+\mu_{+}v^{\prime}(x_2^{+}+)=0.
\label{eq:C2.x2.plus.right}
\end{align}
Since \eqref{3.106.g.} implies $v^{\prime}(x_2^{+}-)=v^{\prime}(x_2^{+}+)$, it follows that
\begin{align}
v^{\prime\prime}(x_2^{+}-)
=v^{\prime\prime}(x_2^{+}+).
\label{eq:C2.x2.plus}
\end{align}
Thus, $v$ is twice continuously differentiable at $x_2^{+}$.
By \eqref{3.112.g.} and the strictly increasing property of $\sigma_{+}^2v^{\prime\prime}+\mu_{+}v^{\prime}$ on $[x_2^{+},\infty)$, we have
\begin{align}
\label{3.112.g.1}
\sigma_{+}^2v^{\prime\prime}(x)+\mu_{+}v^{\prime}(x)\geq 0.
\end{align}
\end{itemize}
Therefore, we conclude that
\begin{itemize}
\item[$\bullet$] {Suppose either $a_{48}\geq 0$. }
Then
\begin{align}
v^{\prime\prime}(x)>0, \quad x\in (x_{2}^{+},\infty),\nn
\end{align}
confirming that $v$ given by \eqref{3.105.g.} and \eqref{3.101.g.} is indeed a solution to \eqref{eq:a046} on $[x_{2}^{+},\infty)$.

\item[$\bullet$] {Suppose $a_{48}<0$.}
By \eqref{3.110.g.} and \eqref{3.112.g.1}, we have
\begin{align}
\left\{
\begin{array}{ll}
v^{\prime\prime}(x)>0,& x\in (x_2^{+}+\frac{\ln{\frac{-a_{48}\theta_{-}^2}{a_{47}\theta_{+}^2}}}{\theta_{+}-\theta_{-}},\infty), \\
v^{\prime\prime}(x)<0,& x\in [x_2^{+}, x_2^{+}+\frac{\ln{\frac{-a_{48}\theta_{-}^2}{a_{47}\theta_{+}^2}}}{\theta_{+}-\theta_{-}}),
\\
\sigma_{+}^2 v^{\prime\prime}(x) +\mu_{+}v^{\prime}(x)\geq 0,& x\in [x_2^{+}, x_2^{+}+\frac{\ln{\frac{-a_{48}\theta_{-}^2}{a_{47}\theta_{+}^2}}}{\theta_{+}-\theta_{-}}),
\end{array}
\right.
\end{align}
confirming that $v$ given by \eqref{3.105.g.} and \eqref{3.101.g.} is indeed a solution to \eqref{eq:a046} on $[x_{2}^{+},\infty)$.

\end{itemize}
In summary, when $\mu_{\pm}>0$ and $x_0^{-}< a\leq x_2^{+}$, 
the solution to \eqref{eq:a046} takes the form \eqref{3.114.g.}.

We proceed under the assumption that {\color{blue}$a>x_0^{-}\vee x_2^{+}$.} Based on the previous analysis, one knows that the solution $v$ of \eqref{eq:a046} on $[0,x_0^{-}]$ should be given by \eqref{3.45.g.} and the solution $v$ of \eqref{eq:a046} on $[x_0^{-},a]$ should be given by \eqref{3.46.g.}. By \eqref{3.47.g.} and \eqref{eq:C2.x0.minus}, these two pieces are twice continuously differentiable across $x_0^{-}$.
On the interval $[a,\infty)$, based on the previous analysis, the solution $v$ of \eqref{eq:a046} on $[a,\infty)$ should be given by \eqref{3.57.g.}. 
Since it is proved that $a_{35}>0$ and $a_{35}+a_{36}>0$, one deduces that
\begin{itemize}
\item If $a_{36}\geq 0$, then, by \eqref{3.57.g.}, it is obvious that $v^{\prime\prime}(x)> 0$ for any $x\in[a,\infty)$.

\item If $a_{36}<0$, then
\begin{align}
v^{\prime\prime}(x)=C(a_{35}\theta_{+}^{2}e^{\theta_{+}(x-a)}+a_{36}\theta_{-}^{2}e^{\theta_{-}(x-a)}),\quad x\in [a,\infty),\nn
\end{align}
is strictly increasing on $[a,\infty)$. At $x=a$, we have $v^{\prime\prime}(a)=C(a_{35}\theta_{+}^{2}+a_{36}\theta_{-}^{2})$. If $a_{35}\theta_{+}^{2}+a_{36}\theta_{-}^{2}\ge0$, we have $v^{\prime\prime}(x)>0$ for all $x\in[a,\infty)$. If $a_{35}\theta_{+}^{2}+a_{36}\theta_{-}^{2}<0$, which occurs if and only if $\ln{\frac{-a_{36}\theta_{-}^2}{a_{35}\theta_{+}^2}}> 0$,
then
\begin{align}
\label{3.1.wwc1}
\left\{
\begin{array}{ll}
v^{\prime\prime}(x)>0,& x\in (a+\frac{\ln{\frac{-a_{36}\theta_{-}^2}{a_{35}\theta_{+}^2}}}{\theta_{+}-\theta_{-}},\infty), \\
v^{\prime\prime}(x)<0,& x\in [a, a+\frac{\ln{\frac{-a_{36}\theta_{-}^2}{a_{35}\theta_{+}^2}}}{\theta_{+}-\theta_{-}}).
\end{array}
\right.
\end{align}

By \eqref{3.57.g.}, one gets
\begin{align}
\label{3.2.wwc1}
&\sigma_{+}^2v^{\prime\prime}(x)+\mu_{+}v^{\prime}(x)
\nn\\
=&~ C\left(a_{35} \left(\sigma_{+}^2\theta_{+}^{2}+\mu_{+}\theta_{+}\right)
e^{\theta_{+}(x-a)}
+ a_{36}\left(\sigma_{+}^2\theta_{-}^{2}+\mu_{+}\theta_{-}\right)
e^{\theta_{-}(x-a)}\right)
\nn\\
=&~
C\left(a_{35} \left(2q-\mu_{+}\theta_{+}\right)
e^{\theta_{+}(x-a)}
+ a_{36}\left(2q-\mu_{+}\theta_{-}\right)
e^{\theta_{-}(x-a)}\right), \quad x\in [a,\infty).
\end{align}
Based on the previously proven $2q-\mu_{+}\theta_{+}>0$ and $a_{35}>0$ and the fact that $2q-\mu_{+}\theta_{-}>0$, the function on the RHS of \eqref{3.2.wwc1} is strictly increasing in $x$, since the dominant exponential term (with $\theta_{+}>\theta_{-}$) grows faster. In this case, the minimum occurs at $x=a$:
\begin{align}
\label{3.3.wwc1}
&\sigma_{+}^2v^{\prime\prime}(a+)+\mu_{+}v^{\prime}(a+)
\nn\\
=&~
2\left(qv(a+)-\mu_{+}v^{\prime}(a+)-\frac{1}{2}\sigma_{+}^2v^{\prime\prime}(a+)\right)
+\sigma_{+}^2v^{\prime\prime}(a+)+\mu_{+}v^{\prime}(a+)
\nn\\
=&~2qv(a+)-\mu_{+}v^{\prime}(a+)
\nn\\
=&~2qv(a-)-\mu_{+}v^{\prime}(a-)
\nn\\
=&~Ca_{33}e^{\delta_{+}(a-x_0^{-})}(2q-\mu_{+}\delta_{+})+Ca_{34}e^{\delta_{-}(a-x_0^{-})}(2q-\mu_{+}\delta_{-})
\nn\\
=&~Ca_{35}(2q-\mu_{+}\theta_{+})+Ca_{36}(2q-\mu_{+}\theta_{-})
\nn\\
=&~C(a_{45}+a_{46})^{\gamma_{+}-1}\left[2q(a_{45}+a_{46})-\mu_{+}\gamma_{+}\frac{a_{45}}{a}\right]\geq0.
\end{align}
Indeed, $a>x_2^+$ is equivalent to
\[
\frac{\mu_{+}a(a_{45}+a_{46})}{\sigma_{+}^{2}(1-\gamma_{+})a_{45}}>1,
\]
which, by the definition of $\gamma_{+}$, 
\end{itemize}
Therefore, we conclude that
\begin{itemize}
\item[$\bullet$] Suppose either $a_{36}\geq 0$ or $
a_{35}\theta_{+}^{2}+a_{36}\theta_{-}^{2}\geq 0$. Then,
\begin{align}
v^{\prime\prime}(x)>0, \quad x\in (a,\infty),\nn
\end{align}
confirming that $v$ given by \eqref{3.57.g.} and \eqref{3.48.g.} is indeed a solution to \eqref{eq:a046} on $[a,\infty)$.

\item[$\bullet$] Suppose $a_{36}<0$,
$a_{35}\theta_{+}^{2}+a_{36}\theta_{-}^{2}<0$. 
By \eqref{3.1.wwc1} and \eqref{3.3.wwc1}, we have
\begin{align}
\left\{
\begin{array}{ll}
v^{\prime\prime}(x)>0,& x\in (a+\frac{\ln{\frac{-a_{36}\theta_{-}^2}{a_{35}\theta_{+}^2}}}{\theta_{+}-\theta_{-}},\infty), \\
v^{\prime\prime}(x)<0,& x\in [a, a+\frac{\ln{\frac{-a_{36}\theta_{-}^2}{a_{35}\theta_{+}^2}}}{\theta_{+}-\theta_{-}}),
\\
\sigma_{+}^2v^{\prime\prime}(x)+\mu_{+}v^{\prime}(x)\geq 0,& x\in [a, a+\frac{\ln{\frac{-a_{36}\theta_{-}^2}{a_{35}\theta_{+}^2}}}{\theta_{+}-\theta_{-}}),
\end{array}
\right.
\end{align}
confirming that $v$ given by \eqref{3.57.g.} and \eqref{3.48.g.} is indeed a solution to \eqref{eq:a046} on $[a,\infty)$.
\end{itemize}
In summary, if $\mu_{\pm}>0$ and $a> x_0^{-}\vee x_2^{+}$, 
the solution to \eqref{eq:a046} is \eqref{3.tt1.g.}.
\end{proof}

\subsubsection{Identification of the uniform (in $x$) maximizer of $V_{\underline{x},\overline{x}}(x)$ over $(\underline{x},\overline{x})$}
\label{subs.3.4.2}

By analyzing various parameter configurations under case $\mu_{\pm}>0$, we identify seven mutually exclusive sub-cases denoted by $(\text{IV}_{1})$-$(\text{IV}_{7})$. For each sub-case, we specify the function $g$ and its inflection points $(a_i)_{1\leq i\leq 3}$, and derive conclusions based on the constructions of $g_{41}$, $g_{42}$, $g_{43}$ and $g_{44}$ presented in Subsection \ref{subsubsec.3.4.1}.

\begin{itemize}

\item[($\text{IV}_{1}$)] If $a\leq x_0^{-}\wedge x_{1}^{+}$, and either of the following conditions holds
\begin{itemize}
\item[$\bullet$] $a_{44}\geq 0$,
\item[$\bullet$] $a_{44}< 0$ (recall that $a_{43}\theta_{+}^{2}+a_{44}\theta_{-}^{2}< 0$ in this case),
\end{itemize}
letting
\begin{align}
g:=g_{41}, \, a_1:=0, \, a_2:=0, \, a_3:=x_1^{+}+\frac{\ln{\left(1\vee\frac{-a_{44}\theta_{-}^2}{a_{43}\theta_{+}^2}\right)}}{\theta_{+}-\theta_{-}},\nn
\end{align}
then $g$ is \textbf{piecewise concave-convex} with inflection points $(a_i)_{1\leq i\leq 3}$ satisfying $0=a_{1}=a_{2}<a_{3}$.

\item[($\text{IV}_{2}$)] If $x_{1}^{+}<a\leq x_0^{-}$ and either of the following conditions holds
\begin{itemize}
\item[$\bullet$] $a_{32}\geq 0$ or $a_{31}\theta_{+}^{2}+a_{32}\theta_{-}^{2}\geq 0$,
\item[$\bullet$] $a_{32}< 0$ and $a_{31}\theta_{+}^{2}+a_{32}\theta_{-}^{2}< 0$, 
\end{itemize}
letting
\begin{align}
g:=g_{42},\, a_1:=0,\, a_2:=0, \, a_3:=a+\frac{\ln{\left(1\vee\frac{-a_{32}\theta_{-}^2}{a_{31}\theta_{+}^2}\right)}}{\theta_{+}-\theta_{-}},
\nn
\end{align}
then $g$ is \textbf{piecewise concave-convex} with inflection points $(a_i)_{1\leq i\leq 3}$ satisfying $0=a_{1}=a_{2}<a_{3}$.

\item[($\text{IV}_{3}$)] If $x_0^{-}<a\le x_{2}^{+}$, $a_{34}<0$, $a_{33}\delta_{+}^2+a_{34}\delta_{-}^2<0$, $x_0^{-}+\frac{\ln{\frac{-a_{34}\delta_{-}^2}{a_{33}\delta_{+}^2}}}{\delta_{+}-\delta_{-}}\geq a$,
and either of the following conditions holds
\begin{itemize}
\item[$\bullet$] $a_{48}\ge0$,
\item[$\bullet$] $a_{48}<0$ \,(recall that $a_{47}\theta_{+}^2+a_{48}\theta_{-}^2<0$ in this case),
\end{itemize}
letting
\begin{align}
g:=g_{43}, \, a_1:=0,\, a_{2}:=0,\, a_{3}:=x_{2}^{+}+\frac{\ln{\left(1\vee\frac{-a_{48}\theta_{-}^2}{a_{47}\theta_{+}^2}\right)}}{\theta_{+}-\theta_{-}},\nn
\end{align}
then $g$ is \textbf{piecewise concave-convex} with inflection points $(a_i)_{1\leq i\leq 3}$ satisfying $0=a_{1}=a_{2}<a_{3}$. 

\item[($\text{IV}_{4}$)] If $a>x_0^{-}\vee x_{2}^{+}$, either $a_{36}\geq 0$ or $a_{35}\theta_{+}^{2}+a_{36}\theta_{-}^{2}\geq 0$ holds, and either of the following conditions holds
\begin{itemize}
\item[$\bullet$] $a_{34}<0$, $a_{33}\delta_{+}^2+a_{34}\delta_{-}^2<0$, and $x_0^{-}+\frac{\ln{\frac{-a_{34}\delta_{-}^2}{a_{33}\delta_{+}^2}}}{\delta_{+}-\delta_{-}}<a$,
\item[$\bullet$] $a_{34}\geq 0$ or $a_{33}\delta_{+}^2+a_{34}\delta_{-}^2\geq 0$,
\end{itemize}
letting
\begin{align}
g:=g_{44}, \, a_1:=0, \, a_2:=0, \, a_3:=x_0^{-}+\frac{\ln{\left(1\vee\frac{-a_{34}\delta_{-}^2}{a_{33}\delta_{+}^2}\right)}}{\delta_{+}-\delta_{-}},\nn
\end{align}
then $g$ is \textbf{piecewise concave-convex} with inflection points $(a_i)_{1\leq i\leq 3}$ satisfying $0=a_{1}=a_{2}< a_{3}$.

\item[($\text{IV}_{5}$)] If $a>x_0^{-}\vee x_{2}^{+}$, $a_{34}<0$, $a_{33}\delta_{+}^2+a_{34}\delta_{-}^2<0$, $x_0^{-}+\frac{\ln{\frac{-a_{34}\delta_{-}^2}{a_{33}\delta_{+}^2}}}{\delta_{+}-\delta_{-}}\geq a$, and either of the following conditions holds
\begin{itemize}
\item[$\bullet$] $a_{36}\geq 0$ or $a_{35}\theta_{+}^{2}+a_{36}\theta_{-}^{2}\geq 0$,
\item[$\bullet$] $a_{36}< 0$ and $a_{35}\theta_{+}^{2}+a_{36}\theta_{-}^{2}< 0$,
\end{itemize}
letting
\begin{align}
g:=g_{44}, \, a_1:=0, \, a_2:=0, \, a_3:=a+\frac{\ln{\left(1\vee\frac{-a_{36}\theta_{-}^2}{a_{35}\theta_{+}^2}\right)}}{\theta_{+}-\theta_{-}},\nn
\end{align}
then $g$ is \textbf{piecewise concave-convex} with inflection points $(a_i)_{1\leq i\leq 3}$ satisfying $0=a_{1}=a_{2}<a_{3}$.

\item[($\text{IV}_{6}$)] If $a>x_0^{-}\vee x_{2}^{+}$, $a_{36}< 0$, $a_{35}\theta_{+}^{2}+a_{36}\theta_{-}^{2}< 0$, and either of the following conditions holds
\begin{itemize}
\item[$\bullet$] $a_{34}<0$, $a_{33}\delta_{+}^2+a_{34}\delta_{-}^2<0$, $x_0^{-}+\frac{\ln{\frac{-a_{34}\delta_{-}^2}{a_{33}\delta_{+}^2}}}{\delta_{+}-\delta_{-}}<a$,
\item[$\bullet$] $a_{34}\geq 0$ or $a_{33}\delta_{+}^2+a_{34}\delta_{-}^2\geq 0$,
\end{itemize}
letting
\begin{align}
g:=g_{44}, \, a_1:=x_0^{-}+\frac{\ln{\left(1\vee\frac{-a_{34}\delta_{-}^2}{a_{33}\delta_{+}^2}\right)}}{\delta_{+}-\delta_{-}}, \, a_2:=a, \, a_3:=a+\frac{\ln{\frac{-a_{36}\theta_{-}^2}{a_{35}\theta_{+}^2}}}{\theta_{+}-\theta_{-}},\nn
\end{align}
then $g$ is \textbf{piecewise concave-convex} with inflection points $(a_i)_{1\leq i\leq 3}$ satisfying $0<a_{1}<a_{2}< a_{3}$.

\item[($\text{IV}_{7}$)] If $x_0^{-}<a\le x_{2}^{+}$, either one of the following conditions holds
\begin{itemize}
\item[$\bullet$] $a_{34}\geq 0$ \text{ or } $a_{33}\delta_{+}^2+a_{34}\delta_{-}^2\geq 0$,
\item[$\bullet$] $a_{34}<0$
and
$a_{33}\delta_{+}^2+a_{34}\delta_{-}^2<0$, and $x_0^{-}+\frac{\ln{\frac{-a_{34}\delta_{-}^2}{a_{33}\delta_{+}^2}}}{\delta_{+}-\delta_{-}}<a$,
\end{itemize}
and, either one of the following conditions holds
\begin{itemize}
\item[$\bullet$] $a_{48}\ge0$,
\item[$\bullet$] $a_{48}<0$ \,(recall that $a_{47}\theta_{+}^2+a_{48}\theta_{-}^2<0$ in this case),
\end{itemize}
letting
\begin{align}
g:=g_{43}, \, a_1:=x_0^{-}+\frac{\ln{\left(1\vee\frac{-a_{34}\delta_{-}^2}{a_{33}\delta_{+}^2}\right)}}{\delta_{+}-\delta_{-}},\, a_{2}:=a,\, a_{3}:=x_{2}^{+}+\frac{\ln{\left(1\vee\frac{-a_{48}\theta_{-}^2}{a_{47}\theta_{+}^2}\right)}}{\theta_{+}-\theta_{-}},\nn
\end{align}
then $g$ is \textbf{piecewise concave-convex} with inflection points $(a_i)_{1\leq i\leq 3}$ satisfying $0<a_{1}<a_{2}<a_{3}$.

\end{itemize}
We note that the seven sub-cases mentioned above are collectively exhaustive. Under Cases $(\text{\emph{IV}}_{6})$-$(\text{\emph{IV}}_{7})$, define
\begin{align}\label{opti.stra.3.4}
(\underline{x}^{\star},\overline{x}^{\star})= \left\{
\begin{array}{ll}
(\hat{z}_1,\hat{z}_2),&\text{\emph{if} } \beta\in (A_1\cup A_3)\backslash\{\omega_1(x_2)\}, \\
((g^{\prime})^{-1}_1(g^{\prime}(x_2),x_2),&\text{\emph{if} } \beta\in (A_1\cup A_3)\cap\{\omega_1(x_2)\}, \\
(\tilde{z}_1,\tilde{z}_2),& \text{\emph{if} } \beta\in A_2\cap \overline{A}_3, \\
(\tilde{z}_1,\tilde{z}_2),&\text{\emph{if} } \beta\in \overline{A_1\cup A_2\cup A_3}\backslash\{\omega_1(x_2)\}
\\
\text{ \emph{any one of } } ((g^{\prime})^{-1}_1(g^{\prime}(x_2)),x_2) \text{ \emph{and} } (\tilde{z}_1,\tilde{z}_2),&\text{\emph{if} } \beta\in \overline{A_1\cup A_2\cup A_3}\cap\{\omega_1(x_2)\}.
\end{array}
\right.
\end{align}

\begin{lemma}
\label{lem:slope.comparison.case.IV}
Suppose that either Case $(\text{\emph{IV}}_{6})$ or Case $(\text{\emph{IV}}_{7})$ holds, and let
$(\underline{x}^{\star},\overline{x}^{\star})$ be defined by \eqref{opti.stra.3.4}. Then
\begin{align}
g^{\prime}(\overline{x}^{\star})=\inf\left\{g^{\prime}(\overline{x}):
\text{ there exists }\underline{x}\geq0\text{ such that }
\psi(\underline{x},\overline{x})=\beta\right\}.
\label{eq:slope.minimizer.case.IV}
\end{align}
	\end{lemma}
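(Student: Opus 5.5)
The plan is to adapt the proof of Lemma \ref{lem3.4} to the case $0<a_1<a_2<a_3$ of Cases $(\text{IV}_{6})$--$(\text{IV}_{7})$. The one genuinely new feature is that $g$ is concave on $[0,a_1]$, so $g'$ first decreases and then increases. The natural left endpoint of a dividend band is therefore no longer $0$ but $(g')^{-1}_1(g'(\cdot))$. This explains why \eqref{opti.stra.3.4} uses $(g')^{-1}_1(g'(x_2))$ in place of $0$, and why $\omega_1$ and $x_1$ are defined with $(g')^{-1}_1$.

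\textbf{Step 1 (reduction).} Fix $\overline{x}$. For every admissible $\underline{x}$,
\[
\psi(\underline{x},\overline{x})\le \sup_{0\le y<\overline{x}}\psi(y,\overline{x}).
\]
Since $\partial_x\psi(x,y)=-(1-g'(x)/g'(y))$, the maximizer in $y$ lies at a point where $g'(y)=g'(\overline{x})$ with $g'$ crossing upward. Such points are $(g')^{-1}_1(g'(\overline{x}))$, or $(g')^{-1}_3(g'(\overline{x}))$, or the endpoint $0$ when $g'(\overline{x})\ge g'(0)$ fails to have a preimage on the branch. This gives
\[
\beta\le \Omega(\overline{x}):=\max\Big\{\omega_1(\overline{x}),\ \omega_2(\overline{x})\Big\}
\]
on the relevant ranges. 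The definitions of $\omega_1,\omega_2$ through $x_1,x_2$ encode exactly which branch is active.

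\textbf{Step 2 (case analysis on $\overline{x}$ versus $\overline{x}^\star$).} I will split according to the location of $\overline{x}$ in $[0,a_1]$, $(a_1,a_2]$, $(a_2,a_3]$ or $(a_3,\infty)$, and according to which line of \eqref{opti.stra.3.4} defines $\overline{x}^\star$. In each case I argue by contradiction: assume $g'(\overline{x})<g'(\overline{x}^\star)$ and $\psi(\underline{x},\overline{x})=\beta$.

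If $\overline{x}\le a_1$, then $g'$ is decreasing there, so $\psi<0$, which is impossible.

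If $\overline{x}\in(a_1,a_2]\cup(a_2,a_3]$, I split $\psi$ at $(g')^{-1}_2(g'(\overline{x}))$. The tail integral is negative, so $\beta<\omega_1$ evaluated at a point of slope below $g'(\overline{x}^\star)$. Monotonicity of $\omega_1$ in the slope variable, which is increasing in $g'$, then contradicts $\omega_1(\overline{x}^\star)=\beta$. This copies the chain of inequalities in Lemma \ref{lem3.4}, with $0$ replaced by $(g')^{-1}_1$.

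If $\overline{x}>a_3$, Step 1 gives $\beta\le\omega_2(\overline{x})$, hence $\overline{x}\ge\omega_2^{-1}(\beta)$. Combined with $g'(\overline{x})<g'(\overline{x}^\star)$, this forces $\beta\in A_2$. That is excluded when $\overline{x}^\star=\hat z_2$ with $\beta\in A_1\cup A_3$. It is consistent with the choice $\overline{x}^\star=\tilde z_2$ in the remaining lines, where one instead compares against $\omega_1$ and uses the membership $\beta\notin A_1$ together with the threshold $x_1$. The tie case $\beta=\omega_1(x_2)$ is handled by continuity, since both candidates then have equal slope.

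\textbf{Main obstacle.} I expect the main difficulty to be establishing monotonicity of $\omega_1$ and $\omega_2$ along each branch, and the switching at $x_1$ and $x_2$. This is what makes $\omega_i^{-1}$ well defined and lets me transport inequalities on $\beta$ into inequalities on slopes. The first tool I would try is differentiating under the integral: the boundary terms vanish because the integrand is zero at the endpoints $(g')^{-1}_i(g'(x))$. This leaves
\[
\frac{g''(x)}{g'(x)^2}\int g'\,\mathrm{d}s,
\]
which is positive wherever $g''(x)>0$. Here one needs to check that $\omega_1$ is only used on $[a_1,(g')^{-1}_2(g'(x_2)))\cup[x_2,\infty)$ and that the jump at $x_2$ is upward. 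That last check is exactly what the defining property of $x_2$ in \eqref{13.17.g.} provides.
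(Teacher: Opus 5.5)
Your proposal follows essentially the same route as the paper's proof. Argue by contradiction from $g'(\overline{x})<g'(\overline{x}^{\star})$, split on the location of $\overline{x}$, and bound $\psi(\underline{x},\overline{x})$ by its maximum over $\underline{x}$: an $\omega_1$-type quantity below $a_3$, and $\omega_2=\max\{\psi((g')_1^{-1}(g'(\overline{x})),\overline{x}),\psi((g')_3^{-1}(g'(\overline{x})),\overline{x})\}$ above $a_3$. Then conclude via the derivative formula $\frac{g''(y)}{g'(y)^2}\bigl[g(y)-g(\cdot)\bigr]$ and the definitions of $A_1,A_2,A_3$; your single maximization over $\underline{x}$ just compresses the paper's split by the location of $\underline{x}$.

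One slip: since $\partial_y\psi(y,\overline{x})=(g'(y)-g'(\overline{x}))/g'(\overline{x})$, the interior maximizers occur where $g'$ crosses the level $g'(\overline{x})$ downward, not upward. The points you name, $(g')_1^{-1}(g'(\overline{x}))$ and $(g')_3^{-1}(g'(\overline{x}))$, are nonetheless the right ones.
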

\begin{proof}
We first record the monotonicity properties used below. Whenever the relevant inverse function exists and $i\in\{1,3\}$, we have
\begin{align}
\frac{\mathrm{d}}{\mathrm{d}y}
\psi\bigl((g^{\prime})_i^{-1}(g^{\prime}(y)),y\bigr)=
\frac{g^{\prime\prime}(y)}{[g^{\prime}(y)]^2}
\left[g(y)-g\bigl((g^{\prime})_i^{-1}(g^{\prime}(y))\bigr)\right]>0,
\label{eq:omega.branch.increasing.case.IV}
\end{align}
for $y\in(a_1,a_2)\cup(b_1,\infty)$ when $i=1$ and $y\in(a_3,b_2)$ when $i=3.$ Moreover,
\begin{align}
\frac{\mathrm{d}}{\mathrm{d}y}
\int_{(g^{\prime})_1^{-1}(g^{\prime}(y))}^{(g^{\prime})_3^{-1}(g^{\prime}(y))}
\left(1-\frac{g^{\prime}(z)}{g^{\prime}(y)}\right)\mathrm{d}z
=\frac{g^{\prime\prime}(y)}{[g^{\prime}(y)]^2}
\left[g\bigl((g^{\prime})_3^{-1}(g^{\prime}(y))\bigr)-g\bigl((g^{\prime})_1^{-1}(g^{\prime}(y))\bigr)
\right]>0,\quad y\in(b_1,b_2),
\label{eq:x1.crossing.case.IV}
\end{align}
and
\begin{align}
\frac{\mathrm{d}}{\mathrm{d}y}
\int_{(g^{\prime})_2^{-1}(g^{\prime}(y))}^{y}
\left(1-\frac{g^{\prime}(z)}{g^{\prime}(y)}\right)\mathrm{d}z=
\frac{g^{\prime\prime}(y)}{[g^{\prime}(y)]^2}
\left[g(y)-g\bigl((g^{\prime})_2^{-1}(g^{\prime}(y))\bigr)\right]>0,\quad y\in(b_1,b_2).
\label{eq:x2.crossing.case.IV}
\end{align}
By \eqref{eq:x1.crossing.case.IV} and the definition of $x_1$, we have
\begin{align}
			&\psi\bigl((g^{\prime})_1^{-1}(g^{\prime}(y)),y\bigr)
			-\psi\bigl((g^{\prime})_3^{-1}(g^{\prime}(y)),y\bigr)
			\begin{cases}
				<0,&y\in[b_1,x_1),\\
				>0,&y\in(x_1,b_2],
			\end{cases}
			\label{eq:x1.sign.case.IV}\\
			&\psi\bigl((g^{\prime})_1^{-1}(g^{\prime}(x_1)),x_1\bigr)
			-\psi\bigl((g^{\prime})_3^{-1}(g^{\prime}(x_1)),x_1\bigr)
			\begin{cases}
				=0,&x_1\in(b_1,b_2],\\
				\geq0,&x_1=b_1.
			\end{cases}
			\label{eq:x1.endpoint.case.IV}
		\end{align}
Consequently, by the definition of $\omega_2$, we have
\begin{align}
\omega_2(y)=\max\Big\{
&\psi\bigl((g^{\prime})_1^{-1}(g^{\prime}(y)),y\bigr),
\psi\bigl((g^{\prime})_3^{-1}(g^{\prime}(y)),y\bigr)\Big\},
\label{eq:omega2.maximum.case.IV}
\end{align}
whenever both inverse functions exist.

We prove \eqref{eq:slope.minimizer.case.IV} only for the case where $\beta\in A_1\backslash\{\omega_1(x_2)\}$ and \begin{align*}
\overline{x}^{\star}=\omega_1^{-1}(\beta)
\in\left(a_1,(g^{\prime})_2^{-1}(g^{\prime}(x_2))\right).
\end{align*}
The proofs for the other possible locations of $(\underline{x}^{\star},\overline{x}^{\star})$ specified in \eqref{opti.stra.3.4} are similar. In the present case, \begin{align*}
\underline{x}^{\star}=(g^{\prime})_1^{-1}(g^{\prime}(\overline{x}^{\star}))<a_1<\overline{x}^{\star}<a_2,\qquad
\psi(\underline{x}^{\star},\overline{x}^{\star})=\beta.
\end{align*}
Let $(\underline{x},\overline{x})\neq(\underline{x}^{\star},\overline{x}^{\star})$ be any pair such that
$0\leq\underline{x}<\overline{x}$ and
$\psi(\underline{x},\overline{x})=\beta$. We prove
$g^{\prime}(\overline{x})\geq g^{\prime}(\overline{x}^{\star})$ by contradiction.
Suppose that
\begin{align}
g^{\prime}(\overline{x})<g^{\prime}(\overline{x}^{\star}).
\label{eq:slope.case.IV.contradiction}
\end{align}
We examine the following cases.
\begin{itemize}
\item Suppose $0\leq\underline{x}<\overline{x}\leq a_1$.
Since $g$ is concave on $[0,a_1]$, we have
\begin{align*}
\psi(\underline{x},\overline{x})
=\int_{\underline{x}}^{\overline{x}}
\left(1-\frac{g^{\prime}(z)}{g^{\prime}(\overline{x})}\right)\mathrm{d}z\leq0,
\end{align*}
which contradicts
$\psi(\underline{x},\overline{x})=\beta>0$.
\item Suppose $0\leq\underline{x}<\overline{x}$ and
$\overline{x}\in(a_1,a_2)$. By
\eqref{eq:slope.case.IV.contradiction} and the strictly increasing property of $g^{\prime}$ on $[a_1,a_2]$, we have
$\overline{x}<\overline{x}^{\star}$. For fixed $\overline{x}\in(a_1,a_2)$, it follows from the construction of $g$ that the function $x\mapsto\psi(x,\overline{x})$ is increasing on $[0,(g^{\prime})_1^{-1}(g^{\prime}(\overline{x}))]$ and decreasing on $[(g^{\prime})_1^{-1}(g^{\prime}(\overline{x})),\overline{x}]$. Therefore,
\begin{align*}
\beta=\psi(\underline{x},\overline{x})\leq
\psi((g^{\prime})_1^{-1}(g^{\prime}(\overline{x})),\overline{x})=\omega_1(\overline{x})<\omega_1(\overline{x}^{\star})=\beta,
\end{align*}
where the strict inequality follows from the strictly increasing property of $\omega_1$ on its first domain interval. This is a contradiction.
\item Suppose $0\leq\underline{x}<\overline{x}$ and $\overline{x}\in[a_2,a_3)$. If $g^{\prime}(\overline{x})\leq g^{\prime}(a_1)$, then $g^{\prime}(z)\geq g^{\prime}(\overline{x})$ for every $z\in[0,\overline{x}]$, and hence $\psi(\underline{x},\overline{x})\leq0$, a contradiction. 	Thus, it remains to consider $g^{\prime}(\overline{x})>g^{\prime}(a_1)$. The sign of $1-g^{\prime}(z)/g^{\prime}(\overline{x})$ shows that $x\mapsto\psi(x,\overline{x})$ attains its positive maximum at $(g^{\prime})_1^{-1}(g^{\prime}(\overline{x}))$ on the interval $[0,\overline{x}]$. Consequently, 
\begin{align*}
\beta
&=\psi(\underline{x},\overline{x})\nn\\
&\leq
\psi((g^{\prime})_1^{-1}(g^{\prime}(\overline{x})),\overline{x})\nn\\
&=\int_{(g^{\prime})_1^{-1}(g^{\prime}(\overline{x}))}^{(g^{\prime})_2^{-1}(g^{\prime}(\overline{x}))}
\left(1-\frac{g^{\prime}(z)}{g^{\prime}(\overline{x})}\right)\mathrm{d}z
+\int_{(g^{\prime})_2^{-1}(g^{\prime}(\overline{x}))}^{\overline{x}}
\left(1-\frac{g^{\prime}(z)}{g^{\prime}(\overline{x})}\right)\mathrm{d}z\nn\\
&<\int_{(g^{\prime})_1^{-1}(g^{\prime}(\overline{x}))}^{(g^{\prime})_2^{-1}(g^{\prime}(\overline{x}))}
\left(1-\frac{g^{\prime}(z)}{g^{\prime}(\overline{x})}\right)\mathrm{d}z\nn\\
&=\omega_1((g^{\prime})_2^{-1}(g^{\prime}(\overline{x})))
<\omega_1(\overline{x}^{\star})=\beta.
\end{align*}
Here, the first strict inequality follows from
$g^{\prime}(z)>g^{\prime}(\overline{x})$ on
$((g^{\prime})_2^{-1}(g^{\prime}(\overline{x})),\overline{x})$, while the second follows from \eqref{eq:slope.case.IV.contradiction}, which gives
$(g^{\prime})_2^{-1}(g^{\prime}(\overline{x}))<\overline{x}^{\star}$, and the increasing property of $\omega_1$. This is again a contradiction.
\item Suppose $\underline{x}\in[0,a_1]$ and $\overline{x}\in[a_3,\infty)$. We have
\begin{align}
\frac{\partial}{\partial x}\psi(x,\overline{x})
=-\left(1-\frac{g^{\prime}(x)}{g^{\prime}(\overline{x})}\right).
\label{eq:psi.derivative.case.IV}
\end{align}
Under \eqref{eq:slope.case.IV.contradiction}, \[ g^{\prime}(\overline{x}) <g^{\prime}(\overline{x}^{\star}) <g^{\prime}(x_2)\leq g^{\prime}(a_2). \]
Thus, only the following two cases are possible.
\begin{itemize}
\item If $g^{\prime}(\overline{x})<g^{\prime}(a_1)$, then
$(g^{\prime})_1^{-1}(g^{\prime}(\overline{x}))$ does not exist, while $(g^{\prime})_3^{-1}(g^{\prime}(\overline{x}))$ exists, and $	g^{\prime}(z)>g^{\prime}(\overline{x}),$ for $z\in[\underline{x},(g^{\prime})_3^{-1}(g^{\prime}(\overline{x}))),$ and $g^{\prime}(z)<g^{\prime}(\overline{x}),$ for $z\in((g^{\prime})_3^{-1}(g^{\prime}(\overline{x})),\overline{x}).$
Therefore, by \eqref{eq:psi.derivative.case.IV},
\begin{align*}
\psi(\underline{x},\overline{x})
&\leq
\psi((g^{\prime})_3^{-1}(g^{\prime}(\overline{x})),\overline{x})=\omega_2(\overline{x}).
\end{align*}
\item If $g^{\prime}(a_1)\leq g^{\prime}(\overline{x})\leq g^{\prime}(a_2)$, then both $(g^{\prime})_1^{-1}(g^{\prime}(\overline{x}))$ and $(g^{\prime})_3^{-1}(g^{\prime}(\overline{x}))$ exist. Since $\underline{x}\in[0,a_1]$, \eqref{eq:psi.derivative.case.IV} gives
\begin{align*}
\psi(\underline{x},\overline{x})\leq\psi((g^{\prime})_1^{-1}(g^{\prime}(\overline{x})),\overline{x}).
\end{align*}
Moreover,
\begin{align*}
\psi((g^{\prime})_1^{-1}(g^{\prime}(\overline{x})),\overline{x})
-\psi((g^{\prime})_3^{-1}(g^{\prime}(\overline{x})),\overline{x})=\int_{(g^{\prime})_1^{-1}(g^{\prime}(\overline{x}))}^{(g^{\prime})_3^{-1}(g^{\prime}(\overline{x}))}
\left(1-\frac{g^{\prime}(z)}{g^{\prime}(\overline{x})}\right)\mathrm{d}z.
\end{align*}
Hence, by \eqref{eq:omega2.maximum.case.IV}, 

\item If $g^{\prime}(\overline{x})>g^{\prime}(a_2)$, then $(g^{\prime})_3^{-1}(g^{\prime}(\overline{x}))$ does not exist, while $(g^{\prime})_1^{-1}(g^{\prime}(\overline{x}))$ exists. By \eqref{eq:psi.derivative.case.IV},
$
\psi(\underline{x},\overline{x}) \leq
\psi((g^{\prime})_1^{-1}(g^{\prime}(\overline{x})),\overline{x}) =\omega_2(\overline{x}).
$
\end{itemize}
Thus, in all three cases,
\begin{align}
\beta=\psi(\underline{x},\overline{x}) \leq\omega_2(\overline{x}).
\label{eq:psi.less.omega2.case.IV}
\end{align}
Since $\omega_2$ and $g^{\prime}$ are increasing on their corresponding domains, \eqref{eq:psi.less.omega2.case.IV} and $\beta\in A_1$ imply
\begin{align}
g^{\prime}(\overline{x})\geq g^{\prime}(\omega_2^{-1}(\beta))
>g^{\prime}(\omega_1^{-1}(\beta))
=g^{\prime}(\overline{x}^{\star}).
\label{eq:slope.from.omega2.case.IV}
\end{align}
contradicting \eqref{eq:slope.case.IV.contradiction}.
\item Suppose $\underline{x}\in(a_1,a_2)$ and $\overline{x}\in[a_3,\infty)$. By the same three-case analysis based on \eqref{eq:psi.derivative.case.IV} as in the preceding item,
\begin{align*}
\psi(\underline{x},\overline{x})\leq
\begin{cases}
\psi\bigl((g^{\prime})_3^{-1}(g^{\prime}(\overline{x})),\overline{x}\bigr),
&g^{\prime}(\overline{x})<g^{\prime}(a_1),\\[1mm]
\max\Big\{
\psi\bigl((g^{\prime})_1^{-1}(g^{\prime}(\overline{x})),\overline{x}\bigr),
\psi\bigl((g^{\prime})_3^{-1}(g^{\prime}(\overline{x})),\overline{x}\bigr)\Big\},
&g^{\prime}(a_1)\leq g^{\prime}(\overline{x})\leq g^{\prime}(a_2),\\[1mm]
\psi\bigl((g^{\prime})_1^{-1}(g^{\prime}(\overline{x})),\overline{x}\bigr),
&g^{\prime}(\overline{x})>g^{\prime}(a_2).
\end{cases}
\end{align*}
Indeed, in the middle case, the minimum of $x\mapsto\psi(x,\overline{x})$ on $[a_1,a_2]$ is attained at $(g^{\prime})_2^{-1}(g^{\prime}(\overline{x}))$, so its maximum over $[a_1,a_2]$ is attained at $a_1$ or $a_2$; moreover,
\begin{align*}
\psi(a_1,\overline{x})
&\leq\psi\bigl((g^{\prime})_1^{-1}(g^{\prime}(\overline{x})),\overline{x}\bigr),\\
\psi(a_2,\overline{x})
&\leq\psi\bigl((g^{\prime})_3^{-1}(g^{\prime}(\overline{x})),\overline{x}\bigr).
\end{align*} 
Hence, by \eqref{eq:omega2.maximum.case.IV},
\begin{align*}
\beta=\psi(\underline{x},\overline{x})\leq\omega_2(\overline{x}).
\end{align*}
	Thus, \eqref{eq:slope.from.omega2.case.IV} also holds for this position, contradicting \eqref{eq:slope.case.IV.contradiction}.

\item Suppose $\underline{x}\in[a_2,a_3)$ and $\overline{x}\in[a_3,\infty)$. If $g^{\prime}(\overline{x})\leq g^{\prime}(a_2)$, then the monotonicity of $x\mapsto\psi(x,\overline{x})$ on $[a_2,a_3]$ gives
$
\psi(\underline{x},\overline{x})
\leq
\psi((g^{\prime})_3^{-1}(g^{\prime}(\overline{x})),\overline{x})
\leq\omega_2(\overline{x}).
$
If $g^{\prime}(\overline{x})>g^{\prime}(a_2)$, then $g^{\prime}(z)\leq g^{\prime}(\overline{x}),$ for $z\in[(g^{\prime})_1^{-1}(g^{\prime}(\overline{x})),\underline{x}],$ and hence
\begin{align*}
\psi((g^{\prime})_1^{-1}(g^{\prime}(\overline{x})),\overline{x})
-\psi(\underline{x},\overline{x})=\int_{(g^{\prime})_1^{-1}(g^{\prime}(\overline{x}))}^{\underline{x}}
\left(1-\frac{g^{\prime}(z)}{g^{\prime}(\overline{x})}\right)\mathrm{d}z\geq0.
\end{align*}
Therefore, $\psi(\underline{x},\overline{x})\leq\psi((g^{\prime})_1^{-1}(g^{\prime}(\overline{x})),\overline{x})=\omega_2(\overline{x}).
$
Thus, in both cases, \eqref{eq:psi.less.omega2.case.IV} and \eqref{eq:slope.from.omega2.case.IV} hold, contradicting \eqref{eq:slope.case.IV.contradiction}. 
\item Suppose $a_3\leq\underline{x}<\overline{x}<\infty$. If $g^{\prime}(\overline{x})\leq g^{\prime}(a_2)$, then $	g^{\prime}(z)\leq g^{\prime}(\overline{x}),$ for $z\in[(g^{\prime})_3^{-1}(g^{\prime}(\overline{x})),\underline{x}]$ and 
$$\psi((g^{\prime})_3^{-1}(g^{\prime}(\overline{x})),\overline{x})-\psi(\underline{x},\overline{x})=\int_{(g^{\prime})_3^{-1}(g^{\prime}(\overline{x}))}^{\underline{x}}\left(1-\frac{g^{\prime}(z)}{g^{\prime}(\overline{x})}\right)\mathrm{d}z\geq0,$$ and hence
$
\psi(\underline{x},\overline{x})\leq\psi((g^{\prime})_3^{-1}(g^{\prime}(\overline{x})),\overline{x})\leq\omega_2(\overline{x}).$ If $g^{\prime}(\overline{x})>g^{\prime}(a_2)$, then $	g^{\prime}(z)\leq g^{\prime}(\overline{x}),$ for $	z\in[(g^{\prime})_1^{-1}(g^{\prime}(\overline{x})),\underline{x}],$ and $$\psi((g^{\prime})_1^{-1}(g^{\prime}(\overline{x})),\overline{x})-\psi(\underline{x},\overline{x})=\int_{(g^{\prime})_1^{-1}(g^{\prime}(\overline{x}))}^{\underline{x}}\left(1-\frac{g^{\prime}(z)}{g^{\prime}(\overline{x})}\right)\mathrm{d}z\geq0,$$
and hence $\psi(\underline{x},\overline{x})\leq\psi((g^{\prime})_1^{-1}(g^{\prime}(\overline{x})),\overline{x})=\omega_2(\overline{x}).$ Therefore, \eqref{eq:psi.less.omega2.case.IV} and \eqref{eq:slope.from.omega2.case.IV} hold in either case, contradicting \eqref{eq:slope.case.IV.contradiction}. 
\end{itemize}
	Thus every pair $(\underline{x},\overline{x})$ satisfying
			$\psi(\underline{x},\overline{x})=\beta$ obeys
			$g^{\prime}(\overline{x})\geq g^{\prime}(\overline{x}^{\star})$. Since
			$(\underline{x}^{\star},\overline{x}^{\star})$ itself satisfies the
			constraint, \eqref{eq:slope.minimizer.case.IV} follows. The proof for
			the remaining ranges of $\beta$ in \eqref{opti.stra.3.4} is obtained by
			the same comparison.
\end{proof}

	\begin{itemize}
		\item[] \textbf{Assumption B}: If $\overline{x}^{\star}\in[a_1,a_2)$ and $\psi(0,a_3)\leq \psi(0,\overline{x}^{\star})$, then, for any pair $(\underline{x},\overline{x})$ such that
		$\psi(\underline{x},\overline{x})=\beta$, $\underline{x}\geq0$,
		$\overline{x}\geq a_3$, and
		$g^{\prime}(\overline{x}^{\star})<g^{\prime}(\overline{x})<g^{\prime}(a_2)$, we have
		\begin{align*}
			\psi\left(0,(g^{\prime})_3^{-1}(g^{\prime}(\overline{x}))\right)
			\geq\psi(0,\overline{x}^{\star}).
		\end{align*}
	\end{itemize}

\begin{theorem}
\label{Theorem3.6}
Suppose $\mu_{\pm}>0$. Let $\psi$ be defined by \eqref{def.psi.}. Let $g$ and $(a_{i})_{1\leq i\leq 3}$ be defined separately in seven cases $(\text{\emph{IV}}_{1})$-$(\text{\emph{IV}}_{7})$.

Under Cases $(\text{\emph{IV}}_{1})$-$(\text{\emph{IV}}_{5})$, define $(\underline{x}^{\star},\overline{x}^{\star})
=((g^{\prime})_{3}^{-1}(g^{\prime}(\phi^{-1}(\beta))),\phi^{-1}(\beta))$. The reinsurance-dividend strategy $(u^{\star},D^{\star})$ given by 
\begin{align}\label{optimal.stra.3.13}
\begin{cases}
u_{t}^{\star}=
\frac{\mu_{-}U^{u^{\star},D^{\underline{x}^{\star},\overline{x}^{\star}}}_{t}}{\sigma_{-}^2(1-\gamma_{-})}\emph{\textbf{1}}_{\{U^{u^{\star},D^{\underline{x}^{\star},\overline{x}^{\star}}}_{t}\in[0,x_0^{-}\wedge a)\}}+\frac{\mu_{+}\left(U^{u^{\star},D^{\underline{x}^{\star},\overline{x}^{\star}}}_{t}+\frac{a_{42}}{a_{41}}a\right)}{\sigma_{+}^2(1-\gamma_{+})}\emph{\textbf{1}}_{\{U^{u^{\star},D^{\underline{x}^{\star},\overline{x}^{\star}}}_{t}\in{\color{blue}[a,a \vee x^+_1)},\,x^+_1\geq x^+_2
\}}\\~~~~~\,\,\frac{\mu_{+}\left(U^{u^{\star},D^{\underline{x}^{\star},\overline{x}^{\star}}}_{t}+\frac{a_{46}}{a_{45}}a\right)}{\sigma_{+}^2(1-\gamma_{+})}\emph{\textbf{1}}_{\{U^{u^{\star},D^{\underline{x}^{\star},\overline{x}^{\star}}}_{t}\in{\color{blue}[a,a \vee x^+_2)},\,x^{+}_1<x^+_2\}}+\emph{\textbf{1}}_{\{U^{u^{\star},D^{\underline{x}^{\star},\overline{x}^{\star}}}_{t}\in[x_0^{-}\wedge a,\infty)\backslash{\color{blue}[a,a \vee x^+_1\vee x^+_2)}\}}
,& t\geq 0, \\
D^{\star}_{t}=D^{\underline{x}^{\star},\overline{x}^{\star}}_{t},& t\geq 0,
\end{cases}
\end{align}
is optimal for the auxiliary control problem \eqref{eq:a008.aux.}, and, the associated value function is
\begin{equation}\label{eq:V3.3}
V_{\underline{x}^{\star},\overline{x}^{\star}}(x)=
\left\{\begin{array}{ll}
\frac{g(x)}{g^{\prime}(\overline{x}^{\star})},&x\in[0,\overline{x}^{\star}), \\
x-\overline{x}^{\star}+\frac{g(\overline{x}^{\star})}{g^{\prime}(\overline{x}^{\star})}=x-\underline{x}^{\star}-\beta+\frac{g(\underline{x}^{\star})}{g^{\prime}(\overline{x}^{\star})},& x\in[\overline{x}^{\star},\infty).
\end{array}
\right.
\end{equation}

Under Cases $(\text{\emph{IV}}_{6})$-$(\text{\emph{IV}}_{7})$ with $\overline{x}^{\star}<a$, or with $\overline{x}^{\star}\geq a$ and
$\psi(0,a_3)\leq\psi(0,\overline{x}^{\star})$, let $(\underline{x}^{\star},\overline{x}^{\star})$ be defined by \eqref{opti.stra.3.4}. The reinsurance-dividend strategy $(u^{\star},D^{\star})$ given by \eqref{optimal.stra.3.13} is optimal for the auxiliary control problem	\eqref{eq:a008.aux.}, and,
the associated value function is given by \eqref{eq:V3.3}.

Under Cases $(\text{\emph{IV}}_{6})$-$(\text{\emph{IV}}_{7})$ with $\overline{x}^{\star}< a$ and $\psi(0,a_3)\leq\psi(0,\overline{x}^{\star})$, let $(\underline{x}^{\star},\overline{x}^{\star})$ be defined by \eqref{opti.stra.3.4}. Suppose further that \textbf{Assumption B} holds. The reinsurance-dividend strategy $(u^{\star},D^{\star})$ given by \eqref{optimal.stra.3.13} is optimal for the auxiliary control problem	\eqref{eq:a008.aux.}, and,
the associated value function is given by \eqref{eq:V3.3}. 
\end{theorem}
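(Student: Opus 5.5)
The plan is to follow the two-stage scheme already used for Theorems \ref{thm3.2.ww}, \ref{Theorem3.4} and \ref{Theorem3.5}.

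\textbf{Step 1: reduction to a family indexed by barriers.} By Lemma \ref{lem3.1.w} the auxiliary problem \eqref{eq:a008.aux.} decouples into the intermediate problems \eqref{auxiliary.contr.p.} followed by the pointwise maximisation \eqref{point.point.optimization}. For fixed barriers, Theorem \ref{thm3.1.w} together with the solutions $g_{41},\dots,g_{44}$ from Subsection \ref{subsubsec.3.4.1} gives
\[
V_{\underline{x},\overline{x}}(x)=\frac{g(x)}{g^{\prime}(\overline{x})}\mathbf{1}_{\{x<\overline{x}\}}+\Big(\frac{g(\underline{x})}{g^{\prime}(\overline{x})}+x-\underline{x}-\beta\Big)\mathbf{1}_{\{x\geq\overline{x}\}},
\]
with the constraint $\psi(\underline{x},\overline{x})=\beta$. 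The reinsurance feedback is the maximiser recorded in that subsection, which is exactly \eqref{optimal.stra.3.13}. So everything reduces to showing that the proposed pair $(\underline{x}^{\star},\overline{x}^{\star})$ dominates uniformly in $x$.

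\textbf{Step 2: Cases $(\text{IV}_1)$--$(\text{IV}_5)$.} Here $0=a_1=a_2<a_3$, so $g$ is concave on $[0,a_3]$ and convex afterwards. I would copy the proof of Theorem \ref{Theorem3.5}.
\begin{itemize}
\item If $\overline{x}\le a_3$, then $\psi(\underline{x},\overline{x})<0$, so such pairs are infeasible.
\item Otherwise $x\mapsto\psi(x,\overline{x})$ is maximised at $(g^{\prime})_3^{-1}(g^{\prime}(\overline{x}))$. Hence $\beta\le\phi(\overline{x})$.
\item $\phi$ is increasing on $[a_3,\infty)$, by \eqref{eq:omega.branch.increasing.case.IV} with $i=3$. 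This gives $\overline{x}\ge\overline{x}^{\star}$, and therefore $g^{\prime}(\overline{x})\ge g^{\prime}(\overline{x}^{\star})$.
\item The three-interval comparison ($[0,\overline{x}^{\star}]$, $(\overline{x}^{\star},\overline{x}]$, $(\overline{x},\infty)$) as in \eqref{eq:v-v.4}--\eqref{eq:v-v.6} then gives $V_{\underline{x}^{\star},\overline{x}^{\star}}\ge V_{\underline{x},\overline{x}}$.
\end{itemize}
The only new point is that $\underline{x}^{\star}>0$. So on $(\overline{x}^{\star},\overline{x}]$ I would write the difference as $\int_{\underline{x}^{\star}}^{x}\big(1-\frac{g^{\prime}}{g^{\prime}(\overline{x})}\big)-\beta+\frac{g(\underline{x}^{\star})}{g^{\prime}(\overline{x}^{\star})}-\frac{g(\underline{x}^{\star})}{g^{\prime}(\overline{x})}$ and use $\psi(\underline{x}^{\star},\overline{x}^{\star})=\beta$.

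\textbf{Step 3: Cases $(\text{IV}_6)$--$(\text{IV}_7)$.} Here $0<a_1<a_2<a_3$, and the key input is Lemma \ref{lem:slope.comparison.case.IV}: every feasible pair satisfies $g^{\prime}(\overline{x})\ge g^{\prime}(\overline{x}^{\star})$. This immediately gives the comparison on $[0,\min(\overline{x},\overline{x}^{\star})]$. The remaining intervals are handled by splitting according to the locations of $\overline{x}^{\star}$ and $(\underline{x},\overline{x})$ relative to $a_1,a_2,a_3$, mirroring the proof of Theorem \ref{Theorem3.4}. On each piece, the sign of $1-g^{\prime}(z)/g^{\prime}(\overline{x}^{\star})$ makes the difference monotone, so only endpoint values need checking; these follow from $\psi=\beta$ for both pairs and the slope comparison.

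\textbf{Main obstacle.} The hard configuration is $\overline{x}^{\star}<a$ (i.e.\ $\overline{x}^{\star}\in[a_1,a_2)$) against a competitor with $\overline{x}\ge a_3$ and $g^{\prime}(\overline{x}^{\star})<g^{\prime}(\overline{x})<g^{\prime}(a_2)$. On $(\overline{x}^{\star},\overline{x}]$ the difference
\[
x\mapsto \frac{g(\underline{x}^{\star})}{g^{\prime}(\overline{x}^{\star})}+x-\underline{x}^{\star}-\beta-\frac{g(x)}{g^{\prime}(\overline{x})}
\]
increases, then decreases on $[(g^{\prime})_2^{-1}(g^{\prime}(\overline{x})),(g^{\prime})_3^{-1}(g^{\prime}(\overline{x}))]$, then increases again. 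Its interior minimum is at $(g^{\prime})_3^{-1}(g^{\prime}(\overline{x}))$, and nonnegativity there is precisely the inequality $\psi(0,(g^{\prime})_3^{-1}(g^{\prime}(\overline{x})))\ge\psi(0,\overline{x}^{\star})$ (using $\underline{x}^{\star}$ on the concave branch).
\begin{itemize}
\item When $\psi(0,a_3)>\psi(0,\overline{x}^{\star})$, I expect this inequality to hold automatically, by the monotonicity argument of Remark \ref{remark3.7}.
\item In the complementary case $\psi(0,a_3)\le\psi(0,\overline{x}^{\star})$ it is exactly \textbf{Assumption B}, which is why the statement imposes it there.
\end{itemize}
On $(\overline{x},\infty)$ both value functions have slope 1, so the difference is constant and equal to its value at $\overline{x}$, which is nonnegative by the previous step. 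Optimality then follows from Lemma \ref{lem3.1.w}.
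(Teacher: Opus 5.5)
Your proposal is correct and follows essentially the same route as the paper. The steps match: decoupling via Lemma~\ref{lem3.1.w}; the Theorem~\ref{Theorem3.5} argument (monotonicity of $\phi$) for Cases $(\text{IV}_1)$--$(\text{IV}_5)$; and, for Cases $(\text{IV}_6)$--$(\text{IV}_7)$, the slope comparison of Lemma~\ref{lem:slope.comparison.case.IV} followed by a location-by-location monotonicity/endpoint check. The critical configuration $\overline{x}^{\star}\in[a_1,a_2)$, $\overline{x}\geq a_3$ is resolved exactly as in the paper: if $\psi(0,a_3)>\psi(0,\overline{x}^{\star})$ the needed inequality is automatic by the monotonicity in Remark~\ref{remark3.7}, and otherwise Assumption B supplies it.

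When you write out the endpoint checks, be aware that the tail comparison $\int_0^{\underline{x}}\bigl(1-\frac{g'(z)}{g'(\overline{x})}\bigr)\mathrm{d}z\geq\int_0^{\underline{x}^{\star}}\bigl(1-\frac{g'(z)}{g'(\overline{x}^{\star})}\bigr)\mathrm{d}z$ needs more than $\psi=\beta$ and the slope comparison. It also uses that $\underline{x}^{\star}$ maximizes $\psi(\cdot,\overline{x}^{\star})$, which comes from its location on the $(g')_1^{-1}$ or $(g')_3^{-1}$ branch and the definitions of $x_1,x_2$; the paper invokes this explicitly.
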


\begin{proof}
We only need to give a proof for the scenario of $0<a_{1}<a_{2}<a_{3}$ (see case $\text{{IV}}_{6}-\text{{IV}}_{7}$), the proof for the scenario of $0=a_{1}=a_{2}<a_{3}$ (see case $\text{{IV}}_{1}-\text{{IV}}_{5}$) is similar to that of Theorem \ref{Theorem3.5}. By Lemma \ref{lem3.1.w}, we can decouple the auxiliary optimization problem \eqref{eq:a008.aux.} into problem \eqref{auxiliary.contr.p.} and problem \eqref{point.point.optimization}. By theorem \ref{thm3.1.w} and similar arguments adopted at the proof of Theorem \ref{thm3.2.ww}, the value function of the auxiliary optimization problem \eqref{auxiliary.contr.p.} is given by \begin{equation}
V_{\underline{x},\overline{x}}(x)=
\left\{\begin{array}{ll}
\frac{g(x)}{g^{\prime}(\overline{x})},&x\in[0,\overline{x}), \\
\frac{g(\underline{x})}{g^{\prime}(\overline{x})}+x-\underline{x}-\beta,& x\in[\overline{x},\infty),
\end{array}
\right.
\end{equation}
where $g=g_{44}$ ($g=g_{43}$, resp.) is defined in \eqref{3.tt1.g.} under Case $\text{{IV}}_{6}$ (\eqref{3.114.g.} under Case $\text{{IV}}_{7}$, resp.), and $\underline{x}$ and $\overline{x}$ satisfy
\begin{align}
\label{3.20.w2.4}
\psi(\underline{x},\overline{x})=\beta.
\end{align}

We next verify that $(\underline{x}^{\star},\overline{x}^{\star})$ given by \eqref{opti.stra.3.4} is the solution to problem \eqref{point.point.optimization}. Suppose $(\underline{x}^{\star},\overline{x}^{\star})$ is such that $\underline{x}^{\star}\in[0,a_1]$ and $\overline{x}^{\star}\in[a_1,(g^{\prime})^{-1}_2(g^{\prime}(x_2)))\cup[x_2,\infty)$.
Then $\underline{x}^{\star}=(g^{\prime})^{-1}_1(g^{\prime}(\overline{x}^{\star}))$. We examine the following cases. 

Suppose 
$(\underline{x},\overline{x})$ with $\underline{x}\in[0,a_1)$ is
any solution to \eqref{3.20.w2.4} different from $(\underline{x}^{\star},\overline{x}^{\star})$. Note that we necessarily have $\overline{x}>a_1$ to ensure $\psi(\underline{x},\overline{x})=\beta$. Then, we have the following observations.
\begin{itemize}
\item 
Suppose 
$\overline{x}\in (a_1,a_2)$ such that $\psi(\underline{x},\overline{x})=\beta$. 
Then, if $\overline{x}^{\star}\in[0,(g^{\prime})^{-1}_2(g^{\prime}(x_2)))$, we obtain $\overline{x}>\overline{x}^{\star}$ and $g^{\prime}(\overline{x})\geq g^{\prime}(\overline{x}^{\star})$, since Lemma \ref{lem:slope.comparison.case.IV}, and the following inequalities
\begin{itemize}
\item For $x\in (0,\overline{x}^{\star}]$,
\begin{align}\label{eq:v-v.4.3.6}
V_{\underline{x}^{\star},\overline{x}^{\star}}(x)-V_{\underline{x},\overline{x}}(x)=g(x)\left(\frac{1}{g^{\prime}(\overline{x}^{\star})}-\frac{1}{g^{\prime}(\overline{x})}\right)\geq0.
\end{align}
\item For $x\in(\overline{x}^{\star},\overline{x}]$,
\begin{align}\label{eq:v-v.5.3.6}
V_{\underline{x}^{\star},\overline{x}^{\star}}(x)-V_{\underline{x},\overline{x}}(x)=&~
\frac{g(\underline{x}^{\star})}{g^{\prime}(\overline{x}^{\star})}+x-\underline{x}^{\star}-\beta-\frac{g(x)}{g^{\prime}(\overline{x})}
\nn\\
=&~-\int_{0}^{\overline{x}^{\star}}\left(1-\frac{g^{\prime}(z)}{g^{\prime}(\overline{x}^{\star})}\right)\mathrm{d}z
+\int_{0}^{x}\left(1-\frac{g^{\prime}(z)}{g^{\prime}(\overline{x})}\right)\mathrm{d}z
\nn\\
\geq&~-\int_{0}^{\overline{x}^{\star}}\left(1-\frac{g^{\prime}(z)}{g^{\prime}(\overline{x})}\right)\mathrm{d}z
+\int_{0}^{x}\left(1-\frac{g^{\prime}(z)}{g^{\prime}(\overline{x})}\right)\mathrm{d}z
\nn\\
=&~\int_{\overline{x}^{\star}}^{x}\left(1-\frac{g^{\prime}(z)}{g^{\prime}(\overline{x})}\right)\mathrm{d}z\geq0.
\end{align}
\item For $x\in(\overline{x},\infty)$,
\begin{align}\label{eq:v-v.6.3.6}
V_{\underline{x}^{\star},\overline{x}^{\star}}(x)-V_{\underline{x},\overline{x}}(x)=&~\frac{g(\overline{x}^{\star})}{g^{\prime}(\overline{x}^{\star})}+
x-\overline{x}^{\star}-(\frac{g(\overline{x})}{g^{\prime}(\overline{x})}+x-\overline{x})
\nn\\
=&~\int_0^{\overline{x}}\left(1-\frac{g^{\prime}(z)}{g^{\prime}(\overline{x})}\right)dz-\int_0^{\overline{x}^{\star}}\left(1-\frac{g^{\prime}(z)}{g^{\prime}(\overline{x}^{\star})}\right)dz
\nn\\
\geq&~\int_{\overline{x}^{\star}}^{\overline{x}}\left(1-\frac{g^{\prime}(z)}{g^{\prime}(\overline{x})}\right)\mathrm{d}z\geq0.
\end{align}
\end{itemize}
Hence, $V_{\underline{x}^{\star},\overline{x}^{\star}}(x)\geq V_{\underline{x},\overline{x}}(x)$ for all $x\in(0,\infty)$. If $\overline{x}^{\star}\in[x_2,\infty)$, then we have $\overline{x}<\overline{x}^{\star}$ and $g^{\prime}(\overline{x})\geq g^{\prime}(\overline{x}^{\star})$ by Lemma \ref{lem:slope.comparison.case.IV}. Thus,
\begin{itemize}
\item for $x\in(0,\overline{x})$, 
\begin{eqnarray}\label{eq:v-v.1.3.6}
V_{\underline{x}^{\star},\overline{x}^{\star}}(x)-V_{\underline{x},\overline{x}}(x)=\frac{g(x)}{g^{\prime}(\overline{x}^{\star})}-\frac{g(x)}{g^{\prime}(\overline{x})}\geq0;
\end{eqnarray}

\item for $x\in[\overline{x},\overline{x}^{\star})$, \begin{eqnarray}\label{eq:v-v.2.3.6}
V_{\underline{x}^{\star},\overline{x}^{\star}}(x)-V_{\underline{x},\overline{x}}(x)\hspace{-0.3cm}&=&\hspace{-0.3cm}\frac{g(x)}{g^{\prime}(\overline{x}^{\star})}-\frac{g(\underline{x})}{g^{\prime}(\overline{x})}-x+\underline{x}+\beta
\nn\\
\hspace{-0.3cm}&=&\hspace{-0.3cm}\int_{0}^{\overline{x}}\left(1-\frac{g^{\prime}(z)}{g^{\prime}(\overline{x})}\right)\mathrm{d}z-\int_{0}^{x}\left(1-\frac{g^{\prime}(z)}{g^{\prime}(\overline{x}^{\star})}\right)\mathrm{d}z
\nn\\
\hspace{-0.3cm}&\geq&\hspace{-0.3cm}\int_{0}^{\overline{x}}\left(1-\frac{g^{\prime}(z)}{g^{\prime}(\overline{x})}\right)\mathrm{d}z-\int_{0}^{\overline{x}^{\star}}\left(1-\frac{g^{\prime}(z)}{g^{\prime}(\overline{x}^{\star})}\right)\mathrm{d}z
\nn\\
\hspace{-0.3cm}&=&\hspace{-0.3cm}
\int_{0}^{\underline{x}}\left(1-\frac{g^{\prime}(z)}{g^{\prime}(\overline{x})}\right)\mathrm{d}z-\int_{0}^{\underline{x}^{\star}}\left(1-\frac{g^{\prime}(z)}{g^{\prime}(\overline{x}^{\star})}\right)\mathrm{d}z
\nn\\
\hspace{-0.3cm}&\geq&\hspace{-0.3cm}
\int_{\underline{x}^{\star}}^{\underline{x}}\left(1-\frac{g^{\prime}(z)}{g^{\prime}(\overline{x}^{\star})}\right)\mathrm{d}z\geq0;
\end{eqnarray}
\item for $x\in[\overline{x}^{\star},\infty)$,
\begin{eqnarray}\label{eq:v-v.3.3.6}
V_{\underline{x}^{\star},\overline{x}^{\star}}(x)-V_{\underline{x},\overline{x}}(x)\hspace{-0.3cm}&=&\hspace{-0.3cm}
\frac{g(\underline{x}^{\star})}{g^{\prime}(\overline{x}^{\star})}+
x-\underline{x}^{\star}-\beta-(\frac{g(\underline{x})}{g^{\prime}(\overline{x})}+x-\underline{x}-\beta)
\nn\\
\hspace{-0.3cm}&=&\hspace{-0.3cm}
\int_0^{\underline{x}}\left(1-\frac{g^{\prime}(z)}{g^{\prime}(\overline{x})}\right)dz-\int_0^{\underline{x}^{\star}}\left(1-\frac{g^{\prime}(z)}{g^{\prime}(\overline{x}^{\star})}\right)dz
\nn\\
\hspace{-0.3cm}&\geq&\hspace{-0.3cm}
\int_{\underline{x}^{\star}}^{\underline{x}}\left(1-\frac{g^{\prime}(z)}{g^{\prime}(\overline{x}^{\star})}\right)dz\geq0.
\end{eqnarray}

\end{itemize}
Hence, it holds that $V_{\underline{x}^{\star},\overline{x}^{\star}}(x)\geq V_{\underline{x},\overline{x}}(x)$ for all $x\in(0,\infty)$.
\item Suppose 
$\overline{x}\in [a_2,a_3)$ such that $\psi(\underline{x},\overline{x})=\beta$. By Lemma \ref{lem:slope.comparison.case.IV}, we have $g^{\prime}(\overline{x})\geq g^{\prime}(\overline{x}^{\star})$. 
Then, if $\overline{x}^{\star}\in[0,(g^{\prime})^{-1}_2(g^{\prime}(x_2)))$, we have $\overline{x}^{\star}<\overline{x}$ and the following inequalities
\begin{itemize}
\item For $x\in (0,\overline{x}^{\star}]$, we have \eqref{eq:v-v.4.3.6}.
\item For $x\in(\overline{x}^{\star},\overline{x}]$,
\begin{align}
V_{\underline{x}^{\star},\overline{x}^{\star}}(x)-V_{\underline{x},\overline{x}}(x)=&~
\frac{g(\underline{x}^{\star})}{g^{\prime}(\overline{x}^{\star})}+x-\underline{x}^{\star}-\beta-\frac{g(x)}{g^{\prime}(\overline{x})}
\nn\\
=&~-\int_{0}^{\overline{x}^{\star}}\left(1-\frac{g^{\prime}(z)}{g^{\prime}(\overline{x}^{\star})}\right)\mathrm{d}z
+\int_{0}^{x}\left(1-\frac{g^{\prime}(z)}{g^{\prime}(\overline{x})}\right)\mathrm{d}z.
\end{align}
Noting that the function $$x\mapsto -\int_{0}^{\overline{x}^{\star}}\left(1-\frac{g^{\prime}(z)}{g^{\prime}(\overline{x}^{\star})}\right)\mathrm{d}z
+\int_{0}^{x}\left(1-\frac{g^{\prime}(z)}{g^{\prime}(\overline{x})}\right)\mathrm{d}z$$
is increasing on $[\overline{x}^{\star},(g^{\prime})^{-1}_2(g^{\prime}(\overline{x})))$ and decreasing on $((g^{\prime})^{-1}_2(g^{\prime}(\overline{x}))),\overline{x}]$, $V_{\underline{x}^{\star},\overline{x}^{\star}}(x)-V_{\underline{x},\overline{x}}(x)\geq0$ on $[\overline{x}^{\star},\overline{x})$ holds if and only if 
\begin{eqnarray}\label{eq:condi1.1.3.6}
-\int_{0}^{\overline{x}^{\star}}\left(1-\frac{g^{\prime}(z)}{g^{\prime}(\overline{x}^{\star})}\right)\mathrm{d}z
+\int_{0}^{\overline{x}^{\star}}\left(1-\frac{g^{\prime}(z)}{g^{\prime}(\overline{x})}\right)\mathrm{d}z\geq0,
\end{eqnarray}
and 
\begin{eqnarray}\label{eq:condi1.2.3.6}
-\int_{0}^{\overline{x}^{\star}}\left(1-\frac{g^{\prime}(z)}{g^{\prime}(\overline{x}^{\star})}\right)\mathrm{d}z
+\int_{0}^{\overline{x}}\left(1-\frac{g^{\prime}(z)}{g^{\prime}(\overline{x})}\right)\mathrm{d}z\geq0.
\end{eqnarray}
The first inequality holds, since
\begin{eqnarray}
\hspace{-0.3cm}&&\hspace{-0.3cm}
-\int_{0}^{\overline{x}^{\star}}\left(1-\frac{g^{\prime}(z)}{g^{\prime}(\overline{x}^{\star})}\right)\mathrm{d}z
+\int_{0}^{\overline{x}^{\star}}\left(1-\frac{g^{\prime}(z)}{g^{\prime}(\overline{x})}\right)\mathrm{d}z
\nn\\
\hspace{-0.3cm}&\geq&\hspace{-0.3cm}
-\int_{0}^{\overline{x}^{\star}}\left(1-\frac{g^{\prime}(z)}{g^{\prime}(\overline{x}^{\star})}\right)\mathrm{d}z
+\int_{0}^{\overline{x}^{\star}}\left(1-\frac{g^{\prime}(z)}{g^{\prime}(\overline{x}^{\star})}\right)\mathrm{d}z=0.
\end{eqnarray}
The second inequality holds, since
\begin{align}
&-\int_{0}^{\overline{x}^{\star}}\left(1-\frac{g^{\prime}(z)}{g^{\prime}(\overline{x}^{\star})}\right)\mathrm{d}z
+\int_{0}^{\overline{x}}\left(1-\frac{g^{\prime}(z)}{g^{\prime}(\overline{x})}\right)\mathrm{d}z\nn\\
=&~\int_{0}^{\underline{x}}\left(1-\frac{g^{\prime}(z)}{g^{\prime}(\overline{x})}\right)\mathrm{d}z
-\int_{0}^{\underline{x}^{\star}}\left(1-\frac{g^{\prime}(z)}{g^{\prime}(\overline{x}^{\star})}\right)\mathrm{d}z\nn\\
\geq&~\int_{0}^{\underline{x}}\left(1-\frac{g^{\prime}(z)}{g^{\prime}(\overline{x}^{\star})}\right)\mathrm{d}z
-\int_{0}^{\underline{x}^{\star}}\left(1-\frac{g^{\prime}(z)}{g^{\prime}(\overline{x}^{\star})}\right)\mathrm{d}z\nn\\
=&~\int_{\underline{x}^{\star}}^{\underline{x}}\left(1-\frac{g^{\prime}(z)}{g^{\prime}(\overline{x}^{\star})}\right)\mathrm{d}z\geq0.
\end{align}
\item For $x\in(\overline{x},\infty)$, \eqref{eq:condi1.2.3.6} gives
\begin{align*}
V_{\underline{x}^{\star},\overline{x}^{\star}}(x)-V_{\underline{x},\overline{x}}(x)
&=V_{\underline{x}^{\star},\overline{x}^{\star}}(\overline{x})-V_{\underline{x},\overline{x}}(\overline{x})\\
&=-\int_{0}^{\overline{x}^{\star}}\left(1-\frac{g^{\prime}(z)}{g^{\prime}(\overline{x}^{\star})}\right)\mathrm{d}z+\int_{0}^{\overline{x}}\left(1-\frac{g^{\prime}(z)}{g^{\prime}(\overline{x})}\right)\mathrm{d}z\geq0.
\end{align*}

\end{itemize}
Hence, $V_{\underline{x}^{\star},\overline{x}^{\star}}(x)\geq V_{\underline{x},\overline{x}}(x)$ for all $x\in(0,\infty)$. If $\overline{x}^{\star}\in[x_2,\infty)$, then we have \eqref{eq:v-v.1.3.6}-\eqref{eq:v-v.3.3.6}. 
\item Suppose 
$\overline{x}\in[a_3,\infty)$ such that $\psi(\underline{x},\overline{x})=\beta$. By Lemma \ref{lem:slope.comparison.case.IV}, we have $g^{\prime}(\overline{x})\geq g^{\prime}(\overline{x}^{\star})$.
If $\overline{x}^{\star}\in[x_2,\infty),$ then we have $\overline{x}^{\star}<\overline{x}$ and \eqref{eq:v-v.4.3.6}-\eqref{eq:v-v.6.3.6}. If $\overline{x}^{\star}\in[0,(g^{\prime})^{-1}_2(g^{\prime}(x_2)))$, then $\overline{x}^{\star}<\overline{x}$. 
It follows from the construction of $g$ that \begin{align}\label{psi'y.3.6}
\left\{
\begin{array}{ll}
\frac{\partial}{\partial y}\psi(x,y)=\int_{x}^{y}\frac{g^{\prime}(z)g^{\prime\prime}(y)}{\left[g^{\prime}(y)\right]^{2}}\mathrm{d}z<0,& x<a_2<y<a_3,\\
\frac{\partial}{\partial y}\psi(x,y)=\int_{x}^{y}\frac{g^{\prime}(z)g^{\prime\prime}(y)}{\left[g^{\prime}(y)\right]^{2}}\mathrm{d}z>0,& 0\leq x<a_3<y.
\end{array}
\right.
\end{align}

For $x\in (0,\overline{x}^{\star}]$, we have \eqref{eq:v-v.4.3.6}. For $x\in(\overline{x}^{\star},\overline{x}]$,
\begin{align} \label{eq:comparison.high.upper.case.IV}V_{\underline{x}^{\star},\overline{x}^{\star}}(x)-V_{\underline{x},\overline{x}}(x)=&~
\frac{g(\underline{x}^{\star})}{g^{\prime}(\overline{x}^{\star})}+x-\underline{x}^{\star}-\beta-\frac{g(x)}{g^{\prime}(\overline{x})}
\nn\\
=&~-\int_{0}^{\overline{x}^{\star}}\left(1-\frac{g^{\prime}(z)}{g^{\prime}(\overline{x}^{\star})}\right)\mathrm{d}z
+\int_{0}^{x}\left(1-\frac{g^{\prime}(z)}{g^{\prime}(\overline{x})}\right)\mathrm{d}z.
\end{align}
If $g^{\prime}(a_2)\leq g^{\prime}(\overline{x}),$ then $1-g^{\prime}(x)/g^{\prime}(\overline{x})\geq0$ on $[\overline{x}^{\star},\overline{x}]$, so \eqref{eq:comparison.high.upper.case.IV} is increasing. If $g^{\prime}(a_2)>g^{\prime}(\overline{x})$, the function \eqref{eq:comparison.high.upper.case.IV}
is increasing on $[\overline{x}^{\star},(g^{\prime})^{-1}_2(g^{\prime}(\overline{x})))$, decreasing on $((g^{\prime})^{-1}_2(g^{\prime}(\overline{x})),(g^{\prime})^{-1}_3(g^{\prime}(\overline{x})))$, and increasing on $((g^{\prime})^{-1}_3(g^{\prime}(\overline{x}))),\overline{x})$. At $x=\overline{x}^{\star}$, we have \begin{eqnarray}
\hspace{-0.3cm}&&\hspace{-0.3cm}
-\int_{0}^{\overline{x}^{\star}}\left(1-\frac{g^{\prime}(z)}{g^{\prime}(\overline{x}^{\star})}\right)\mathrm{d}z
+\int_{0}^{\overline{x}^{\star}}\left(1-\frac{g^{\prime}(z)}{g^{\prime}(\overline{x})}\right)\mathrm{d}z\nn\\
\hspace{-0.3cm}&\geq&\hspace{-0.3cm}
-\int_{0}^{\overline{x}^{\star}}\left(1-\frac{g^{\prime}(z)}{g^{\prime}(\overline{x}^{\star})}\right)\mathrm{d}z
+\int_{0}^{\overline{x}^{\star}}\left(1-\frac{g^{\prime}(z)}{g^{\prime}(\overline{x}^{\star})}\right)\mathrm{d}z=0.
\end{eqnarray}
\begin{itemize}
\item Suppose $\psi(0,a_3)>\psi(0,\overline{x}^{\star})$. 
It follows from \eqref{psi'y.3.6} that
\begin{eqnarray}\label{eq:3.6.1}
\psi(0,\overline{x}^{\star})<\psi(0,a_3)<\psi(0,\overline{x}),
\end{eqnarray}
and 
\begin{eqnarray}\label{eq:condi1.6.3.6}
\psi(0,a_3)=\int_0^{a_3}\left(1-\frac{g^{\prime}(z)}{g^{\prime}(a_3)}\right)\mathrm{d}z<\int_0^{(g^{\prime})^{-1}_3(g^{\prime}(\overline{x}))}\left(1-\frac{g^{\prime}(z)}{g^{\prime}(\overline{x})}\right)\mathrm{d}z.
\end{eqnarray}
By \eqref{eq:3.6.1} and \eqref{eq:condi1.6.3.6}, we have
\begin{align}
-&~\int_{0}^{\overline{x}^{\star}}\left(1-\frac{g^{\prime}(z)}{g^{\prime}(\overline{x}^{\star})}\right)\mathrm{d}z
+\int_{0}^{(g^{\prime})^{-1}_3(g^{\prime}(\overline{x}))}\left(1-\frac{g^{\prime}(z)}{g^{\prime}(\overline{x})}\right)\mathrm{d}z\nn\\
&>-\psi(0,\overline{x}^{\star})+\psi(0,a_3)>0,
\end{align}
yields that $V_{\underline{x}^{\star},\overline{x}^{\star}}(x)-V_{\underline{x},\overline{x}}(x)>0$ for $x\in (\overline{x}^{\star},\overline{x})$. 

\item 
Suppose $\psi(0,a_3)\leq \psi(0,\overline{x}^{\star})$. 
By \textbf{Assumption B}, at $x=(g^{\prime})_3^{-1}(g^{\prime}(\overline{x}))$,
\begin{align*}
&-\int_{0}^{\overline{x}^{\star}}\left(1-\frac{g^{\prime}(z)}{g^{\prime}(\overline{x}^{\star})}\right)\mathrm{d}z
+\int_{0}^{(g^{\prime})_3^{-1}(g^{\prime}(\overline{x}))}\left(1-\frac{g^{\prime}(z)}{g^{\prime}(\overline{x})}\right)\mathrm{d}z\\
&=\psi\left(0,(g^{\prime})_3^{-1}(g^{\prime}(\overline{x}))\right)-\psi(0,\overline{x}^{\star})\geq0.
\end{align*}
Thus, $V_{\underline{x}^{\star},\overline{x}^{\star}}(x)-V_{\underline{x},\overline{x}}(x)\geq0$ on $(\overline{x}^{\star},\overline{x}]$. 
\end{itemize}
For $x\in[\overline{x},\infty)$,
\begin{align*}
V_{\underline{x}^{\star},\overline{x}^{\star}}(x)-V_{\underline{x},\overline{x}}(x)
&=V_{\underline{x}^{\star},\overline{x}^{\star}}(\overline{x})-V_{\underline{x},\overline{x}}(\overline{x})\\
&=\int_0^{\overline{x}}\left(1-\frac{g^{\prime}(z)}{g^{\prime}(\overline{x})}\right)\mathrm{d}z
-\int_0^{\overline{x}^{\star}}\left(1-\frac{g^{\prime}(z)}{g^{\prime}(\overline{x}^{\star})}\right)\mathrm{d}z\geq0.
\end{align*}
\end{itemize}
Suppose $(\underline{x},\overline{x})$ with $\underline{x}\in[a_1,a_2)$ is
any solution to \eqref{3.20.w2.4} different from $(\underline{x}^{\star},\overline{x}^{\star})$. Then, we have the following observations.
\begin{itemize}
\item Suppose 
$\overline{x}\in (a_1,a_2]$ such that $\psi(\underline{x},\overline{x})=\beta$. By Lemma \ref{lem:slope.comparison.case.IV}, we have $g^{\prime}(\overline{x})\geq g^{\prime}(\overline{x}^{\star})$. Then, if $\overline{x}^{\star}\in[a_1,(g^{\prime})^{-1}_2(g^{\prime}(x_2)))$, we obtain $\overline{x}>\overline{x}^{\star}$ and the inequalities \eqref{eq:v-v.4.3.6}-\eqref{eq:v-v.6.3.6}. Hence, $V_{\underline{x}^{\star},\overline{x}^{\star}}(x)\geq V_{\underline{x},\overline{x}}(x)$ for all $x\in(0,\infty)$. If $\overline{x}^{\star}\in[x_2,\infty)$, then we have $\overline{x}<\overline{x}^{\star}$ and 
\begin{itemize}
\item for $x\in(0,\overline{x}],$
\begin{align}
\label{eq:v-v3.215}
V_{\underline{x}^{\star},\overline{x}^{\star}}(x)-V_{\underline{x},\overline{x}}(x)=\frac{g(x)}{g^{\prime}(\overline{x}^{\star})}-\frac{g(x)}{g^{\prime}(\overline{x})}\geq0;
\end{align}
\item for $x\in[\overline{x},\overline{x}^{\star}]$,
\begin{align}
&V_{\underline{x}^{\star},\overline{x}^{\star}}(x)
-V_{\underline{x},\overline{x}}(x)
\nn\\
&=\frac{g(x)}{g^{\prime}(\overline{x}^{\star})}
-\left(
\frac{g(\overline{x})}{g^{\prime}(\overline{x})}
+x-\overline{x}
\right)
\nn\\
&=\int_0^{\overline{x}}
\left(1-\frac{g^{\prime}(z)}{g^{\prime}(\overline{x})}\right)\mathrm{d}z
-\int_0^x
\left(1-\frac{g^{\prime}(z)}{g^{\prime}(\overline{x}^{\star})}\right)\mathrm{d}z.
\label{eq:comparison.lower.a1.high.candidate}
\end{align} 
At $x=\overline{x}^{\star}$, we have
\begin{align}\label{eq:v-v3.216}
&V_{\underline{x}^{\star},\overline{x}^{\star}}
(\overline{x}^{\star})
-V_{\underline{x},\overline{x}}
(\overline{x}^{\star})\nn\\
&=
\int_0^{\overline{x}}
\left(1-\frac{g^{\prime}(z)}{g^{\prime}(\overline{x})}\right)\mathrm{d}z
-\int_0^{\overline{x}^{\star}}
\left(1-\frac{g^{\prime}(z)}{g^{\prime}(\overline{x}^{\star})}\right)\mathrm{d}z\nn\\
&=
\int_0^{\underline{x}}
\left(1-\frac{g^{\prime}(z)}{g^{\prime}(\overline{x})}\right)\mathrm{d}z
-\int_0^{\underline{x}^{\star}}
\left(1-\frac{g^{\prime}(z)}{g^{\prime}(\overline{x}^{\star})}\right)\mathrm{d}z\nn\\
&\geq\int_0^{(g^{\prime})_1^{-1}(g^{\prime}(\overline{x}))}\left(1-\frac{g^{\prime}(z)}{g^{\prime}(\overline{x})}\right)\mathrm{d}z-\int_0^{(g^{\prime})_1^{-1}(g^{\prime}(\overline{x}^{\star}))}
\left(1-\frac{g^{\prime}(z)}{g^{\prime}(\overline{x}^{\star})}\right)\mathrm{d}z
\geq0.
\end{align}
Here, the first inequality follows from $\underline{x}<\overline{x}=(g^{\prime})_2^{-1}(g^{\prime}(\overline{x}))$, while the second follows from the increasing property of
\begin{align*}
y\mapsto
\int_0^{(g^{\prime})_1^{-1}(y)}
\left(1-\frac{g^{\prime}(z)}{y}\right)\mathrm{d}z.
\end{align*}

It is easy to check the function $x\mapsto V_{\underline{x}^{\star},\overline{x}^{\star}}
(x)
-V_{\underline{x},\overline{x}}
(x)$ increases in $(\overline{x},(g^{\prime})_3^{-1}(g^{\prime}(\overline{x}^{\star})))$ and decreases in $((g^{\prime})_3^{-1}(g^{\prime}(\overline{x}^{\star})),\overline{x}^{\star})$. Combined with \eqref{eq:v-v3.215} and \eqref{eq:v-v3.216}, we have $V_{\underline{x}^{\star},\overline{x}^{\star}}
(x)
-V_{\underline{x},\overline{x}}
(x)\geq0$ on $[\overline{x},\overline{x}^{\star}]$.
\item for $x\in[\overline{x}^{\star},\infty)$, 
\begin{align*}
V_{\underline{x}^{\star},\overline{x}^{\star}}(x)
-V_{\underline{x},\overline{x}}(x)=
V_{\underline{x}^{\star},\overline{x}^{\star}}
(\overline{x}^{\star})
-V_{\underline{x},\overline{x}}
(\overline{x}^{\star})
\geq0.
\end{align*}
\end{itemize}
\item Suppose 
$\overline{x}\in (a_2,a_3]$ such that $\psi(\underline{x},\overline{x})=\beta$. Then, if $\overline{x}^{\star}\in[a_1,(g^{\prime})^{-1}_2(g^{\prime}(x_2)))$, then
$\overline{x}^{\star}<a_2<\overline{x}.$
Moreover,
$\underline{x}^{\star}
=(g^{\prime})_1^{-1}(g^{\prime}(\overline{x}^{\star})).$
Since $\psi(\underline{x},\overline{x})=\beta>0$, we have
$\underline{x}<(g^{\prime})_2^{-1}(g^{\prime}(\overline{x})).$
The monotonicity of $g^{\prime}$ and
\(g^{\prime}(\overline{x})\geq g^{\prime}(\overline{x}^{\star}),\) since Lemma \ref{lem:slope.comparison.case.IV} yield
\begin{align}
\int_0^{\underline{x}}
\left(1-\frac{g^{\prime}(z)}{g^{\prime}(\overline{x})}\right)\mathrm{d}z
&\geq
\int_0^{(g^{\prime})_1^{-1}(g^{\prime}(\overline{x}))}
\left(1-\frac{g^{\prime}(z)}{g^{\prime}(\overline{x})}\right)\mathrm{d}z
\nn\\
&\geq
\int_0^{(g^{\prime})_1^{-1}(g^{\prime}(\overline{x}^{\star}))}
\left(1-\frac{g^{\prime}(z)}
{g^{\prime}(\overline{x}^{\star})}\right)\mathrm{d}z
\nn\\
&=
\int_0^{\underline{x}^{\star}}
\left(1-\frac{g^{\prime}(z)}
{g^{\prime}(\overline{x}^{\star})}\right)\mathrm{d}z.
\label{eq:middle.upper.lower.comparison}
\end{align}
Here, the second inequality follows from the increasing property of
\[
y\mapsto
\int_0^{(g^{\prime})_1^{-1}(y)}
\left(1-\frac{g^{\prime}(z)}{y}\right)\mathrm{d}z.
\]
We have the following inequalities.
\begin{itemize}
\item For $x\in(0,\overline{x}^{\star}]$,
\begin{align*}
V_{\underline{x}^{\star},\overline{x}^{\star}}(x)
-V_{\underline{x},\overline{x}}(x)=g(x)\left(
\frac{1}{g^{\prime}(\overline{x}^{\star})}
-\frac{1}{g^{\prime}(\overline{x})}
\right)\geq0.
\end{align*}
\item For $x\in
(\overline{x}^{\star},\overline{x}]$,
\begin{align}
V_{\underline{x}^{\star},\overline{x}^{\star}}(x)
-V_{\underline{x},\overline{x}}(x)
=
-\int_0^{\overline{x}^{\star}}
\left(1-\frac{g^{\prime}(z)}
{g^{\prime}(\overline{x}^{\star})}\right)\mathrm{d}z
+\int_0^x
\left(1-\frac{g^{\prime}(z)}
{g^{\prime}(\overline{x})}\right)\mathrm{d}z.
\label{eq:middle.upper.value.difference}
\end{align} 
At $x=\overline{x}^{\star}$, the difference is non-negative.
At $x=\overline{x}^{\star}$, the difference is strictly positive.
At $x=\overline{x}$, 
\eqref{eq:middle.upper.lower.comparison} gives
\begin{align*}
&-\int_0^{\overline{x}^{\star}}
\left(1-\frac{g^{\prime}(z)}
{g^{\prime}(\overline{x}^{\star})}\right)\mathrm{d}z
+\int_0^{\overline{x}}
\left(1-\frac{g^{\prime}(z)}
{g^{\prime}(\overline{x})}\right)\mathrm{d}z\\
&=
\int_0^{\underline{x}}
\left(1-\frac{g^{\prime}(z)}
{g^{\prime}(\overline{x})}\right)\mathrm{d}z
-\int_0^{\underline{x}^{\star}}
\left(1-\frac{g^{\prime}(z)}
{g^{\prime}(\overline{x}^{\star})}\right)\mathrm{d}z
\geq0.
\end{align*}
The right-hand side of
\eqref{eq:middle.upper.value.difference}
is increasing on
$[\overline{x}^{\star},
(g^{\prime})_2^{-1}(g^{\prime}(\overline{x}))]$
and decreasing on
$
[(g^{\prime})_2^{-1}(g^{\prime}(\overline{x})),
\overline{x}].
$
Hence, its minimum is attained at one of the two endpoints.
Therefore, \[
V_{\underline{x}^{\star},\overline{x}^{\star}}(x)
-V_{\underline{x},\overline{x}}(x)\geq0,
\qquad
x\in(\overline{x}^{\star},\overline{x}].
\]
\item For $x\in[\overline{x},\infty)$,
\begin{align*}
V_{\underline{x}^{\star},\overline{x}^{\star}}(x)
-V_{\underline{x},\overline{x}}(x)=
V_{\underline{x}^{\star},\overline{x}^{\star}}
(\overline{x})
-V_{\underline{x},\overline{x}}(\overline{x})
\geq0.
\end{align*}
\end{itemize}
If $\overline{x}^{\star}\in[x_2,\infty)$, then $\overline{x}<a_3<\overline{x}^{\star}$. Moreover, $\psi(\underline{x},\overline{x})=\beta>0$ implies $\underline{x}<(g^{\prime})_2^{-1}(g^{\prime}(\overline{x})).$ Consequently,
\begin{align*}
\int_0^{\underline{x}}
\left(1-\frac{g^{\prime}(z)}{g^{\prime}(\overline{x})}\right)\mathrm{d}z&\geq
\int_0^{(g^{\prime})_1^{-1}(g^{\prime}(\overline{x}))}
\left(1-\frac{g^{\prime}(z)}{g^{\prime}(\overline{x})}\right)\mathrm{d}z\\
&\geq
\int_0^{(g^{\prime})_1^{-1}(g^{\prime}(\overline{x}^{\star}))}
\left(1-\frac{g^{\prime}(z)}{g^{\prime}(\overline{x}^{\star})}\right)\mathrm{d}z.
\end{align*}
Here, the first inequality follows from $\underline{x}<(g^{\prime})_2^{-1}(g^{\prime}(\overline{x}))$, while the second follows from the increasing property of
\begin{align*}
y\mapsto
\int_0^{(g^{\prime})_1^{-1}(y)}
\left(1-\frac{g^{\prime}(z)}{y}\right)\mathrm{d}z.
\end{align*}
We now compare the two value functions on each interval.
\begin{itemize}
\item For $x\in(0,\overline{x}]$,
\begin{align*}
V_{\underline{x}^{\star},\overline{x}^{\star}}(x)-V_{\underline{x},\overline{x}}(x)=g(x)\left(\frac{1}{g^{\prime}(\overline{x}^{\star})}-\frac{1}{g^{\prime}(\overline{x})}\right)\geq0.
\end{align*}
\item For $x\in(\overline{x},\overline{x}^{\star}]$,
\begin{align*}
V_{\underline{x}^{\star},\overline{x}^{\star}}(x)-V_{\underline{x},\overline{x}}(x)=V_{\underline{x}^{\star},\overline{x}^{\star}}(\overline{x})-V_{\underline{x},\overline{x}}(\overline{x})+\int_{\overline{x}}^x\left(\frac{g^{\prime}(z)}{g^{\prime}(\overline{x}^{\star})}-1\right)\mathrm{d}z.
\end{align*}
At $x=\overline{x}^{\star}$, the difference is non-negative. At $x=\overline{x}^{\star}$, we have
\begin{align*}
V_{\underline{x}^{\star},\overline{x}^{\star}}(\overline{x}^{\star})-V_{\underline{x},\overline{x}}(\overline{x}^{\star})
&=\int_0^{\underline{x}}\left(1-\frac{g^{\prime}(z)}{g^{\prime}(\overline{x})}\right)\mathrm{d}z-\int_0^{\underline{x}^{\star}}\left(1-\frac{g^{\prime}(z)}{g^{\prime}(\overline{x}^{\star})}\right)\mathrm{d}z\geq0.
\end{align*}
Since $g^{\prime}(\overline{x})\geq g^{\prime}(\overline{x}^{\star})$ and $g^{\prime}$ is strictly decreasing on $[a_2,a_3]$, we have $\overline{x}\leq(g^{\prime})_3^{-1}(g^{\prime}(\overline{x}^{\star})).$
Therefore, the function $x\mapsto V_{\underline{x}^{\star},\overline{x}^{\star}}(x)-V_{\underline{x},\overline{x}}(x)$ is increasing on $[\overline{x},(g^{\prime})_3^{-1}(g^{\prime}(\overline{x}^{\star}))],$ and decreasing on $[(g^{\prime})_3^{-1}(g^{\prime}(\overline{x}^{\star})),\overline{x}^{\star}].$ Thus $V_{\underline{x}^{\star},\overline{x}^{\star}}(x)-V_{\underline{x},\overline{x}}(x)\geq0$ on $(\overline{x},\overline{x}^{\star}].$
\item For $x\in[\overline{x}^{\star},\infty)$,
\begin{align*}
V_{\underline{x}^{\star},\overline{x}^{\star}}(x)-V_{\underline{x},\overline{x}}(x)=V_{\underline{x}^{\star},\overline{x}^{\star}}(\overline{x}^{\star})-V_{\underline{x},\overline{x}}(\overline{x}^{\star})\geq0.
\end{align*}
\end{itemize}

\item Suppose 
$\overline{x}\in (a_3,\infty)$ such that $\psi(\underline{x},\overline{x})=\beta$. By Lemma \ref{lem:slope.comparison.case.IV}, we have $g^{\prime}(\overline{x})\geq g^{\prime}(\overline{x}^{\star})$. Then, if $\overline{x}^{\star}\in[a_1,(g^{\prime})^{-1}_2(g^{\prime}(x_2)))$, then \(\overline{x}^{\star}<a_2<a_3<\overline{x}\). We have the following inequalities.
\begin{itemize}
\item 
For $x\in(0,\overline{x}^{\star}]$,
\begin{align}
\label{eq:comparison.a1.high.upper.low.1}
V_{\underline{x}^{\star},\overline{x}^{\star}}(x)
-
V_{\underline{x},\overline{x}}(x)
=
g(x)\left(
\frac{1}{g^{\prime}(\overline{x}^{\star})}
-
\frac{1}{g^{\prime}(\overline{x})}
\right)\geq0.
\end{align}

\item For \(x\in(\overline{x}^{\star},\overline{x}]\),
\begin{align}\label{eq:comparison.a1.high.upper}
V_{\underline{x}^{\star},\overline{x}^{\star}}(x)
-
V_{\underline{x},\overline{x}}(x)=
-\int_0^{\overline{x}^{\star}}
\left(
1-\frac{g^{\prime}(z)}{g^{\prime}(\overline{x}^{\star})}
\right)\mathrm dz
+
\int_0^x
\left(
1-\frac{g^{\prime}(z)}{g^{\prime}(\overline{x})}
\right)\mathrm dz .
\end{align} 
If
$g^{\prime}(\overline{x})\geq g^{\prime}(a_2),$
then $g^{\prime}(z)\leq g^{\prime}(\overline{x})$ for $z\in[\overline{x}^{\star},\overline{x}]$.
Consequently,
$1-\frac{g^{\prime}(z)}{g^{\prime}(\overline{x})}\geq0$ for $z\in[\overline{x}^{\star},\overline{x}]$, and the right-hand side of \eqref{eq:comparison.a1.high.upper} is increasing on $[\overline{x}^{\star},\overline{x}]$. Therefore,
\begin{align*}
V_{\underline{x}^{\star},\overline{x}^{\star}}(x)
-V_{\underline{x},\overline{x}}(x)\geq0,
\qquad
x\in(\overline{x}^{\star},\overline{x}].
\end{align*}
If $g^{\prime}(\overline{x})<g^{\prime}(a_2),$
then, since $\overline{x}>a_3$, $g^{\prime}(\overline{x})\in[g^{\prime}(a_3),g^{\prime}(a_2)).$
Hence, we have
\begin{align*}
\overline{x}^{\star}<(g^{\prime})_2^{-1}(g^{\prime}(\overline{x}))<(g^{\prime})_3^{-1}(g^{\prime}(\overline{x}))<\overline{x}.
\end{align*}
The right-hand side of \eqref{eq:comparison.a1.high.upper} is increasing on
$[\overline{x}^{\star},(g^{\prime})_2^{-1}(g^{\prime}(\overline{x}))],$ and
decreasing on $[(g^{\prime})_2^{-1}(g^{\prime}(\overline{x})),(g^{\prime})_3^{-1}(g^{\prime}(\overline{x}))],$
and then increasing on
$[(g^{\prime})_3^{-1}(g^{\prime}(\overline{x})),\overline{x}].$
\begin{itemize}
\item Suppose $\psi(0,a_3)>\psi(0,\overline{x}^{\star})$. It follows from \eqref{psi'y.3.6} that $\psi(0,\overline{x}^{\star})<\psi(0,a_3)<\psi(0,\overline{x}),$
and
\begin{align*}
\psi(0,a_3)
=\int_0^{a_3}\left(1-\frac{g^{\prime}(z)}{g^{\prime}(a_3)}\right)\mathrm{d}z<\int_0^{(g^{\prime})_3^{-1}(g^{\prime}(\overline{x}))}
\left(1-\frac{g^{\prime}(z)}{g^{\prime}(\overline{x})}\right)\mathrm{d}z.
\end{align*}
Consequently, 
\begin{align*}
&-\int_0^{\overline{x}^{\star}}\left(1-\frac{g^{\prime}(z)}{g^{\prime}(\overline{x}^{\star})}\right)\mathrm{d}z+\int_0^{(g^{\prime})_3^{-1}(g^{\prime}(\overline{x}))}\left(1-\frac{g^{\prime}(z)}{g^{\prime}(\overline{x})}\right)\mathrm{d}z\\
&>-\psi(0,\overline{x}^{\star})+\psi(0,a_3)>0.
\end{align*}
\item Suppose $\psi(0,a_3)\leq\psi(0,\overline{x}^{\star})$.
By \textbf{Assumption B}, at $x=(g^{\prime})_3^{-1}(g^{\prime}(\overline{x}))$,
\begin{align*}
&-\int_0^{\overline{x}^{\star}}
\left(1-\frac{g^{\prime}(z)}{g^{\prime}(\overline{x}^{\star})}\right)\mathrm{d}z+\int_0^{(g^{\prime})_3^{-1}(g^{\prime}(\overline{x}))}\left(1-\frac{g^{\prime}(z)}{g^{\prime}(\overline{x})}\right)\mathrm{d}z\\
&=\psi\left(0,(g^{\prime})_3^{-1}(g^{\prime}(\overline{x}))\right)-\psi(0,\overline{x}^{\star})\geq0.
\end{align*}
\end{itemize}

Consequently,
$
V_{\underline{x}^{\star},\overline{x}^{\star}}(x)-V_{\underline{x},\overline{x}}(x)\geq0,$ for $x\in(\overline{x}^{\star},\overline{x}].$
\item For $x\in[\overline{x},\infty)$, we have
\begin{align}\label{eq:comparison.a1.high.upper.low.3}
V_{\underline{x}^{\star},\overline{x}^{\star}}(x)-V_{\underline{x},\overline{x}}(x)=V_{\underline{x}^{\star},\overline{x}^{\star}}(\overline{x})-V_{\underline{x},\overline{x}}(\overline{x})\geq0.
\end{align}
\end{itemize}
If $\overline{x}^{\star}\in[x_2,\infty)$.
	Since both $\overline{x}^{\star}$ and $\overline{x}$ belong to
	$(a_3,\infty)$ and $g^{\prime}$ is strictly increasing on
	$[a_3,\infty)$, it follows from
	$g^{\prime}(\overline{x})>g^{\prime}(\overline{x}^{\star})$ that $\overline{x}>\overline{x}^{\star}.$ We have the following inequalities.
\begin{itemize}
\item For $x\in(0,\overline{x}^{\star}]$,
\begin{align}\label{eq:comparison.a1.high.upper.high.1}
V_{\underline{x}^{\star},\overline{x}^{\star}}(x)-V_{\underline{x},\overline{x}}(x)=g(x)\left(\frac{1}{g^{\prime}(\overline{x}^{\star})}-\frac{1}{g^{\prime}(\overline{x})}\right)>0.
\end{align}
\item For $x\in(\overline{x}^{\star},\overline{x}]$,
\begin{align*}
V_{\underline{x}^{\star},\overline{x}^{\star}}(x)-V_{\underline{x},\overline{x}}(x)=-\int_0^{\overline{x}^{\star}}
\left(1-\frac{g^{\prime}(z)}{g^{\prime}(\overline{x}^{\star})}\right)\mathrm{d}z+\int_0^x
\left(1-\frac{g^{\prime}(z)}{g^{\prime}(\overline{x})}\right)\mathrm{d}z.
\end{align*}
At $x=\overline{x}^{\star}$, the difference is strictly positive.
Moreover, because $g^{\prime}$ is strictly increasing on $[a_3,\infty)$,
$1-\frac{g^{\prime}(x)}{g^{\prime}(\overline{x})}>0,$ for 
$x\in[\overline{x}^{\star},\overline{x}).$
		Thus the difference is strictly increasing on
		$[\overline{x}^{\star},\overline{x}]$, and hence
		\begin{align*}
			V_{\underline{x}^{\star},\overline{x}^{\star}}(x)
			-V_{\underline{x},\overline{x}}(x)>0,
			\qquad
			x\in(\overline{x}^{\star},\overline{x}].
		\end{align*}
\item For $x\in[\overline{x},\infty)$,
		\begin{align}\label{eq:comparison.a1.high.upper.high.3}
			V_{\underline{x}^{\star},\overline{x}^{\star}}(x)
			-V_{\underline{x},\overline{x}}(x)=
			V_{\underline{x}^{\star},\overline{x}^{\star}}
			(\overline{x})
			-V_{\underline{x},\overline{x}}(\overline{x})
			>0.
		\end{align}
\end{itemize}
\end{itemize}
Suppose $(\underline{x},\overline{x})$ with $\underline{x}\in[a_2,\infty)$ is
any solution to \eqref{3.20.w2.4} different from $(\underline{x}^{\star},\overline{x}^{\star})$. Note that we necessarily have $\overline{x}>a_3$. Then, we have the following observations.
			\begin{itemize}
				\item If $\overline{x}^{\star}\in[a_1,(g^{\prime})^{-1}_2(g^{\prime}(x_2)))$, then
				$\overline{x}^{\star}<a_2<a_3<\overline{x}$. By Lemma
				\ref{lem:slope.comparison.case.IV}, we have
				$g^{\prime}(\overline{x})\geq g^{\prime}(\overline{x}^{\star})$. By arguments analogous
				to those used for $\underline{x}\in[a_1,a_2)$ and
				$\overline{x}\in(a_3,\infty)$, where \textbf{Assumption B} is used when
				$g^{\prime}(\overline{x})<g^{\prime}(a_2)$ and $\psi(0,a_3)\leq\psi(0,\overline{x}^{\star})$ to verify the non-negativity at
				$(g^{\prime})_3^{-1}(g^{\prime}(\overline{x}))$, we obtain from
				\eqref{eq:comparison.a1.high.upper.low.1}--\eqref{eq:comparison.a1.high.upper.low.3} that
				\[
				V_{\underline{x}^{\star},\overline{x}^{\star}}(x)
				-
				V_{\underline{x},\overline{x}}(x)\geq0,
				\qquad x\in[0,\infty).
				\]
				\item If $\overline{x}^{\star}\in[x_2,\infty)$, then
				$\overline{x}^{\star}>a_3$. Since $\overline{x}>a_3$ and $g^{\prime}$ is
				strictly increasing on $[a_3,\infty)$, Lemma
				\ref{lem:slope.comparison.case.IV} implies
				$\overline{x}>\overline{x}^{\star}$. By arguments analogous to
				those used for $\underline{x}\in[a_1,a_2)$ and
				$\overline{x}\in(a_3,\infty)$, we obtain from
				\eqref{eq:comparison.a1.high.upper.high.1}--\eqref{eq:comparison.a1.high.upper.high.3} that
				\[
				V_{\underline{x}^{\star},\overline{x}^{\star}}(x)
				-
				V_{\underline{x},\overline{x}}(x)\geq0,
				\qquad x\in[0,\infty).
				\]
			\end{itemize}
Suppose $(\underline{x}^{\star},\overline{x}^{\star})$ is such that $\underline{x}^{\star}
			=(g^{\prime})_3^{-1}(g^{\prime}(\overline{x}^{\star}))\in[a_2,a_3),$ and $\overline{x}^{\star}\in(a_3,x_1]$.
Then $\underline{x}^{\star}=(g^{\prime})^{-1}_1(g^{\prime}(\overline{x}^{\star}))$. 	Let $(\underline{x},\overline{x})$ be any solution to \eqref{3.20.w2.4}. By Lemma
			\ref{lem:slope.comparison.case.IV}, we have
			$g^{\prime}(\overline{x})\geq g^{\prime}(\overline{x}^{\star})$.
\begin{itemize}
\item If $\overline{x}\geq a_3$, then the increasing property of $g^{\prime}$ on
			$[a_3,\infty)$ yields $\overline{x}\geq\overline{x}^{\star}$. For
			$x\in(0,\overline{x}^{\star}]$,
			\begin{align*}
			V_{\underline{x}^{\star},\overline{x}^{\star}}(x)
			-V_{\underline{x},\overline{x}}(x)
			=g(x)\left(\frac{1}{g^{\prime}(\overline{x}^{\star})}
			-\frac{1}{g^{\prime}(\overline{x})}\right)\geq0.
			\end{align*}
			For $x\in(\overline{x}^{\star},\overline{x}]$,
			\begin{align*}
			V_{\underline{x}^{\star},\overline{x}^{\star}}(x)
			-V_{\underline{x},\overline{x}}(x)=V_{\underline{x}^{\star},\overline{x}^{\star}}
			(\overline{x}^{\star})
			-V_{\underline{x},\overline{x}}(\overline{x}^{\star})
			+\int_{\overline{x}^{\star}}^{x}
			\left(1-\frac{g^{\prime}(z)}{g^{\prime}(\overline{x})}\right)\mathrm{d}z
			\geq0,
			\end{align*}
For $x\in[\overline{x},\infty)$, $$V_{\underline{x}^{\star},\overline{x}^{\star}}(x)
-V_{\underline{x},\overline{x}}(x)
=
V_{\underline{x}^{\star},\overline{x}^{\star}}(\overline{x})
-V_{\underline{x},\overline{x}}(\overline{x})
\geq0,
\qquad x\in[\overline{x},\infty).$$
\item If $\overline{x}<a_3$, then necessarily $\overline{x}>a_1$. 
We have the following inequalities.
\begin{itemize}
\item For $x\in(0,\overline{x}]$,
\begin{align}
V_{\underline{x}^{\star},\overline{x}^{\star}}(x)
-V_{\underline{x},\overline{x}}(x)\nn=g(x)\left(\frac{1}{g^{\prime}(\overline{x}^{\star})}
-\frac{1}{g^{\prime}(\overline{x})}\right)\geq0.
\end{align}
\item For $x\in(\overline{x},\underline{x}^{\star}]$, we have
\begin{align}
\overline{x}\leq\underline{x}^{\star},\qquad
g^{\prime}(z)\geq g^{\prime}(\overline{x}^{\star}),
				\quad z\in[\overline{x},\underline{x}^{\star}].
				\label{eq:third.candidate.low.competitor.shape.1}
			\end{align}
Indeed, if $\overline{x}\in(a_1,a_2)$, the increasing property of $g^{\prime}$ on $[a_1,a_2]$ and the decreasing property of $g^{\prime}$ on $[a_2,a_3]$ imply \eqref{eq:third.candidate.low.competitor.shape.1}. If $\overline{x}\in[a_2,a_3)$, the decreasing property of $g^{\prime}$ on $[a_2,a_3]$ implies \eqref{eq:third.candidate.low.competitor.shape.1}. Thus, by \eqref{eq:third.candidate.low.competitor.shape.1},
\begin{align}
V_{\underline{x}^{\star},\overline{x}^{\star}}(x)
-V_{\underline{x},\overline{x}}(x)=V_{\underline{x}^{\star},\overline{x}^{\star}}(\overline{x})
-V_{\underline{x},\overline{x}}(\overline{x})
+\int_{\overline{x}}^{x}
\left(\frac{g^{\prime}(z)}{g^{\prime}(\overline{x}^{\star})}-1\right)\mathrm{d}z\geq0.
\label{eq:third.candidate.low.competitor.2}
\end{align}
\item For $x\in(\underline{x}^{\star},\overline{x}^{\star}]$, it follows from the construction of $g$ that
\begin{align}
g^{\prime}(z)\leq g^{\prime}(\overline{x}^{\star}),
\qquad z\in[\underline{x}^{\star},\overline{x}^{\star}].
\label{eq:third.candidate.low.competitor.shape.2}
\end{align}
Hence, the function $x\mapsto V_{\underline{x}^{\star},\overline{x}^{\star}}(x)-V_{\underline{x},\overline{x}}(x)$
is decreasing. 
We claim that
			\begin{align}
				\int_{\underline{x}}^{\underline{x}^{\star}}
				\left(1-\frac{g^{\prime}(z)}{g^{\prime}(\overline{x}^{\star})}\right)\mathrm{d}z
				\leq0.
				\label{eq:third.candidate.low.competitor.integral}
			\end{align}
Indeed, if $g^{\prime}(\overline{x}^{\star})<g^{\prime}(a_1)$, then $g^{\prime}(z)>g^{\prime}(\overline{x}^{\star})$ for $z\in[0,\underline{x}^{\star})$, and \eqref{eq:third.candidate.low.competitor.integral} follows immediately; if $g^{\prime}(\overline{x}^{\star})\geq g^{\prime}(a_1)$, then the continuity and the definition of $x_1$, together with
$\overline{x}^{\star}\leq x_1$, yield
\begin{align}
\int_{(g^{\prime})_1^{-1}(g^{\prime}(\overline{x}^{\star}))}^{\underline{x}^{\star}}
\left(1-\frac{g^{\prime}(z)}{g^{\prime}(\overline{x}^{\star})}\right)\mathrm{d}z\leq0.
\label{eq:third.candidate.x1.integral}
\end{align}
For $\underline{x}\leq(g^{\prime})_1^{-1}(g^{\prime}(\overline{x}^{\star}))$,
			\eqref{eq:third.candidate.low.competitor.integral} follows from
			\eqref{eq:third.candidate.x1.integral} and
			$g^{\prime}(z)\geq g^{\prime}(\overline{x}^{\star})$ on
			$[\underline{x},(g^{\prime})_1^{-1}(g^{\prime}(\overline{x}^{\star}))]$.
For $(g^{\prime})_1^{-1}(g^{\prime}(\overline{x}^{\star}))<\underline{x}<(g^{\prime})_2^{-1}(g^{\prime}(\overline{x}^{\star})),$ 	we have
			\begin{align*}
				&\int_{\underline{x}}^{\underline{x}^{\star}}
				\left(1-\frac{g^{\prime}(z)}{g^{\prime}(\overline{x}^{\star})}\right)\mathrm{d}z\nn\\
				&=\int_{(g^{\prime})_1^{-1}(g^{\prime}(\overline{x}^{\star}))}^{\underline{x}^{\star}}
				\left(1-\frac{g^{\prime}(z)}{g^{\prime}(\overline{x}^{\star})}\right)\mathrm{d}z
				-\int_{(g^{\prime})_1^{-1}(g^{\prime}(\overline{x}^{\star}))}^{\underline{x}}
				\left(1-\frac{g^{\prime}(z)}{g^{\prime}(\overline{x}^{\star})}\right)\mathrm{d}z
				\leq0.
			\end{align*}
Finally, if
			$\underline{x}\geq(g^{\prime})_2^{-1}(g^{\prime}(\overline{x}^{\star}))$, then
			$\underline{x}<\overline{x}\leq\underline{x}^{\star}$ and
			$g^{\prime}(z)\geq g^{\prime}(\overline{x}^{\star})$ on
			$[\underline{x},\underline{x}^{\star}]$, which again proves
			\eqref{eq:third.candidate.low.competitor.integral}.
At $x=\overline{x}^{\star}$, using
$\psi(\underline{x}^{\star},\overline{x}^{\star})=\beta$ and \eqref{eq:third.candidate.low.competitor.integral}, we obtain
\begin{align}
&V_{\underline{x}^{\star},\overline{x}^{\star}}(\overline{x}^{\star})
-V_{\underline{x},\overline{x}}(\overline{x}^{\star})\nn\\
&=\underline{x}-\underline{x}^{\star}
					+\frac{g(\underline{x}^{\star})}{g^{\prime}(\overline{x}^{\star})}
					-\frac{g(\underline{x})}{g^{\prime}(\overline{x})}\nn\\
					&=-\int_{\underline{x}}^{\underline{x}^{\star}}
					\left(1-\frac{g^{\prime}(z)}{g^{\prime}(\overline{x}^{\star})}\right)\mathrm{d}z
					+g(\underline{x})
					\left(\frac{1}{g^{\prime}(\overline{x}^{\star})}
					-\frac{1}{g^{\prime}(\overline{x})}\right)\geq0.
					\label{eq:third.candidate.low.competitor.3}
				\end{align}
\item For $x\in[\overline{x}^{\star},\infty)$, \begin{align}
					V_{\underline{x}^{\star},\overline{x}^{\star}}(x)
					-V_{\underline{x},\overline{x}}(x)
					&=V_{\underline{x}^{\star},\overline{x}^{\star}}(\overline{x}^{\star})
					-V_{\underline{x},\overline{x}}(\overline{x}^{\star})
					\geq0.
					\label{eq:third.candidate.low.competitor.4}
				\end{align}
\end{itemize}
\end{itemize}
The proof is complete.
\end{proof}

\begin{remark} 
From the proof of Theorem \ref{Theorem3.6}, it is clear that, in cases $(\text{\emph{IV}}_{6})$-$(\text{\emph{IV}}_{7})$ with $\overline{x}^{\star}< a$ and $\psi(0,a_3)\leq\psi(0,\overline{x}^{\star})$, \textbf{Assumption B} is necessary and sufficient for the existence of an optimal strategy for the auxiliary stochastic control problem \eqref{eq:a008.aux.} 
\end{remark}

\section{Characterization of the optimal impulsive strategy}
\label{sec.4}

In this section, we accomplish Step (II) specified at the beginning of Section \ref{sec.3}. Namely, we verify that the optimal reinsurance-dividend strategy obtained for the auxiliary control problem \eqref{eq:a008.aux.} in Section \ref{sec.3} is also optimal for the original control problem \eqref{eq:a008} by showing that the value function of \eqref{eq:a008.aux.} satisfies the HJB equation \eqref{7.17.HJB} associated with \eqref{eq:a008}.

Prior to presenting these results in Theorem \ref{thm4.1}, we establish the following technical lemma, which serves as a key component in its proof.

\begin{lemma}
\label{lem.4.1}
Let $(\underline{x}^{\star},\overline{x}^{\star})$ be the candidate optimal dividend barriers given by Theorems \ref{thm3.2.ww}, \ref{Theorem3.4}, \ref{Theorem3.5}, and \ref{Theorem3.6}. We have
\begin{eqnarray}\label{eq:4.2}
\int_{y}^{x}\left(1-\frac{g^{\prime}(z)}{g^{\prime}(\overline{x}^{\star})}\right)\mathrm{d}z\leq \beta,\quad \beta\leq y+\beta<x<\infty.
\end{eqnarray}
\end{lemma}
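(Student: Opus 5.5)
We read \eqref{eq:4.2} as a statement about the function $F(y,x):=\int_{y}^{x}\bigl(1-g^{\prime}(z)/g^{\prime}(\overline{x}^{\star})\bigr)\mathrm{d}z$ on $\{0\le y<x\}$, and show $\sup F=\beta$. This is exactly what is needed later to obtain $\mathcal{M}V_{\underline{x}^{\star},\overline{x}^{\star}}\le 0$. The plan has two steps. First, locate the maximizers of $F$ by first-order conditions. Second, identify the maximal value with $\beta$ using $\psi(\underline{x}^{\star},\overline{x}^{\star})=\beta$ and the extremal characterization of $g^{\prime}(\overline{x}^{\star})$ from Lemmas \ref{lem3.4} and \ref{lem:slope.comparison.case.IV}.

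\textbf{Step 1 (first-order conditions).} Put $c:=g^{\prime}(\overline{x}^{\star})$. Then $\partial_x F=1-g^{\prime}(x)/c$ and $\partial_y F=-(1-g^{\prime}(y)/c)$. Since $g$ is piecewise concave-convex with inflection points $a_1\le a_2\le a_3$, we have $g^{\prime}\to\infty$ at $+\infty$ on the last convex branch. Hence for $x$ large $F$ decreases in $x$, and the supremum is attained on a compact set (or at $y=0$). At an interior maximizer we need $g^{\prime}(x)=c$ with $g^{\prime}$ crossing $c$ upward at $x$, and $g^{\prime}(y)=c$ with $g^{\prime}$ crossing $c$ downward at $y$; otherwise $y=0$. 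Consequently the upper endpoint $x$ lies in $\{(g^{\prime})_2^{-1}(c),(g^{\prime})_4^{-1}(c)\}$, whichever exist, and the lower endpoint $y$ lies in $\{0,(g^{\prime})_1^{-1}(c),(g^{\prime})_3^{-1}(c)\}$ with $y<x$. This leaves finitely many candidate pairs.

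\textbf{Step 2 (comparison with $\beta$).} Every candidate pair $(y,x)$ has $g^{\prime}(x)=c$, so $F(y,x)=\psi(y,x)$. Suppose some candidate had $\psi(y,x)>\beta$. Since $\partial_y\psi>0$ on the relevant stretch where $g^{\prime}<g^{\prime}(x)$, we can move $y$ upward continuously to some $y^{\prime}$ with $\psi(y^{\prime},x)=\beta$. We may also first lower $x$ slightly along the branch on which $\partial_x\psi>0$, i.e.\ the region $g^{\prime\prime}(x)>0$, and find a pair $(y^{\prime\prime},x^{\prime})$ with $\psi(y^{\prime\prime},x^{\prime})=\beta$ and $g^{\prime}(x^{\prime})<c$. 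This contradicts the infimum property \eqref{3.95.77} (resp.\ \eqref{eq:slope.minimizer.case.IV}).

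In the cases of Theorems \ref{thm3.2.ww} and \ref{Theorem3.5}, and in Cases $(\text{IV}_1)$--$(\text{IV}_5)$, where no such lemma is stated, the same contradiction uses instead the monotonicity of $y\mapsto\psi(0,y)$ or of $\phi$ established in those proofs. Combined with the strict increase of $\psi$ in its second argument on convex pieces, \eqref{mon.prop.} and \eqref{psi'xy2}, this shows that $\overline{x}^{\star}$ is the smallest upper barrier compatible with $\psi=\beta$. Hence $\max F=\psi(\underline{x}^{\star},\overline{x}^{\star})=\beta$.

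\textbf{Main obstacle.} The main difficulty is the perturbation argument in Step 2 when the upper endpoint of the offending pair is $(g^{\prime})_2^{-1}(c)\in(a_1,a_2)$, or when $c$ equals $g^{\prime}$ at an inflection point. There the branch structure can change: $(g^{\prime})_1^{-1}$ may cease to exist, and the threshold $x_2$ enters. We would handle this by reusing the case split of Lemma \ref{lem:slope.comparison.case.IV}, namely $\beta\in A_1,A_2,A_3$ and $\beta=\omega_1(x_2)$, with the functions $\omega_1,\omega_2$. We would then use the identity $\omega_2(y)=\max\{\psi((g^{\prime})_1^{-1}(g^{\prime}(y)),y),\psi((g^{\prime})_3^{-1}(g^{\prime}(y)),y)\}$ together with the strict monotonicity \eqref{eq:omega.branch.increasing.case.IV}. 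These show directly that each candidate value $\omega_i(x)$ at level $c$ is at most $\omega_i(\omega_i^{-1}(\beta))=\beta$ whenever $g^{\prime}(x)=c\le g^{\prime}(\omega_i^{-1}(\beta))$, which is precisely how $\overline{x}^{\star}$ was chosen in \eqref{parametr.maxmizers} and \eqref{opti.stra.3.4}.
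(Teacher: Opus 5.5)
Your argument is correct in substance, but it takes a different route from the paper.

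\textbf{The paper's route.} The paper does not use Lemmas \ref{lem3.4} or \ref{lem:slope.comparison.case.IV} here. It treats only the configuration $0<a_1<a_2<a_3$ and splits according to where $(\underline{x}^{\star},\overline{x}^{\star})$ lies and where $y$ (or $x$) lies. It then writes down the monotonicity pattern of $x\mapsto\int_y^x(1-g'(z)/g'(\overline{x}^{\star}))\,\mathrm{d}z$ (or of the corresponding map in $y$). Each local maximum is bounded by $\beta$ directly from explicit properties of the barriers, e.g.\ $g'(\overline{x}^{\star})<g'(x_2)$, $\overline{x}^{\star}\ge x_2$, or $g'(\overline{x}^{\star})<g'(x_1)$.

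\textbf{Your route.} You reduce the lemma to slope-minimality of $\overline{x}^{\star}$. A maximizer of $F$ has its upper endpoint at level $g'=c$. So if $F>\beta$ somewhere, you lower the level slightly and raise the lower endpoint by the intermediate value theorem, obtaining a pair with $\psi=\beta$ and upper slope below $c$. The converse also holds: a pair with $\psi(\underline{x},\overline{x})=\beta$ and $g'(\overline{x})<c$ violates \eqref{eq:4.2} at $(y,x)=(\underline{x},\overline{x})$, because $g'>0$. Hence Lemma \ref{lem.4.1} is equivalent to \eqref{3.95.77}, resp.\ \eqref{eq:slope.minimizer.case.IV}.

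\textbf{What each buys.} Your route makes this equivalence explicit, so the configuration-by-configuration work is done only once. The paper's route is self-contained but repeats that work. Your proof is exactly as complete as Lemmas \ref{lem3.4} and \ref{lem:slope.comparison.case.IV}, which the paper proves only for representative subcases. In Cases I, $(\text{II}_3)$, III and $(\text{IV}_1)$--$(\text{IV}_5)$ you do not need Step 2 at all. There $g'$ is increasing, or decreasing-then-increasing with $g'(0+)=\infty$ in the latter cases. So the only candidate pair at level $c$ is $(\underline{x}^{\star},\overline{x}^{\star})$ itself, and its value is $\beta$ by construction.

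\textbf{Two small repairs.} First, the derivative of $\psi$ in its lower argument is $-(1-g'(y)/g'(x))$. This is negative, not positive, where $g'<g'(x)$. The intermediate-value step is unaffected, since $\psi(x,x)=0<\beta$.

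Second, your ``main obstacle'' disappears if you perturb only a global maximizer $(y^{*},x^{*})$ of $F$, not arbitrary candidates. Maximality in $x$ forces points with $g'<c$ arbitrarily close to the left of $x^{*}$. Otherwise, for nearby $x'<x^{*}$, $\int_{x'}^{x^{*}}(1-g'(z)/c)\,\mathrm{d}z<0$ (as $g'$ is constant on no interval), contradicting maximality. Hence no inflection-point degeneracy can arise.

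A cleaner version uses the level function $H(s):=\max_{0\le y<x}\int_y^x(1-g'(z)/s)\,\mathrm{d}z$. For $s,s'\in[s_0,s_1]$ it satisfies $|H(s)-H(s')|\le|1/s-1/s'|\,g(R)$, where $\{g'<s_1\}\subset[0,R]$. So $H(c)>\beta$ gives $H(s)>\beta$ for some $s<c$, and the maximizer at level $s$ has $g'(x_s)=s$. Either way, the split over $A_1,A_2,A_3$ and $\omega_1(x_2)$ is unnecessary, as is the fallback through $\omega_1,\omega_2$.
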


\begin{proof}
We verify \eqref{eq:4.2} only in the case where $0<a_{1}<a_{2}<a_{3}$; other cases can be verified similarly. To this end, we consider the following cases.
\begin{itemize}

\item Case $0\leq \underline{x}^{\star}<\overline{x}^{\star}<a_{2}$ and $y\in[0,\underline{x}^{\star}]$.

If $g^{\prime}(\overline{x}^{\star})>g^{\prime}(a_{3})$, then the function $x\mapsto \int_{y}^{x}\left(1-\frac{g^{\prime}(z)}{g^{\prime}(\overline{x}^{\star})}\right)\mathrm{d}z$ is decreasing on $[y,\underline{x}^{\star}]$, increasing on $[\underline{x}^{\star},\overline{x}^{\star}]$, decreasing on $[\overline{x}^{\star},(g^{\prime})_{3}^{-1}(g^{\prime}(\overline{x}^{\star}))]$, increasing on $[(g^{\prime})_{3}^{-1}(g^{\prime}(\overline{x}^{\star})),(g^{\prime})_{4}^{-1}(g^{\prime}(\overline{x}^{\star}))]$, and decreasing on $[(g^{\prime})_{4}^{-1}(g^{\prime}(\overline{x}^{\star})),\infty)$. In addition, since $g^{\prime}(\overline{x}^{\star})<g^{\prime}(x_{2})$, we have
\begin{align}
\int_{y}^{y}\left(1-\frac{g^{\prime}(z)}{g^{\prime}(\overline{x}^{\star})}\right)\mathrm{d}z&=0\leq \beta, \label{7.20.4.3}\\
\int_{y}^{\overline{x}^{\star}}\left(1-\frac{g^{\prime}(z)}{g^{\prime}(\overline{x}^{\star})}\right)\mathrm{d}z &\leq \int_{\underline{x}^{\star}}^{\overline{x}^{\star}}\left(1-\frac{g^{\prime}(z)}{g^{\prime}(\overline{x}^{\star})}\right)\mathrm{d}z=\beta, \label{7.20.4.4}\\
\int_{y}^{(g^{\prime})_{4}^{-1}(g^{\prime}(\overline{x}^{\star}))}\left(1-\frac{g^{\prime}(z)}{g^{\prime}(\overline{x}^{\star})}\right)\mathrm{d}z &\leq \int_{\underline{x}^{\star}}^{(g^{\prime})_{4}^{-1}(g^{\prime}(\overline{x}^{\star}))}\left(1-\frac{g^{\prime}(z)}{g^{\prime}(\overline{x}^{\star})}\right)\mathrm{d}z
\nn\\
&\leq \psi(\underline{x}^{\star},\overline{x}^{\star})+
\int_{\overline{x}^{\star}}^{(g^{\prime})_{4}^{-1}(g^{\prime}(\overline{x}^{\star}))}\left(1-\frac{g^{\prime}(z)}{g^{\prime}(x_{2})}\right)\mathrm{d}z
\nn\\
&\leq\beta.\label{7.20.4.5}
\end{align}
Therefore, \eqref{eq:4.2} holds true in this sub-case. 

If $g^{\prime}(\overline{x}^{\star})\leq g^{\prime}(a_{3})$, then the function $x\mapsto \int_{y}^{x}\left(1-\frac{g^{\prime}(z)}{g^{\prime}(\overline{x}^{\star})}\right)\mathrm{d}z$ is decreasing on $[y,\underline{x}^{\star}]$, increasing on $[\underline{x}^{\star},\overline{x}^{\star}]$, decreasing on $[\overline{x}^{\star},\infty)$. In addition, \eqref{7.20.4.4} holds. Therefore, \eqref{eq:4.2} holds true in this sub-case. 

\item Case $0\leq \underline{x}^{\star}<\overline{x}^{\star}<a_{2}$ and $y\in(\underline{x}^{\star},\overline{x}^{\star}]$.

If $g^{\prime}(\overline{x}^{\star})>g^{\prime}(a_{3})$, then the function $x\mapsto \int_{y}^{x}\left(1-\frac{g^{\prime}(z)}{g^{\prime}(\overline{x}^{\star})}\right)\mathrm{d}z$ is increasing on $[y,\overline{x}^{\star}]$, decreasing on $[\overline{x}^{\star},(g^{\prime})_{3}^{-1}(g^{\prime}(\overline{x}^{\star}))]$, increasing on $[(g^{\prime})_{3}^{-1}(g^{\prime}(\overline{x}^{\star})),(g^{\prime})_{4}^{-1}(g^{\prime}(\overline{x}^{\star}))]$, and decreasing on $[(g^{\prime})_{4}^{-1}(g^{\prime}(\overline{x}^{\star})),\infty)$. In addition, we have \eqref{7.20.4.4} and \eqref{7.20.4.5}, which
implies that \eqref{eq:4.2} holds true in this sub-case. 

If $g^{\prime}(\overline{x}^{\star})\leq g^{\prime}(a_{3})$, then the function $x\mapsto \int_{y}^{x}\left(1-\frac{g^{\prime}(z)}{g^{\prime}(\overline{x}^{\star})}\right)\mathrm{d}z$ is increasing on $[y,\overline{x}^{\star}]$, decreasing on $[\overline{x}^{\star},\infty)$. In addition, \eqref{7.20.4.4} holds. Therefore, \eqref{eq:4.2} holds true in this sub-case. 

\item Case $0\leq \underline{x}^{\star}<\overline{x}^{\star}<a_{2}$ and $y\in(\overline{x}^{\star},\infty)$.

If $g^{\prime}(\overline{x}^{\star})>g^{\prime}(a_{3})$, then the function $x\mapsto \int_{y}^{x}\left(1-\frac{g^{\prime}(z)}{g^{\prime}(\overline{x}^{\star})}\right)\mathrm{d}z$ is decreasing on $[y, y\vee (g^{\prime})_{3}^{-1}(g^{\prime}(\overline{x}^{\star}))]$, increasing on $[y\vee (g^{\prime})_{3}^{-1}(g^{\prime}(\overline{x}^{\star})),y\vee (g^{\prime})_{4}^{-1}(g^{\prime}(\overline{x}^{\star}))]$, and decreasing on $[y\vee (g^{\prime})_{4}^{-1}(g^{\prime}(\overline{x}^{\star})),\infty)$. In addition, we have \eqref{7.20.4.5}. Consequently, \eqref{eq:4.2} holds true in this sub-case. 

If $g^{\prime}(\overline{x}^{\star})\leq g^{\prime}(a_{3})$, then the function $x\mapsto \int_{y}^{x}\left(1-\frac{g^{\prime}(z)}{g^{\prime}(\overline{x}^{\star})}\right)\mathrm{d}z$ is decreasing on $[y,\infty)$. Therefore, \eqref{eq:4.2} holds true in this sub-case.

\item Case $a_{2}< \underline{x}^{\star}<a_{3}<\overline{x}^{\star}$ and $x\in[\overline{x}^{\star},\infty)$.

If $g^{\prime}(\overline{x}^{\star})> g^{\prime}(a_{1})$, then the function $y\mapsto \int_{y}^{x}\left(1-\frac{g^{\prime}(z)}{g^{\prime}(\overline{x}^{\star})}\right)\mathrm{d}z$ is increasing on $[0,(g^{\prime})_{1}^{-1}(g^{\prime}(\overline{x}^{\star}))]$, decreasing on $[(g^{\prime})_{1}^{-1}(g^{\prime}(\overline{x}^{\star})),(g^{\prime})_{2}^{-1}(g^{\prime}(\overline{x}^{\star}))]$, increasing on $[(g^{\prime})_{2}^{-1}(g^{\prime}(\overline{x}^{\star})),\underline{x}^{\star}]$, decreasing on $[\underline{x}^{\star},\overline{x}^{\star})$, and increasing on $[\overline{x}^{\star},x)$. In addition, since $g^{\prime}(\overline{x}^{\star})<g^{\prime}(x_{1})$, we have
\begin{align}
\int_{(g^{\prime})_{1}^{-1}(g^{\prime}(\overline{x}^{\star}))}^{x}\left(1-\frac{g^{\prime}(z)}{g^{\prime}(\overline{x}^{\star})}\right)\mathrm{d}z&\leq 
\int_{(g^{\prime})_{1}^{-1}(g^{\prime}(\overline{x}^{\star}))}^{\overline{x}^{\star}}\left(1-\frac{g^{\prime}(z)}{g^{\prime}(\overline{x}^{\star})}\right)\mathrm{d}z
\nn\\
&= 
\int_{(g^{\prime})_{1}^{-1}(g^{\prime}(\overline{x}^{\star}))}^{\underline{x}^{\star}}\left(1-\frac{g^{\prime}(z)}{g^{\prime}(\overline{x}^{\star})}\right)\mathrm{d}z+\psi(\underline{x}^{\star},\overline{x}^{\star})
\nn\\
&\leq 
\int_{(g^{\prime})_{1}^{-1}(g^{\prime}(\overline{x}^{\star}))}^{\underline{x}^{\star}}\left(1-\frac{g^{\prime}(z)}{g^{\prime}(x_{1})}\right)\mathrm{d}z+\beta
\nn\\
&\leq \beta,\label{7.20.4.6}\\
\int_{\underline{x}^{\star}}^{x}\left(1-\frac{g^{\prime}(z)}{g^{\prime}(\overline{x}^{\star})}\right)\mathrm{d}z&\leq 
\int_{\underline{x}^{\star}}^{\overline{x}^{\star}}\left(1-\frac{g^{\prime}(z)}{g^{\prime}(\overline{x}^{\star})}\right)\mathrm{d}z\nn\\
&
=\beta.\label{7.20.4.7.1}
\end{align}
Therefore, \eqref{eq:4.2} holds true in this sub-case.

If $g^{\prime}(\overline{x}^{\star})\leq g^{\prime}(a_{1})$, then the function $y\mapsto \int_{y}^{x}\left(1-\frac{g^{\prime}(z)}{g^{\prime}(\overline{x}^{\star})}\right)\mathrm{d}z$ is increasing on $[0,\underline{x}^{\star}]$, decreasing on $[\underline{x}^{\star},x)$. In addition, we have \eqref{7.20.4.7.1}. Consequently, \eqref{eq:4.2} holds true in this sub-case.

\item Case $a_{2}< \underline{x}^{\star}<a_{3}<\overline{x}^{\star}$ and $x\in[\underline{x}^{\star},\overline{x}^{\star})$.

If $g^{\prime}(\overline{x}^{\star})> g^{\prime}(a_{1})$, then the function $y\mapsto \int_{y}^{x}\left(1-\frac{g^{\prime}(z)}{g^{\prime}(\overline{x}^{\star})}\right)\mathrm{d}z$ is increasing on $[0,(g^{\prime})_{1}^{-1}(g^{\prime}(\overline{x}^{\star}))]$, decreasing on $[(g^{\prime})_{1}^{-1}(g^{\prime}(\overline{x}^{\star})),(g^{\prime})_{2}^{-1}(g^{\prime}(\overline{x}^{\star}))]$, increasing on $[(g^{\prime})_{2}^{-1}(g^{\prime}(\overline{x}^{\star})),\underline{x}^{\star}]$, decreasing on $[\underline{x}^{\star},x)$. In addition, we have \eqref{7.20.4.6} and \eqref{7.20.4.7.1}.
Therefore, \eqref{eq:4.2} holds true in this sub-case.

If $g^{\prime}(\overline{x}^{\star})\leq g^{\prime}(a_{1})$, then the function $y\mapsto \int_{y}^{x}\left(1-\frac{g^{\prime}(z)}{g^{\prime}(\overline{x}^{\star})}\right)\mathrm{d}z$ is increasing on $[0,\underline{x}^{\star}]$, decreasing on $[\underline{x}^{\star},x)$. In addition, we have \eqref{7.20.4.7.1}. Consequently, \eqref{eq:4.2} holds true in this sub-case.

\item Case $a_{2}< \underline{x}^{\star}<a_{3}<\overline{x}^{\star}$ and $x\in[0,\underline{x}^{\star})$.

If $g^{\prime}(\overline{x}^{\star})> g^{\prime}(a_{1})$, then
\begin{align}
\int_{y}^{x}\left(1-\frac{g^{\prime}(z)}{g^{\prime}(\overline{x}^{\star})}\right)\mathrm{d}z&
\leq 
\int_{(g^{\prime})_{1}^{-1}(g^{\prime}(\overline{x}^{\star}))}^{(g^{\prime})_{2}^{-1}(g^{\prime}(\overline{x}^{\star}))}\left(1-\frac{g^{\prime}(z)}{g^{\prime}(\overline{x}^{\star})}\right)\mathrm{d}z
\nn\\
&\leq \beta,\nn
\end{align}
where, the last inequality follows by the definition of $(\underline{x}^{\star},\overline{x}^{\star})$ and the fact of $a_{2}< \underline{x}^{\star}<a_{3}<\overline{x}^{\star}$.
Therefore, \eqref{eq:4.2} holds true in this sub-case.

If $g^{\prime}(\overline{x}^{\star})\leq g^{\prime}(a_{1})$, then $\int_{y}^{x}\left(1-\frac{g^{\prime}(z)}{g^{\prime}(\overline{x}^{\star})}\right)\mathrm{d}z\leq 0\leq \beta$. Consequently, \eqref{eq:4.2} holds true in this sub-case.

\item Case $0\leq \underline{x}^{\star}<a_{1}<a_{3}<\overline{x}^{\star}$. Then $g^{\prime}(\overline{x}^{\star})\geq \max\{g^{\prime}(x_{1}), g^{\prime}(x_{2}),g^{\prime}(a_{1}), g^{\prime}(a_{3})\}$. Suppose that $g'(a_2)>g'(\overline{x}^{\star})$.

If $y\in [0,\underline{x}^{\star}]$, then the function $x\mapsto \int_{y}^{x}\left(1-\frac{g^{\prime}(z)}{g^{\prime}(\overline{x}^{\star})}\right)\mathrm{d}z$ is decreasing on $[y,\underline{x}^{\star}]$, increasing on $[\underline{x}^{\star},(g^{\prime})_{2}^{-1}(g^{\prime}(\overline{x}^{\star}))]$, decreasing on $[(g^{\prime})_{2}^{-1}(g^{\prime}(\overline{x}^{\star})),(g^{\prime})_{3}^{-1}(g^{\prime}(\overline{x}^{\star}))]$, increasing on $[(g^{\prime})_{3}^{-1}(g^{\prime}(\overline{x}^{\star})),\overline{x}^{\star}]$, and decreasing on $[\overline{x}^{\star},\infty)$. In addition, we have
\begin{align}
\int_{y}^{(g^{\prime})_{2}^{-1}(g^{\prime}(\overline{x}^{\star}))}\left(1-\frac{g^{\prime}(z)}{g^{\prime}(\overline{x}^{\star})}\right)\mathrm{d}z&\leq 
\int_{\underline{x}^{\star}}^{(g^{\prime})_{2}^{-1}(g^{\prime}(\overline{x}^{\star}))}\left(1-\frac{g^{\prime}(z)}{g^{\prime}(\overline{x}^{\star})}\right)\mathrm{d}z
\nn\\
&=\psi(\underline{x}^{\star},\overline{x}^{\star})-
\int_{(g^{\prime})_{2}^{-1}(g^{\prime}(\overline{x}^{\star}))}^{\overline{x}^{\star}}\left(1-\frac{g^{\prime}(z)}{g^{\prime}(\overline{x}^{\star})}\right)\mathrm{d}z
\nn\\
&\leq \beta, \quad (\text{ since } \overline{x}^{\star} \geq x_{2})\label{7.20.4.7}
\\
\int_{y}^{\overline{x}^{\star}}\left(1-\frac{g^{\prime}(z)}{g^{\prime}(\overline{x}^{\star})}\right)\mathrm{d}z&\leq \int_{\underline{x}^{\star}}^{\overline{x}^{\star}}\left(1-\frac{g^{\prime}(z)}{g^{\prime}(\overline{x}^{\star})}\right)\mathrm{d}z
\nn\\
&=\beta.\label{7.20.4.8}
\end{align}
Therefore, \eqref{eq:4.2} holds true in this sub-case.

If $y\in (\underline{x}^{\star}, \overline{x}^{\star}]$, then the function $x\mapsto \int_{y}^{x}\left(1-\frac{g^{\prime}(z)}{g^{\prime}(\overline{x}^{\star})}\right)\mathrm{d}z$ is increasing on $[y,y\vee (g^{\prime})_{2}^{-1}(g^{\prime}(\overline{x}^{\star}))]$, decreasing on $[y\vee(g^{\prime})_{2}^{-1}(g^{\prime}(\overline{x}^{\star})),y\vee(g^{\prime})_{3}^{-1}(g^{\prime}(\overline{x}^{\star}))]$, increasing on $[y\vee(g^{\prime})_{3}^{-1}(g^{\prime}(\overline{x}^{\star})),\overline{x}^{\star}]$, and decreasing on $[\overline{x}^{\star},\infty)$. In addition, \eqref{7.20.4.7} and \eqref{7.20.4.8} remain true. Hence \eqref{eq:4.2} holds true in this sub-case.

If $y\in (\overline{x}^{\star},\infty)$, then the function $x\mapsto \int_{y}^{x}\left(1-\frac{g^{\prime}(z)}{g^{\prime}(\overline{x}^{\star})}\right)\mathrm{d}z$ is decreasing on $[y,\infty)$. Consequently, \eqref{eq:4.2} holds true in this sub-case.

Suppose $g'(a_2)\leq g'(\overline{x}^{\star})$.
If $y\in[0,\underline{x}^{\star}]$, then the function $x\mapsto \int_{y}^{x}\left(1-\frac{g^{\prime}(z)}{g^{\prime}(\overline{x}^{\star})}\right)\mathrm{d}z$ is decreasing on $[y,\underline{x}^{\star}]$, increasing on $[\underline{x}^{\star},\overline{x}^{\star}]$, and decreasing on $[\overline{x}^{\star},\infty)$.Moreover,
\begin{align}\label{eq:lem4.1.1}
\int_y^y\left(1-\frac{g^{\prime}(z)}{g^{\prime}(\overline{x}^{\star})}\right)\mathrm{d}z&=0\leq\beta,\nn\\
\int_y^{\overline{x}^{\star}}\left(1-\frac{g^{\prime}(z)}{g^{\prime}(\overline{x}^{\star})}\right)\mathrm{d}z
&\leq\int_{\underline{x}^{\star}}^{\overline{x}^{\star}}\left(1-\frac{g^{\prime}(z)}{g^{\prime}(\overline{x}^{\star})}\right)\mathrm{d}z=\beta.
\end{align}
Therefore, \eqref{eq:4.2} holds true in this sub-case.

If $y\in(\underline{x}^{\star},\overline{x}^{\star}]$, then the function $x\mapsto \int_{y}^{x}\left(1-\frac{g^{\prime}(z)}{g^{\prime}(\overline{x}^{\star})}\right)\mathrm{d}z$ is increasing on $[y,\overline{x}^{\star}]$ and decreasing on $[\overline{x}^{\star},\infty)$. In addition, \eqref{eq:lem4.1.1} remains true. Consequently, \eqref{eq:4.2} holds true in this sub-case.

If $y\in(\overline{x}^{\star},\infty)$, then the function $x\mapsto \int_{y}^{x}\left(1-\frac{g^{\prime}(z)}{g^{\prime}(\overline{x}^{\star})}\right)\mathrm{d}z$ is decreasing on $[y,\infty)$. Hence, \eqref{eq:4.2} holds true in this sub-case.
\end{itemize}
The proof is complete.
\end{proof}

With the aid of Lemma \ref{lem.4.1}, we are ready to characterize the optimal dividend and reinsurance strategies, along with the value function, in the following Theorem \ref{thm4.1}.

\begin{theorem}\label{thm4.1}
Let $(\underline{x}^{\star},\overline{x}^{\star})$ and $u^{\star}$ be given by Theorems \ref{thm3.2.ww}, \ref{Theorem3.4}, \ref{Theorem3.5}, and \ref{Theorem3.6}. Then, the optimal strategy for the auxiliary problem \eqref{eq:a008.aux.}, denoted as $(u^{\star},D^{\underline{x}^{\star},\overline{x}^{\star}})$, constitutes the optimal strategy for the control problem \eqref{eq:a008} if and only if at least one of the following conditions is satisfied.
\begin{itemize}
\item[\emph{(a)}] 
$\mu_{-}\geq \mu_{+}$.
\item[\emph{(b)}] 
$\overline{x}^{\star}\geq a$.
\item[\emph{(c)}] 
$\overline{x}^{\star}<a$ and $\mu_+\leq0$.
\item[\emph{(d)}] 
$\overline{x}^{\star}< a$, $\mu_{+}>0$, $\mu_{-}< \mu_{+}$, and $\mu_+-q(a-\overline{x}^{\star})-qg(\overline{x}^{\star})/g^{\prime}(\overline{x}^{\star})\leq0$. 

\end{itemize}
In particular, if
\begin{itemize}
\item[\emph{(e)}] $g^{\prime\prime}(a+)\geq0$,
\end{itemize}
holds, then either one of the conditions \emph{(a)}, \emph{(b)}, \emph{(c)}, or \emph{(d)} is necessarily satisfied.
\end{theorem}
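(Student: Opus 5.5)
The natural route is to take the candidate $v:=V_{\underline{x}^{\star},\overline{x}^{\star}}$ from Theorems \ref{thm3.2.ww}, \ref{Theorem3.4}, \ref{Theorem3.5}, \ref{Theorem3.6}, i.e.\ $v=g/g'(\overline{x}^{\star})$ on $[0,\overline{x}^{\star})$ and $v(x)=x-\overline{x}^{\star}+g(\overline{x}^{\star})/g'(\overline{x}^{\star})$ beyond. We then check when $v$ satisfies the hypotheses of Theorem \ref{thm2.2} with $x_0=\overline{x}^{\star}$. By construction $v\in C^1((0,\infty))\cap C^2((0,\infty)\setminus\{a,\overline{x}^{\star}\})$ with bounded one-sided second derivatives, $v$ is linear with slope one beyond $\overline{x}^{\star}$, and $v(0)=0$. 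So it remains to verify the two HJB inequalities,
\[
\mathcal{M}v\le 0 \quad\text{everywhere}, \qquad \max_{u}\mathcal{L}^u v\le 0 \quad\text{on } (\overline{x}^{\star},\infty)\setminus\{a\},
\]
together with the complementarity condition. On $(0,\overline{x}^{\star})$ we have $\max_u\mathcal{L}^uv=0$ by Section \ref{sec.3}. Once the inequalities hold, Theorem \ref{thm2.2} together with the admissibility of the barrier/feedback strategy gives optimality.

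\emph{Intervention part.} For $x<\overline{x}^{\star}$ we write
\[
v(x-\eta)+\eta-\beta-v(x)=\int_{x-\eta}^{x}\Bigl(1-\frac{g'(z)}{g'(\overline{x}^{\star})}\Bigr)\mathrm{d}z-\beta .
\]
Lemma \ref{lem.4.1} shows this is $\le 0$. For $x\ge\overline{x}^{\star}$ the integrand vanishes on $[\overline{x}^{\star},x]$, so the same lemma applies. Equality at $x=\overline{x}^{\star}$, $\eta=\overline{x}^{\star}-\underline{x}^{\star}$ follows from $\psi(\underline{x}^{\star},\overline{x}^{\star})=\beta$. Hence $\mathcal{M}v\le0$ in every case, with no extra condition.

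\emph{Continuation part beyond $\overline{x}^{\star}$.} There $v'=1$ and $v''=0$, so
\[
\max_{u}\mathcal{L}^uv(x)=\max_{u\in[0,1]}\bigl(u\,\mu(x)\bigr)-qv(x)=\mu(x)^+-qv(x).
\]
If $\mu(x)\le0$ this is clearly negative. At $x=\overline{x}^{\star}-$ continuity of $v''$ from the left (or one-sided smooth fit) gives $\mu(\overline{x}^{\star})-qv(\overline{x}^{\star})\le 0$ in the relevant regime, since $\mathcal{L}^1v\le \max_u\mathcal{L}^uv=0$ and $v''(\overline{x}^{\star}-)\ge0$ at a convex point. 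Because $qv$ is increasing, the sign of $\mu(x)^+-qv(x)$ can only fail where $\mu(x)$ jumps up, namely at $x=a$ when $\mu_+>\mu_-$. We split accordingly.
\begin{itemize}
\item[(b)] If $\overline{x}^{\star}\ge a$, $\mu$ is constant on $(\overline{x}^{\star},\infty)$, and the left-limit inequality propagates.
\item[(a)] If $\mu_-\ge\mu_+$, $\mu$ is nonincreasing, so it also propagates.
\item[(c)] If $\mu_+\le0$, the expression is negative on $(a,\infty)$, and the previous argument applies on $(\overline{x}^{\star},a]$.
\item[(d)] In the remaining case, the only binding point is $x=a+$. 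There the requirement is
\[
\mu_+\le q\,v(a)=q\bigl(a-\overline{x}^{\star}+g(\overline{x}^{\star})/g'(\overline{x}^{\star})\bigr),
\]
which is exactly the stated inequality.
\end{itemize}
Conversely, if (a)–(d) all fail, then $\max_u\mathcal{L}^uv>0$ on a right neighbourhood of $a$. By the super-solution property of $V$ (Theorem \ref{8.7.thm.2.5}, at points of $\setd$ where $v$ is $C^2$), $v\neq V$, so the candidate is not optimal. This gives the ``only if'' direction.

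\emph{Condition (e).} Suppose $g''(a+)\ge0$ and (a)–(c) fail, so $\overline{x}^{\star}<a$, $\mu_+>\mu_-$, $\mu_+>0$. The ODE on the right of $a$ with maximizer $u=1$ (which holds since $g''(a+)\ge0$) gives
\[
\tfrac12\sigma_+^2g''(a+)+\mu_+g'(a)-qg(a)=0, \qquad\text{hence}\qquad \mu_+g'(a)\le qg(a).
\]
Dividing by $g'(\overline{x}^{\star})$ and using $g'(a)/g'(\overline{x}^{\star})\ge1$ when $\overline{x}^{\star}<a$ leads to (d). The monotonicity $g'(a)\ge g'(\overline{x}^{\star})$ comes from the barrier minimizing $g'$ (Lemmas \ref{lem3.4}, \ref{lem:slope.comparison.case.IV}); more carefully, one compares $v(a)$ with $g(a)/g'(\overline{x}^{\star})$ through the linear extension, using $v\ge g/g'(\overline{x}^{\star})$ on $[\overline{x}^{\star},a]$.

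The main obstacle I expect is this last comparison, together with establishing rigorously the left-limit sign at $\overline{x}^{\star}$ uniformly across all the sub-cases of Section \ref{sec.3}. The intervention part reduces cleanly to Lemma \ref{lem.4.1}.
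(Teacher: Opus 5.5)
Your architecture matches the paper's: verification through Theorem~\ref{thm2.2} with $x_0=\overline{x}^{\star}$, $\mathcal{M}v\le0$ from Lemma~\ref{lem.4.1}, the left limit of $\mathcal{L}^uv$ at $\overline{x}^{\star}$ together with $v''(\overline{x}^{\star}-)\ge0$, and then a case split beyond $\overline{x}^{\star}$. However, two steps fail as written.

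\emph{Case (b) with $\overline{x}^{\star}=a$.} Points just to the left of $\overline{x}^{\star}=a$ lie in the lower regime. So the left-limit inequality you obtain is $\mu_-u-qv(a)\le0$, i.e.\ $\mu_-^+\le qv(a)$. It carries no information about $\mu_+$, which is what governs $(a,\infty)$. Suppose $\mu_+>\max\{\mu_-,0\}$; this can occur with $\overline{x}^{\star}=a$, e.g.\ in Case (II$_3$) for the appropriate $\beta$. Then your claim that ``the left-limit inequality propagates'' is unjustified. You need $\mu_+\le qv(a)=qg(a)/g'(a)$, and that requires information from the right of $a$. The paper obtains it by noting that $\overline{x}^{\star}=a$ forces $a\in(a_3,\infty)$, hence $g''(a+)\ge0$. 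It then treats this subcase with the (e)-argument: $\mathcal{L}^1g\le\max_u\mathcal{L}^ug=0$ on $(a,\infty)$, let $x\downarrow a$, and drop the nonnegative term $\tfrac12\sigma_+^2g''(a+)$.

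\emph{The final comparison in the (e)-step is reversed.} When $\overline{x}^{\star}<a$, $g$ is convex on $[\overline{x}^{\star},a]$; this is also what gives $v''(\overline{x}^{\star}-)\ge0$. So $g'(z)\ge g'(\overline{x}^{\star})$ there, and
\[
v(x)-\frac{g(x)}{g'(\overline{x}^{\star})}=\int_{\overline{x}^{\star}}^{x}\Bigl(1-\frac{g'(z)}{g'(\overline{x}^{\star})}\Bigr)\mathrm{d}z\le0,\qquad x\in[\overline{x}^{\star},a].
\]
That is, $v\le g/g'(\overline{x}^{\star})$ on this interval, not $\ge$. Dividing $\mu_+g'(a)\le qg(a)$ by $g'(\overline{x}^{\star})$ and using $g'(a)\ge g'(\overline{x}^{\star})$ only yields $\mu_+\le qg(a)/g'(\overline{x}^{\star})$. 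This bound is at least $qv(a)$, so (d) does not follow.

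The missing idea is to divide by $g'(a)$ instead, which gives $\mu_+\le qg(a)/g'(a)$. Then show $g(a)/g'(a)\le v(a)=a-\overline{x}^{\star}+g(\overline{x}^{\star})/g'(\overline{x}^{\star})$. This is the paper's key estimate: $h(x):=g(x)/g'(x)-x$ satisfies $h'=-gg''/(g')^2\le0$ on $(\overline{x}^{\star},a)$, so $h(a)\le h(\overline{x}^{\star})$. You also do not need the maximizer to equal $1$ on a right neighbourhood of $a$, which is delicate if $g''(a+)=0$. The inequality $\mathcal{L}^1g\le0$ on $(a,\infty)$ plus passage to the limit suffices.

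\emph{The ``only if'' direction.} Your argument here goes beyond the paper, whose proof establishes only sufficiency. It is sound, but the test function must satisfy $\phi''(x)=0$. With a quadratic perturbation $\phi''(x)<0$, and choosing $u$ small makes $\max_u\mathcal{L}^u\phi(x)\le0$, so no contradiction arises. The polynomial $\phi(y)=y-\overline{x}^{\star}+v(\overline{x}^{\star})-K(y-x)^4$ with $x\in(a,a+\epsilon)$ and $K$ large works. It lies below $v$ on $[\overline{x}^{\star},\infty)$ trivially, and below $0\le v$ on $(0,\overline{x}^{\star})$. It gives $\max_u\mathcal{L}^u\phi(x)=\mu_+^+-qv(x)>0$, contradicting Theorem~\ref{8.7.thm.2.5}.
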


\begin{proof}
Recall that the value function $V_{\underline{x}^{\star},\overline{x}^{\star}}(x)$ can be rewritten as 
\begin{equation}\label{eq:4.V}
V_{\underline{x}^{\star},\overline{x}^{\star}}(x)=
\left\{\begin{array}{ll}
\frac{g(x)}{g^{\prime}(\overline{x}^{\star})},&x\in[0,\overline{x}^{\star}), \\
x-\overline{x}^{\star}+\frac{g(\overline{x}^{\star})}{g^{\prime}(\overline{x}^{\star})}=x-\underline{x}^{\star}-\beta+\frac{g(\underline{x}^{\star})}{{g^{\prime}(\overline{x}^{\star})}},& x\in[\overline{x}^{\star},\infty).
\end{array}
\right.
\end{equation}
We first prove 
\begin{align}
\label{eq:thm4.1.1}
\left\{
\begin{array}{ll}
\mathcal{M}V_{\underline{x}^{\star},\overline{x}^{\star}}(x)\leq0, & \quad x\in[0,\overline{x}^{\star}), \\
\mathcal{M}V_{\underline{x}^{\star},\overline{x}^{\star}}(x)=0, & \quad x\in[\overline{x}^{\star}, \infty),
\end{array}
\right.
\end{align}
which is equivalent to
\begin{align}
\label{7.21.4.11}
\left\{
\begin{array}{ll}
V_{\underline{x}^{\star},\overline{x}^{\star}}(x)-V_{\underline{x}^{\star},\overline{x}^{\star}}(y)\geq x-y-\beta, &\quad \text{for all }\,\, x\geq y\geq0,\\
V_{\underline{x}^{\star},\overline{x}^{\star}}(x)-V_{\underline{x}^{\star},\overline{x}^{\star}}(y)= x-y-\beta, &\quad \text{for all }\,\,x\geq \overline{x}^{\star}\,\, \text{ and some }\,\,y\in[0,x].
\end{array}
\right.
\end{align} 
We distinguish the following mutually exclusive and collectively exhaustive cases.
\begin{itemize}
\item If $y+\beta>x\geq y\geq0$, it holds that
$$V_{\underline{x}^{\star},\overline{x}^{\star}}(x)-V_{\underline{x}^{\star},\overline{x}^{\star}}(y)\geq0>x-y-\beta.$$
\item If $x\geq y\geq \overline{x}^{\star}$ and $x\geq y+\beta$, it holds that
$$V_{\underline{x}^{\star},\overline{x}^{\star}}(x)-V_{\underline{x}^{\star},\overline{x}^{\star}}(y)=x-y>x-y-\beta.$$
\item If $x\geq \overline{x}^{\star}\geq y\geq 0$ and $x\geq y+\beta$, it follows from \eqref{eq:4.2} that
\begin{eqnarray}
V_{\underline{x}^{\star},\overline{x}^{\star}}(x)-V_{\underline{x}^{\star},\overline{x}^{\star}}(y)
\hspace{-0.3cm}&=&\hspace{-0.3cm}
x-\overline{x}^{\star}+\frac{g(\overline{x}^{\star})-g(y)}{g^{\prime}(\overline{x}^{\star})}
\nn\\
\hspace{-0.3cm}&=&\hspace{-0.3cm}
x-y-\beta-\left[\overline{x}^{\star}-y-\frac{g(\overline{x}^{\star})-g(y)}{g^{\prime}(\overline{x}^{\star})}-\beta\right]
\nn\\
\hspace{-0.3cm}&=&\hspace{-0.3cm}
x-y-\beta-\left[\int_{y}^{\overline{x}^{\star}}\left(1-\frac{g^{\prime}(z)}{g^{\prime}(\overline{x}^{\star})}\right)\mathrm{d}z-\beta\right]
\nn\\
\hspace{-0.3cm}& \geq &~\hspace{-0.3cm}x-y-\beta.\nn
\end{eqnarray}
In particular, if $x\geq \overline{x}^{\star}\geq y= \underline{x}^{\star}$ (Recall that $\overline{x}^{\star}-\underline{x}^{\star}-\beta\geq \int_{\underline{x}^{\star}}^{\overline{x}^{\star}}\left(1-\frac{g^{\prime}(z)}{g^{\prime}(\overline{x}^{\star})}\right)\mathrm{d}z-\beta=0$), we have
\begin{align*}
V_{\underline{x}^{\star},\overline{x}^{\star}}(x)-V_{\underline{x}^{\star},\overline{x}^{\star}}(\underline{x}^{\star})& 
=\left(x-\underline{x}^{\star}\right)-\beta,\quad x\geq \overline{x}^{\star}.
\end{align*}

\item If $\overline{x}^{\star}\geq x\geq y+\beta\geq \beta$, it follows from \eqref{eq:4.2} that
\begin{align}
V_{\underline{x}^{\star},\overline{x}^{\star}}(x)-V_{\underline{x}^{\star},\overline{x}^{\star}}(y)&=\frac{g(x)-g(y)}{g^{\prime}(\overline{x}^{\star})}
\nn\\
&=x-y-\beta-\left[\int_{y}^{x}\left(1-\frac{g^{\prime}(z)}{g^{\prime}(\overline{x}^{\star})}\right)\mathrm{d}z-\beta\right]
\nn\\
&\geq x-y-\beta.\nn
\end{align}
\end{itemize}
Combining the above arguments yields \eqref{7.21.4.11}. Consequently, \eqref{eq:thm4.1.1} is verified.

We next prove that
\begin{align}
\label{30.add.new.x}
\frac{g(a)}{g^{\prime}(a)}\leq \frac{g(\overline{x}^{\star})}{g^{\prime}(\overline{x}^{\star})}+a-\overline{x}^{\star},\quad \text{if}\quad \overline{x}^{\star}\leq a.
\end{align}
Let $h(x):=\frac{g(x)}{g^{\prime}(x)}-x$. Then we have $h'(x)=-\frac{g(x)}{(g^{\prime}(x))^2}g^{\prime\prime}(x)$. By the construction of $g$ in Section \ref{subsec.3.1}--\ref{subsec.3.4}, one knows $g^{\prime\prime}(x)\geq0$ for $x\in (\overline{x}^{\star},a)$ in case of $\overline{x}^{\star}\leq a$. Therefore, $h(x)$ is decreasing on $(\overline{x}^{\star},a)$, which implies \eqref{30.add.new.x}.

We next verify that 
\begin{align}
\label{7.22.4.13}
\left\{
\begin{array}{ll}
\max\limits_{u\in[0,1]}\mathcal{L}^uV_{\underline{x}^{\star},\overline{x}^{\star}}(x) = 0,& x\in(0,\overline{x}^{\star})\backslash\{a,\overline{x}^{\star}\}, \\
\max\limits_{u\in[0,1]}\mathcal{L}^uV_{\underline{x}^{\star},\overline{x}^{\star}}(x)\leq 0,& x\in(\overline{x}^{\star},\infty)\backslash\{a,\overline{x}^{\star}\}.
\end{array}
\right.
\end{align}
By \eqref{eq:4.V} and the construction of $V_{\underline{x}^{\star},\overline{x}^{\star}}(x)$, we obtain $V_{\underline{x}^{\star},\overline{x}^{\star}}\in C^1(\mathbb{R}_{+})\cap C^2(\mathbb{R}_{+}\backslash\{a,\overline{x}^{\star}\})$ and 
\begin{eqnarray}\label{eq:thm4.1.2}
\max\limits_{u\in[0,1]}\mathcal{L}^uV_{\underline{x}^{\star},\overline{x}^{\star}}(x)=\mathcal{L}^{u^{\star}}V_{\underline{x}^{\star},\overline{x}^{\star}}(x)=0,\quad x\in(0,\overline{x}^{\star})\backslash\{a\}.
\end{eqnarray}
In addition, it holds that
\begin{align}
\label{7.23.4.15z}
\lim_{x\rightarrow \overline{x}^{\star}+}\mathcal{L}^{u}V_{\underline{x}^{\star},\overline{x}^{\star}}(x)
&=
\lim_{x\rightarrow \overline{x}^{\star}+}(\mu_+ \mathbf{1}_{\{x>a\}} +\mu_- \mathbf{1}_{\{x\leq a\}})u-qV_{\underline{x}^{\star},\overline{x}^{\star}}(\overline{x}^{\star})
\\ \nn
&=(\mu_+ \mathbf{1}_{\{\overline{x}^{\star}\geq a\}} +\mu_- \mathbf{1}_{\{\overline{x}^{\star}<a\}})u-qV_{\underline{x}^{\star},\overline{x}^{\star}}(\overline{x}^{\star}), \quad u\in[0,1],
\\
\lim_{x\rightarrow \overline{x}^{\star}-}\mathcal{L}^{u}V_{\underline{x}^{\star},\overline{x}^{\star}}(x)
&=\frac{1}{2}(\sigma^2_+ \mathbf{1}_{\{\overline{x}^{\star}>a\}} +\sigma^2_- \mathbf{1}_{\{\overline{x}^{\star}\leq a\}})u^2V^{\prime\prime}_{\underline{x}^{\star},\overline{x}^{\star}}(\overline{x}^{\star}-)
\nn\\
&\quad +(\mu_+ \mathbf{1}_{\{\overline{x}^{\star}>a\}} +\mu_- \mathbf{1}_{\{\overline{x}^{\star}\leq a\}})u-qV_{\underline{x}^{\star},\overline{x}^{\star}}(\overline{x}^{\star}), \quad u\in[0,1].\label{7.23.4.16.zzz}
\end{align}
Combining \eqref{eq:thm4.1.2}, \eqref{7.23.4.15z}, and \eqref{7.23.4.16.zzz} yields 
\begin{align*}
0\geq\lim_{x\rightarrow \overline{x}^{\star}-}\mathcal{L}^{u}V_{\underline{x}^{\star},\overline{x}^{\star}}(x) 
&=\frac{1}{2}(\sigma^2_+ \mathbf{1}_{\{\overline{x}^{\star}>a\}} +\sigma^2_- \mathbf{1}_{\{\overline{x}^{\star}\leq a\}})u^2V^{\prime\prime}_{\underline{x}^{\star},\overline{x}^{\star}}(\overline{x}^{\star}-)\\
&\quad\;+\left(\mu_{-}-\mu_{+}\right)u\mathbf{1}_{\{\overline{x}^{\star}= a\}}
+\lim_{x\rightarrow \overline{x}^{\star}+}\mathcal{L}^{u}V_{\underline{x}^{\star},\overline{x}^{\star}}(x), \quad u\in[0,1].
\end{align*}
Using this and the fact that $V^{\prime\prime}_{\underline{x}^{\star},\overline{x}^{\star}}(\overline{x}^{\star}-)\geq 0$ (Actually, from the explicit characterizations of $(\underline{x}^{\star},\overline{x}^{\star})$, one knows that $g^{\prime\prime}(\overline{x}^{\star}-)\geq 0$, which is equivalent to $V^{\prime\prime}_{\underline{x}^{\star},\overline{x}^{\star}}(\overline{x}^{\star}-)\geq 0$), one has
\begin{align}
\label{mu-+}
\left(\mu_{-}-\mu_{+}\right)u\mathbf{1}_{\{\overline{x}^{\star}= a\}}
+
\lim_{x\rightarrow \overline{x}^{\star}+}\mathcal{L}^{u}V_{\underline{x}^{\star},\overline{x}^{\star}}(x)\leq 0, \quad u\in[0,1].
\end{align}
We next prove 
\begin{align}
\label{7.23.4.17.zz}
\mathcal{L}^{u}V_{\underline{x}^{\star},\overline{x}^{\star}}(x)\leq 0, \quad (x,u)\in(\overline{x}^{\star},\infty)\backslash\{a\}\times[0,1].
\end{align}

\begin{itemize}
\item When Condition (a) holds true, by \eqref{mu-+} we have
\begin{align}
\label{7.23.4.17z}
\lim_{x\rightarrow \overline{x}^{\star}+}\mathcal{L}^{u}V_{\underline{x}^{\star},\overline{x}^{\star}}(x)\leq 0, \quad u\in[0,1].
\end{align}
Then, if $\overline{x}^{\star}\geq a$, we have
\begin{align}
\mathcal{L}^{u}V_{\underline{x}^{\star},\overline{x}^{\star}}(x)
&=\mu_+u-qV_{\underline{x}^{\star},\overline{x}^{\star}}(x)
\nn\\
&=\mu_+u-qV_{\underline{x}^{\star},\overline{x}^{\star}}(\overline{x}^{\star})-q\left[V_{\underline{x}^{\star},\overline{x}^{\star}}(x)-V_{\underline{x}^{\star},\overline{x}^{\star}}(\overline{x}^{\star})\right]
\nn\\
&= \lim_{x\rightarrow \overline{x}^{\star}+}\mathcal{L}^{u}V_{\underline{x}^{\star},\overline{x}^{\star}}(x)-q
\left[V_{\underline{x}^{\star},\overline{x}^{\star}}(x)-V_{\underline{x}^{\star},\overline{x}^{\star}}(\overline{x}^{\star})\right]
\nn\\
&\leq 0, \quad (x,u)\in(\overline{x}^{\star},\infty)\times[0,1].\nn
\end{align}
While, if $\overline{x}^{\star}< a$, we have
\begin{align}
\mathcal{L}^{u}V_{\underline{x}^{\star},\overline{x}^{\star}}(x)
&=(\mu_+ \mathbf{1}_{\{x>a\}} +\mu_- \mathbf{1}_{\{x\leq a\}})u-qV_{\underline{x}^{\star},\overline{x}^{\star}}(x)
\nn\\
&=\left[\mu_+ \mathbf{1}_{\{x>a\}} +\mu_- \mathbf{1}_{\{x\leq a\}}-\mu_{-}\right]u-q\left[V_{\underline{x}^{\star},\overline{x}^{\star}}(x)-V_{\underline{x}^{\star},\overline{x}^{\star}}(\overline{x}^{\star})\right]
\nn\\
&\quad +\mu_{-}u-q V_{\underline{x}^{\star},\overline{x}^{\star}}(\overline{x}^{\star})
\nn\\
&=\left[\mu_+ \mathbf{1}_{\{x>a\}} +\mu_- \mathbf{1}_{\{x\leq a\}}-\mu_{-}\right]u-q\left[V_{\underline{x}^{\star},\overline{x}^{\star}}(x)-V_{\underline{x}^{\star},\overline{x}^{\star}}(\overline{x}^{\star})\right]
\nn\\
&\quad +\lim_{x\rightarrow \overline{x}^{\star}+}\mathcal{L}^{u}V_{\underline{x}^{\star},\overline{x}^{\star}}(x)
\nn\\
&\leq 0, \quad (x,u)\in(\overline{x}^{\star},\infty)\times[0,1].\nn
\end{align}
That is, \eqref{7.23.4.17.zz} holds under Condition (a).

\item When Condition (b) holds true with $\overline{x}^{\star}> a$, it follows from \eqref{mu-+} that
\begin{eqnarray}
\mathcal{L}^{u}V_{\underline{x}^{\star},\overline{x}^{\star}}(x)
\hspace{-0.3cm}&=&\hspace{-0.3cm}\mu_+ u-qV_{\underline{x}^{\star},\overline{x}^{\star}}(x)\leq \mu_+ u-qV_{\underline{x}^{\star},\overline{x}^{\star}}(\overline{x}^{\star})
\nn\\
\hspace{-0.3cm}&=&\hspace{-0.3cm}
\lim_{x\rightarrow \overline{x}^{\star}+}\mathcal{L}^{u}V_{\underline{x}^{\star},\overline{x}^{\star}}(x)\leq -\left(\mu_{-}-\mu_{+}\right)u\mathbf{1}_{\{\overline{x}^{\star}= a\}}=0,\quad (x,u)\in(\overline{x}^{\star},\infty)\times[0,1].\nn
\end{eqnarray}
That is, \eqref{7.23.4.17.zz} holds under Condition (b) with $\overline{x}^{\star}> a$.

\item Suppose Condition (b) holds with $\overline{x}^{\star}=a$. From the definition of $\overline{x}^{\star}$ and the discussions in Subsections \ref{subsec.3.1}–\ref{subsec.3.4}, we know that $\overline{x}^{\star}=a$ is possible only in one of the following cases: $\mu_{\pm}\leq 0$, $(\text{{II}}_{3})$, $(\text{{III}}_{2})$, or $(\text{{IV}}_{4})$. Furthermore, under any of these four cases, we must have $a=\overline{x}^{\star}\in (a_{3},\infty)$. Consequently, $g^{\prime\prime}(a+)=g^{\prime\prime}(\overline{x}^{\star}+)\geq 0$, which verifies Condition (e). We omit the details of this verification here, as the sufficiency of Condition (e) for \eqref{7.23.4.17.zz} will be established later.

\item When Condition (c) holds true, from \eqref{mu-+} follows \eqref{7.23.4.17z}, that is
\begin{align}
\mu_{-} u-qV_{\underline{x}^{\star},\overline{x}^{\star}}(\overline{x}^{\star})\leq 0,\nn
\end{align}
which implies
\begin{align}
\mathcal{L}^{u}V_{\underline{x}^{\star},\overline{x}^{\star}}(x)
&=\left[\mu_{-} u-qV_{\underline{x}^{\star},\overline{x}^{\star}}(\overline{x}^{\star})\right]-q\left[V_{\underline{x}^{\star},\overline{x}^{\star}}(x)-V_{\underline{x}^{\star},\overline{x}^{\star}}(\overline{x}^{\star})\right]
\nn\\
&\leq 0,\quad (x,u)\in(\overline{x}^{\star},a]\times[0,1].\nn
\end{align}
Moreover, since $\mu_+\leq 0$, we have
\begin{align}
\mathcal{L}^{u}V_{\underline{x}^{\star},\overline{x}^{\star}}(x)
&=\mu_+ u-qV_{\underline{x}^{\star},\overline{x}^{\star}}(x)
\nn\\
&\leq -qV_{\underline{x}^{\star},\overline{x}^{\star}}(x)
\nn\\
&\leq 0,\quad (x,u)\in(a,\infty)\times[0,1].\nn
\end{align}
Therefore, \eqref{7.23.4.17.zz} holds under Condition (c).

\item When Condition (d) holds true, we have
\begin{eqnarray}
\mathcal{L}^{u}V_{\underline{x}^{\star},\overline{x}^{\star}}(x)
\hspace{-0.3cm}&=&\hspace{-0.3cm}\mu_- u-qV_{\underline{x}^{\star},\overline{x}^{\star}}(x)
\nn\\
\hspace{-0.3cm}&=&\hspace{-0.3cm}
\left[\mu_-u -qV_{\underline{x}^{\star},\overline{x}^{\star}}(\overline{x}^{\star})\right]-q\left[V_{\underline{x}^{\star},\overline{x}^{\star}}(x) -V_{\underline{x}^{\star},\overline{x}^{\star}}(\overline{x}^{\star})\right]
\nn\\
\hspace{-0.3cm}&=&\hspace{-0.3cm}
\lim_{x\rightarrow \overline{x}^{\star}+}\mathcal{L}^{u}V_{\underline{x}^{\star},\overline{x}^{\star}}(x)-q\left[V_{\underline{x}^{\star},\overline{x}^{\star}}(x) -V_{\underline{x}^{\star},\overline{x}^{\star}}(\overline{x}^{\star})\right]
\nn\\
\hspace{-0.3cm}&\leq&\hspace{-0.3cm}0,\quad (x,u)\in(\overline{x}^{\star},a]\times[0,1],\nn
\end{eqnarray}
and
\begin{eqnarray}
\mathcal{L}^{u}V_{\underline{x}^{\star},\overline{x}^{\star}}(x)
\hspace{-0.3cm}&=&\hspace{-0.3cm}\mu_+ u-qV_{\underline{x}^{\star},\overline{x}^{\star}}(x)
\nn\\
\hspace{-0.3cm}&=&\hspace{-0.3cm}\mu_+ u-q(x-\overline{x}^{\star})-qV_{\underline{x}^{\star},\overline{x}^{\star}}(\overline{x}^{\star})
\nn\\
\hspace{-0.3cm}&\leq&\hspace{-0.3cm}
\mu_+u-q(a-\overline{x}^{\star})-qg(\overline{x}^{\star})/g^{\prime}(\overline{x}^{\star})
\nn\\
\hspace{-0.3cm}&\leq&\hspace{-0.3cm}
\mu_+-q(a-\overline{x}^{\star})-qg(\overline{x}^{\star})/g^{\prime}(\overline{x}^{\star})
\nn\\
\hspace{-0.3cm}&\leq&\hspace{-0.3cm}0,\quad (x,u)\in(a,\infty)\times[0,1].\nn
\end{eqnarray}
Hence, \eqref{7.23.4.17.zz} holds under Condition (d).
\end{itemize}
Combining \eqref{eq:thm4.1.2} and \eqref{7.23.4.17.zz}, we conclude that \eqref{7.22.4.13} holds under any of conditions (a), (b), (c), or (d). Combining with \eqref{eq:thm4.1.1}, \eqref{7.22.4.13} and Theorem \ref{thm2.2}, we prove the first claim of Theorem \ref{thm4.1}.

We next prove the second claim of Theorem \ref{thm4.1}. To this end, suppose that Condition (e) holds. We claim that either the conditions (a), (b), (c), or (d) of Theorem \ref{thm4.1} is satisfied. If Condition (a), Condition (b) with $\overline{x}^{\star}>a$, or Condition (c) is true, then the claim follows immediately. Now suppose that none of Condition (a), Condition (b) with $\overline{x}^{\star}>a$, and Condition (c) holds. That is, either $\overline{x}^{\star}=a$ and $\mu_{-}<\mu_{+}\leq 0$, or 
$\overline{x}^{\star}\leq a$ and $\mu_{-}<\mu_{+}>0$. 
\begin{itemize}
\item When $\overline{x}^{\star}=a$ and $\mu_{-}<\mu_{+}\leq 0$, we have
\begin{align}
\lim_{x\rightarrow \overline{x}^{\star}+}\mathcal{L}^{u}V_{\underline{x}^{\star},\overline{x}^{\star}}(x)=&~\mu_+u-qV_{\underline{x}^{\star},\overline{x}^{\star}}(\overline{x}^{\star})
\nn\\
\leq&~-qg(a+)/g^{\prime}(a+)
\nn\\
\leq&~ 0,\quad u\in[0,1],\nn
\end{align}
implying that 
\begin{align} \mathcal{L}^{u}V_{\underline{x}^{\star},\overline{x}^{\star}}(x)=&~\mu_+u-qV_{\underline{x}^{\star},\overline{x}^{\star}}(\overline{x}^{\star})
-q\left[V_{\underline{x}^{\star},\overline{x}^{\star}}(x)-V_{\underline{x}^{\star},\overline{x}^{\star}}(\overline{x}^{\star})\right]
\nn\\
=&~\lim_{x\rightarrow \overline{x}^{\star}+}\mathcal{L}^{u}V_{\underline{x}^{\star},\overline{x}^{\star}}(x)
-q\left[V_{\underline{x}^{\star},\overline{x}^{\star}}(x)-V_{\underline{x}^{\star},\overline{x}^{\star}}(\overline{x}^{\star})\right]
\nn\\
\leq&~ 0,\quad (x,u)\in(\overline{x}^{\star},\infty)\times[0,1].\nn
\end{align}
That is, \eqref{7.23.4.17.zz} holds under the case where $\overline{x}^{\star}=a$ and $\mu_{-}<\mu_{+}\leq 0$.
\item When $\overline{x}^{\star}\leq a$ and $\mu_{-}<\mu_{+}>0$, by \eqref{30.add.new.x}, we have 
\begin{align}
\label{7.23.4.16}
&\mu_+-q(a-\overline{x}^{\star})-qg(\overline{x}^{\star})/g^{\prime}(\overline{x}^{\star})
\nn\\
\leq&~\mu_{+}-qg(a+)/g^{\prime}(a+)
\nn\\
=&~\frac{1}{g^{\prime}(a+)}\left(\mu_{+}g^{\prime}(a+)-qg(a+)\right).
\end{align}
By $g^{\prime\prime}(a+)\geq 0$ and the discussions in Subsections \ref{subsec.3.2} and \ref{subsec.3.4}, there must be $\delta_{0}>0$ such that $g^{\prime\prime}(x)\geq 0$ for all $x\in(a,a+\delta_{0})$, which implies that the maximizer $u^{\star}$ of \eqref{eq:a046} is given as $u^{\star}(x)\equiv 1$ on $(a,a+\delta_{0})$.
Consequently, it holds that
\begin{align}
\label{7.23.4.17}
\frac{1}{2}\sigma_{+}^{2}g^{\prime\prime}(x)+\mu_{+}g^{\prime}(x)-qg(x)&=\frac{1}{2}\sigma_{+}^{2}\left(u^{\star}(x)\right)^{2}g^{\prime\prime}(x)+\mu_{+}u^{\star}(x)g^{\prime}(x)-qg(x)
\nn\\
&=\max_{u\in[0,1]}\left(\frac{1}{2}\sigma_{+}^{2}u^{2}g^{\prime\prime}(x)+\mu_{+}u g^{\prime}(x)-qg(x)\right)
\nn\\
&=0, \quad x\in (a,a+\delta_{0}),
\end{align}
since $g$ solves \eqref{eq:a046} by construction.
Sending $x\downarrow a$ in \eqref{7.23.4.17} yields
\begin{align}
\label{7.23.4.19}
\mu_{+}g^{\prime}(a+)-qg(a+)=-\frac{1}{2}\sigma_{+}^{2}g^{\prime\prime}(a+)\leq 0.
\end{align}
Combining \eqref{7.23.4.16} and \eqref{7.23.4.19} 
establishes the validity of Condition (d) of Theorem \ref{thm4.1}.
\end{itemize}
The proof is complete.
\end{proof}

\begin{remark}
In Theorem \ref{thm4.1}, the following additional assumption
\begin{align}
\label{add.ass}
\mu_+-q(a-\overline{x}^{\star})-qg(\overline{x}^{\star})/g^{\prime}(\overline{x}^{\star})\leq0,
\end{align}
is imposed to facilitate the characterization of an optimal dividend and reinsurance strategy in the sub-case where $\overline{x}^{\star}< a$ and $\mu_{-}< \mu_{+}>0$. Since \eqref{add.ass}
serves as a sufficient condition guaranteeing optimality among all admissible strategies, whereas \textbf{Assumption A} and \textbf{Assumption B} (in Sub-sections \ref{subs.3.2.2} and \ref{subs.3.4.2}) ensure
optimality among admissible strategies whose dividend strategy is restricted to two-barrier impulsive dividend strategies, it is natural to expect that \eqref{add.ass} is more stringent than the latter two. The following analysis is devoted to verifying this conjecture.

Consider the Cases $(\text{\emph{II}}_{1})$-$(\text{\emph{II}}_{2})$ with $\mu_{-}\leq0<\mu_{+}$, $\underline{x}^{\star}=0$, and $\overline{x}^{\star}\in [0,a)$. 
Suppose Condition \emph{(d)} holds, which, by $g(0)=0$, is equivalent to
\begin{align}
&\mu_{+}-q(a-\overline{x}^{\star})-\frac{qg(\overline{x}^{\star})}{g^{\prime}(\overline{x}^{\star})}\leq0
\nn\\
\Longleftrightarrow&\,\,
\psi(0,\overline{x}^{\star})
=\overline{x}^{\star}-\frac{g(\overline{x}^{\star})}{g^{\prime}(\overline{x}^{\star})}\leq a-\frac{\mu_{+}}{q}.
\label{eq:remark4.3.condition.d}
\end{align}
Moreover, $\underline{x}^{\star}=0$ gives
$\beta=\psi(0,\overline{x}^{\star}).$ Hence, Condition \emph{(d)} is equivalent to
\begin{align}
\beta\leq a-\frac{\mu_{+}}{q}.
\label{eq:remark4.3.case.II.condition.d}
\end{align}
By Remark \ref{remark3.7}, we know that
\begin{align}
\beta\leq\psi(0,a_{3})
\label{eq:remark4.3.case.II.assumption.A}
\end{align}
is sufficient for \textbf{Assumption A}. We claim that
\begin{align}
\label{8.15.4.28}
\psi(0,a_3)> a-\mu_+/q.
\end{align} 
Indeed, under Cases \emph{$\text{II}_{1}$} with $a_{24}<0$, we have $a_3>a$, and hence
\begin{eqnarray}
\psi(0,a_3)
\hspace{-0.3cm}&=&\hspace{-0.3cm}a_3-\frac{g(a_3)}{g^{\prime}(a_3)}
\nn\\
\hspace{-0.3cm}&=&\hspace{-0.3cm}
a_3-\frac{a_{23}\left(-\frac{a_{24}\theta^2_-}{a_{23}\theta^2_+}\right)^{\frac{\theta_+}{\theta_+-\theta_-}}+a_{24}\left(-\frac{a_{24}\theta^2_-}{a_{23}\theta^2_+}\right)^{\frac{\theta_-}{\theta_+-\theta_-}}}{\theta_+ a_{23}\left(-\frac{a_{24}\theta^2_-}{a_{23}\theta^2_+}\right)^{\frac{\theta_+}{\theta_+-\theta_-}}+\theta_-a_{24}\left(-\frac{a_{24}\theta^2_-}{a_{23}\theta^2_+}\right)^{\frac{\theta_-}{\theta_+-\theta_-}}}
\nn\\
\hspace{-0.3cm}&=&\hspace{-0.3cm}a_3-\frac{a_{23}\left(-\frac{a_{24}\theta^2_-}{a_{23}\theta^2_+}\right)+a_{24}}{\theta_+a_{23}\left(-\frac{a_{24}\theta^2_-}{a_{23}\theta^2_+}\right)+\theta_-a_{24}}
\nn\\
\hspace{-0.3cm}&=&\hspace{-0.3cm}a_3-\frac{\mu_+}{q}
\nn\\
\hspace{-0.3cm}&>&\hspace{-0.3cm}a-\frac{\mu_+}{q},\nn
\end{eqnarray}
confirming \eqref{8.15.4.28}.
Under Case \emph{$\text{II}_{1}$} with $a_{24}\geq0$, note that
\begin{align}
&a<x_{0}^{+}:=\frac{\sigma_{+}^2 \left(1-\gamma_{+}\right)}{\mu_{+}}{-\frac{\gamma_+(e^{\delta_+a}-e^{\delta_-a})}{\delta_+e^{\delta_+a}-\delta_-e^{\delta_-a}}+a}\nn\\
\Longleftrightarrow&
-\frac{\gamma_+(e^{\delta_+a}-e^{\delta_-a})}{\delta_+e^{\delta_+a}-\delta_-e^{\delta_-a}}>-\frac{\sigma^2_+(1-\gamma_+)}{\mu_+},\nn
\end{align}
which implies
\begin{eqnarray}
\psi(0,a_3)\hspace{-0.3cm}&=&\hspace{-0.3cm}
x_0^+-\frac{g(x_0^+)}{g^{\prime}(x_0^+)} 
\nn\\
\hspace{-0.3cm}&=&\hspace{-0.3cm}a-\frac{(1-\gamma_+)\mu_+}{2q}-\frac{\gamma_+(e^{\delta_+a}-e^{\delta_-a})}{\delta_+ e^{\delta_+a}-\delta_-e^{\delta_-a}}
\nn\\
\hspace{-0.3cm}&>&\hspace{-0.3cm}
a-\frac{(1-\gamma_+)\mu_+}{2q}-\frac{\sigma^2_+(1-\gamma_+)}{\mu_+}
\nn\\
\hspace{-0.3cm}&=&\hspace{-0.3cm}
a-\frac{\mu_+}{2q}\frac{\mu_+^{2}}{\mu_+^{2}+2\sigma_{+}^{2}q}-\frac{\sigma^2_+\mu_+}{\mu_+^2+2q\sigma^2_+}
\nn\\
\hspace{-0.3cm}&\geq&\hspace{-0.3cm}
a-\frac{\mu_+}{2q}-\frac{\mu_+}{2q}
\nn\\
\hspace{-0.3cm}&=&\hspace{-0.3cm}
a-\frac{\mu_+}{q},\nn
\end{eqnarray}
confirming \eqref{8.15.4.28}.
Under Case \emph{$\text{II}_{2}$}, the inequality $a_{11}\theta_{+}^{2}+a_{12}\theta_{-}^{2}<0$ implies
		$-\frac{a_{12}\theta_{-}^{2}}{a_{11}\theta_{+}^{2}}>1$. Hence,
\begin{align}
\psi(0,a_3)
=&~
a_3-\frac{g(a_3)}{g^{\prime}(a_3)}
\nn\\
=&~
a+\frac{\ln{\frac{-a_{12}\theta_{-}^2}{a_{11}\theta_{+}^2}}}{\theta_{+}-\theta_{-}}-\frac{a_{11}\left(-\frac{a_{12}\theta^2_-}{a_{11}\theta^2_+}\right)^{\frac{\theta_+}{\theta_+-\theta_-}}+a_{12}\left(-\frac{a_{12}\theta^2_-}{a_{11}\theta^2_+}\right)^{\frac{\theta_-}{\theta_+-\theta_-}}}{\theta_+ a_{11}\left(-\frac{a_{12}\theta^2_-}{a_{11}\theta^2_+}\right)^{\frac{\theta_+}{\theta_+-\theta_-}}+\theta_-a_{12}\left(-\frac{a_{12}\theta^2_-}{a_{11}\theta^2_+}\right)^{\frac{\theta_-}{\theta_+-\theta_-}}}
\nn\\
=&~a+\frac{\ln{\frac{-a_{12}\theta_{-}^2}{a_{11}\theta_{+}^2}}}{\theta_{+}-\theta_{-}}-\frac{a_{11}\left(-\frac{a_{12}\theta^2_-}{a_{11}\theta^2_+}\right)+a_{12}}{\theta_+a_{11}\left(-\frac{a_{12}\theta^2_-}{a_{11}\theta^2_+}\right)+\theta_-a_{12}}
\nn\\
=&~a+\frac{\ln{\frac{-a_{12}\theta_{-}^2}{a_{11}\theta_{+}^2}}}{\theta_{+}-\theta_{-}}-\frac{\mu_+}{q}
\nn\\
>&~a-\frac{\mu_+}{q},\nn 
\end{align}
confirming \eqref{8.15.4.28}.
Combining \eqref{eq:remark4.3.case.II.condition.d} and \eqref{8.15.4.28} implies $$\beta\leq a-\frac{\mu_{+}}{q}<\psi(0,a_{3}).$$ 
Therefore, by Remark \ref{remark3.7}, Condition \emph{(d)} implies \textbf{Assumption A}. 

We next consider the Cases $(\text{\emph{IV}}_{6})$--$(\text{\emph{IV}}_{7})$ with $\mu_{\pm}>0$, $\overline{x}^{\star}<a$, $a_{2}=a$, and $\overline{x}^{\star}\in[a_{1},a_{2})$. 
Suppose Condition \emph{(d)} holds.
For any pair $(\underline{x},\overline{x})$ appearing in \textbf{Assumption B}, we have $\overline{x}\geq a_{3}$ and $g^{\prime}(\overline{x})<g^{\prime}(a_{2})$. Moreover, for $a_{3}\leq y< (g^{\prime})_{4}^{-1}(g^{\prime}(a_{2})),$
\begin{align*}
\frac{\mathrm{d}}{\mathrm{d}y}\psi\!\left(0,(g^{\prime})_{3}^{-1}(g^{\prime}(y))\right)
&=\frac{g^{\prime\prime}(y)}{[g^{\prime}(y)]^{2}}\int_{0}^{(g^{\prime})_{3}^{-1}(g^{\prime}(y))}g^{\prime}(z)\,\mathrm{d}z\geq0.
\end{align*}
Consequently,
\begin{align}
\psi\!\left(0,(g^{\prime})_{3}^{-1}(g^{\prime}(\overline{x}))\right) \geq\psi(0,a_{3}).\label{eq:remark4.3.assumption.B.bound}
\end{align}
It follows that
\begin{align}
\psi(0,\overline{x}^{\star})\leq\psi(0,a_{3}),
\label{eq:remark4.3.case.IV.assumption.B}
\end{align}
is sufficient for \textbf{Assumption B}. 
We claim that
\begin{align}
\psi(0,a_{3})>a-\frac{\mu_{+}}{q}.
\label{eq:remark4.3.case.IV.bound}
\end{align}
Indeed, under Case $(\text{\emph{IV}}_{6})$,
we have 
\begin{align*}
\psi(0,a_{3})
&=a_{3}-\frac{g(a_{3})}{g^{\prime}(a_{3})}\\
&=a+\frac{\ln\!\left(-\dfrac{a_{36}\theta_{-}^{2}}{a_{35}\theta_{+}^{2}}\right)}{\theta_{+}-\theta_{-}}-\frac{a_{35}\left(-\dfrac{a_{36}\theta_{-}^{2}}{a_{35}\theta_{+}^{2}}\right)^{\frac{\theta_{+}}{\theta_{+}-\theta_{-}}}+a_{36}\left(-\dfrac{a_{36}\theta_{-}^{2}}{a_{35}\theta_{+}^{2}}\right)^{\frac{\theta_{-}}{\theta_{+}-\theta_{-}}}}{\theta_{+}a_{35}\left(-\dfrac{a_{36}\theta_{-}^{2}}{a_{35}\theta_{+}^{2}}\right)^{\frac{\theta_{+}}{\theta_{+}-\theta_{-}}}+\theta_{-}a_{36}\left(-\dfrac{a_{36}\theta_{-}^{2}}{a_{35}\theta_{+}^{2}}\right)^{\frac{\theta_{-}}{\theta_{+}-\theta_{-}}}}\\ 
&=a_{3}-\frac{\theta_{+}+\theta_{-}}{\theta_{+}\theta_{-}}\\
&=a_{3}-\frac{\mu_{+}}{q}\\
&>a-\frac{\mu_{+}}{q},
\end{align*}
which confirms \eqref{eq:remark4.3.case.IV.bound}.
Under Case $(\text{\emph{IV}}_{7})$ with $a_{48}<0$, we have
\begin{align*}
\psi(0,a_{3})
&=a_{3}-\frac{g(a_{3})}{g^{\prime}(a_{3})}\\
&=x_{2}^{+}+\frac{\ln\!\left(-\dfrac{a_{48}\theta_{-}^{2}}{a_{47}\theta_{+}^{2}}\right)}{\theta_{+}-\theta_{-}}-\frac{a_{47}\left(-\dfrac{a_{48}\theta_{-}^{2}}{a_{47}\theta_{+}^{2}}\right)^{\frac{\theta_{+}}{\theta_{+}-\theta_{-}}}+a_{48}\left(-\dfrac{a_{48}\theta_{-}^{2}}{a_{47}\theta_{+}^{2}}\right)^{\frac{\theta_{-}}{\theta_{+}-\theta_{-}}}}{\theta_{+}a_{47}\left(-\dfrac{a_{48}\theta_{-}^{2}}{a_{47}\theta_{+}^{2}}\right)^{\frac{\theta_{+}}{\theta_{+}-\theta_{-}}}+\theta_{-}a_{48}\left(-\dfrac{a_{48}\theta_{-}^{2}}{a_{47}\theta_{+}^{2}}\right)^{\frac{\theta_{-}}{\theta_{+}-\theta_{-}}}}\\ 
&=a_{3}-\frac{\theta_{+}+\theta_{-}}{\theta_{+}\theta_{-}}\\
&=a_{3}-\frac{\mu_{+}}{q}\\
&>a-\frac{\mu_{+}}{q},
\end{align*}
confirming \eqref{eq:remark4.3.case.IV.bound}.
Under Case $(\text{\emph{IV}}_{7})$ with $a_{48}\geq0$, we have $a_{3}=x_{2}^{+}\geq a$ and
\begin{align*}
\psi(0,a_{3})=x_{2}^{+}-\frac{g(x_{2}^{+})}{g^{\prime}(x_{2}^{+})}=x_{2}^{+}-\frac{x_{2}^{+}+\dfrac{a_{46}}{a_{45}}a}{\gamma_{+}}=-\frac{a_{46}}{a_{45}}a-\frac{(1-\gamma_{+})\mu_{+}}{2q}.
\end{align*}
Since $x_{2}^{+}\geq a$, we have
\[-\frac{a_{46}}{a_{45}}a\geq a-\frac{\sigma_{+}^{2}(1-\gamma_{+})}{\mu_{+}}.\]
Consequently, \[ \psi(0,a_{3}) \geq a-\frac{\mu_{+}}{2q} >a-\frac{\mu_{+}}{q}. \]
That is, \eqref{eq:remark4.3.case.IV.bound} holds true. Combining \eqref{eq:remark4.3.condition.d} and \eqref{eq:remark4.3.case.IV.bound} yields
\[\psi(0,\overline{x}^{\star})\leq a-\frac{\mu_{+}}{q}<\psi(0,a_{3}).\]
Consequently, Condition \emph{(d)} implies the sufficient condition stated in \eqref{eq:remark4.3.case.IV.assumption.B} for \textbf{Assumption B}; hence, \textbf{Assumption B} holds. 
\end{remark}

\begin{remark}\label{rem:special.cases}
Let $\mu_- = \mu_+>0$ and $\sigma_- = \sigma_+$. We briefly discuss how our results reduce to those of \cite{Cadenillas06}. To this end, we further let $a=0$. Then, only Case $(\mathrm{IV}_1)$, i.e., $0=a\leq x_{1}^{+}\wedge x_{0}^{-}$, can occur. 
Since $x_{1}^{+}=x_{0}^{-}$ and $\gamma_{-}=\gamma_{+}$, it follows from \eqref{3.66.g.} that $a_{41}=a$ and $a_{42}=0$. Consequently, on $[0,x_{1}^{+}]$, the function $g_{41}$ reduces to (4.5) of \cite{Cadenillas06}, where $x_{1}^{+}$ here corresponds to $x_{0}$ in \cite{Cadenillas06}. Similarly, \eqref{3.72.g.} reduces to (4.11) and (4.12) of \cite{Cadenillas06}, which, in turn, forces $g_{41}$ to reduce to (4.9) of \cite{Cadenillas06} on $(x_{1}^{+},\infty)$. Moreover, $(\underline{x}^{\star}, \overline{x}^{\star})$ here reduces to $(\tilde{x},x_{1})$ in \cite{Cadenillas06}. Therefore, the results of Theorems 5.1 and 5.2 in \cite{Cadenillas06} (for $k=1$; indeed, the case $k\in(0,1)$ presents no essential difficulty) can be recovered from Theorems \ref{thm2.2} and \ref{thm4.1} of the present paper (noting that Condition~\emph{(a)} of Theorem \ref{thm4.1} is automatically satisfied). 

The results of \cite{Wang26} can also be recovered from those of the present paper. To this end, we replace the interval $[0,1]$ where the reinsurance ratio resides, by the singleton $\{1\}$. Consequently, the solution to \eqref{eq:a046} coincides with the function $g$ in \cite{Wang26}. The concavity-convexity properties of \(g\) in Sections \ref{subsec.3.1}-\ref{subsec.3.4} reduces to the corresponding results of Section 3 in \cite{Wang26}. Consequently, Theorem 4.1 of \cite{Wang26} can be recovered by Theorem \ref{thm4.1} of this paper. 
\end{remark}

\vspace{5mm}



\end{document}